\newcommand{\subsectionruninhead}{\@startsection{subsection}{2}{0mm}
	{-\baselineskip}{-0mm}{\bf\large}}
\newcommand{\subsubsectionruninhead}{\@startsection{subsubsection}{3}{0mm}
	{-\baselineskip}{-0mm}{\bf\normalsize}}
\newtheorem*{theorem*}{Theorem}
\newtheorem*{proof*}{Proof}
\newtheorem*{proposition*}{Proposition}
\newtheorem*{notation*}{Notation}
\newtheorem*{corollary*}{Corollary}
\newtheorem*{claim*}{Claim}
\newtheorem*{remark*}{Remark}
\newtheorem*{theorem1}{Theorem 1.1'}
\newtheorem*{theorem2}{Theorem 1.3'}
\newtheorem*{theorem3}{Theorem 2.1'}
\newtheorem{theorem}{Theorem}[section]
\newtheorem{proposition}{Proposition}[section]
\newtheorem{corollary}[theorem]{Corollary}
\newtheorem{lemma}[proposition]{Lemma}
\newtheorem{claim}[proposition]{Claim}
\theoremstyle{definition}
\newtheorem{definition}[proposition]{Definition}
\theoremstyle{remark}
\newtheorem{remark}[proposition]{Remark}
\numberwithin{equation}{section}
\def\NN{\mathbb{N}}
\def\RR{\mathbb{R}}
\def\TT{\mathbb{T}}
\def\ZZ{\mathbb{Z}}
\def\tildeL{\tilde{\mathcal{L}}}
\def\tildeF{\tilde{\mathcal{F}}}
\def\e{{\varepsilon}}
\def\va{{\varphi}}
\def\mathcalf{\mathcal{F}_f}
\def\mathcalg{\mathcal{F}_g}
\def\tildeF{\tilde{\mathcal{F}}_f}
\def\tildeG{\tilde{\mathcal{F}}_g}
\begin{document}
	\title{Topological and smooth classification of Anosov maps on torus}
	\author{Ruihao Gu \quad and \quad Yi Shi\footnote{Y. Shi was partially supported by National Key R\&D Program of China (2021YFA1001900) and NSFC (12071007, 11831001, 12090015).}}
	\date{\today}
	\maketitle
	
	\begin{abstract}
		In this paper, we give a complete topological and smooth classification of non-invertible Anosov maps on torus. We show that two non-invertible Anosov maps on torus are topologically conjugate if and only if their corresponding periodic points have the same Lyapunov exponents on the stable bundles. As a corollary, if two $C^r$ non-invertible Anosov maps on torus are topologically conjugate, then the conjugacy is $C^r$-smooth along the stable foliation. Moreover, we show that the smooth conjugacy class of a non-invertible Anosov map on torus is completely determined by the Jacobians of return maps at periodic points.
	\end{abstract}

	\section{Introduction}
     Let $M$ be a $C^{\infty}$-smooth $d$-dimensional closed Riemannian manifold and $f,g:M\to M$ be two maps. We say that  $f$ is \textit{topologically conjugate }to $g$ if there exists a homeomorphism $h:M\to M$ homotopic to identity Id$_M$ such that $h\circ f=g\circ h$. The topological conjugacy builds a dynamical equivalence of maps since two conjugate maps have exactly the same dynamical behavior up to a homeomorphism. However, it is almost impossible to classify  all maps without any restriction. For local topologically conjugacy, we say a $C^r$-map $f:M\to M$ is $C^r$-structurally stable if every $g:M\to M$ being $C^r$-close to $f$ is topologically conjugate to $f$. The most classical structurally stable systems are expanding maps and Anosov diffeomorphisms, which inspires us to give a full classification of them up to topological conjugacy.
     
     We say a local diffeomorphism $f:M\to M$ is an \textit{expanding map}  if the mini-norm $m(D_xf)>1$ for every $x\in M$. Shub \cite{Shub1969} proved that the universal cover of $M$ is Euclidean space $\RR^d$, and $f$ is topologically conjugate to an affine expanding endomorphism of an infra-nilmanifold if $\pi_1(M)$ is nilpotent. Later Franks \cite{Franks1970} showed that $\pi_1(M)$ has polynomial growth if $f:M\to M$ is an expanding map. Finally, Gromov \cite{Gro81} proved that a finitely generated group is virtually nilpotent, which implies that if $f:M\to M$ is expanding, then $M$ is an infra-nilmanifold and $f$ topologically conjugates to an affine expanding endomorphism. This gives a complete topological classification of expanding maps. Recently,  Gogolev and Hertz \cite{GH2022expanding} give a smooth classification of expanding maps via Jacobians of return maps at periodic points, see Remark \ref{1 rmk GH} for details.

     Recall that a diffeomorphism $f:M\to M$ is \textit{Anosov} if there exists a continuous $Df$-invariant splitting $TM=E^s \oplus E^u$ such that $Df$ is uniformly contracting in $E^s$ and uniformly expanding in $E^u$. The global classification of Anosov diffeomorphisms is widely open. If $f:M\to M$ is a codimension-one Anosov diffeomorphism, i.e., dim$E^s=1$ or dim$E^u=1$, then $M$ must be the $d$-torus $\TT^d$\cite{Franks1970,Newhouse1970}. If we restrict us on $d$-torus $\TT^d$ (or nilmanifold),  then every Anosov diffeomorphism $f:\TT^d\to \TT^d$ is topologically conjugate \cite{Franks1969, Manning1974} to its linear part $f_*:\pi_1(\TT^d)\to \pi_1(\TT^d)$. In particular,  this implies every Anosov diffeomorphism on a closed surface must topologically conjugate to a hyperbolic automorphism $A\in{\rm GL}_2(\ZZ)$ on torus $\TT^2$.

     In 1974,  Ma\~n\'e and Pugh \cite{ManePugh1975} introduced the definition of  Anosov maps which unifies the concepts of Anosov diffeomorphism and expanding map.
     \begin{definition}[\cite{ManePugh1975}]\label{1 def Anosov map manepugh}
     	Let $f:M\to M$ be a $C^1$ local diffeomorphism. We say $f$ is an \textit{Anosov map}, if there exists a continuous $Df$-invariant subbundle $E_f^s\subset TM$ such that $Df$ is uniformly contracting on $E_f^s$ and  uniformly expanding on the quotient bundle $TM/E_f^s$.
     \end{definition}
 
 \begin{remark}
 	Since the expanding direction of a point depends on its negative orbit, it is possible for  a non-invertible non-expanding Anosov map  to have different expanding directions at a point. See  \cite{Przytycki1976} for  examples with infinity expanding directions at certain points. 
 \end{remark}

     The simplest non-invertible Anosov map with non-trivial stable bundle is
     $$
     A_k=\begin{bmatrix}
     k&1\\
     1&1
     \end{bmatrix} :\mathbb{T}^2\to\mathbb{T}^2, \qquad \forall k\in\NN_{\geq 3}.
     $$
     Surprisingly, the linear Anosov map $A_k\ (k\geq3)$ can be $C^1$-approximated by Anosov maps which are not topologically conjugate to $A_k$. In fact, Ma\~n\'e, Pugh \cite{ManePugh1975} and Przytycki \cite{Przytycki1976} proved  the following theorem.
     
     \begin{theorem*}[\cite{ManePugh1975, Przytycki1976}]
     	A non-invertible Anosov map is $C^1$-structurally stable if and only if  it is an expanding map.
     \end{theorem*}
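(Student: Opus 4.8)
\emph{Expanding implies structurally stable.} This direction is classical, and I would simply invoke Shub's theorem \cite{Shub1969}: $m(D_xf)>1$ is a $C^1$-open condition, so any $g$ that is $C^1$-close to an expanding $f$ is again expanding and homotopic to $f$, and by the classification recalled above (\cite{Shub1969,Franks1970,Gro81}) both $f$ and $g$ are topologically conjugate to affine expanding endomorphisms of the same infra-nilmanifold; two such endomorphisms with the same linear part differ by a translation and are hence conjugate, so $g$ is topologically conjugate to $f$.

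\emph{Non-expanding implies not structurally stable.} Let $f$ be a non-invertible Anosov map that is not expanding, so $1\le\dim E^s_f\le d-1$ (the lower bound is the failure of uniform expansion; the upper bound because, for the covering map $f$, $\int_M|\det D_xf|\,dx=\deg(f)\cdot\mathrm{vol}(M)\ge 2\,\mathrm{vol}(M)$, so $Df$ cannot contract volume everywhere). I would argue by contradiction: if $f$ were $C^1$-structurally stable, all maps in some $C^1$-neighborhood $\mathcal U$ of $f$ would be mutually topologically conjugate, so it is enough to find two maps in $\mathcal U$ that are not topologically conjugate. The invariant I would use to distinguish them is the number of local unstable manifolds. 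Passing to the natural extension of $f$ (the inverse limit over backward orbits, on which the unstable subbundle is well defined), for $x\in M$ let $n_f(x)\in\{1,2,\dots,\infty\}$ be the number of distinct germs $W^u_{\mathrm{loc}}(\tilde x)$ as $\tilde x$ runs over the backward orbits of $x$. The soft point is that $n_f$ is a topological conjugacy invariant: $y\in W^u_{\mathrm{loc}}(\tilde x)$ is equivalent to $y$ having a backward orbit that stays $\varepsilon$-close to $\tilde x$, a purely dynamical condition, so a conjugacy $h$ with $h\circ f=g\circ h$ — a homeomorphism carrying backward orbits to backward orbits and, by uniform continuity, $\varepsilon$-proximity to $\varepsilon'$-proximity — satisfies $h\big(W^u_{\mathrm{loc},f}(\tilde x)\big)=W^u_{\mathrm{loc},g}(\hat h\tilde x)$ and therefore $n_f(x)=n_g(h(x))$. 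Thus $\sup_x n_f(x)$, and the values of $n_f$ on the conjugacy-matched finite set of periodic points of any fixed period, are conjugacy invariants; note that $n_f\equiv 1$ exactly when $f$ is \emph{special}, i.e.\ admits a continuous $Df$-invariant splitting $TM=E^s\oplus E^u$ over $M$.

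It remains to move this invariant by a $C^1$-small perturbation, and this is where non-expansion is essential. Fix a periodic point $p$, $f^n(p)=p$; non-invertibility gives $\deg(f^n)\ge 2$, hence a preimage $q\ne p$ with $f^n(q)=p$, and since $1\le\dim E^s_f\le d-1$ the point $p$ has local stable and local unstable manifolds of positive dimension. Following Przytycki \cite{Przytycki1976}, I would make a $C^1$-small perturbation supported in a small ball around $q$ — hence away from $p$ — that twists $Df$ near $q$: the backward orbit $(p,p,\dots)$ avoids this ball, so the unstable direction along it is unchanged, while for a generic such perturbation the unstable direction along a backward orbit through $q$ does change, producing a local unstable manifold through $p$ that is absent for $f$. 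When $f$ is special this already forces $n_g(p)\ge 2>1$; in general one arranges the perturbation so that the multiset of unstable-manifold germs over the finitely many period-$n$ points is changed. Either way the perturbed map $g$ — still a non-invertible, non-expanding Anosov map, Anosov being $C^1$-open \cite{ManePugh1975} — is not topologically conjugate to $f$, contradicting structural stability; pushing the same idea one can even produce points with $n_g=\infty$, as in Przytycki's examples \cite{Przytycki1976}.

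The main obstacle is the control of that perturbation. Although it is localized near $q$, it affects the unstable direction along \emph{every} backward orbit that eventually visits the ball, so the support and the twist must be chosen carefully to be sure the chosen invariant genuinely changes while $g$ stays Anosov — this is precisely the technical heart of \cite{ManePugh1975,Przytycki1976}. By contrast, the fact that the number and arrangement of local unstable manifolds is a topological invariant is soft, and it is what reduces the structural-stability question to this concrete perturbation problem.
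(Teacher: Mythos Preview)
The paper does not prove this theorem; it is stated as background and attributed to \cite{ManePugh1975, Przytycki1976} without any argument, so there is no ``paper's own proof'' to compare against. Your proposal is therefore not a reconstruction of anything in this paper but rather a sketch of the original arguments from those references.

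As a sketch of those original arguments, your outline is broadly on target: the invariant you isolate---the cardinality $n_f(x)$ of local unstable-manifold germs over a point, equivalently whether $f$ is special---is exactly the obstruction exploited by Przytycki, and your observation that this is a topological conjugacy invariant is correct and is the soft part. The hard part, as you yourself flag, is the perturbation: one must produce a $C^1$-small modification that genuinely creates (or destroys) an unstable direction at a prescribed periodic point while remaining Anosov. You defer this entirely to \cite{ManePugh1975, Przytycki1976}, which is honest but means your write-up is an outline rather than a proof. One small correction: your volume argument for $\dim E^s_f\le d-1$ is fine, but the claim ``two affine expanding endomorphisms with the same linear part differ by a translation and are hence conjugate'' needs a word of care on infra-nilmanifolds (the ``translation'' must be interpreted in the affine group of the nilpotent Lie group); on tori it is immediate.
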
 
      
     This theorem makes the topological classification of non-invertible Anosov maps with non-trivial stable bundles difficult (even in a small neighborhood), comparing with expanding maps and Anosov diffeomorphisms.
     
     \vspace{2mm}
     
     We introduce some symbols and notations.
     For $r\geq 1$ or $r=+\infty$, we denote by $\mathcal{A}^r(M)$ the set of $C^r$-smooth Anosov maps on $M$.  Denote by $\mathcal{N}^r(M)$ the subset of  $\mathcal{A}^r(M)$ which consists of  non-invertible maps. In particular, we use $k+\alpha$  in the superscript  such as  $\mathcal{A}^{k+\alpha}(M)$  to emphasize $k\in\NN$ and $0<\alpha<1$.  We mark the dimension of the stable bundle as the subscript, namely,  $\mathcal{A}^r_s(M)$ and $\mathcal{N}^{r+\alpha}_s(M)$ consist maps with $s$-dimensional stable bundles, for $0\leq s<d$.  We denote by ${\rm Home}_0(M)$ the set of homeomorphisms homotopic to identity on $M$.

     \subsection{Topological classification of Anosov maps}
    
    In this paper, we give a complete topological classification of non-invertible Anosov maps on torus.
    
   	\begin{theorem}\label{1 thm main thm}
    	Let $f,g \in\mathcal{N}^r(\TT^2)\ (r>1)$ be homotopic. Then $f$ is topologically conjugate to $g$, if and only if,  $f$ and $g$ have  same Lyapunov exponents on  stable bundles of  corresponding periodic points. 
    \end{theorem}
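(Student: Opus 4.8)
The plan is to prove both directions. The "only if" direction is routine: if $h\circ f=g\circ h$ with $h\in\mathrm{Home}_0(\TT^2)$, then $h$ maps the stable foliation of $f$ to that of $g$, carries periodic orbits to periodic orbits of the same period, and since Lyapunov exponents on the one-dimensional stable bundle are determined by the return map of $f$ along a periodic stable leaf (a quantity that is a topological conjugacy invariant up to the smoothness of $h$ along $E^s$ — but for the exponent itself only the periodic data matters, and $h$ intertwines the return dynamics), the stable exponents at corresponding periodic points agree. I would spell this out using that the stable foliation is $f$-invariant and uniquely integrable, so $h$ sends $\mathcal{F}^s_f$-leaves to $\mathcal{F}^s_g$-leaves.

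The substance is the "if" direction. Here is the strategy. Both $f$ and $g$ are homotopic, so they share a common linear part $A\in\mathrm{GL}_2(\ZZ)$ which is a hyperbolic (non-invertible) toral endomorphism; by the known semiconjugacy results for Anosov maps (lifting to $\RR^2$ and using expansiveness/shadowing on the inverse limit), there are semiconjugacies $\pi_f\colon f\to A$ and $\pi_g\colon g\to A$, i.e. continuous maps homotopic to the identity with $\pi_f\circ f = A\circ\pi_f$ and similarly for $g$. The obstruction to these being conjugacies lives entirely in the stable direction: $\pi_f$ collapses stable leaves of $f$ onto stable leaves of $A$. So the natural candidate conjugacy is $h = \pi_g^{-1}\circ\pi_f$ in a suitable sense — but $\pi_f$ is not injective, so this must be made precise by working leaf-by-leaf along the stable foliation. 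Concretely, I would fix the unstable direction via $\pi_f$ (which is injective on unstable leaves, since $A$ is expanding transverse to its stable leaves and $f$ shadows), and then construct $h$ on each stable leaf of $f$ as the unique homeomorphism onto the corresponding stable leaf of $g$ that conjugates the stable holonomy/return dynamics. The key analytic input is that equality of stable Lyapunov exponents at \emph{all} periodic points forces the two families of one-dimensional stable return maps to be conjugate in a way that varies continuously across leaves — this is a Livšic-type / de la Llave–Marco–Moriyón-type argument: the periodic-data matching implies a cohomological equation for $\log\|Df|_{E^s}\|$ versus $\log\|Dg|_{E^s}\|$ has a continuous solution, which upgrades (via $C^r$ regularity and the one-dimensionality of $E^s$) to the desired leafwise conjugacy.

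The steps, in order: (1) lift everything to $\RR^2$ and record the semiconjugacies $\pi_f,\pi_g$ to the common linear part $A$, together with their basic properties (homotopic to identity, bounded distance from identity, collapsing stable leaves, injective on unstable leaves). (2) Use a structural stability / inverse-limit argument to get that $f$ and $g$, restricted to unstable leaves, are "the same" as seen through $A$ — set up the correspondence of unstable leaves and of the induced expanding dynamics. (3) Along stable leaves, set up the one-dimensional return cocycles at periodic points and invoke the Livšic-type argument: equal periodic exponents $\Rightarrow$ the stable return maps of $f$ and $g$ over corresponding periodic unstable "fibers" are smoothly conjugate, with conjugacies depending continuously (indeed Hölder) on the base point. (4) Glue these leafwise conjugacies using the product-like local structure (stable foliation $\times$ unstable direction via $\pi_f$) into a global homeomorphism $h$ of $\TT^2$ homotopic to the identity, and verify $h\circ f = g\circ h$ by checking it separately along stable leaves (by construction) and transversally (via the intertwining with $A$).

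I expect the main obstacle to be Step (3)–(4): making the leafwise stable conjugacies depend continuously on the transverse parameter and gluing them into a genuine homeomorphism. The difficulty is genuinely non-invertible in nature — the stable leaf through a point, and hence the relevant return dynamics, depends on the \emph{negative} orbit (the point in the inverse limit), so one must work on the inverse limit $\varprojlim(\TT^2,f)$, build the conjugacy there equivariantly, and then check it descends to $\TT^2$. Controlling regularity of the solution of the Livšic equation and its transverse dependence, and ensuring the glued map is injective (not merely continuous and leaf-preserving), is where the real work lies; the one-dimensionality of $E^s$ and the $C^r$ ($r>1$) hypothesis are what make it go through.
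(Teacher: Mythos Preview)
There is a genuine gap in your ``only if'' direction. You treat it as routine, but it is not: Lyapunov exponents are \emph{not} topological conjugacy invariants in general. For Anosov diffeomorphisms on $\TT^2$ this implication is simply false --- one can have $h\circ f=g\circ h$ with $h$ a homeomorphism and yet $\lambda^s_f(p)\neq\lambda^s_g(h(p))$. Your parenthetical hedge (``a topological conjugacy invariant up to the smoothness of $h$ along $E^s$'') exposes the problem: you would need $h$ to be differentiable along stable leaves to conclude anything about exponents, and that is exactly what is not given. The whole content of this direction is a \emph{rigidity} phenomenon specific to the non-invertible setting. The paper proves it by first establishing a dichotomy (Proposition~\ref{1 prop dichotomy of non-invertible Anosov map}): either $f$ is special, in which case both $f$ and $g$ are conjugate to their common linearization and the exponents are forced to equal $\lambda^s(A)$; or $f$ is $u$-accessible, meaning any two points are connected by finitely many unstable leaves (for different backward orbits). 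In the accessible case one argues by contradiction: if exponents differ at some periodic $p$, a direct estimate shows $\|Dh|_{E^s_f(p)}\|=0$; then $u$-accessibility together with $C^1$ unstable holonomy propagates this vanishing derivative to every point, contradicting that $h$ is a homeomorphism. None of this is routine, and your proposal contains no mechanism to exploit non-invertibility here.

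For the ``if'' direction your ingredients (Liv\v{s}ic, inverse limits, one-dimensionality of $E^s$) are the right ones, but your structural setup via semiconjugacies $\pi_f,\pi_g$ to $A$ and a candidate $h=\pi_g^{-1}\circ\pi_f$ is both awkward and different from the paper. The paper instead uses that lifts $F,G:\RR^2\to\RR^2$ are \emph{always} conjugate by a homeomorphism $H$ bounded from the identity (Proposition~\ref{2.1 prop lifting conjugate}); the entire task is to show that the stable periodic data hypothesis forces $H$ to commute with deck transformations, so that it descends to $\TT^2$. The argument builds affine structures on stable leaves, uses a Liv\v{s}ic theorem on the inverse limit space to show $H$ is differentiable along $\tilde{\mathcal F}^s_f$ with uniformly controlled derivative, and then runs an order-preservation argument on stable leaves (Lemmas~\ref{3 lem: control the deviation}--\ref{3 lem: keepdeviation}) together with the asymptotic $\ZZ^d$-periodicity of $H$ (Proposition~\ref{2.1 prop nmH}) to rule out any failure of $H(x+n)=H(x)+n$. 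Your leaf-by-leaf gluing plan might be made to work, but the paper's ``show the existing $H$ descends'' framework is cleaner and sidesteps the injectivity worries you flag in Step~(4).
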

    
    \begin{remark}
    Here we explain how to match periodic points of two homotopic Anosov maps.
     If two Anosov maps are conjugate, it is natural to match their periodic points by the conjugacy. When there is no  \textit{a priori} conjugacy, we use the \textit{(stable) leaf conjugacy}. In fact, let $f,g :\TT^2\to \TT^2$ be two homotopic Anosov maps, then $f$ is leaf conjugate to $g$ \cite{HallHamerlindl2021} (also see Proposition \ref{2.2 prop leaf conjugate}), i.e., there exists $h\in{\rm Home}_0(\TT^2)$  such that 
     \begin{align}
     	h\circ f \big(\mathcalf^s(x)\big)= g \circ h \big(\mathcalf^s(x)\big),\quad \forall x\in\TT^2,\label{eq.1.leaf conjugacy}
     \end{align}
     where $\mathcalf^s(x)$  is the stable leaf of $f$ at  $x$(see Subsection \ref{subsec 2.1} for definition). 
     Since $f$ is uniformly contracting along every leaf $\mathcalf^s(x)$, every periodic stable leaf of $f$ admits a unique periodic point. Hence a leaf conjugacy $h$ induces a bijection between the sets of periodic points of  $f$ and $g$, denoted by 
     \begin{align}
     	h^P:{\rm Per}(f)\to {\rm Per}(g),\label{eq.1.hp}
     \end{align}
     and this gives us the way to match periodic points of $f$ and $g$. 
     \end{remark}

     For every periodic point $p$ of $f$, denote by $\lambda^s_f(p)$ the Lyapunov exponent on the stable bundle (\textit{stable Lyapunov exponent}) of $p$ for $f$. We say that $f$ and $g$ admit the same \textit{stable periodic data} with respect to a leaf conjugacy $h$ between $f$ and $g$, if the stable Lyapunov exponents of corresponding periodic points which induced by $h$ coincide, i. e.
     \begin{align}
     	\lambda^s_f(p)=\lambda^s_g\big( h^P(p) \big), \quad \forall p\in{\rm Per}(f), \label{eq.1.pds}
     \end{align} 
  where  $h^P$ is given by \eqref{eq.1.hp}. Hence Theorem \ref{1 thm main thm} means that the same stable periodic data implies that the leaf conjugacy can be deformed to a topological conjugacy.
  
   Theorem \ref{1 thm main thm} has the following interesting corollary, which exhibits the rigid phenomenon that "weak equivalence" (topological conjugacy) implies "strong equivalence"(smooth conjugacy). 
  
  \begin{corollary}\label{1 cor h smooth along stable}
  	Let $f,g \in \mathcal{N}^r(\TT^2)\ (r>1)$. If $f$ is topologically conjugate to $g$, then the conjugacy is $C^r$-smooth along  the stable foliation.
  \end{corollary}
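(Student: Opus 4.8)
The plan is to deduce Corollary \ref{1 cor h smooth along stable} from Theorem \ref{1 thm main thm} together with a Livšic-type regularity statement for the conjugacy along the stable foliation. First I would observe that if $f$ is topologically conjugate to $g$ via $h\in{\rm Home}_0(\TT^2)$, then in particular $f$ and $g$ are homotopic, and the conjugacy $h$ is itself a (stable) leaf conjugacy, so the induced matching $h^P:{\rm Per}(f)\to{\rm Per}(g)$ agrees with conjugation by $h$. Hence for every periodic point $p$ of period $n$, the return maps $D f^n|_{E^s_f(p)}$ and $D g^n|_{E^s_g(h(p))}$ are linearly conjugate — they have the same eigenvalue — so $\lambda^s_f(p)=\lambda^s_g(h^P(p))$, i.e. $f$ and $g$ automatically have the same stable periodic data. (This direction is the ``only if'' half of Theorem \ref{1 thm main thm}, which we may invoke.)

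The substance is then to show that, conversely, the existence of a topological conjugacy (equivalently, of the same stable periodic data) forces $h$ to be $C^r$ when restricted to each stable leaf $\mathcalf^s(x)$. The key point is that the stable foliations $\mathcalf^s$ and $\mathcalg^s$ are one-dimensional $C^r$ foliations (the stable leaves of a $C^r$ Anosov map are $C^r$ immersed curves), and $h$ maps $\mathcalf^s$-leaves to $\mathcalg^s$-leaves. Restricted to a leaf, $h$ conjugates the $C^r$ contraction $f|_{\mathcalf^s(x)}$ to the $C^r$ contraction $g|_{\mathcalg^s(h(x))}$. For a single $C^r$ contraction of an interval fixing a point, the linearizing (Sternberg) coordinates and the comparison of multipliers at the fixed point give smoothness; the non-stationary analogue — a sequence of contractions along the orbit, with matching periodic data — is exactly the setting of the non-stationary Livšic/de la Llave–Marco–Moriyón regularity theory. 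Concretely, I would cover $\TT^2$ by finitely many foliation charts for $\mathcalf^s$, use the density of stable leaves of periodic points (the leaves through ${\rm Per}(f)$ are dense, since periodic points are dense), and on each periodic leaf the matching of Lyapunov exponents pins down the $C^1$ derivative of $h$ along the leaf at the periodic point; then propagate this by the dynamics $h\circ f = g\circ h$, using the uniform contraction and bounded distortion to get uniform $C^1$ (indeed $C^r$) bounds for $h$ along leaves on a fundamental domain of each leaf, and finally pass to the closure using continuity of $h$ and of the stable bundles.

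The main technical obstacle is the regularity bootstrapping: getting from ``$h$ along each leaf is Lipschitz / $C^1$ with the right derivative at periodic points'' to ``$h$ along each leaf is $C^r$, uniformly''. This is where one needs the full strength of the $C^r$-section / Livšic-regularity machinery: one introduces the cocycle $x\mapsto \log\big(Df|_{E^s_f(x)}\big)$ and its $g$-counterpart, shows the matching of periodic data makes the two cohomologous over the (non-invertible!) base dynamics, and then invokes a non-invertible analogue of the Livšic theorem to produce a $C^{r-\e}$ (or $C^r$, depending on the precise formulation available in the paper) transfer function, which is precisely the leafwise derivative of $h$. Care is needed because $f$ is non-invertible, so the stable bundle $E^s_f$ and the cocycle are genuinely defined on $\TT^2$ (not on an inverse limit) — this is the feature that makes the stable direction well-behaved, in contrast to the unstable one — and the Livšic argument runs over the forward dynamics of $f$. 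Once the leafwise derivative of $h$ is shown to be $C^{r-1}$ along leaves, integrating along the $C^r$ leaves yields that $h|_{\mathcalf^s(x)}$ is $C^r$, which is the assertion of the corollary.
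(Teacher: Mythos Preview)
Your reduction to Theorem \ref{1 thm main thm} for the matching of stable periodic data is legitimate as a citation, but the justification you give is wrong: a \emph{topological} conjugacy $h$ does not make the return maps $Df^n|_{E^s_f(p)}$ and $Dg^n|_{E^s_g(h(p))}$ linearly conjugate, since $Dh$ does not exist a priori. That the stable periodic data must coincide is precisely the nontrivial ``only if'' direction, and the paper proves it by a dichotomy (Proposition \ref{1 prop dichotomy of non-invertible Anosov map}): either $f$ is special, in which case one quotes \cite{AGGS2022}, or $f$ is $u$-accessible, in which case a mismatch at some $p$ forces $\|Dh|_{E^s_f(p)}\|=0$ (Lemma \ref{5 lemma: differential at p}), and this vanishing propagates to every point via $C^1$ unstable holonomies (Lemma \ref{5 lemma: u-holonomy send differentiable}), contradicting that $h$ is a homeomorphism along leaves.

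For the smoothness step your sketch has a genuine gap. Livšic produces a H\"older transfer function $u$ with $\log\|Df|_{E^s_f}\| - \log\|Dg|_{E^s_g}\|\circ h = u\circ f - u$, and you assert that $e^u$ ``is precisely the leafwise derivative of $h$''. But this identification presupposes that $h$ is differentiable along stable leaves, which is what must be proved; matching multipliers at a periodic point does not by itself pin down a derivative (on a single leaf, the self-conjugacies of $x\mapsto\lambda x$ include maps not differentiable at $0$). The paper closes this gap differently from your outline: it uses the leafwise affine structures $d^s_F,d^s_G$ together with the Livšic theorem on the inverse limit (Lemma \ref{3 lem: s rigidity induce to RRd}) to prove that $H$ is uniformly bi-Lipschitz along $\tildeF^s$ (Proposition \ref{3 prop: bi-Lipschitz}), hence Lebesgue-a.e.\ differentiable on each leaf; it then spreads differentiability to every point, in the $u$-accessible case again via $C^1$ unstable holonomies (codimension one is used here through Proposition \ref{2.1 prop C1 foliation}). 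Only after differentiability is established does the identity $\rho_g\big(h(x),h(y)\big)=\dfrac{\|Dh|_{E^s_f(x)}\|}{\|Dh|_{E^s_f(y)}\|}\,\rho_f(x,y)$ make sense; since $\rho_f,\rho_g$ are $C^{r-1}$ along leaves, this yields $Dh|_{E^s_f}\in C^{r-1}$ and hence $h\in C^r$ along $\mathcalf^s$. Your ``propagate from periodic leaves and pass to the closure'' scheme does not supply this missing step.
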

  
  \begin{remark}
  	Both Theorem \ref{1 thm main thm} and Corollary \ref{1 cor h smooth along stable} automatically hold for expanding maps since they have trivial stable bundles. But they do not hold for Anosov diffeomorphisms.
  \end{remark}
  
  \begin{remark}
  	For higher-dimensional torus we obtain same results for the case that $f$ is irreducible with one-dimensional stable bundle,  see Subsection \ref{subsec 2.2}. 
  \end{remark}
  
  \begin{remark}
  	Recently, Gogolev and Hertz have researched  the similar phenomenon of Corollary \ref{1 cor h smooth along stable}, usually called "bootstrap", for contact Anosov flows and codimension-one Anosov flows. They showed that under some additional assumption such as bunching conditions or exluding the flows being constant roof suspensions over Anosov diffeomorphisms, the topological conjugacy between two Anosov flows  must be smooth \cite{GH2021Anosovflow1,GH2021Anosovflow2}.
  \end{remark}
 
 \vspace{2mm}
  
  There are other ways to match the stable periodic data of $f$ and $g$.  Although there is no  \textit{a priori} conjugacy between two homotopic Anosov maps on $\TT^2$, there exist conjugacies on the level of  universal cover $\RR^2$, inverse limit spaces \cite{AokiHiraide1994} and also on the (stable) leaf spaces.  Moreover, we build connection among these three types of conjugacy and the leaf conjugacy in  \eqref{eq.1.leaf conjugacy} . Hence we can translate \eqref{eq.1.pds} into the corresponding stable periodic data via the above three different  types of conjugacy. Here we give a quick view of this as follow and one can get more details in Subsection \ref{subsec 2.3}.

  Let $f,g\in\mathcal{A}^1(\TT^2)$ be homotopic.  Denote by $F$ and $G$, any two liftings of $f$ and $g$ respectively by the natural projection $\pi:\RR^2\to\TT^2$.  Then there exists a conjugacy $H$  bounded from Id$_{\RR^2}$ between $F$ and $G$ (\cite{AokiHiraide1994}, also see Proposition \ref{2.1 prop lifting conjugate}).
   \begin{itemize}
   	\item For $\tau=f\ {\rm or}\ g$, let  $\TT^2_{\tau}$ be the inverse limit space and  $\sigma_{\tau}:\TT^2_{\tau}\to\TT^2_{\tau}$ be the (left) shift homeomorphism (see Subsection \ref{subsec 2.1} for definitions). 
    	\begin{itemize}
    	\item $H$  induces a conjugacy $\bar{h}$ between $\sigma_f$ and $\sigma_g$.
    \end{itemize}
   	\item For $\tau=f\ {\rm or}\ g$, denote the stable leaf space $S_{\tau}:=\TT^2/\mathcal{F}^s_{\tau}$, then $\tau$ induces the map  $\tau_s:S_{\tau}\to S_{\tau}$. 
   		\begin{itemize}
   		\item A leaf conjugacy homotopic to Id$_{\TT^2}$ in the sense of \eqref{eq.1.leaf conjugacy} induces a conjugacy $s$ between $f_s$ and $g_s$.
   		\item $H$ induces a conjugacy $s$ between $f_s$ and $g_s$.
   		\item  A conjugacy $s$ between $f_s$ and $g_s$ induces a conjugacy $H$ between some liftings $F$ and $G$. 
   	\end{itemize}
   \end{itemize} 
   As a result, we have the following  commutative diagram (Figure 1). In which,  $\pi_0:(\TT^2)^{\ZZ}\to \TT^2$ is  projection $\pi_0\big((x_i)\big)=x_0$ and $\bar{\pi}$  projects  orbits on $\RR^2$ to  orbits on $\TT^2$. Note that the leaf conjugacy cannot \textit{a priori} be commutative with $f$ and $g$.
    \[ \xymatrix@=7.4ex{
    \RR^2\ar@(dl,ul)^{F}	\ar[rrrrr]^{H} \ar[dr]^{\pi} 	\ar[ddd]_{\bar{\pi}} &&&&& \RR^2\ar@(dr,ur)_{G}  \ar[dl]_{\pi} 	\ar[ddd]^{\bar{\pi}}\\
   	&\TT^2\ar@(dl,ul)^{f} \ar[dr] \ar@{~)}[rrr]^{\rm leaf\ conjugacy} &&&\TT^2 \ar@(dr,ur)_{g} \ar[dl]  \\
   	&&S_f\ar@(ul,dl)_{f_s} \ar[r]^{s} &S_g\ar@(ur,dr)^{g_s} \\
    \TT^2_f \ar@(dl,ul)^{\sigma_f}  \ar[uur]_{\pi_0}  \ar[rrrrr]_{\exists\  \bar{h}}   &&&&& \TT^2_g \ar@(dr,ur)_{\sigma_g}  \ar[uul]^{\pi_0} 
   } 
   \]
 $$\text{Figure 1. The relation among four types of conjugacy.}$$
 
   \vspace{2mm}
   Now we explain Theorem \ref{1 thm main thm} in the sense of mapping spaces with the help of Figure 1. Recall that if $f\in\mathcal{A}^1(\TT^d)\ (d\geq 2)$, then its linearization $A=f_*:\pi_1(\TT^d) \to \pi_1(\TT^d)$ also induces an Anosov map on $\TT^d$ \cite[Theorem 8.1.1]{AokiHiraide1994}.

   Let $A\in GL_2(\RR)\cap M_2(\ZZ)$ which induces an non-invertible Anosov map on $\TT^2$. Denote by $\mathcal{A}^r(A)$ $(r>1)$ the set of  $C^r$-smooth Anosov maps homotopic to $A$.  Denote
   \begin{align}\label{A-maps}
    \tilde{\mathcal{A}}^r(A)=
        \big\{
        (f,H)~\big|&~f\in\mathcal{A}^r(A),~H:\RR^2\to\RR^2~
   \text{is a homeomorphism s.t.} \notag\\
       &\|H-{\rm Id}_{\RR^2}\|_{C^0}<\infty,~A\circ H=H\circ F
       ~\text{where}~F:\RR^2\to\RR^2~\text{is a lift of}~f \big\}.
   \end{align}
   We say two tuples $(f_1,H_1), (f_2,H_2)\in \tilde{\mathcal{A}}^r(A)$ are \textit{topologically equivalent on torus}, which denote by $(f_1,H_1)\sim_H (f_2,H_2)$, if  $H_2^{-1}\circ H_1: \RR^2\to\RR^2$ is commutative with deck transformations:
  $$
  H_2^{-1}\circ H_1(x+n)=H_2^{-1}\circ H_1(x)+n, 
  \quad \forall x\in\RR^2, \forall n\in\ZZ^2.
  $$ 
  This implies $H_2^{-1}\circ H_1$ can descend to $\TT^2$, i.e., there exists $h\in{\rm Home}_0(\TT^2)$ such that 
  $$
  \pi\circ \big(H_2^{-1}\circ H_1\big)=h\circ\pi,
  \qquad \text{and} \qquad
  h \circ f_1=f_2\circ h.
  $$ 
  Note that $h$ is \textit{a priori} only H\"older continuous. 
  
  We say $(f_1,H_1), (f_2,H_2)\in \tilde{\mathcal{A}}^r(A)$ are \textit{smooth equivalent along stable foliation on torus}, which denote by $(f_1,H_1)\sim_s (f_2,H_2)$, if we further assume the conjugacy $h\in{\rm Home}_0(\TT^2)$ is $C^r$-smooth along stable foliation of $f_1$.
  
  We denote by 
  $$
  \mathcal{T}^r_H(A)=\tilde{\mathcal{A}}^r(A)/_{\sim_H}
  \qquad \text{and} \qquad
  \mathcal{T}^r_s(A)=\tilde{\mathcal{A}}^r(A)/_{\sim_s}
  $$ 
  spaces of two equivalence classes of such tuples, which are Teichm\"uller spaces of Anosov maps by H\"older conjugacies, and by H\"older conjugacies which are $C^r$-smooth along stable foliations, respectively. 
  
   Denote by $\mathcal{F}^{\rm H}(\TT^2_A)$ the set of H\"older functions on $\TT^2_A$. Define an equivalence relation  $"\sim_{\sigma_A}"$ in $ \mathcal{F}^{\rm H}(\TT^2_A)$ as 
  $$\tilde{\phi}_1 \sim_{\sigma_A}\tilde{ \phi}_2 \iff \exists\;  {\rm H\ddot{o}lder} \;\tilde{u}:\TT^2_A\to \RR {\rm \;\; such\;\; that\;\;} \tilde{\phi}_1-\tilde{\phi}_2=\tilde{u}\circ \sigma_A-\tilde{u}.$$
  This means $\tilde{\phi}_1$ and $\tilde{ \phi}_2$ are cohomologous via $\sigma_A$ on $\TT^2_A$. 
  
  \vspace{2mm}
  
  We can restate Theorem \ref{1 thm main thm} and Corollary \ref{1 cor h smooth along stable} as following.   As mentioned before, for any $(f,H)\in  \tilde{\mathcal{A}}^r(A)$, $H$ can induce a conjugacy $\bar{h}:\TT^2_f\to \TT^2_A$ between $\sigma_f$ and $\sigma_A$ (also see  Proposition \ref{2.2 prop: the relationship of 3 conjugacies inverse limit space}).
  \begin{theorem1}\label{thm 1.1'}
  	Let $A\in GL_2(\RR)\cap M_2(\ZZ)$ which induces a non-invertible Anosov map on $\TT^2$, and 
  	$(f_i,H_i)\in \tilde{\mathcal{A}}^{r}(A)\ (r>1)$ for $i=1,2$. Then  
  	$$ 
  	(f_1,H_1)\sim_H (f_2,H_2) ~\iff~  
  	{\rm log}\|Df_1|_{E^s_{f_1}}\|\circ \bar{h}_1^{-1} \sim_{\sigma_A} {\rm log}\|Df_2|_{E^s_{f_2}}\|\circ \bar{h}_2^{-1}
  	~\iff~ (f_1,H_1)\sim_s (f_2,H_2),
  	$$
  	where $\bar{h}_i$ is the conjugacy induced by $H_i$  on $\TT^2_A$. 
  	In particular, there  is   a natural injection
   \begin{align}
  	\mathcal{T}^r_H(A)~=~
  	\mathcal{T}^r_s(A)~\hookrightarrow~ 
  	\mathcal{F}^{\rm H}(\TT^2_A)\big/_{\sim_{\sigma_A}}
  	\qquad \text{and} \qquad
  	(f,H)~\mapsto~{\rm log}\|Df|_{E^s_f}\|\circ \bar{h}^{-1}.
  	\label{eq.1. space injection}
  \end{align}
   Here $\bar{h}:\TT^2_f\to \TT^2_A$ is the conjugacy induced by $H$.
  \end{theorem1}

 \begin{remark}
  The injection \eqref{eq.1. space injection} also holds for expanding maps, if one defines {\rm log}$\|Df|_{E^s_f}\|$ as $-\infty$. For a precise explication, we will give more details of Theorem \ref{thm 1.1'}' later in  Subsection \ref{subsec 2.2}.
 \end{remark}

   \subsection{Smooth classification of Anosov maps}

   Based on the topological classification stated in Theorem \ref{1 thm main thm}, we give a smooth classification  of non-invertible Anosov maps on torus with non-trivial stable bundles via Jacobians of return maps at periodic points. 
   For $r>1$, let  $r_*=\Big\{ \begin{array}{lr}
   r-\e, \; \ r\in\NN  \\ \;\;\; r,\ \ \ \ \ \ \  r\notin  \NN \ {\rm or}\ r=+\infty
   \end{array}$, where $\e>0$ can be arbitrarily small.  
   
    \begin{theorem}\label{1 thm jacobian rigidty}
    	Assume $f,g \in\mathcal{N}^r_1(\TT^2)\ (r>1)$ and $f$ is topologically conjugate to $g$ by $h\in{\rm Home}_0(\TT^2)$. If 
    	$$
    	{\rm Jac}\big(f^n\big)(p)={\rm Jac}\big(g^n\big)(h(p)),
    	\qquad \forall p=f^n(p),
    	$$
    	then $h$ is a $C^{r_*}$-smooth diffeomorphism.
    \end{theorem}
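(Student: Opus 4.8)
The plan is to reduce Theorem \ref{1 thm jacobian rigidty} to leaf-wise regularity of $h$ along two transverse foliations and then to invoke Journé's regularity lemma. First I would record what is already available: by Corollary \ref{1 cor h smooth along stable} the conjugacy $h$ is $C^r$ along the stable foliation $\mathcal{F}^s_f$, and differentiating $h$ along a stable leaf at a periodic point $p=f^n(p)$, where $f^n|_{\mathcal{F}^s_f(p)}$ and $g^n|_{\mathcal{F}^s_g(h(p))}$ are $C^r$ contractions with fixed points $p$ and $h(p)$, forces $\lambda^s_f(p)=\lambda^s_g(h(p))$ for every $p\in{\rm Per}(f)$. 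Next I would pass to the universal cover: an Anosov map is a $C^r$ local diffeomorphism of the compact manifold $\TT^2$, hence a covering map, so any lift $F$ of $f$ and $G$ of $g$ to $\RR^2$ is a $C^r$ diffeomorphism of $\RR^2$, in fact a uniformly hyperbolic one with constants inherited from $\TT^2$. Thus $F$ carries a genuine invariant unstable foliation $\mathcal{F}^u_F$ with uniformly $C^r$ leaves, transverse to the lifted stable foliation; one checks $\mathcal{F}^u_F$ is invariant under deck transformations, so it descends to a continuous $f$-invariant foliation $\mathcal{F}^u_f$ of $\TT^2$ with $C^r$ leaves, transverse to $\mathcal{F}^s_f$. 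Taking $H$ to be the lift $\tilde h$ of $h$ (bounded distance from ${\rm Id}_{\RR^2}$, with $H\circ F=G\circ H$), the relation $H\circ F^{-1}=G^{-1}\circ H$ shows $H$ sends $\mathcal{F}^u_F$-leaves to $\mathcal{F}^u_G$-leaves, hence $h$ sends $\mathcal{F}^u_f$-leaves to $\mathcal{F}^u_g$-leaves. It then suffices to prove $h$ is $C^{r_*}$ along $\mathcal{F}^u_f$: Journé's lemma combined with the stable regularity gives $h\in C^{r_*}(\TT^2)$, and applying the whole argument to $h^{-1}$ (the Jacobian and periodic-data hypotheses are symmetric in $f$ and $g$) yields $h^{-1}\in C^{r_*}$, so $h$ is a $C^{r_*}$-diffeomorphism.

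The second step is to extract the unstable periodic data. For $p=f^n(p)$ the backward orbit along the periodic orbit is forced, so there is a well-defined unstable direction $E^u_f(p)$; moreover it is tangent to the leaf $L_p:=\mathcal{F}^u_f(p)$, because the distinguished backward orbit of a periodic point supplied by the lift is exactly its periodic orbit. Hence $|{\rm Jac}(f^n)(p)|=\|Df^n(p)|_{E^s_f}\|\cdot\|Df^n(p)|_{E^u_f}\|$, and similarly for $g$ at $h(p)$; combining the hypothesis ${\rm Jac}(f^n)(p)={\rm Jac}(g^n)(h(p))$ with $\lambda^s_f(p)=\lambda^s_g(h(p))$ from the first step gives $\|Df^n(p)|_{E^u_f}\|=\|Dg^n(h(p))|_{E^u_g}\|$, i.e. $\lambda^u_f(p)=\lambda^u_g(h(p))$ for every periodic point.

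Then I would prove the leaf-wise unstable regularity. Restricted to the leaves of $\mathcal{F}^u_f$, $f$ is uniformly expanding one-dimensional $C^r$ dynamics, and $h$ intertwines $f|_{\mathcal{F}^u_f}$ with $g|_{\mathcal{F}^u_g}$. On a periodic leaf $L_p$ the map $f^n|_{L_p}$ is a $C^r$ expanding map of a line fixing $p$, conjugated by $h|_{L_p}$ to $g^n|_{L_{h(p)}}$; the periodic points of $f^n|_{L_p}$ are dense in $L_p$ and are periodic points of $f$, so by the second step their multipliers match those of the corresponding periodic points of $g^n|_{L_{h(p)}}$. The classical smooth rigidity for $C^r$ expanding maps of the line (or circle) with coinciding periodic data then gives that $h|_{L_p}$ is $C^{r_*}$, with bounds uniform over the dense family of periodic leaves; by density and the uniform $C^{r_*}$ transverse structure of $\mathcal{F}^u_f$ this upgrades to $C^{r_*}$-regularity of $h$ along every unstable leaf. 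Equivalently, one may argue upstairs: $F$ and $G$ are honest Anosov diffeomorphisms of $\RR^2$ and $H$ is a conjugacy between them with matching unstable periodic data, so the classical leaf-wise smoothness argument for Anosov diffeomorphisms applies with estimates that are uniform on $\RR^2$ since all the relevant objects descend to $\TT^2$.

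I expect the third step to be the main obstacle, for two reasons. First, for a non-invertible $f$ there is no intrinsic unstable foliation on $\TT^2$; it is precisely the passage to the universal cover, where $f$ becomes a genuine Anosov diffeomorphism, that produces $\mathcal{F}^u_f$ and makes the one-dimensional (equivalently, invertible) smooth-conjugacy machinery available, and one must then run that machinery with hyperbolicity and distortion estimates that are uniform on the non-compact cover. Second, the input from Corollary \ref{1 cor h smooth along stable} is itself a non-trivial consequence of the non-invertible stable theory developed in the paper, so the argument genuinely couples that new theory with the classical unstable theory; keeping the two compatible — in particular matching periodic points via the leaf conjugacy in a way consistent with both the stable leaves and the lifted unstable foliation — is where the care is needed.
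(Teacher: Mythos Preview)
There is a genuine gap in your second step. You claim that the unstable foliation $\tilde{\mathcal{F}}^u_F$ of the lift $F$ on $\RR^2$ is invariant under deck transformations and hence descends to a foliation $\mathcal{F}^u_f$ of $\TT^2$. This is false in general: it holds precisely when $f$ is \emph{special}, and the paper's Proposition~\ref{2.1 prop special and conjugate} together with the dichotomy of Proposition~\ref{1 prop dichotomy of non-invertible Anosov map} makes clear that the generic non-invertible Anosov map on $\TT^2$ is \emph{not} special. Concretely, from $F(x+n)=F(x)+An$ one gets $T_n\big(\tilde{\mathcal{F}}^s_F(x)\big)=\tilde{\mathcal{F}}^s_F(x+n)$ because forward iteration only sees $A^k n\in\ZZ^d$; but for $\tilde{\mathcal{F}}^u_F$ one needs control of $F^{-k}(x+n)-F^{-k}(x)$, and since $A^{-1}n\notin\ZZ^d$ in general there is no reason for $T_n$ to preserve unstable leaves. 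The paper in fact \emph{uses} the failure of deck-invariance in the non-special case: in the proof of Proposition~\ref{4.2 prop H is C^r along u} it chooses $x_0\in\RR^2$ and $n\in\ZZ^2$ with $DT_n\big(E^u_F(x_0)\big)\neq E^u_F(x_0+n)$.

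Once $\mathcal{F}^u_f$ does not exist on $\TT^2$, your third step collapses: there are no ``periodic unstable leaves $L_p$'' on the torus, the density-of-periodic-leaves argument has no meaning, and the classical one-dimensional expanding-map rigidity cannot be applied leafwise on $\TT^2$. Working entirely on $\RR^2$ does not save this either, because periodic points of $f$ do not lift to periodic points of $F$, so there is no direct way to feed the periodic-data hypothesis into a purely upstairs argument. What the paper actually does is: build the affine structure and Livschitz theory on the inverse limit space to get $H$ bi-Lipschitz along $\tilde{\mathcal{F}}^u_F$ (Proposition~\ref{3 prop: bi-Lipschitz} adapted to the unstable side), and then, in the non-special case, exploit the very fact that $\tilde{\mathcal{F}}^u_F$ and its deck-translate $\tilde{\mathcal{F}}^{u,n}:=T_{-n}\big(\tilde{\mathcal{F}}^u_F\circ T_n\big)$ are \emph{transverse} near some point. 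Since $H$ commutes with $T_n$ and preserves $\tilde{\mathcal{F}}^u$, it also intertwines the pair $(\tilde{\mathcal{F}}^u,\tilde{\mathcal{F}}^{u,n})$ with the corresponding pair for $G$, and the $C^1$ holonomies of these one-dimensional foliations (and of $\tilde{\mathcal{F}}^s$) propagate a single differentiability point of $H|_{\tilde{\mathcal{F}}^u}$ to an open set, after which minimality of $\mathcal{F}^s_f$ finishes the job. Your overall strategy (stable regularity $+$ matching unstable periodic data $+$ unstable regularity $+$ Journ\'e) is the same as the paper's, but the mechanism for unstable regularity in the non-special case is the missing idea.
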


 		\begin{remark}
 		From Corollary \ref{1 cor h smooth along stable},  the topological conjugacy has already been smooth along  stable foliation. We just need to prove regularity of  conjugacy restricted on the unstable direction by  method  originated in \cite{Llave1992,GG2008}, see also \cite{SY2019}.
 	\end{remark}

 	\begin{remark}\label{1 rmk GH}
 	Gogolev and Hertz  discoverd surprisingly that for two conjugate irreducible $C^{r}$-smooth $(r\geq 2)$   expanding maps $f$ and $g$ on $d$-torus $(d\geq 2)$, if they have the same periodic Jacobian data and the Jacobian determinant of $f$ is not cohomology to a constant, then $f$ is $C^{r-1}$-smoothly conjugate to $g$ (see \cite[Corollary 2.6]{GH2022expanding}). We would like to point out a little difference in Theorem \ref{1 thm jacobian rigidty} in which  the Jacobian determinant of the map  being cohomology to a constant would be allowed.
 	\end{remark}

 	\begin{remark}
 	 We mention that \cite{Micena2022} gives a smooth classification of topologically conjugate Anosov maps which have unstable subbundles on $\TT^2$, via both stable and unstable Lyapunov exponents at  periodic points. Our work also deals with the case having no unstable subbundles which brings us an obstacle.
    \end{remark}

\vspace{2mm}
   We also give a view of Theorem \ref{1 thm jacobian rigidty} in the sense of mapping spaces.
   	For any $f\in\mathcal{A}_1^r(\TT^2)\ (r>1)$, let
   	$$ 
   	\tilde{\mathcal{A}}^r_{\TT^2}(f)=
   	\left\{(g,h)~\big|~ g\in\mathcal{A}_1^r(\TT^2),~h\in{\rm Home}_0(\TT^2),~ 
   	\text{s. t.}~h\circ f= g\circ h \right\}
   	$$
   	We say that two tuples $(g_1,h_1), (g_2,h_2)\in \tilde{\mathcal{A}}^r_{\TT^2}(f)$ are $C^{r_*}$-\textit{equivalent}, denoted by $(g_1,h_1)\sim_{r_*}(g_2,h_2)$, if the homeomorphism 
   	$h_1\circ h_2^{-1}\in{\rm Home}_0(\TT^2)$ is $C^{r_*}$-smooth. 
   	
   	The Teichm${\rm \ddot{u}}$ller space $\mathcal{T}^{r_*}_{\TT^2}(f)$ is the space of "$\sim_{r_*}$"-equivalence classes of such tuples 
   	\begin{align}
   		\mathcal{T}^{r_*}_{\TT^2}(f)= \tilde{\mathcal{A}}^r_{\TT^2}(f)/_{\sim_{r_*}}. \label{eq. 1. teichmuller}
   	\end{align}
   Denote $\mathcal{F}^{\rm H}(\TT^2)$ the set of {\rm H${\rm \ddot{o}}$lder} functions on $\TT^2$. Define the equivalence relation "$\sim_f$" in  $\mathcal{F}^{\rm H}(\TT^2)$  as 
   \begin{align}
   	 \phi_1 \sim_f \phi_2 \iff \exists\;  {\rm H\ddot{o}lder} \;u:\TT^2\to \RR {\rm \;\; such\;\; that\;\;} \phi_1-\phi_2=u\circ f- u. \label{eq. 1. coboundary}
   \end{align}
   Since Livschitz Theorem holds for Anosov maps on torus (see Subsection \ref{subsec 2.4}), Theorem \ref{1 thm jacobian rigidty} immediately implies  the following theorem.  
   
   \begin{theorem2}\label{thm 1.2'}
   	 Let $f\in\mathcal{N}_1^r(\TT^2)\ (r>1)$ and $(g_i,h_i)\in \tilde{\mathcal{A}}^r_{\TT^2}(f)$ for $i=1,2$.  Then 
   	$$
   	(g_1,h_1)\sim_{r_*}(g_2,h_2)\iff {\rm log}\big({\rm Jac}(g_1)\circ h_1\big) \sim_f {\rm log}\big({\rm Jac}(g_2)\circ h_2\big).
   	$$ 
   	In particular, there is a natural injection
   	$$
   	\mathcal{T}^{r_*}_{\TT^2}(f)~\hookrightarrow~
   	\mathcal{F}^{\rm H}(\TT^2)\big/_{\sim_f}
   	\qquad \text{and} \qquad
   	(g,h)~\mapsto~{\rm log}\big({\rm Jac}(g)\circ h\big).
   	$$
   \end{theorem2}

   Finally, we can give a global picture of Anosov maps on $\TT^2$ up to smooth conjugacy in a fixed homotopy class. Let $A\in GL_2(\RR)\cap M_2(\ZZ)$ which induces a linear Anosov map in $\mathcal{N}_1^r(\TT^2)$. For instance, we can take
   $$
   A=\begin{bmatrix}
   	k&1\\
   	1&1
   \end{bmatrix}, \qquad \forall k\in\NN_{\geq 3}.
   $$
   
   Let $\tilde{\mathcal{A}}^r(A)$ be the set of tuples $(A,H)$, where $f\in\mathcal{A}^r(A)$ is an Anosov map homotopic to $A$ and $H$ is the conjugacy in $\RR^2$, see \ref{A-maps}. We say two tuples $(f_1,H_1), (f_2,H_2)\in \tilde{\mathcal{A}}^r(A)$ are \textit{$C^{r_*}$-smooth equivalent on torus}, which denote by $(f_1,H_1)\sim_{r_*} (f_2,H_2)$ if $H_2^{-1}\circ H_1$ is $\ZZ^2$-periodic and $C^{r_*}$-smooth. This implies there exists a $C^{r_*}$-diffeomorphism $h:\TT^2\to\TT^2$ such that
   $$
   \pi\circ \big(H_2^{-1}\circ H_1\big)=h\circ\pi,
   \qquad \text{and} \qquad
   h \circ f_1=f_2\circ h.
   $$ 
   The Teichm\"uller space of Anosov maps homotopic to $A$ is the equivalence class
   $$
   \mathcal{T}^{r_*}(A)=\tilde{\mathcal{A}}^r(A)/_{\sim_{r_*}}.
   $$
   
   Combining Theorem 1.1' and Theorem 1.3', we have the following corollary.
   
   \begin{corollary}
   	   Let $A\in GL_2(\RR)\cap M_2(\ZZ)$ which induces a linear Anosov map in $\mathcal{N}_1^r(\TT^2)$. Then
   	   $$
   	   \mathcal{T}^{r_*}(A)~=~
   	   \mathcal{T}^r_H(A)\ltimes_f\mathcal{T}^{r_*}_{\TT^2}(f)~:=~
   	   \bigcup_{\langle(f,H)\rangle\in\mathcal{T}^r_H(A)}\mathcal{T}^{r_*}_{\TT^2}(f)
   	   $$
   	   Here $\langle(f,H)\rangle$ is the equivalence class of $(f,H)$ in $\mathcal{T}^r_H(A)$ by H\"older conjugacy. In particular, there exists a natural injection
   	   $$
   	   \mathcal{T}^{r_*}(A)~\hookrightarrow~ 
   	   \mathcal{F}^{\rm H}(\TT^2_A)\big/_{\sim_{\sigma_A}}
   	   \ltimes_f\mathcal{F}^{\rm H}(\TT^2)\big/_{\sim_f}.
   	   $$
   \end{corollary}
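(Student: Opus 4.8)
The plan is to realise $\mathcal{T}^{r_*}(A)$ as a set fibred over $\mathcal{T}^r_H(A)$ whose fibre over the class of $(f,H)$ is $\mathcal{T}^{r_*}_{\TT^2}(f)$, and then to assemble fibrewise the two injections furnished by Theorem 1.1' and Theorem 1.3'. First note that $\sim_{r_*}$ refines $\sim_H$ on $\tilde{\mathcal{A}}^r(A)$: if $H_2^{-1}\circ H_1$ is $\ZZ^2$-periodic and $C^{r_*}$-smooth, then in particular it is $\ZZ^2$-periodic, so $(f_1,H_1)\sim_{r_*}(f_2,H_2)$ implies $(f_1,H_1)\sim_H(f_2,H_2)$. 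Hence there is a well-defined surjection $P:\mathcal{T}^{r_*}(A)\twoheadrightarrow\mathcal{T}^r_H(A)$, and $\mathcal{T}^{r_*}(A)$ is the disjoint union of the fibres of $P$.

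\textbf{Identifying the fibres.} Fix $(f,H)\in\tilde{\mathcal{A}}^r(A)$, with $F$ the lift of $f$ satisfying $A\circ H=H\circ F$. Given $(g,h)\in\tilde{\mathcal{A}}^r_{\TT^2}(f)$, let $\tilde h:\RR^2\to\RR^2$ be the unique lift of $h$ bounded from ${\rm Id}_{\RR^2}$ (it exists since $h\in{\rm Home}_0(\TT^2)$) and set $\Phi(g,h):=(g,\,H\circ\tilde h^{-1})$. Then $G:=\tilde h\circ F\circ\tilde h^{-1}$ is a lift of $g$, $A\circ(H\circ\tilde h^{-1})=H\circ F\circ\tilde h^{-1}=(H\circ\tilde h^{-1})\circ G$, and $\|H\circ\tilde h^{-1}-{\rm Id}_{\RR^2}\|_{C^0}<\infty$, so $\Phi(g,h)\in\tilde{\mathcal{A}}^r(A)$; moreover $(H\circ\tilde h^{-1})^{-1}\circ H=\tilde h$ is $\ZZ^2$-periodic, so $\Phi(g,h)\sim_H(f,H)$, i.e. $\Phi(g,h)$ lies over $\langle(f,H)\rangle$. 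For two such data one computes $(H\circ\tilde h_2^{-1})^{-1}\circ(H\circ\tilde h_1^{-1})=\tilde h_2\circ\tilde h_1^{-1}$, which is $\ZZ^2$-periodic and is $C^{r_*}$ exactly when $h_1\circ h_2^{-1}$ is; hence $\Phi$ passes to a well-defined injection of $\mathcal{T}^{r_*}_{\TT^2}(f)$ into $P^{-1}(\langle(f,H)\rangle)$. It is onto: if $(g,K)\sim_H(f,H)$ then $\varphi:=H^{-1}\circ K$ is $\ZZ^2$-periodic and descends to some $h_0\in{\rm Home}_0(\TT^2)$ with $h_0\circ g=f\circ h_0$, so $h:=h_0^{-1}$ lies in $\tilde{\mathcal{A}}^r_{\TT^2}(f)$, has bounded lift $\varphi^{-1}$, and satisfies $\Phi(g,h)=(g,H\circ\varphi)=(g,K)$. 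Thus $P^{-1}(\langle(f,H)\rangle)\cong\mathcal{T}^{r_*}_{\TT^2}(f)$, and therefore $\mathcal{T}^{r_*}(A)=\bigcup_{\langle(f,H)\rangle\in\mathcal{T}^r_H(A)}\mathcal{T}^{r_*}_{\TT^2}(f)=\mathcal{T}^r_H(A)\ltimes_f\mathcal{T}^{r_*}_{\TT^2}(f)$; the right-hand side is well-posed because a H\"older conjugacy between the base maps of two representatives of a single $\mathcal{T}^r_H(A)$-class induces a canonical bijection between the corresponding spaces $\mathcal{T}^{r_*}_{\TT^2}(f)$.

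\textbf{The injection.} For $A$ as in the statement every $f\in\mathcal{A}^r(A)$ lies in $\mathcal{N}^r_1(\TT^2)$, so Theorem 1.3' applies on each fibre while Theorem 1.1' applies on the base. Map a class in $\mathcal{T}^{r_*}(A)$, represented by $(g,K)$ over $\langle(f,H)\rangle$ and corresponding to $(g,h)\in\tilde{\mathcal{A}}^r_{\TT^2}(f)$ as above, to the pair
$$
\Big(\ {\rm log}\|Df|_{E^s_f}\|\circ\bar h^{-1}\ ,\ {\rm log}\big({\rm Jac}(g)\circ h\big)\ \Big)
\ \in\ \mathcal{F}^{\rm H}(\TT^2_A)\big/_{\sim_{\sigma_A}}\ \ltimes_f\ \mathcal{F}^{\rm H}(\TT^2)\big/_{\sim_f},
$$
where $\bar h:\TT^2_f\to\TT^2_A$ is the conjugacy induced by $H$. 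If two classes share the same image, equality of the first coordinates and injectivity in Theorem 1.1' put them over a common base point, hence in a common fibre $\mathcal{T}^{r_*}_{\TT^2}(f)$; then equality of the second coordinates and injectivity in Theorem 1.3' force them to coincide there, hence in $\mathcal{T}^{r_*}(A)$. This gives the asserted injection.

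\textbf{Main difficulty.} Everything above is formal once Theorems 1.1' and 1.3' are available; the only point that needs genuine care is the dictionary in the second step between conjugacies on $\TT^2$ and conjugacies on $\RR^2$ — uniqueness of the bounded-from-${\rm Id}_{\RR^2}$ lift, the absence of auxiliary choices in $\Phi$ (the lift $G$ of $g$ is forced by $F$ and $\tilde h$), and the verification that $\sim_{r_*}$ restricted to a fibre of $P$ corresponds exactly to $\sim_{r_*}$ on $\tilde{\mathcal{A}}^r_{\TT^2}(f)$ — together with the well-posedness remark for the union over $\mathcal{T}^r_H(A)$.
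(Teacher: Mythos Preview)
Your proposal is correct and follows exactly the approach the paper intends: the paper merely states that the corollary is obtained by ``combining Theorem 1.1' and Theorem 1.3''' without supplying details, and you have carefully filled in the formal dictionary between $\tilde{\mathcal{A}}^r(A)$ and $\tilde{\mathcal{A}}^r_{\TT^2}(f)$ that makes this combination precise. The only comment is that your care with lifts and the well-posedness of the fibre identification goes beyond what the paper writes, but this is precisely the content the paper leaves implicit.
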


  \begin{remark}\label{1 rmk teichmuller of diffeo}
   	The Teichm${\rm \ddot{u}}$ller space of an Anosov diffeomorphism on $\TT^2$ already has a complete characterization in \cite{Cawley1993,Llave1987,1MM1987,2MM1987,Llave1992}.   Let $f:\TT^2\to \TT^2$ be a $C^r$-smooth Anosov diffeomorphism.  Denote the Teichm${\rm \ddot{u}}$ller space of $f$ consisting $C^r$-equivalence classes  by $\mathcal{T}_{\TT^2}^{r}(f)$ in the same sense of \eqref{eq. 1. teichmuller}.
   	Define the equivalence relation "$\sim_f$"   in  $\mathcal{F}^{\rm H}(\TT^2)$  as  same as \eqref{eq. 1. coboundary}.
   	Define the SRB map on $\mathcal{T}_{\TT^2}^{r}(f)$  as 
   	\begin{align*}
   		{\rm SRB}:\ \  \mathcal{T}_{\TT^2}^{r}(f) &\longrightarrow \mathcal{F}^{\rm H}(\TT^2)\big/_{\sim_f}\times \mathcal{F}^{\rm H}(\TT^2)\big/_{\sim_f},\qquad \\
   		\langle(g,h)\rangle&\longmapsto \Big(\langle-{\rm log}\big( {\rm Jac} (g|_{E^u_g})\big)\circ h\rangle\ ,\ \langle{\rm log}\big( {\rm Jac}  (g|_{E^s_g})\big)\circ h\rangle\Big),
   	\end{align*}
    where  $\langle(g,h)\rangle$ and $\langle\phi\rangle$ are equivalence class in $\mathcal{T}_{\TT^2}^{r}(f)$ and $\mathcal{F}^{\rm H}(\TT^2)\big/_{\sim_f}$ respectively. This map is well defined by Livschitz Theorem. It has been proved  in \cite{Llave1987,1MM1987,2MM1987,Llave1992} that the SRB map is an injection. In the case of $r=1+{\rm H}$ that is $r=1+\alpha$ for some $0<\alpha<1$,  \cite{Cawley1993} showed that the SRB map is simultaneously a surjection if one considers the equivalence relation "$\sim_f$" in  $\mathcal{F}^{\rm H}(\TT^2)$ as "almost coboundary" which allows a constant error on coboundary \eqref{eq. 1. coboundary}, see more details in \cite{Cawley1993}.  A natural question is that in the sense of "almost coboundary", the injection in Theorem \ref{thm 1.2'}' is also surjective or not?
    \end{remark}

		\vskip 0.5 \baselineskip
	
		\noindent {\bf Organization of this paper:}
	In section \ref{sec 2}, we recall some general properties of Anosov maps,   give some useful properties on the assumptions of  Theorem \ref{1 thm main thm} and sketch our proof of  Theorem \ref{1 thm main thm}. 
	In section \ref{sec 3}, we prove the "sufficient" part of  Theorem \ref{1 thm main thm} which states that  the same stable periodic data implies	the existence of conjugacy.
	In section \ref{sec accessible}, we give a dichotomy to the toral Anosov maps in a sense of accessibility and this is helpful for the "necessary" part  of Theorem \ref{1 thm main thm}.
	In section \ref{sec 4}, we prove the "necessary" part  of Theorem \ref{1 thm main thm} and also show Corollary \ref{1 cor h smooth along stable}.
	In section \ref{sec 5}, we prove Theorem \ref{1 thm jacobian rigidty}.
	
	\vskip 0.5 \baselineskip

	\section{Topological and leaf conjugacies of Anosov maps}\label{sec 2}

	\subsection{Anosov maps}\label{subsec 2.1}
     There are two classical ways to study an Anosov map. One is using the inverse limit space and the other one is observing on the universal cover space. In this subsection, we recall these two methods and some  useful propositions for this paper.

	We first introduce the dynamics on the inverse limit space. Let $(M,d)$ be a compact metric space and $M^{\mathbb{Z}}:=\{(x_i) \;|\;x_i\in M,  \forall i\in \mathbb{Z} \}$ be the product topological space.  It is clear that $M^{\mathbb{Z}}$ is compact and metrizable by  metric
	$$
	\bar{d}((x_i),(y_i))= \sum_{-\infty}^{+\infty} \frac{d(x_i,y_i)}{2^{|i|}}.
	$$
	For any $\bar{x}=(x_i)\in M^{\ZZ}$, let $(\bar{x})_0:=x_0$ and $\pi_0:M^{\ZZ}\to M$ be the projection $\pi_0\big((x_i)\big)=x_0$. Let $\sigma: M^{\mathbb{Z}}\to M^{\mathbb{Z}}$ be the \textit{(left) shift} homeomorphism by
	$\big(\sigma (x_i)\big)_j=x_{j+1}$, for all $j\in \mathbb{Z}$.
	For a continuous map $f:M\to M$, define the \textit{inverse limit space} of $f$ as
	\begin{align}
		M_f:=\big\{(x_i) \;|\; x_i\in M \;\; {\rm and} \;\; f(x_i)=x_{i+1},  \forall i\in \mathbb{Z} \big\}.\label{eq. 2.1 inverse limit space}
	\end{align}
	Denote the restriction of $\sigma$ on $M_f$ by $\sigma_f$. It is clear that the inverse limit space	$(M_f,\bar{d})$ is a $\sigma_f$-invariant and  compact metric space.

	\begin{definition}[\cite{Przytycki1976}]\label{2.1 defprz}
		A $C^1$ local diffeomorphism $f:M\to M$ is called Anosov map, if there exist constants $C>0$
		and $0<\mu<1$ such that for every $\tilde{x}=(x_i) \in M_f$,
		there exists a splitting
		\begin{align*}
			T_{x_i}M=E_f^s(x_i,\tilde{x})\oplus E_f^u(x_i,\tilde{x}),\quad  \forall i\in \mathbb{Z},
		\end{align*}
		which is $Df$-invariant
		\begin{align*}
			D_{x_i}f\left(E_f^{s}(x_i,\tilde{x})\right)
			=E_f^s(x_{i+1},\tilde{x})
			\qquad{\rm and}\qquad D_{x_i}f\left(E_f^{u}(x_i,\tilde{x})\right)
			=E_f^u(x_{i+1},\tilde{x}),
			\qquad\forall i\in\mathbb{Z},
		\end{align*}
		and for all $n>0$, one has that
		\begin{align*}
			&\|D_{x_i}f^n (v)\|\le C\mu^n\|v\|, \qquad \quad \forall v\in E_f^s(x_i,\tilde{x}), \;\;\forall i\in\mathbb{Z},\\
			&\|D_{x_i}f^n (v)\|\ge C^{-1}\mu^{-n}\|v\|, \;\;\quad \forall v\in E_f^u(x_i,\tilde{x}),\;\;\forall i\in\mathbb{Z}.
		\end{align*}
	\end{definition}
	
	By the equivalent Definition \ref{1 def Anosov map manepugh} or the view of cone-field (see \cite{AGGS2022}), one has that the stable bundle $E^s_f$ is independent of the choice of orbit for a typical point. Meanwhile, the unstable direction relies on the negative orbits. We say $f\in\mathcal{A}^1(M)$ is \textit{special}, if the unstable directions on each point are independent of the negative orbits, i.e., there exists a $Df$-invariant continuous splitting $TM=E^s_f\oplus E^u_f$ such that $Df$ is uniformly constracting in $E_f^s$ and uniformly expanding in $E^u_f$. Note that Anosov diffeomorphisms and expanding maps are always special.

	Since every Anosov map on $\TT^d$ is  transitive (\cite{AokiHiraide1994}, also see \cite[Section 4]{AGGS2022}), it follows  from \cite{Przytycki1976, MicenaTahzibi2016} that  non-special Anosov maps are $C^1$-dense in non-invertible Anosov maps with  non-trivial stable bundles and each of them admits  a  residual set in which each point  has infinitely many unstable directions. Especially, let $f\in\mathcal{A}^1(\TT^d)$ be non-special, then $f$ is not conjugate to its linearization $f_*:\pi_1(\TT^d) \to \pi_1(\TT^d)$.  In fact, one has the following  proposition.
	
	\begin{proposition}[\cite{MoosaviTajbakhsh2019}]\label{2.1 prop special and conjugate}
		Let $f\in\mathcal{A}^1(\TT^d)$, then $f$ is conjugate to its linearization $f_*:\pi_1(\TT^d) \to \pi_1(\TT^d)$,
		if and only if, $f$ is special.
	\end{proposition}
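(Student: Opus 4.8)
The proposition asserts the equivalence of two conditions; the implication ``special $\Rightarrow$ conjugate to the linearization'' is essentially the Franks--Manning construction adapted to the non-invertible setting, while ``conjugate $\Rightarrow$ special'' is the point that genuinely uses non-invertibility. I would treat the latter first, as it is the substantive one.

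Suppose $h\in{\rm Home}_0(\TT^d)$ satisfies $h\circ f=A\circ h$ with $A=f_*$. Since $A$ is a linear Anosov map it is special, with affine unstable foliation whose leaves are the translates $y+E^u_A$; moreover for the linear model one checks directly that for \emph{every} backward orbit $\tilde y$ of a point $y$ the unstable set $W^u_A(y,\tilde y)$ equals $y+E^u_A$, independently of $\tilde y$. Now fix $x_0\in\TT^d$ and $\tilde x\in\TT^d_f$ with $\pi_0(\tilde x)=x_0$. Applying $h$ to the orbit relation turns $\tilde x$ into a backward orbit of $h(x_0)$ under $A$, and uniform continuity of $h$ gives $h\big(W^u_f(x_0,\tilde x)\big)\subset h(x_0)+E^u_A$. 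Since $W^u_f(x_0,\tilde x)$ is an immersed $C^1$ submanifold through $x_0$ of dimension $\dim E^u_f=\dim E^u_A=:m$ (the unstable manifold theorem on the inverse limit), invariance of domain shows that $h$ restricts to a homeomorphism of a small disk $W^u_{f,{\rm loc}}(x_0,\tilde x)$ onto an open neighborhood of $h(x_0)$ inside the $m$-plane $h(x_0)+E^u_A$. Consequently, if $\tilde x'\in\TT^d_f$ is a second point with $\pi_0(\tilde x')=x_0$, the images $h\big(W^u_{f,{\rm loc}}(x_0,\tilde x)\big)$ and $h\big(W^u_{f,{\rm loc}}(x_0,\tilde x')\big)$ are open neighborhoods of $h(x_0)$ in the \emph{same} $m$-plane, hence overlap in an open set; pulling back by the injective map $h$ forces $W^u_{f,{\rm loc}}(x_0,\tilde x)$ and $W^u_{f,{\rm loc}}(x_0,\tilde x')$ to coincide near $x_0$, and in particular $E^u_f(x_0,\tilde x)=E^u_f(x_0,\tilde x')$. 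Thus the unstable direction at $x_0$ is independent of the backward orbit; doing this at every point produces a $Df$-invariant distribution $E^u_f$ on $\TT^d$, continuous because the unstable bundle over $\TT^d_f$ is continuous and now factors through the quotient map $\pi_0\colon\TT^d_f\to\TT^d$. Hence $T\TT^d=E^s_f\oplus E^u_f$ is a continuous $Df$-invariant splitting with the required contraction/expansion, i.e.\ $f$ is special.

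For the converse, assume $f$ is special. Then besides the stable foliation $\mathcal{F}^s_f$ (which always exists) the bundle $E^u_f$ is continuous, $Df$-invariant and uniquely integrable, giving an $f$-invariant unstable foliation $\mathcal{F}^u_f$ of $\TT^d$; passing to the universal cover yields $\ZZ^d$-equivariant, transverse, complementary foliations $\widetilde{\mathcal{F}}^s_F,\widetilde{\mathcal{F}}^u_F$ of $\RR^d$. Comparing with the linear model $A$ (whose stable and unstable foliations on $\RR^d$ have global product structure) one verifies that $\widetilde{\mathcal{F}}^s_F$ and $\widetilde{\mathcal{F}}^u_F$ also have global product structure: leaves are properly embedded and each leaf of one meets each leaf of the other in exactly one point. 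One then runs the Franks--Manning construction: match the leaf spaces of $\widetilde{\mathcal{F}}^s_F,\widetilde{\mathcal{F}}^u_F$ with those of $W^s_A,W^u_A$ by the coarse (bounded-distance) correspondence, and define $H(x)$ as the unique intersection of the affine leaves assigned to $\widetilde{\mathcal{F}}^s_F(x)$ and $\widetilde{\mathcal{F}}^u_F(x)$. Standard estimates give that $H$ is a homeomorphism with $\sup_{\RR^d}\|H-{\rm Id}\|<\infty$ and $A\circ H=H\circ F$, where $F$ is the chosen lift of $f$. Since every datum entering the definition of $H$ is $\ZZ^d$-equivariant, $H$ commutes with deck transformations and descends to $h\in{\rm Home}_0(\TT^d)$ with $h\circ f=A\circ h$.

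The step I expect to be most delicate is establishing the global product structure of $\widetilde{\mathcal{F}}^s_F$ and $\widetilde{\mathcal{F}}^u_F$ on $\RR^d$ in the special case---equivalently, that lifted unstable leaves are properly embedded ``quasi-isometric'' planes rather than, say, accumulating on themselves---since this is exactly what makes the Franks--Manning intersection $H(x)$ well defined and single valued; once this is in hand the equivariance, and hence the descent to $\TT^d$, is formal and the remaining estimates are classical. In the other direction the only non-formal ingredients are the unstable manifold theorem on the inverse limit and invariance of domain, both available off the shelf.
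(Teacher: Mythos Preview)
The paper does not give its own proof of this proposition: it is quoted from \cite{MoosaviTajbakhsh2019} and used as a black box. So there is no ``paper's proof'' to compare against; one can only evaluate your argument on its own merits and against the framework the paper sets up around it.

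Your direction ``conjugate $\Rightarrow$ special'' is correct and is essentially the standard argument. The key step---that $h$ carries every local unstable manifold of $f$ through $x_0$ into the \emph{single} affine plane $h(x_0)+E^u_A$, and then invariance of domain forces any two such local unstable manifolds to share a relatively open neighborhood of $x_0$, hence to have the same tangent space---is exactly right. Continuity of the resulting $E^u_f$ then follows from continuity of the bundle on $\TT^d_f$ (Proposition~\ref{2.1 prop: unstable leaf continuity}).

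Your direction ``special $\Rightarrow$ conjugate'' is also correct in outline, but the paper's own toolkit offers a shorter route that avoids rebuilding Franks--Manning from scratch and sidesteps the global-product-structure worry you flag. Namely, Proposition~\ref{2.1 prop lifting conjugate} already furnishes the conjugacy $H$ on $\RR^d$; the only issue is $\ZZ^d$-equivariance. Proposition~\ref{2.1 prop H is Zd restricted on stable} gives $H(x+n)-n\in\tildeL^s(H(x))$ for all $n\in\ZZ^d$, using that $\tildeF^s$ is $\ZZ^d$-equivariant. When $f$ is special the unstable foliation on $\TT^d$ lifts to a $\ZZ^d$-equivariant $\tildeF^u$, and the \emph{identical} argument run on the unstable side yields $H(x+n)-n\in\tildeL^u(H(x))$. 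Intersecting, $H(x+n)-n\in\tildeL^s(H(x))\cap\tildeL^u(H(x))=\{H(x)\}$, so $H$ commutes with deck transformations and descends. This is precisely the equivalence the paper records just after Proposition~\ref{2.1 prop lifting conjugate} (``$f$ is special if and only if $H$ is commutative with deck transformation''), and it makes the global product structure of $\tildeF^s,\tildeF^u$---which the paper in any case takes as known---unnecessary for this implication.
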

	
	Despite all this,  there exists a conjugacy on the level of inverse limit spaces for toral Anosov maps. For convenience, we state it as follow.
	\begin{proposition}[\cite{AokiHiraide1994}]\label{2.1 prop: conjugacy on the inverse limit spaces}
		Let $f\in\mathcal{A}^1(\TT^d)$ and $A=f_*$. Then there exists a homeomorphism $\tilde{h}:\TT^d_f \to \TT^d_A$ such that $\tilde{h} \circ \sigma_f = \sigma_A \circ \tilde{h}$. 
	\end{proposition}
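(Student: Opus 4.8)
The plan is to run the Franks--Manning argument on the universal cover and then transport it to the inverse limit. Fix a lift $F\colon\RR^d\to\RR^d$ of $f$. Since $f$ is homotopic to the endomorphism induced by $A=f_*$, one has $F=A+\phi$ with $\phi$ a $\ZZ^d$-periodic (hence bounded, and Lipschitz as $f$ is $C^1$) map, and $F(x+n)=F(x)+An$ for all $n\in\ZZ^d$. Because $A$ is hyperbolic it is invertible, so $\|F(x)\|\to\infty$ as $\|x\|\to\infty$; thus $F$ is a proper local homeomorphism of the simply connected manifold $\RR^d$, hence a homeomorphism.

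Next I would construct the unique homeomorphism $H\colon\RR^d\to\RR^d$ with $H\circ F=A\circ H$ and $\sup_{\RR^d}\|H-\mathrm{Id}\|<\infty$. Writing $H=\mathrm{Id}+v$, the intertwining relation becomes $v\circ F=Av-\phi$. Splitting $\RR^d=E^s_A\oplus E^u_A$ and $v=v^s+v^u$, $\phi=\phi^s+\phi^u$, the stable component satisfies $v^s\circ F=(A|_{E^s_A})v^s-\phi^s$; since $\|A|_{E^s_A}\|<1$ and $F$ is invertible, the operator $w\mapsto(A|_{E^s_A})(w\circ F^{-1})$ is a contraction on bounded continuous $E^s_A$-valued maps, and its Neumann series gives the bounded solution $v^s=-\sum_{n\ge0}(A|_{E^s_A})^n\,\phi^s\circ F^{-n-1}$; symmetrically $v^u=\sum_{n\ge0}(A|_{E^u_A})^{-n-1}\,\phi^u\circ F^n$. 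Uniqueness is immediate: the difference $w$ of two solutions is bounded and satisfies $w\circ F=Aw$, so $A^nw(x)=w(F^nx)$ stays bounded for all $n\in\ZZ$, forcing $w\equiv0$ since no nonzero vector has a bounded $A$-orbit. Running the same construction with $F$ and $A$ interchanged yields $H'$ with $H'\circ A=F\circ H'$ and $\|H'-\mathrm{Id}\|<\infty$; then $H\circ H'$ and $H'\circ H$ are bounded perturbations of $\mathrm{Id}$ commuting with $A$ and $F$, hence equal to $\mathrm{Id}$ by the same boundedness argument, so $H$ is a homeomorphism. (This is essentially Proposition \ref{2.1 prop lifting conjugate} specialized to $g=A$.)

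To finish, I would descend $H$ to the inverse limit. The map $x\mapsto(\pi(F^ix))_{i\in\ZZ}$ realizes $\RR^d$ as a dense subset of $\TT^d_f$ on which $\sigma_f$ acts as $F$, and likewise $x\mapsto(\pi(A^ix))_i$ realizes it densely in $\TT^d_A$ with $\sigma_A$ acting as $A$; under these identifications $H$ intertwines the two leaf dynamics. It then suffices to check that $H$ is uniformly continuous for the inverse-limit metric $\bar d$: granting this, $H$ extends to a continuous $\tilde h\colon\TT^d_f\to\TT^d_A$, $H'$ extends to its inverse, and the relation $\tilde h\circ\sigma_f=\sigma_A\circ\tilde h$, which holds on the dense leaf, holds everywhere. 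For the uniform continuity I would exploit the explicit series: since $\phi$ descends to $\TT^d$, the value of $v^s$ (resp.\ $v^u$) at the $i$-th point of an $F$-orbit depends only on the torus-coordinates at times $<i$ (resp.\ $>i$), with geometric weight $\|A|_{E^s_A}\|^n$ (resp.\ $\|(A|_{E^u_A})^{-1}\|^n$) on the coordinate at distance $n$; comparing with the exponential weights defining $\bar d$---after passing to the uniformly equivalent metric with weights $\rho^{-|i|}$ for $\rho>1$ close enough to $1$---gives $\bar d(Hx,Hy)\le C\,\bar d(x,y)$ along the leaf.

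I expect the third step to be the real obstacle. The map $H$ does \emph{not} descend to a homeomorphism of $\TT^d$ itself---by Proposition \ref{2.1 prop special and conjugate} that would force $f$ to be special---so the point is precisely that passing to the inverse limit, where a whole orbit of torus-coordinates and the profinite direction of the solenoid become available, absorbs the $\ZZ^d$-ambiguity in $H$ and removes the obstruction; concretely this is the uniform-continuity estimate, which forces one to track the exponential decay of the dependence of $v^s,v^u$ on the orbit and to work with metrics on $\TT^d_f$ and $\TT^d_A$ adapted to the hyperbolicity rates of $A$. Steps one and two are routine, the only mild care being the homeomorphism claim, handled by the uniqueness/boundedness argument above. (In the reducible case one should replace the leaf $\RR^d$ by its quotient by $\bigcap_{j\ge0}A^j\ZZ^d$, on which $H$ is automatically equivariant; for $d=2$, or $A$ irreducible, this intersection is trivial.)
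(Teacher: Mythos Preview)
Your approach is correct and complete in outline. The paper does not give its own proof of this proposition---it is cited from \cite{AokiHiraide1994}---but it \emph{does} prove the more general Proposition~\ref{2.2 prop: the relationship of 3 conjugacies inverse limit space} (any $H\in\mathcal{H}(f,g)$ descends to a conjugacy $\bar h:\TT^d_f\to\TT^d_g$), of which the present statement is the special case $g=A$. So the natural comparison is with that argument.

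The two routes differ in the technical engine used for the descent. You exploit the explicit Neumann series $v^s=-\sum_{k\ge0}A_s^k\,\phi^s\circ F^{-k-1}$ and $v^u=\sum_{k\ge0}A_u^{-k-1}\,\phi^u\circ F^{k}$, together with the $\ZZ^d$-periodicity of $\phi$, to get a Lipschitz bound $\bar d_\rho(\tilde h\bar x,\tilde h\bar y)\le C\,\bar d_\rho(\bar x,\bar y)$ in a weighted metric $\bar d_\rho$ with $1<\rho<\min(\|A|_{L^s}\|^{-1},\,m(A|_{L^u}))$. The paper instead uses the ``asymptotic equivariance'' of $H$ recorded in Proposition~\ref{2.1 prop nmH} (namely $|H(x+n)-H(x)-n|\to0$ for $n\in A^m\ZZ^d$ as $m\to\infty$) to show directly, in Lemma~\ref{2.2 lem: orbit conjugacy well defined}, that if two sequences of $F$-orbits converge to the same point of $\TT^d_f$ then their $H$-images converge together in $\TT^d_g$. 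Your argument is more quantitative and yields H\"older/Lipschitz regularity of $\tilde h$ for free; the paper's argument is softer, needs no explicit formula for $H$, and therefore works verbatim when the target $g$ is nonlinear (where no closed series for $H$ is available).

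One small correction: the metrics $\bar d$ and $\bar d_\rho$ with $\rho\ne2$ are \emph{not} uniformly equivalent, only topologically equivalent (they both induce the product topology). This does not harm your argument---uniform continuity in $\bar d_\rho$ already yields a continuous extension to the compact space $\TT^d_f$, which is all you need---but the phrase ``uniformly equivalent'' should be dropped. Also, the map $x\mapsto\pi(\mathrm{Orb}_F(x))$ is generally not injective (its fibres are cosets of $\bigcap_{j\ge0}A^j\ZZ^d$), so $\RR^d$ is realised as a dense \emph{image} rather than a dense subset; your well-definedness check in the final parenthetical handles this, and in fact no extra quotient is needed since two points with the same $f$-orbit automatically have $H$-images with the same $A$-orbit.
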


	For $f\in \mathcal{N}^1(\TT^d)$ and $x\in\TT^d$, define the preimage set $P(x,f)$ of $x$ by
	\begin{align}
		P(x,f)=\bigcup_{k\geq 0} P_k(x,f), \quad {\rm where} \quad P_k(x,f)=\{y\in\TT^d\;|\; f^k(y)=f^k(x)   \}.\label{eq. 1.1. preimage set}
	\end{align}
	We also have the stable foliation just as the case of Anosov diffeomorphism and each leaf of it is independent of the choice of orbits too. 
	Note that in this sense we cannot define the stable leaf of a point $x$ by collecting points whose positive orbits closed to one of $x$, since we need exclude its $k$-preimage set $P_k(x,f)$  for any $k\in\NN$.  So we should redefine the \textit{stable leaf} in strong sense (\cite[Section 5.1]{AokiHiraide1994}) by linking the \textit{local stable leaves} with small size $R>0$ 
	$$\mathcal{F}_{f,R}^s(x)=\big\{ y\in\ M \;|\; d\big(f^i(x),f^i(y) \big)<R, \; \forall i\geq 0 \big\}.$$

	We also have the (local) unstable leaf, but it relies on the choice of negative orbits. Let $\tilde{x}=(x_i)\in M_f$ and $R>0$, we define the \textit{local unstable leaf} of $x_0$ for $\tilde{x}$ with size $R$ by 
	$$\mathcal{F}_{f,R}^u(x_0,\tilde{x})=\big\{ y\in M\;\big|\; \exists (y_i)\in M_f \;\;{\rm with}\;\;  y_0=y \;\;{\rm such\; that}\;\; d(x_{-i},y_{-i})<R, \; \forall i\leq0      \big\}.$$
	It is clear that the stable leaf $\mathcalf^s(x)$ and the local unstable leaf $\mathcal{F}_{f,R}^u(x_0,\tilde{x})$ are both submanifolds and tangent to $E^s_f$ and $E^u_f(x_0,\tilde{x})$ respectively ( see \cite{Przytycki1976, QXZ2009}). Moreover, as the case of Anosov diffeomorphisms, the local stable leaves of an Anosov map form a family of embeddings varing continuously with $x\in M$ in $C^1$-topology. In particular, one has a similar property  for  local unstable leaves as follow.

	\begin{proposition}[\cite{Przytycki1976}]\label{2.1 prop: unstable leaf continuity}
		Let $\tilde{x}^k=(x_i^k)_{i\in\ZZ}\in M_f (k\in\NN)$ be a sequence of orbits. If $\tilde{x}^k\to \tilde{y}=(y_i)\in M_f$ as $k\to+\infty$, then for any $R>0$, $$\mathcal{F}_{f,R}^u(x_0^k,\tilde{x}^k)\longrightarrow \mathcal{F}_{f,R}^u(y_0,\tilde{y}),$$ in $C^1$-topology as $k\to+\infty$. In particular, $E^u_f(y_0,\tilde{y})$ is continuous with respect to $(y_0,\tilde{y})\in M\times M_f$.
	\end{proposition}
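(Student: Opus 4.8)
\emph{Proof plan.} I would realise the local unstable leaf as a $C^1$-limit of forward iterates of admissible disks, and then make this limit uniform along the given sequence of orbits.

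Since $f$ is Anosov, there is a continuous $Df$-invariant unstable cone field $\mathcal{C}^u=\{\mathcal{C}^u_x\}_{x\in M}$ with $D_xf\big(\overline{\mathcal{C}^u_x}\big)\subset{\rm int}\,\mathcal{C}^u_{f(x)}$ and with $D_xf$ expanding vectors in $\mathcal{C}^u_x$ by a definite factor $\geq\mu^{-1}>1$ (together with a dual stable cone field). Call a $C^1$ disk \emph{$u$-admissible} if all its tangent planes lie in $\mathcal{C}^u$. The first task is a uniform graph-transform lemma: there are $C>0$, $0<\mu<1$ and $\rho>0$ such that for every $z\in M$, every $u$-admissible disk $D\ni z$ of size $\rho$ and every $n\geq 1$, the image $f^n(D)$ contains a $u$-admissible disk through $f^n(z)$ of size $R$; this disk depends, in the $C^1$ topology of embedded disks (i.e.\ together with their tangent planes), only up to an error $C\mu^n$ on the finite orbit segment $(z,f(z),\dots,f^n(z))$; and any two such disks through the same point built from length-$n$ orbit segments agree up to $C\mu^n$. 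This is the usual contraction estimate for the graph transform carried out in local charts, using only the cone condition and the uniform hyperbolicity rates. Letting $n\to\infty$ gives, for any choice of $u$-admissible disks $D^{(n)}\ni x_{-n}$ of size $\rho$,
\[
\mathcal{F}_{f,R}^u(x_0,\tilde{x})\;=\;\lim_{n\to\infty}\Big(f^n\big(D^{(n)}\big)\cap B(x_0,R)\Big),
\]
with the convergence uniform at rate $C\mu^n$; it suffices to treat $R\le\rho$ small, the general case being recovered by applying $f$ finitely many times.

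For the convergence statement, fix $R$ small and $\varepsilon>0$, and choose $n$ with $C\mu^n<\varepsilon/3$. Because $\tilde{x}^k\to\tilde{y}$ in $M_f$ we have $x^k_{-n}\to y_{-n}$ and $x^k_0\to y_0$; using continuity of $\mathcal{C}^u$, pick $u$-admissible disks $D^{(n)}_k\ni x^k_{-n}$ and $D^{(n)}_y\ni y_{-n}$ of size $\rho$ with $D^{(n)}_k\to D^{(n)}_y$ in $C^1$. Since $f$ is a $C^1$ local diffeomorphism, $f^n$ and $Df^n$ are continuous, hence $f^n\big(D^{(n)}_k\big)\to f^n\big(D^{(n)}_y\big)$ in $C^1$; intersecting with the respective $R$-balls and combining with the uniform estimate above yields, for all large $k$,
\[
d_{C^1}\Big(\mathcal{F}_{f,R}^u(x^k_0,\tilde{x}^k),\ \mathcal{F}_{f,R}^u(y_0,\tilde{y})\Big)\;<\;\tfrac{\varepsilon}{3}+\tfrac{\varepsilon}{3}+\tfrac{\varepsilon}{3}\;=\;\varepsilon,
\]
which is the asserted $C^1$-convergence. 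The final assertion of the proposition is then immediate: $E^u_f(y_0,\tilde{y})=T_{y_0}\mathcal{F}_{f,R}^u(y_0,\tilde{y})$, and $C^1$-convergence of the leaves forces convergence of their tangent planes at the (converging) base points, so $(y_0,\tilde{y})\mapsto E^u_f(y_0,\tilde{y})$ is continuous on $M\times M_f$.

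The main obstacle is the uniform graph-transform lemma of the second paragraph — specifically, extracting a rate $C\mu^n$ for the $C^1$-dependence of $f^n(D)$ on the finite orbit segment that is uniform in the base point and in the disk. What makes this possible, despite the unstable direction genuinely depending on the whole backward orbit, is that the cone field and the hyperbolicity constants are uniform over the compact manifold $M$: the backward orbit enters the construction only through which points of $M$ are iterated, never through the estimates themselves. Once this lemma is in hand, the rest is the routine two-$\varepsilon$ argument above.
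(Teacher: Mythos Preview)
The paper does not prove this proposition; it simply records it as a result of Przytycki and cites \cite{Przytycki1976}. Your proposal is a correct outline of the standard argument (and essentially the one Przytycki gives): realise the local unstable leaf as a uniform $C^1$-limit of forward iterates of $u$-admissible disks via the graph transform, then use that a length-$n$ truncation of the backward orbit determines the leaf up to $C\mu^n$, so that continuity of $f^n$ together with $x^k_{-n}\to y_{-n}$ yields the conclusion. The identification of the uniform graph-transform estimate as the only nontrivial ingredient, and the explanation of why compactness of $M$ makes the constants orbit-independent, are both on target. One small point worth making explicit in a write-up: when you ``intersect with the respective $R$-balls'' you are intersecting with balls centred at the moving points $x^k_0\to y_0$, so you should also invoke the $C^1$-continuity of the restriction operation as the centre varies; this is routine but easy to elide.
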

	
	We also have the \textit{Local Product Structure} for Anosov maps (see \cite[Section 5.2]{AokiHiraide1994} and \cite[Section IV.2]{QXZ2009}) as follow.
	
	\begin{proposition}[\cite{QXZ2009}]\label{2.1 prop: local product sturcture}
		There exist $\delta>0$ and $0<\e<\delta$ such that for any $x_0\in M$ and any $\tilde{y}=(y_i)\in M_f$, if $d(x_0,y_0)<\e$, then $\mathcal{F}_{f}^u(y_0,\tilde{y},\delta)$ transversely intersects with $\mathcal{F}_{f}^s(x_0,\delta)$ at a unique point $z(x_0,\tilde{y})$.  In particular, $z(x_0,\tilde{y})$ is continuous with respect to $(x_0,\tilde{y})\in M \times M_f$. Moreover, there exists a unique orbit $\tilde{z}=(z_i)\in M_f$ such that $z_0=z(x_0,\tilde{y})$ and $d(z_{-i},y_{-i})<\delta$ for all $i\geq 0$.
	\end{proposition}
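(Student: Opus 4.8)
The plan is to realise both local leaves as $C^1$-graphs in a common exponential chart, reduce the intersection claim to a quantitative transversality (contraction-mapping) argument, deduce continuity from the $C^1$-continuity of the leaf families, and extract the orbit $\tilde{z}$ directly from the definition of the local unstable leaf together with the local injectivity of $f$.

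First I would fix the geometric constants. Since $E^s_f$ is continuous on $M$ and $E^u_f$ is continuous on $M\times M_f$ (Proposition \ref{2.1 prop: unstable leaf continuity}), and $E^s_f(y_0)\oplus E^u_f(y_0,\tilde{y})=T_{y_0}M$ for every $(y_0,\tilde{y})\in M\times M_f$, compactness yields a uniform lower bound $\theta_0>0$ for the angle between $E^s_f(z)$ and $E^u_f(w,\tilde{w})$ as soon as $d(z,w)$ is small enough. The local stable leaves vary continuously with their base point in the $C^1$ topology and are tangent to $E^s_f$, and by Proposition \ref{2.1 prop: unstable leaf continuity} the local unstable leaves vary continuously with $(y_0,\tilde{y})$ in the $C^1$ topology and are tangent to $E^u_f$; since $M$ and $M_f$ are compact, these dependences are uniform. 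Using this together with the uniform continuity of $\exp$, I can choose $\delta>0$ so small that: (i) $f$ is injective on every ball of radius $2\delta$; and (ii) for all $x_0$ and all $(y_0,\tilde{y})$ with $d(x_0,y_0)<\delta$, in the chart $\exp_{x_0}^{-1}$ — identifying $T_{x_0}M$ with $\RR^d$ split along $E^s_f(x_0)$ and the parallel transport $E$ of $E^u_f(y_0,\tilde{y})$ — the disk $\mathcal{F}_f^s(x_0,\delta)$ is the graph of a map $u^s\colon U^s\subset E^s_f(x_0)\to E$ and $\mathcal{F}_f^u(y_0,\tilde{y},\delta)$ is the graph of a map $u^u\colon U^u\subset E\to E^s_f(x_0)$, both with Lipschitz constant smaller than $\tfrac12\tan(\theta_0/2)$.

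Under these normalisations, existence and uniqueness of the intersection point reduce to the elementary fact that two such graphs, over transverse complementary subspaces and with small Lipschitz constants, meet in exactly one point, namely the unique fixed point of the contraction $v\mapsto u^u(v^u)+u^s(v^s)$ on a small product neighbourhood of the origin, where $v=v^s+v^u$ is the decomposition of $v$ along $E^s_f(x_0)\oplus E$. Choosing $0<\e<\delta$ small enough ensures that when $d(x_0,y_0)<\e$ the domains $U^s,U^u$ are large enough for this fixed point to lie in the interiors of both disks; hence $z(x_0,\tilde{y})$ is well defined, the intersection is transverse, and it is unique (a further common point would be a second fixed point of the same contraction). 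Continuity of $z(x_0,\tilde{y})$ on $M\times M_f$ then follows because $u^s$ depends continuously on $x_0$ and $u^u$ depends continuously on $(y_0,\tilde{y})$ in the $C^1$ topology, and a contraction with continuously varying data has continuously varying fixed point.

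It remains to produce the orbit $\tilde{z}$. Since $z_0:=z(x_0,\tilde{y})\in\mathcal{F}_f^u(y_0,\tilde{y},\delta)$, the definition of the local unstable leaf supplies a backward branch $(z_i)_{i\leq 0}$ with $z_0=z(x_0,\tilde{y})$ and $d(z_{-i},y_{-i})<\delta$ for all $i\geq 0$; setting $z_i:=f^i(z_0)$ for $i>0$ gives $\tilde{z}=(z_i)\in M_f$ with the required shadowing. Uniqueness of $\tilde{z}$, equivalently of the backward branch, follows from (i): if $(z_i')_{i\leq 0}$ is another branch with $z_0'=z_0$ and $d(z_{-i}',y_{-i})<\delta$, and $-j<0$ is the first index where the branches disagree, then $f(z_{-j})=z_{-(j-1)}=z_{-(j-1)}'=f(z_{-j}')$ while $d(z_{-j},z_{-j}')\leq d(z_{-j},y_{-j})+d(y_{-j},z_{-j}')<2\delta$, contradicting injectivity of $f$ on $2\delta$-balls. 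The step I expect to be the most delicate is the \emph{uniform} choice of $\delta$ in (ii): the small-Lipschitz $C^1$-graph description must hold simultaneously for every base point, which rests on the continuous — hence, by compactness of $M$ and $M_f$, uniform — dependence of the two families of local leaves on their parameters; and for the unstable leaves this is exactly Proposition \ref{2.1 prop: unstable leaf continuity}, the only place where the non-invertibility of $f$ (the dependence on the negative orbit $\tilde{y}$) genuinely intervenes.
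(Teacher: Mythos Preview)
The paper does not supply its own proof of this proposition: it is quoted as a known result from \cite{QXZ2009} (and \cite{AokiHiraide1994}), so there is nothing to compare against directly. Your argument is the standard one --- realise the two local leaves as $C^1$-graphs over complementary subspaces in a common chart, use the uniform angle bound and compactness of $M\times M_f$ to make the Lipschitz constants small, and locate the intersection as the fixed point of a contraction --- and it is correct in outline and in the details you give. The extraction of the backward branch of $\tilde{z}$ from the very definition of $\mathcal{F}^u_f(y_0,\tilde{y},\delta)$, and its uniqueness via the local injectivity of $f$ on $2\delta$-balls, are exactly the right moves. One cosmetic remark: your contraction $v\mapsto u^u(v^u)+u^s(v^s)$ on $E^s_f(x_0)\oplus E$ has Lipschitz constant bounded by the sum (or twice the max) of the two graph Lipschitz constants, so you need those to be $<\tfrac12$ rather than merely $<1$; your choice ``$<\tfrac12\tan(\theta_0/2)$'' is more than enough, but it is worth saying explicitly why the composite map contracts.
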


	Although  the inverse limit space has compactness which the universal cover  lacks, observing the dynamics on the universal cover  has some added convenience. In fact,  Ma$\rm \tilde{n} \acute{e}$ and Pugh \cite{ManePugh1975} proved that the lifting on universal cover of an Anosov map on a closed Riemannian manifold is an Anosov diffeomorphism. Moreover, when we restrict on $\TT^d$, the liftings on $\RR^d$ of an Anosov map  and its linearization are conjugate  \cite[Proposition 8.2.1 and Proposition 8.4.2]{AokiHiraide1994}. For convenience, we restate them as the following proposition.
	
	In the rest part of this subsection, we always assume that $f\in\mathcal{A}^1(\TT^d)$   and $A:=f_*:\pi_1(\TT^d)\to\pi_1(\TT^d)$ is its linearization. Let $F:\mathbb{R}^d \to \mathbb{R}^d$ be a lift of $f$ by the  natural projection $\pi:\mathbb{R}^d\to \mathbb{T}^d$.  For short, we still denote  by $A:\RR^d\to \RR^d$ a lift of $A$ by $\pi$, if there is no confusion.  Recall that if $f\in\mathcal{A}^1(\TT^d)$, then its linearization $A\in \mathcal{A}^1(\TT^d)$ \cite[Theorem 8.1.1]{AokiHiraide1994}.
	
	\begin{proposition}[\cite{AokiHiraide1994}]\label{2.1 prop lifting conjugate}
	There is a unique bijection $H:\mathbb{R}^d\to \mathbb{R}^d$ such that
		\begin{enumerate}
			\item $H\circ F=A\circ H$.
			\item $H$ and $H^{-1}$ are both uniformly continuous.
			\item There exists $C>0$ such that $\|H-{\rm Id}_{\RR^d}\|_{C^0}<C$ and $\|H^{-1}-{\rm Id}_{\RR^d}\|_{C^0}<C$.
		\end{enumerate}
	\end{proposition}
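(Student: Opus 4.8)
\smallskip
\noindent\textit{Proof strategy.}
The plan is to run the Franks--Manning--Shub scheme on the universal cover. First I would fix the dictionary between $F$, $A$ and a bounded cocycle. By the Ma\~n\'e--Pugh theorem quoted above, the lift $F:\RR^d\to\RR^d$ is a diffeomorphism and is Anosov; what matters is that its hyperbolic constants, invariant cone fields and local stable/unstable manifolds are \emph{uniform}, since they are pulled back from the compact torus. A lift $A:\RR^d\to\RR^d$ of the linearization is a hyperbolic linear automorphism of $\RR^d$ (no eigenvalue on the unit circle, as $f$ is Anosov) with $A$-invariant splitting $\RR^d=E^s_A\oplus E^u_A$; choose an adapted norm so that $\|A|_{E^s_A}\|<1<m(A|_{E^u_A})$. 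Since $F$ is a lift of $f$ and $A=f_*$, we have $F(x+n)=F(x)+An$ for every $n\in\ZZ^d$, hence $\Phi:=F-A$ is $\ZZ^d$-periodic, thus bounded and uniformly continuous; moreover $F$ and $F^{-1}$ are globally Lipschitz, their derivatives being the lifts of $Df$ and $(Df)^{-1}$ on the compact manifold.

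\smallskip
\noindent\textit{Existence of $H$.}
Writing $H=\mathrm{Id}_{\RR^d}+h$, the relation $A\circ H=H\circ F$ is equivalent to the twisted linear equation $A\,h-h\circ F=\Phi$. Splitting $h=h^s+h^u$ and $\Phi=\Phi^s+\Phi^u$ along $E^s_A\oplus E^u_A$, the stable part becomes $h^s(y)=A|_{E^s_A}\,h^s(F^{-1}y)-\Phi^s(F^{-1}y)$, the fixed-point equation of a contraction on $C^0_b(\RR^d,\RR^d)$ with factor $\|A|_{E^s_A}\|<1$, and the unstable part becomes $h^u(x)=(A|_{E^u_A})^{-1}\bigl(h^u(Fx)+\Phi^u(x)\bigr)$, a contraction with factor $m(A|_{E^u_A})^{-1}<1$. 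Banach's fixed point theorem produces a unique bounded $h$; since $\Phi$, $F$ and $F^{-1}$ are uniformly continuous, the closed subspace of uniformly continuous maps is invariant under both operators, so $h$, and hence $H$, is uniformly continuous, with $\|H-\mathrm{Id}_{\RR^d}\|_{C^0}<\infty$. (Equivalently: an $F$-orbit is an $A$-pseudo-orbit, and $H(x)$ is the base point of the unique $A$-orbit shadowing it.)

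\smallskip
\noindent\textit{Bijectivity and uniqueness.}
Surjectivity is soft: $H$ is proper because $\|H-\mathrm{Id}_{\RR^d}\|_{C^0}<\infty$, and it is properly homotopic to $\mathrm{Id}_{\RR^d}$ through $\mathrm{Id}_{\RR^d}+th$, so it has degree one and is onto. For injectivity, if $H(x)=H(y)$ then $H\circ F^{n}=A^{n}\circ H$ gives $\|F^n x-F^n y\|\le 2\|h\|_{C^0}$ for every $n\in\ZZ$, and the uniform expansivity of the Anosov diffeomorphism $F$ on $\RR^d$ forces $x=y$. The symmetric shadowing construction --- now using the uniform hyperbolicity of $F$, as $A$-orbits are $F$-pseudo-orbits --- yields $G=\mathrm{Id}_{\RR^d}+g$ with $F\circ G=G\circ A$ and $g$ bounded and uniformly continuous. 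Then $H\circ G$ commutes with $A$ and is bounded from $\mathrm{Id}_{\RR^d}$; but the only such map is $\mathrm{Id}_{\RR^d}$, because $K=\mathrm{Id}_{\RR^d}+k$ with $k\circ A=A\circ k$ and $k$ bounded satisfies $k(A^{\pm n}x)=A^{\pm n}k(x)$, and letting $n\to+\infty$ (resp. $n\to-\infty$) kills the $E^u_A$- (resp. $E^s_A$-) component, so $k\equiv 0$. Hence $G=H^{-1}$, which yields (2) and (3); applying the same observation to $H_1\circ H_2^{-1}$ gives uniqueness.

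\smallskip
\noindent\textit{Main obstacle.}
The only step that is not purely formal is the injectivity of $H$, i.e.\ the expansivity of the lifted system on the non-compact space $\RR^d$: one must show that two complete $F$-orbits remaining within a fixed bounded distance for all time must coincide, using only the uniform hyperbolic estimates inherited from $\TT^d$. In the spirit of the remark following Definition \ref{1 def Anosov map manepugh}, the point is that every point of $\RR^d$ has a well-defined backward $F$-orbit, hence a well-defined local unstable leaf, so the standard local-product-structure / expansivity argument goes through verbatim once these uniform objects are in place. Everything else reduces to a hyperbolic fixed-point estimate or a degree count.
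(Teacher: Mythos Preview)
The paper does not prove this proposition at all: it is quoted from \cite{AokiHiraide1994} (Propositions 8.2.1 and 8.4.2 there) and stated for later use. So there is no ``paper's own proof'' to compare against; what you have written is essentially the standard Franks--Manning argument that those references carry out, and it is correct.

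Two small remarks. First, your phrase ``their derivatives being the lifts of $Df$ and $(Df)^{-1}$'' is slightly misleading since $f$ is non-invertible on $\TT^d$; what you actually use (and what is true) is that $f$ is a local diffeomorphism on a compact manifold, so $\sup_x\|(D_xf)^{-1}\|<\infty$, hence $DF^{-1}$ is uniformly bounded on $\RR^d$. Second, you are right to flag expansivity of $F$ on $\RR^d$ with the \emph{large} constant $2\|h\|_{C^0}$ as the only non-formal step. The clean way to close it, without risking circularity with later propositions in the paper (quasi-isometry of the leaves, Global Product Structure) that themselves invoke this conjugacy, is to argue directly from the fact that $F$ is a bounded $C^1$ perturbation of the hyperbolic linear map $A$: the invariant cone fields force $E^s_F,E^u_F$ to be uniformly transverse to $E^u_A,E^s_A$ respectively, so each leaf of $\tildeF^s$ (resp.\ $\tildeF^u$) is a global Lipschitz graph over $E^s_A$ (resp.\ $E^u_A$), giving the global product structure and hence large-scale expansivity without appealing to $H$. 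With that in hand your injectivity and your identification $G=H^{-1}$ go through exactly as written.
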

	
	For any $n\in\ZZ^d$, let $T_n:\RR^d\to \RR^d$ be the deck transformation (or $\ZZ^d$-action) $T_n(x)=x+n$. By Proposition \ref{2.1 prop special and conjugate}, one has that  $f$ is special  if and only if  $H$ is \textit{commutative with deck transformation}, namely, $$H(x+n)=H(x)+n,\quad \forall x\in\RR^d \;\;\;{\rm and}\;\;\; \forall n\in\ZZ^d.$$

	Recall that  the  \textit{stable manifolds} of the lifting $F$, denoted by $\tildeF^s(x)$ has the following topological characterization
	$$ \tildeF^s(x):=\big\{y\in \RR^d \; \big|\;  d\big(F^k(y),F^k(x)\big)\to 0 \;\;{\rm as}\;\; k\to +\infty \big\},$$
	for all $x\in \RR^d$, 
	and the  \textit{unstable manifolds} $\tildeF^u(x)$ by iterating backward.   Note that the two foliations $\tildeF^s$ and $\tildeF^u$ admit the \textit{Global Product Structure}, namely any two leaves $\tildeF^s(x)$ and $\tildeF^u(y)$ transversely intersect at a unique point in $\RR^d$.

	For short, we denote the stable/unstable bundles and foliations of the linearization $A$ on $\TT^d$ by $L^{s/u}$, $\mathcal{L}^{s/u}$ and on $\RR^d$ by $\tilde{L}^{s/u}$, $\tildeL^{s/u}$ respectively. 	In the rest part of this paper, the \textit{local (un)stable} manifolds with size $\delta$ of $F$ denoted by  $\tildeF^{s/u}(x,\delta)$ means
	$$\tildeF^{s/u}(x,\delta):=\big\{y\in \tildeF^{s/u}(x) \; \big|\;  d_{\tildeF^{s/u}}(x,y)\leq \delta \big\}, $$
	for all $x\in \RR^d$,  where $d_{\tildeF^{s/u}}(\cdot,\cdot)$ is induced by the metric on $\RR^d$.  And we also denote by $\mathcalf^s(x_0,\delta)$ and $\mathcalf^u(x_0,\tilde{x},\delta)$, the local stable and unstable leaves of $f$ for $(x_i)=\tilde{x}\in \TT^d_f$ with size $\delta$ measured by metrics along leaves.

	It is clear that $H$ maps the stable/unstable leaf of $F$ to one of $A$ from the topological character of  stable/unstable  leaf, namely,
	\begin{align}
		H\big(\tildeF^{s/u}(x)\big)=\tildeL^{s/u}\big(H(x)\big),\quad \forall x\in\RR^d. \label{eq. 2.1.1}
	\end{align} 
	We denote the map in the stable leaf space of $F$ by $F_s:\RR^d\big/ \tildeF^{s} \to \RR^d\big/ \tildeF^{s}$ with $F_s\big( \tildeF^s(x)\big)=\tildeF^s\big(F(x)\big)$.
	It follows from \eqref{eq. 2.1.1} that $H$ induces a conjugacy between the stable leaf spaces,	
	\begin{align}
		H_{s}:\RR^d\big/ \tildeF^{s} \to :\RR^d\big/ \tildeL^{s},\label{eq.2.1.leaf space conjugacy}
	\end{align}
	with $H_s\big(\tildeF^s(x) \big)=\tildeL^s\big(H(x)\big)$ such that $H_s\circ F_s= A_s\circ H_s$. 
	
	Although in general $H$ cannot be commutative with deck transformation, the next proposition says that $H_s$  is always commutative with deck transformation.

	\begin{proposition}[\cite{AGGS2022}]\label{2.1 prop H is Zd restricted on stable}
		Assume that $H$ is given by Proposition \ref{2.1 prop lifting conjugate}. Then for any $x\in\RR^d$ and $n\in\ZZ^d$,
		$$H(x+n)-n\in \tildeL^s\big(H(x)\big) \quad {\rm and } \quad H^{-1}(x+n)-n\in \tildeF^s\big(H^{-1}(x)\big).$$
	\end{proposition}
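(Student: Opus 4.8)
The plan is to track the ``deck defect'' $\phi_n(x):=H(x+n)-H(x)-n$ and to show that, for every $x\in\RR^d$ and every $n\in\ZZ^d$, it is a vector lying in the stable subspace $\tilde{L}^s$ of the linear model $A$; the first inclusion of the Proposition follows at once, since the stable leaf $\tildeL^s(H(x))$ of the linear map $A$ through $H(x)$ is exactly the affine subspace $H(x)+\tilde{L}^s$. First I would record two elementary observations. Since $f$ is homotopic to its linearization $A$, the lift satisfies $F(x+n)=F(x)+An$ for every $n\in\ZZ^d$ (this is the relation $F\circ T_n=T_{An}\circ F$ among lifts), and inductively $F^k(x+n)=F^k(x)+A^kn$ for all $k\geq 0$, with $A^kn\in\ZZ^d$ because $A$ has integer entries. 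By Proposition \ref{2.1 prop lifting conjugate} we have $H\circ F=A\circ H$ and $C:=\max\{\|H-{\rm Id}_{\RR^d}\|_{C^0},\|H^{-1}-{\rm Id}_{\RR^d}\|_{C^0}\}<\infty$; writing $\phi_n(x)=\big(H(x+n)-(x+n)\big)-\big(H(x)-x\big)$ gives the uniform bound $\|\phi_n(x)\|\leq 2C$ for all $x\in\RR^d$ and $n\in\ZZ^d$.

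The key computation is a functional equation for $\phi$. Using $F(x+n)=F(x)+An$ and $H\circ F=A\circ H$,
\[
\phi_{An}\big(F(x)\big)=H\big(F(x)+An\big)-H\big(F(x)\big)-An=H\big(F(x+n)\big)-H\big(F(x)\big)-An=A\big(H(x+n)-H(x)-n\big)=A\,\phi_n(x),
\]
and iterating, $\phi_{A^k n}\big(F^k(x)\big)=A^k\phi_n(x)$ for every $k\geq 0$; hence $\|A^k\phi_n(x)\|=\|\phi_{A^kn}(F^kx)\|\leq 2C$ for all $k\geq 0$. Now $A$, which induces an Anosov map on $\TT^d$, is a hyperbolic linear isomorphism of $\RR^d$, with invariant splitting $\RR^d=\tilde{L}^s\oplus\tilde{L}^u$ on which it contracts resp.\ uniformly expands. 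Decompose $\phi_n(x)=v^s+v^u$ accordingly; since $\tilde{L}^s$ and $\tilde{L}^u$ are complementary there is $\kappa>0$ with $\|a+b\|\geq\kappa\|b\|$ whenever $a\in\tilde{L}^s$, $b\in\tilde{L}^u$, so $\|A^k\phi_n(x)\|\geq\kappa\|A^kv^u\|$. As $A|_{\tilde{L}^u}$ is a uniform expansion, $\|A^kv^u\|\to\infty$ unless $v^u=0$, and the uniform bound forces $v^u=0$. Thus $\phi_n(x)\in\tilde{L}^s$, i.e.\ $H(x+n)-n=H(x)+\phi_n(x)\in H(x)+\tilde{L}^s=\tildeL^s\big(H(x)\big)$, which is the first inclusion.

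For the second inclusion I would deduce it formally from the first, without any new estimate. From $F^k(x+n)=F^k(x)+A^kn$ and the topological descriptions of the leaves one gets the translation-equivariances $\tildeF^s(x+n)=\tildeF^s(x)+n$ and $\tildeL^s(w+n)=\tildeL^s(w)+n$ for $n\in\ZZ^d$; and from \eqref{eq. 2.1.1} together with the bijectivity of $H$ one has $H^{-1}\big(\tildeL^s(w)\big)=\tildeF^s\big(H^{-1}(w)\big)$. Fix $y\in\RR^d$ and $n\in\ZZ^d$, and put $z=H^{-1}(y)$. The first inclusion at $z$ gives $H(z+n)\in\tildeL^s(H(z))+n=\tildeL^s(y+n)$, while trivially $H\big(H^{-1}(y+n)\big)=y+n\in\tildeL^s(y+n)$; applying $H^{-1}$ shows that $z+n$ and $H^{-1}(y+n)$ both lie in $\tildeF^s\big(H^{-1}(y+n)\big)$, hence on a common stable leaf of $F$, which by equivariance equals $\tildeF^s(z)+n$, so $H^{-1}(y+n)-n\in\tildeF^s(z)=\tildeF^s\big(H^{-1}(y)\big)$. (Alternatively one reruns the paragraph above with $(F,H)$ replaced by $(A,H^{-1})$, reaching a pair of points whose $F^k$-orbit distance is $\leq 2C$ for all $k\geq 0$, and concludes via the Global Product Structure of $\tildeF^s,\tildeF^u$ that they share a $\tildeF^s$-leaf.) I do not expect a serious obstacle: the proof is a short functional equation plus linear algebra, and the only points that need care are the homotopy relation $F(x+n)=F(x)+An$ and, for the second inclusion, the translation bookkeeping above (equivalently, the ``uniformly bounded forward orbit $\Rightarrow$ common stable leaf'' property of the lifted Anosov diffeomorphism, i.e.\ the Global Product Structure).
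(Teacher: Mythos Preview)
Your argument is correct. The paper itself does not prove this proposition here; it cites it from \cite{AGGS2022}, so there is no in-paper proof to compare against. Your approach---tracking the deck defect $\phi_n(x)=H(x+n)-H(x)-n$, deriving the equivariance $\phi_{An}(F(x))=A\,\phi_n(x)$ from $F\circ T_n=T_{An}\circ F$ and $H\circ F=A\circ H$, and then killing the $\tilde L^u$-component of $\phi_n(x)$ by the uniform bound $\|\phi_n\|\leq 2C$ together with expansion of $A$ on $\tilde L^u$---is exactly the standard proof one would expect and is the same idea behind the cited result. Your deduction of the second inclusion from the first via $H^{-1}(\tildeL^s(\cdot))=\tildeF^s(H^{-1}(\cdot))$ and the $\ZZ^d$-equivariance of $\tildeF^s$ is also clean and correct.
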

	
	By proposition \ref{2.1 prop H is Zd restricted on stable}, one can get the following proposition which means that $H$ is "asymptotically" commutative with the deck transformation $T_n$, for $n\in A^m\ZZ^d$ with $m\to+\infty$.
	
	\begin{proposition}[\cite{AGGS2022}]\label{2.1 prop nmH}
		Assume that $H$ is given by Proposition \ref{2.1 prop lifting conjugate}.  There exist $C>0$ and  $\{\varepsilon_m\}$ with $\varepsilon_m\to 0$ as $m\to +\infty$, such that for every $x\in\RR^d$ and every $n_m\in A^m\ZZ^d$, one has
		$$|H(x+n_m)-H(x)-n_m|<C\cdot \| A|_{L^s}\|^m,$$
		and
		$$|H^{-1}(x+n_m)-H^{-1}(x)-n_m|<\varepsilon_m.$$
	\end{proposition}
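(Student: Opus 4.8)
The idea is to read off both estimates from Proposition \ref{2.1 prop H is Zd restricted on stable}, which places the ``defect'' $H(x+n_m)-n_m$ on the stable leaf $\tildeL^s(H(x))$ of $A$ and, symmetrically, $H^{-1}(x+n_m)-n_m$ on the stable leaf $\tildeF^s(H^{-1}(x))$ of $F$. Since the lift $F$ is a diffeomorphism of $\RR^d$ (Ma\~n\'e--Pugh) and $H\circ F=A\circ H$, we may conjugate and iterate backward; the recipe is the same for both parts: move the defect by $F^{-m}$ and $A^{-m}$ to a place whose ambient size is controlled by the constant $C$ with $\|H^{\pm1}-{\rm Id}_{\RR^d}\|_{C^0}<C$ (Proposition \ref{2.1 prop lifting conjugate}), then push it forward again, using that $A$ contracts the stable subspace while $F$ contracts the stable leaves.

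\textbf{The two estimates.} Write $n_m=A^m\ell$ with $\ell\in\ZZ^d$, so that $F^{-m}(x+n_m)=F^{-m}(x)+\ell$ and $A^{-m}n_m=\ell$. For the first estimate put $v_m=H(x+n_m)-H(x)-n_m$; by Proposition \ref{2.1 prop H is Zd restricted on stable} and linearity, $v_m$ lies in the stable subspace $\tilde{L}^s$ of $A$, and from $A^{-m}\circ H=H\circ F^{-m}$ one computes
$$
A^{-m}v_m=H\big(F^{-m}(x)+\ell\big)-H\big(F^{-m}(x)\big)-\ell,
\qquad\text{so}\qquad |A^{-m}v_m|<2C .
$$
Since $A^{-m}v_m\in\tilde{L}^s$ and $A$ is linear, $|v_m|=|A^m(A^{-m}v_m)|\le\|A|_{L^s}\|^m\,|A^{-m}v_m|<2C\,\|A|_{L^s}\|^m$, which is the first inequality (up to renaming $C$). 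For the second estimate put $a_m=H^{-1}(x+n_m)-n_m$ and $b=H^{-1}(x)$, so the quantity to bound is $|a_m-b|$; by Proposition \ref{2.1 prop H is Zd restricted on stable}, $a_m$ and $b$ lie on a common stable leaf $\tildeF^s(b)$ of $F$, and from $F^{-m}\circ H^{-1}=H^{-1}\circ A^{-m}$ one gets
$$
F^{-m}(a_m)=H^{-1}\big(A^{-m}x+\ell\big)-\ell,
\qquad F^{-m}(b)=H^{-1}\big(A^{-m}x\big),
$$
hence $|F^{-m}(a_m)-F^{-m}(b)|<2C$ and $F^{-m}(a_m)\in\tildeF^s\big(F^{-m}(b)\big)$ since $F$ preserves the stable foliation. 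Because the stable foliation of $F$ has uniformly $C^1$ leaves, uniformly transverse to the unstable direction, two points on a common stable leaf at ambient distance $<2C$ have leaf-distance at most a uniform $R_0$; and $F$ contracts stable leaves, $d_{\tildeF^s}(F^m y,F^m z)\le C_0\mu^m\,d_{\tildeF^s}(y,z)$ for $y,z$ on a common leaf and $m\ge0$ (Definition \ref{2.1 defprz}, lifted to $\RR^d$). Applying this to $y=F^{-m}(a_m)$, $z=F^{-m}(b)$ gives $|a_m-b|\le d_{\tildeF^s}(a_m,b)\le C_0R_0\mu^m=:\varepsilon_m$, and $\varepsilon_m\to0$ depends only on $m$.

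\textbf{Main obstacle.} Once Proposition \ref{2.1 prop H is Zd restricted on stable} is in hand the scheme is short; the real work is the uniformity of all constants in the base point $x$, so that the geometric rate $\|A|_{L^s}\|^m$ and the null sequence $\varepsilon_m$ are genuinely uniform. The one substantive geometric ingredient is the uniform comparison between the leaf metric on $\tildeF^s$ and the ambient metric for pairs of points lying on a common leaf (used in the second estimate); this rests on the uniform $C^1$-continuity of the stable foliation of a toral Anosov map together with the fact that its lift is an Anosov diffeomorphism of $\RR^d$ enjoying the Global Product Structure.
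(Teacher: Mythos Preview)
Your argument is correct and follows precisely the route the paper indicates: it states just before the proposition that it ``can [be] get[ten]'' from Proposition~\ref{2.1 prop H is Zd restricted on stable}, citing \cite{AGGS2022} for the details, and your proof unpacks exactly that---pull back by $F^{-m}$ (resp.\ $A^{-m}$), use the deck relation $F^{-m}(x+A^m\ell)=F^{-m}(x)+\ell$, bound by $2C$ via $\|H^{\pm1}-{\rm Id}\|_{C^0}<C$, and push forward along the stable direction.

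One minor remark on the second estimate: your passage from ambient distance $<2C$ to a uniform leaf-distance bound $R_0$ on $\tildeF^s$ is exactly the quasi-isometry of Proposition~\ref{2.1 prop quasi-isometric}, which in the paper is stated only for one-dimensional $\tildeF^s$; since this is the setting of both \cite{AGGS2022} and the present paper, that is fine. If you wanted the statement in full generality you could instead transport the pair $F^{-m}(a_m),F^{-m}(b)$ to $\tildeL^s$ via $H$ (picking up only an additive $2C$), contract by $\|A|_{L^s}\|^m$ there, and conclude $\varepsilon_m\to0$ from the uniform continuity of $H^{-1}$---this avoids quasi-isometry at the cost of losing the explicit exponential rate.
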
     
	
	\begin{remark}
		Obviously, let $n_m=|{\rm det}(A)|^m n$ for any $n\in\ZZ^d$ and $m\in\NN$, one has $A^{-i}n_m\in\ZZ^d$ for all $1\leq i\leq m$. This observation would be used in  Section \ref{sec 3}.
	\end{remark}
	
	To end this subsection, we state two properties about  stable and unstable foliations of Anosov maps whose proofs are quite similar to the case of Anosov diffeomorophisms. 
	
\begin{proposition}\label{2.1 prop C1 foliation}
	The stable and unstable foliations $\tildeF^s$ and $\tildeF^u$ are both  absolutely continuous.
	Moreover, if $\tildeF^{s/u}$ is codimension-one, then it is $C^1$-smooth.
	\end{proposition}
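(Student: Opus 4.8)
The plan is to pass to the universal cover and deduce both assertions from the classical theory of $C^{1+\alpha}$ Anosov diffeomorphisms, the only subtlety being that the ambient space $\RR^d$ is non-compact. First I would record the relevant structure of a lift $F:\RR^d\to\RR^d$ of $f$: by Ma\~n\'e--Pugh \cite{ManePugh1975} $F$ is an Anosov diffeomorphism, its stable and unstable foliations $\tildeF^{s}$, $\tildeF^{u}$ are tangent to continuous $DF$-invariant bundles $\tilde{E}^{s}$, $\tilde{E}^{u}$ and have uniformly $C^r$ leaves varying continuously in $C^1$; moreover, since $F$ covers the Anosov map $f$ of the \emph{compact} torus, $F^k(x+n)=F^k(x)+A^kn$ for all $x\in\RR^d$, $n\in\ZZ^d$, $k\in\ZZ$, whence $DF^k(x+n)=DF^k(x)$. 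In particular $DF$ and $DF^{-1}$ are $\ZZ^d$-periodic (so bounded and uniformly continuous) and the hyperbolicity constants of $F$ are those of $f$; thus $F$ has uniformly bounded geometry. Since every argument below is local and uses only these uniform bounds, the standard proofs for Anosov diffeomorphisms of compact manifolds carry over verbatim to $F$ on $\RR^d$. (One could instead push $\tildeF^{s}$ down to the stable foliation $\mathcalf^s$ of $f$ on $\TT^d$, but $\tildeF^{u}$ does not descend, so it is cleanest to stay on the cover.)

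For absolute continuity I would run the Anosov--Sinai argument for $\tildeF^{s}$; the case of $\tildeF^{u}$ follows by replacing $F$ with $F^{-1}$. Fix $p\in\RR^d$, a nearby $q\in\tildeF^{s}(p)$, and small local unstable leaves $\tildeF^{u}(p,\delta)$, $\tildeF^{u}(q,\delta)$, which are transversals to $\tildeF^{s}$ by the Global Product Structure; let $h^{s}(z)=\tildeF^{s}(z)\cap\tildeF^{u}(q,\delta)$ be the stable holonomy. Conjugating by $F^n$ gives $F^n\circ h^{s}=h^{s}_n\circ F^n$, where $h^{s}_n$ is the holonomy between the local unstable leaves through $F^np$ and $F^nq$; since $F^n$ contracts stable distances, these two points become exponentially close, their local unstable leaves become $C^1$-close, and $h^{s}_n\to{\rm Id}$ in $C^1$ along the transversals, so ${\rm Jac}(h^{s}_n)\to1$. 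Together with the chain rule this yields
\[
{\rm Jac}(h^{s})(z)=\prod_{k\geq0}\frac{{\rm Jac}\big(DF|_{\tilde{E}^{u}}\big)(F^k z)}{{\rm Jac}\big(DF|_{\tilde{E}^{u}}\big)(F^k h^{s}z)},
\]
a product that converges uniformly because $z$ and $h^{s}z$ share a stable leaf, so $d(F^k z,F^k h^{s}z)\leq C\mu^k$, while $\log{\rm Jac}(DF|_{\tilde{E}^{u}})$ is $\alpha$-H\"older. Hence $h^{s}$ is absolutely continuous with positive continuous Jacobian, and a standard Fubini/disintegration argument over a foliation box then shows that the conditionals of Lebesgue measure along the $\tildeF^{s}$-plaques are equivalent to the induced leaf volume; that is, $\tildeF^{s}$ is absolutely continuous.

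For the last assertion, suppose $\tildeF^{s}$ is codimension one, so the complementary bundle $\tilde{E}^{u}$ is one-dimensional (on $\TT^2$ this holds for both foliations). Since the leaves of $\tildeF^{s}$ are already $C^r$, it suffices to upgrade its holonomies along the one-dimensional foliation $\tildeF^{u}$ from absolutely continuous to $C^1$. Differentiating $h^{s}=F^{-n}\circ h^{s}_n\circ F^n$ along $\tilde{E}^{u}$ and writing $Dh^{s}_n={\rm Id}+O(\e_n)$ with $\e_n=O(\mu^{n\alpha})$ (from the $C^{1+\alpha}$-closeness of the leaves through $F^np$ and $F^nq$), the remainder is controlled by $\big\|(DF^n|_{\tilde{E}^{u}})^{-1}\big\|\cdot\e_n\cdot\|DF^n|_{\tilde{E}^{u}}\|$; because $\tilde{E}^{u}$ is one-dimensional there is no distortion along it, so this product of norms stays bounded (the same reason the infinite product above converges), the remainder is $O(\mu^{n\alpha})\to0$, and $Dh^{s}$ converges uniformly. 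Thus the holonomies of $\tildeF^{s}$ are $C^1$, which together with the $C^r$ plaques makes $\tildeF^{s}$ a $C^1$ foliation; the case of $\tildeF^{u}$ codimension one is symmetric, applying the argument to $F^{-1}$.

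The step I expect to be the main obstacle is keeping all constants uniform over the non-compact $\RR^d$; this is precisely what the periodicity identities $F^k(x+n)=F^k(x)+A^kn$ and $DF(x+n)=DF(x)$ buy, so I would establish these first. The remainder is the standard holonomy--distortion machinery, with two provisos: the absolute-continuity step genuinely requires H\"older continuity of $DF$ (hence applies in the range $r>1$, not merely $r\geq1$), and the $C^1$-upgrade uses exactly the absence of transverse distortion supplied by the codimension-one hypothesis.
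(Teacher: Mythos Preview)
Your approach is essentially the paper's: both reduce to the classical local analysis for $C^{1+\alpha}$ Anosov diffeomorphisms (the paper simply cites \cite[Theorem~7.1]{Pesinbook2004}), the only issue being uniformity of constants over the non-compact $\RR^d$. The paper obtains this uniformity from the compactness of the inverse limit $\TT^d_f$ via the projection of $\sigma_f$, whereas you use $\ZZ^d$-periodicity of $DF$; both routes work.

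Two technical corrections are in order. First, your identity $F^k(x+n)=F^k(x)+A^kn$ is false for $k<0$ when $A^{-1}n\notin\ZZ^d$ (write $F=A+\phi$ with $\phi$ $\ZZ^d$-periodic to see this); consequently $DF^{-1}$ is only $A\ZZ^d$-periodic, not $\ZZ^d$-periodic. Since $A\ZZ^d\subset\ZZ^d$ has finite index this still yields the uniform bounds you need, so nothing is lost. Second, and more to the point, while $DF$ is $\ZZ^d$-periodic the bundle $\tilde{E}^u$ is \emph{not} (this is exactly the non-special phenomenon), so the uniform H\"older constant you invoke for $\log{\rm Jac}(DF|_{\tilde{E}^u})$ does not follow from periodicity directly. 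It does follow, but you must argue either that the graph-transform/cone estimates for the H\"older modulus of $\tilde{E}^u$ depend only on the uniform hyperbolicity rates and on $\|DF\|_{C^\alpha}$ (both of which your periodicity does control), or pass through the compact inverse limit as the paper suggests. With this clarified your sketch is complete.
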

	The proof of this proposition is a local  analysis  just like one of \cite[Theorem 7.1]{Pesinbook2004}. We mention that one may use the compactness about the dynamics of $F$ which is from one of $\sigma_f$ by projection.

	\begin{proposition}\label{2.1 prop quasi-isometric}
	If $\tildeF^{s/u}$ is one-dimensional foliation, then it is quasi-isometric, i.e., 
	there exist constants $a,b>0$ such that $d_{\tildeF^{s/u}}(x,y)<a\cdot d(x,y)+b$ for any $x\in\RR^d$ and $y\in \tildeF^{s/u}(x)$, i.e. there exists $C>1$ such that $d_{\tildeF^{s/u}}(x,y)<C \cdot d(x,y)$ for any $x\in\RR^d$ and $y\in \tildeF^{s/u}(x)$.
	\end{proposition}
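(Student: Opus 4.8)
\emph{Proof plan.} The plan is to treat $\tilde{\mathcal{F}}^u$ first; since the one‑dimensionality hypothesis is on one of $\tilde{\mathcal{F}}^{s/u}$, and the case of $\tilde{\mathcal{F}}^s$ is symmetric (replace $F$ by $F^{-1}$ and use $\tilde{\mathcal{F}}^u$ as the transverse foliation), this is enough. So assume $\dim\tilde{L}^u=1$ and $\dim\tilde{L}^s=d-1$, so that $A$ acts on $\tilde{L}^u$ as multiplication by a scalar $\lambda$ with $|\lambda|>1$; let $u^*\in(\RR^d)^*$ be the nonzero covector with $\ker u^*=\tilde{L}^s$, so that $A^\top u^*=\lambda u^*$. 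I would introduce the \emph{progress coordinate} $\Pi\colon\RR^d\to\RR$, $\Pi(z)=\langle H(z),u^*\rangle$, where $H$ is the conjugacy of Proposition~\ref{2.1 prop lifting conjugate}. Three properties are immediate: $\Pi\circ F=\lambda\,\Pi$ (from $H\circ F=A\circ H$); $\Pi(z+n)=\Pi(z)+\langle n,u^*\rangle$ for $n\in\ZZ^d$ (by Proposition~\ref{2.1 prop H is Zd restricted on stable}, since $H(z+n)-n-H(z)\in\tilde{L}^s=\ker u^*$); and $|\Pi(x)-\Pi(y)|\le|u^*|\,(|x-y|+2C)$ for all $x,y$, where $C=\|H-\mathrm{Id}_{\RR^d}\|_{C^0}$.

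Next I would check that $\Pi$ is strictly monotone along unstable leaves. By \eqref{eq. 2.1.1}, $\Pi$ is constant on each leaf of $\tilde{\mathcal{F}}^s$ and, after factoring through the leaf space, equals the bijection $H_s$ of \eqref{eq.2.1.leaf space conjugacy} followed by a linear isomorphism $\RR^d/\tilde{\mathcal{L}}^s\cong\RR$; hence $\Pi$ separates the leaves of $\tilde{\mathcal{F}}^s$. Since $\tilde{\mathcal{F}}^u$ and $\tilde{\mathcal{F}}^s$ satisfy the Global Product Structure, $\Pi$ restricted to a leaf $\tilde{\mathcal{F}}^u(x)$ is a continuous injection onto $\RR$, hence a monotone homeomorphism. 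For a sub‑arc $S$ of a leaf write $\mathrm{ext}(S)=|\Pi(b)-\Pi(a)|$ for its endpoints $a,b$; monotonicity makes $\mathrm{ext}$ additive along concatenations, and the $\ZZ^d$‑equivariance of $\Pi$ makes $\mathrm{ext}$ a $\ZZ^d$‑invariant quantity.

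The heart of the argument is the uniform bound: there is $M<\infty$ so that every unstable arc $S$ with $\mathrm{ext}(S)\le1$ has leaf‑length $d_{\tilde{\mathcal{F}}^u}(a,b)\le M$. To prove it I would first use compactness: the unstable foliation of the Anosov diffeomorphism $F$ varies continuously in $C^1$ (Proposition~\ref{2.1 prop: unstable leaf continuity}), so the unstable arcs of leaf‑length $1$ with an endpoint in $[0,1]^d$ form a compact family on which $\mathrm{ext}$ is continuous and, by injectivity of $\Pi$ on leaves, nowhere zero; hence $\inf\,\mathrm{ext}=\delta_0>0$ there, and then by $\ZZ^d$‑invariance and monotonicity \emph{every} unstable arc of leaf‑length $\ge1$ has $\mathrm{ext}\ge\delta_0$. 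Then I would use hyperbolicity: if arcs $S_k$ with $\mathrm{ext}(S_k)\le1$ had leaf‑length $\ell_k\to\infty$, then $F^{-n}(S_k)$ would have leaf‑length $\ge\Lambda^{-n}\ell_k$, where $\Lambda=\sup_{\RR^d}\|DF\|<\infty$ (finite by $\ZZ^d$‑periodicity of $DF$), while $\mathrm{ext}\bigl(F^{-n}(S_k)\bigr)=|\lambda|^{-n}\mathrm{ext}(S_k)\le|\lambda|^{-n}$; fixing $n$ with $|\lambda|^{-n}<\delta_0$ forces leaf‑length $<1$, contradicting $\Lambda^{-n}\ell_k\to\infty$. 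With $M$ in hand, for $a,b\in\tilde{\mathcal{F}}^u(x)$ with $\Pi(a)<\Pi(b)$ I would split the leaf arc from $a$ to $b$ into $\lceil\Pi(b)-\Pi(a)\rceil$ sub‑arcs of $\mathrm{ext}\le1$ and sum, obtaining $d_{\tilde{\mathcal{F}}^u}(a,b)\le M\bigl(\Pi(b)-\Pi(a)+1\bigr)\le M|u^*|\,|a-b|+M\bigl(2C|u^*|+1\bigr)$, which is the quasi‑isometry inequality; the multiplicative form follows because leaves are $C^1$ and the foliation is $\ZZ^d$‑periodic, so $d_{\tilde{\mathcal{F}}^u}(a,b)/|a-b|$ stays bounded as $b\to a$. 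For $\tilde{\mathcal{F}}^s$ one runs the same scheme with $F$ replaced by $F^{-1}$, $\lambda$ by the stable eigenvalue, and $\tilde{\mathcal{F}}^u$ as the transverse foliation; as for Anosov diffeomorphisms, the only point needing care is to choose the progress coordinate so that it remains $\ZZ^d$‑invariant.

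I expect the uniform length bound to be the main obstacle. A priori a leaf of a one‑dimensional foliation could ``dawdle'', accumulating unbounded length while its $\Pi$‑coordinate barely moves, and it is precisely the hyperbolic expansion $\Pi\circ F=\lambda\,\Pi$ with $|\lambda|>1$, together with the compactness provided by the $\ZZ^d$‑periodic (equivalently, inverse‑limit) dynamics, that rules this out — exactly the mechanism familiar from the Anosov diffeomorphism case.
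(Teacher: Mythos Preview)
There is a real gap at your compactness step, and it is precisely the subtlety the paper isolates for the non-invertible case: the unstable foliation $\tildeF^u$ is \emph{not} commutative with deck transformations in general (this is what ``non-special'' means), so a $\ZZ^d$-translate of a $\tildeF^u$-arc is typically not a $\tildeF^u$-arc. Hence ``$\inf\mathrm{ext}=\delta_0>0$ over unstable arcs with an endpoint in $[0,1]^d$'' together with $\ZZ^d$-invariance of $\mathrm{ext}$ does \emph{not} give $\mathrm{ext}\ge\delta_0$ for every $\tildeF^u$-arc of leaf-length $\ge1$: the translate lands in $[0,1]^d$ but falls outside your compact family. Your parenthetical ``$\ZZ^d$-periodic (equivalently, inverse-limit)'' is exactly where the slip occurs --- these are not equivalent here. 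The paper handles this by deferring the uniform length bound to Claim~\ref{4  claim: uniform control for unstable distance} and proving it via \emph{approximate} periodicity: for $n\in A^m\ZZ^d$ with $m$ large, $\tildeF^u(y+n)-n$ is $C^1$-close to $\tildeF^u(y)$ by Proposition~\ref{2.1 prop: unstable leaf continuity}, so compactness on the enlarged fundamental domain $A^m([0,1]^d)$ suffices. In your framework the natural fix is to parametrize the compact family by $\TT^d_f$ rather than $[0,1]^d$: take all lifts through $[0,1]^d$ of all local unstable leaves $\mathcalf^u(x_0,\tilde x,\cdot)$ for $\tilde x\in\TT^d_f$; this family is compact, closed under $\ZZ^d$-translation, and contains every $\tildeF^u$-arc, and $\mathrm{ext}>0$ on it because every such arc is uniformly transverse to the $\ZZ^d$-periodic foliation $\tildeF^s$, whose leaves are the level sets of $\Pi$.

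Your caveat on the stable case is also real and not merely cosmetic: Proposition~\ref{2.1 prop H is Zd restricted on stable} puts $H(z+n)-n-H(z)$ in $\tildeL^s$, not $\tildeL^u$, so the analogous coordinate $\langle H(\cdot),v^*\rangle$ with $\ker v^*=\tildeL^u$ fails to be $\ZZ^d$-equivariant and its $\mathrm{ext}$ is no longer deck-invariant. This is harmless, however, because $\tildeF^s$ \emph{is} $\ZZ^d$-periodic, so the compactness for that case is elementary on $\TT^d$; the paper accordingly cites the Anosov-diffeomorphism argument for $\tildeF^s$ and reserves the new work for $\tildeF^u$.
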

	
	\begin{proof}
	We refer to \cite{BBI2009} for proof of the diffeomorphism case and one can find an adaptation for case of $\tildeF^s$ in  \cite{AGGS2022}. However, since the unstable foliation $\tildeF^u$ may not be commutative with deck transformations,  we will show this case.
	We first  claim that for any $C_1>0$ there exists $R>0$ such that for any $y\in\RR^d$ and $z\in\tildeF^u(y)$, if $d(y,z)<C_1$, then $d_{\tildeF^u}(y,z)<R$. For the coherence of proofs of both this proposition and later one, we leave the proof of the claim to Claim \ref{4  claim: uniform control for unstable distance}.
	
   Let $H$ be the conjugacy between $F$ and $A$ given by   Proposition \ref{2.1 prop lifting conjugate}. In particular, $|H-{\rm Id}_{\RR^d}|<C_0$.
	Since $H$ maps $\tildeF^{s/u}$ to $\tildeL^{s/u}$, one has that $\tildeF^s(x)$ is  contained in the $2C_0$-neighborhood of $ \tildeL^s(x)$, namely, $\tildeF^{s/u}(x)\subset B_{2C_0}\big(  \tildeL^{s/u}(x)\big)$ for any $x\in\RR^d$.
	Fix $n\in\ZZ^d$ such that $d(x+n,x)> 5C_0$ for all $x\in\RR^d$. Then one hsa that $d_H\big(  \tildeF^s(x), \tildeF^s(x+n)  \big)> C_0$ for all $x\in\RR^d$ where $d_H(\cdot,\cdot)$ is the Hausdorff distance.
	It follows from  the Global Product Structure and the claim above that there exists $L_0>0$ such that $\tildeF^u(x,L_0)$ intersects $\tildeF^s(x+n)$ at point $y$ such that $d_H\big(\tildeL^s(x), \tildeL^s(y) \big)>C_0$.
	By cutting the curve of unstable leaf in $L_0$-length, this implies that $\tildeF^u$ is quasi-isometric, i.e., 
	there exist constants $a,b>0$ such that $d_{\tildeF^{s/u}}(x,y)<a\cdot d(x,y)+b$ for any $x\in\RR^d$ and $y\in \tildeF^{s/u}(x)$.
	
	Since $\tildeF^u$ is tangent to $E^u_F$ which is uniformly continuous  by Propsoition \ref{2.1 prop: unstable leaf continuity}, there exists $\e>0$ such that if  $d(x,y)<\e$ for any $x\in\RR^d$ and $y\in\tildeF^u(x)$, one has that $d_{\tildeF^u}(x,y)<2d(x,y)$. It follows that there exists $C>1$ such that $d_{\tildeF^{s/u}}(x,y)<C \cdot d(x,y)$ for any $x\in\RR^d$ and $y\in \tildeF^{s/u}(x)$.
	\end{proof}

\subsection{Conjugacies of Anosov maps}\label{subsec 2.3}
In this subsection, we always assume that $f,g\in \mathcal{A}^1(\TT^d)$ and $f$ is homotopic to $g$. Without loss of generality, we can suppose $f_*=g_*:\pi_1(\TT^d)\to \pi_1(\TT^d)$ and $f_*$ induces $A:\TT^d \to \TT^d$. We will build connection among the conjugacies on the universal cover given by Proposition  \ref{2.1 prop lifting conjugate}, on the inverse limit spaces given by Proposition \ref{2.1 prop: conjugacy on the inverse limit spaces} and on  the stable leaf spaces given by \eqref{eq.2.1.leaf space conjugacy} and also the leaf conjugacy (see \eqref{eq.1.leaf conjugacy}). As a corollary, we can "translate" the stable periodic data condition given by the leaf conjugacy into one by conjugacies on leaf space and inverse limit space. Here we mention that the last two way to match the periodic data on stable bundle can be extended to the case of dim$E^s_f>1$. 

First of all, we introduce the conjugacy on  (stable) leaf space. Here the \textit{(stable) leaf space} for $f$ and $g$ are the quotient spaces $\TT^d\big/ \mathcalf^s$ and $\TT^d\big/ \mathcalg^s$ respectively. For any $x\in\TT^d$, denote the equivalence class of $x$ in $\TT^d\big/{\mathcalf^s}$ and $\TT^d\big/{\mathcalg^s}$ by $[x]_f$ and $[x]_g$ respectively. Then $f$ and $g$ induce maps on their spaces of stable leaves:
$$f_s:\TT^d\big/{\mathcalf^s}\to\TT^d\big/{\mathcalf^s}\quad {\rm and} \quad g_s: \TT^d\big/{\mathcalg^s}\to  \TT^d\big/{\mathcalg^s} \ ,$$
with $f_s([x]_f)=[f(x)]_f$ and  $g_s([x]_g)=[g(x)]_g$ for any $x\in\TT^d$. 
A homeomorphism $$s:\TT^d\big/{\mathcalf^s}\to \TT^d\big/{\mathcalg^s} \  ,$$ is called a \textit{conjugacy on (stable) leaf spaces} between $f_s$ and $g_s$, if $s\circ f_s ([x]_f)= g_s\circ s([x]_f)$ for any $x\in\TT^d$.
Using  the conjugacy on leaf spaces is a natural way to connect the periodic points  of $f$ with ones of $g$ and match their periodic data.	Indeed,	 let $s$ be a conjugacy between $f_s$ and $g_s$. Since $s$ maps periodic leaf of $f$ to one of $g$ and any periodic stable leaf  admits a unique periodic point, $s$ induces a map between the sets of periodic points, denoted by 
\begin{align}
	s^P:{\rm Per}(f)\to {\rm Per}(g).\label{eq.2.2.sp}
\end{align}

A natural way to obtain a conjugacy on leaf spaces is projecting from universal cover. For convenience, we collect all conjugacies given by Proposition \ref{2.1 prop lifting conjugate} between all liftings of $f$ and $g$ by the natural projection $\pi:\RR^d\to\TT^d$ and  denote this set by $\mathcal{H}(f,g)$. 
Let $H\in\mathcal{H}(f,g)$ be the conjugacy 
between liftings $F$ and $G$ with $H\circ F=G\circ H$. Let	$H_s:\RR^d\big/ \tildeF^s \to \RR^d\big/ \tildeG^s$ be the conjugacy between $F_s$ and $G_s$ induced by $H$  in the sense of \eqref{eq.2.1.leaf space conjugacy} such that $H_s\circ F_s= G_s\circ H_s$. Note that  $\tildeF^s(x+n)=\tildeF^s(x)+n$ for all $x\in\RR^d$ and all $n\in\ZZ^d$, moreover we have $\pi\big(\tildeF^s(x)\big)=\mathcalf^s\big(\pi(x)\big)$ .
Then by Proposition \ref{2.1 prop H is Zd restricted on stable}, we can project $H_s$ to $d$-torus by  $\pi$ and induce a conjugacy between $f_s$ and $g_s$. This gives us a map from $\mathcal{H}(f,g)$ to $\mathcal{S}^*(f,g):=\big\{ s: \TT^d\big/{\mathcalf^s}\to \TT^d\big/{\mathcalg^s}\;  \;{\rm homeomorphism}\  \big|\; s\circ f_s=g_s\circ s  \big\}$ the space of conjugacies on leaf spaces . Namely, the map
\begin{align}
	\mathcal{S}_{f,g}:\mathcal{H}(f,g) \longrightarrow  \mathcal{S}^*(f,g) \label{eq. 2.3 map on conjugacy spaces}
\end{align} 
such that $\mathcal{S}_{f,g}(H)([x]_f):=\pi \circ H_s\big( \pi^{-1} [x]_f\big)$ is well defined. We will give a restriction called "bounded from Id" to the conjugacies on leaf space. We will see that this condition is similar to being homotopic to Id$_{\TT^d}$ if one observes it on the universal cover space $\RR^d$.

  For  precise statement, we need mark  liftings by fixed points. Indeed, any lifting $F$ of $f$  by  $\pi$ has exactly one fixed point (see \cite{Franks1969} and \cite[Theorem 8.1.3]{AokiHiraide1994}). We denote by $(F,p_0)$ the lifting of $f$ by $\pi$ with $F(p_0)=p_0$.	It follows from the Global Product Structure that the quotient space $\RR^d\big/{\tildeF^s}$ is homeomorphic to $\tildeF^u(p_0)$. Since the lifting $(F,p_0)$ induces a homeomorphism $F_s:\RR^d\big/{\tildeF^s}\to \RR^d\big/{\tildeF^s}$, in order not to abuse symbols,  we can also see $F_s$ as
$$F_s:\tildeF^u(p_0)\to \tildeF^u(p_0).$$
Let $x\in\RR^d$. Denote the equivalence class of $x$ in $\RR^d\big/{\tildeF^s}$  by $[x]_F$ and denote $x^u=\tildeF^s(x)\cap\tildeF^u(p_0)$. 	Note that $F_s(x^u)=F(x^u)=(F(x))^u$, this means that $F_s$ is in fact the restriction of $F$ on $\tildeF^u(p_0)$. Define a metric $d_F(\cdot,\cdot)$ on  $\RR^d\big/{\tildeF^s}$ by $$d_F([x]_F,[y]_F):=d(x^u,y^u),$$
where $d(\cdot,\cdot)$ is the Euclidean metric on $\RR^d$.

Let $s\in \mathcal{S}^*(f,g)$. Then $s$ maps the fixed point  of $f_s$ to one of  $g_s$. Denote by $p$ the unique fixed point of $f$ in $[p]_f$  and by $q$ for $g$ and $[q]_g$. Now we fix $p\in{\rm Fix}(f)$ and $q\in{\rm Fix}(g)$ with $s([p]_f)=[q]_g$. Then we fix $p_0\in \pi^{-1}(p)$ and $q_0\in\pi^{-1}(q)$. 	It is clear that $s$ can be lifted to the universal cover spaces $\RR^d\big/{\tildeF^s}$ and $\RR^d\big/{\tildeG^s}$ given by $(F,p_0)$ and $(G,q_0)$,
$$\tilde{s}_{p_0,q_0}:\RR^d\big/{\tildeF^s}\to \RR^d\big/{\tildeG^s}.$$
For short, we denote $\tilde{s}_{p_0,q_0}$ by $\tilde{s}$, if there is no confusion. Again by the Global Product Structure, we can see $\tilde{s}$ as $\tilde{s}:\tildeF^u(p_0) \to \tildeG^u(q_0)$. 

We say that $\tilde{s}$ is \textit{bounded from} Id, if there exists $C>0$ such that for any $x\in\RR^d$, $$d_G\big([x]_G, \tilde{s}([x]_F)\big)<C.$$
One can see in the following proposition that this is equivalent to that there exists $C>0$ such that 
$$d\big(x, \tilde{s}(x)\big)<C, \ \ \forall x\in \tildeF^u(p_0).$$
We mention in advance that this restriction for $s$ is also equivalent to its corresponding leaf conjugacy $h$ homotopic to Id$_{\TT^d}$ (see Remark \ref{2.2 rmk: redefine p.d.s.}).

Define the set $\mathcal{S}(f,g)$  by
$$\mathcal{S}(f,g):=\Big\{ s: \TT^d\big/{\mathcalf^s}\to \TT^d\big/{\mathcalg^s}\;  \;{\rm homeomorphism} \ \Big|\; s\circ f_s=g_s\circ s \;\; {\rm and}\;\; \exists\;\tilde{s} {\rm\ is \ bounded \ from \ Id}    \Big\}.$$
The following proposition says that the map $\mathcal{S}_{f,g}$ is  actually a surjection from $\mathcal{H}(f,g)$ to $\mathcal{S}(f,g)$.

\begin{proposition}\label{2.2 prop: the conjugacies on leaf spaces}
	Let $f,g\in \mathcal{A}^1(\TT^d)$ be homotopic and $\mathcal{S}_{f,g}:\mathcal{H}(f,g) \to  \mathcal{S}^*(f,g)$ be the map given by \eqref{eq. 2.3 map on conjugacy spaces}. Then one has the following two items. 
	\begin{enumerate}
		\item For any $H\in\mathcal{H}(f,g)$, one has that  $\mathcal{S}_{f,g}(H)\in\mathcal{S}(f,g)$ and for any $x\in\RR^d$, 
		\begin{align}
			\mathcal{S}_{f,g}(H)\circ\pi\big( [x]_F \big)=\pi\big([H(x)]_G\big), \label{eq. 2.8.1}
		\end{align}
	  where $(F,p_0)$ and $(G,q_0)$ are  liftings of $f$ and $g$ by  $\pi$ and conjugate via $H$. 
	
		\item For any  $s\in\mathcal{S}(f,g)$,  there exists $H\in\mathcal{H}(f,g)$ such that  $s=\mathcal{S}_{f,g}(H)$.
	\end{enumerate}
See the following commutative diagram ${\rm (Figure 2)}$.
\end{proposition}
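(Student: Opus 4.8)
The plan is to unpack both items by carefully tracking how the conjugacy $H$ on $\RR^d$ descends through the stable leaf spaces to $\TT^d$, using Proposition \ref{2.1 prop H is Zd restricted on stable} as the crucial bridge. For item (1), I would start from a fixed $H\in\mathcal{H}(f,g)$ conjugating liftings $(F,p_0)$ and $(G,q_0)$ (every lift has a unique fixed point, so this normalization is available), and recall from \eqref{eq. 2.1.1}--\eqref{eq.2.1.leaf space conjugacy} that $H$ induces $H_s:\RR^d/\tildeF^s\to\RR^d/\tildeG^s$ with $H_s\circ F_s=G_s\circ H_s$. The first task is to check that $\mathcal{S}_{f,g}(H)$ is well-defined on $\TT^d/\mathcalf^s$: since $\pi(\tildeF^s(x))=\mathcalf^s(\pi(x))$ and $\tildeF^s(x+n)=\tildeF^s(x)+n$, one must show $H_s(\tildeF^s(x+n))$ and $H_s(\tildeF^s(x))$ project to the same leaf of $g$ in $\TT^d$, i.e. differ by an element of $\ZZ^d$; this is exactly the content of Proposition \ref{2.1 prop H is Zd restricted on stable}, which gives $H(x+n)-n\in\tildeL^s(H(x))$, and since $H$ carries $\tildeF^s$-leaves to $\tildeG^s$-leaves this says $\tildeG^s(H(x+n))=\tildeG^s(H(x))+n$. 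This simultaneously proves the intertwining identity \eqref{eq. 2.8.1}. Then, for the boundedness claim, I would pass to the $\tildeF^u(p_0)$, $\tildeG^u(q_0)$ description of the leaf spaces: by item (3) of Proposition \ref{2.1 prop lifting conjugate}, $\|H-\mathrm{Id}\|_{C^0}<C$, so $H(x^u)$ lies within distance $C$ of $x^u$; combined with the Global Product Structure this forces $\tilde s(x)$ to be within a uniform distance of $x$ along $\tildeF^u(p_0)$ — here I would invoke the quasi-isometry of the one-dimensional unstable foliation (Proposition \ref{2.1 prop quasi-isometric}) to convert the ambient $C^0$-bound into a bound for $d_G$, giving $\mathcal{S}_{f,g}(H)\in\mathcal{S}(f,g)$.

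For item (2), the plan is to reverse the construction: given $s\in\mathcal{S}(f,g)$, lift it to $\tilde s:\tildeF^u(p_0)\to\tildeG^u(q_0)$ (possible by simple connectivity of $\RR^d$, after fixing compatible fixed points $p,q$ and lifts $p_0,q_0$), bounded from $\mathrm{Id}$ by hypothesis. I would then \emph{define} $H:\RR^d\to\RR^d$ by using the Global Product Structure on the target: for $x\in\RR^d$, set $H(x)$ to be the unique intersection point of $\tildeG^u$ through the appropriate point of $\tildeG^u(q_0)$ with the image under $H_s:=\tilde s$ (via the leaf-space identification) of $\tildeF^s(x)$ — more concretely, $H(x):=\tildeG^u(\tilde s(x^u))\cap \tildeG^s(?)$, where the stable-leaf coordinate has to be specified so that $H\circ F=G\circ H$ comes out; the natural choice is to declare $H$ to map $\tildeF^s(x)$ to $\tildeG^s$-leaf indexed by $\tilde s(x^u)$ and to be equivariant along stable leaves in the only way compatible with the dynamics. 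Then one must verify: (a) $H$ is a homeomorphism (continuity from Proposition \ref{2.1 prop: local product sturcture}, local product structure and continuity of $\tilde s$; bijectivity from GPS); (b) $H\circ F=G\circ H$ (follows from $F_s$, $G_s$ being the restrictions of $F$, $G$ to the unstable leaves through the fixed points, together with $\tilde s\circ F_s=G_s\circ\tilde s$ and the fact that $F$ contracts stable leaves so the stable-coordinate equivariance is forced); (c) $\|H-\mathrm{Id}\|_{C^0}<\infty$ (from boundedness of $\tilde s$ plus the fact that stable leaves of $F$ are uniformly close to $\tilde L^s$-leaves and hence a bounded distance apart transversally). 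Uniqueness of such $H$ from Proposition \ref{2.1 prop lifting conjugate}, item (1), then pins $H$ down, and by construction $s=\mathcal{S}_{f,g}(H)$.

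The main obstacle I anticipate is part (b) of item (2): showing the $H$ built from $\tilde s$ genuinely intertwines $F$ and $G$, not merely intertwines their stable-leaf-space actions. The subtlety is that $\tilde s$ only records leaf-space data, whereas $H$ must be specified transversally along each stable leaf, and there is a priori a one-parameter family of ways to extend; one has to use the uniform contraction of $F$ along $\tildeF^s$ (Definition \ref{2.1 defprz}) to argue that the extension intertwining $F$ and $G$ is unique and exists — essentially an asymptotic/telescoping argument, pushing forward by $F$, applying $H_s$, pulling back by $G$, and showing the resulting sequence of candidate maps converges because the stable discrepancy is contracted. A secondary technical point is making sure the various fixed-point normalizations ($p_0$, $q_0$) are consistently chosen so that $s([p]_f)=[q]_g$ is honestly achieved by $\tilde s$ and matches the fixed point of the $H$ produced by Proposition \ref{2.1 prop lifting conjugate}; this is bookkeeping but needs care, since $H$ from Proposition \ref{2.1 prop lifting conjugate} is unique only once the pair of lifts is fixed.
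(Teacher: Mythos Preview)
Your approach to item (1) is essentially the paper's: use Proposition~\ref{2.1 prop H is Zd restricted on stable} for well-definedness of $\mathcal{S}_{f,g}(H)$ and the identity \eqref{eq. 2.8.1}, then the $C^0$-bound on $H-\mathrm{Id}$ for ``bounded from $\mathrm{Id}$''. One small caution: you invoke quasi-isometry of the unstable foliation (Proposition~\ref{2.1 prop quasi-isometric}), but that proposition is stated for one-dimensional foliations, while Proposition~\ref{2.2 prop: the conjugacies on leaf spaces} is formulated without that restriction. The paper instead observes that $\tildeG^u(q_0)\subset B_{C_0}(\tildeL^u(q_0))$ and $\tildeG^s(H(x))\subset B_{C_0+C_1}(\tildeL^s(x))$, so $x^u$ and $H(x)^u$ both lie in the intersection of tubular neighborhoods of transverse affine subspaces, which is uniformly bounded; this works in any codimension.

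For item (2) you take a genuinely different route. You propose to \emph{construct} $H$ from $\tilde s$ via the Global Product Structure, then face the real difficulty you correctly flag: extending a leaf-space conjugacy to a pointwise conjugacy requires specifying $H$ along each stable leaf, and a telescoping/contraction argument (of the form $H=\lim_n G^n\circ H_0\circ F^{-n}$ for a suitable initial leaf-preserving $H_0$) is needed to force $H\circ F=G\circ H$. This can be made to work, but the paper sidesteps it entirely. The paper simply takes the lifts $(F,p_0),(G,q_0)$ for which $\tilde s$ is bounded from $\mathrm{Id}$, lets $H$ be the \emph{already existing} conjugacy between $F$ and $G$ supplied by Proposition~\ref{2.1 prop lifting conjugate}, and then proves a short \emph{uniqueness} statement: any conjugacy $\RR^d/\tildeF^s\to\RR^d/\tildeG^s$ between $F_s$ and $G_s$ that is bounded from $\mathrm{Id}$ is unique (if two such conjugacies differed at some leaf, their images on $\tildeG^u(q_0)$ would be distinct points whose $G^n$-iterates stay a bounded distance from a common point yet separate to infinity along the unstable, a contradiction). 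Since by item (1) $\mathcal{S}_{f,g}(H)$ also lifts to a bounded-from-$\mathrm{Id}$ conjugacy, it must equal $\tilde s$, hence $s=\mathcal{S}_{f,g}(H)$. This buys you item (2) with no construction and no contraction argument; your approach, while viable, does strictly more work and runs into exactly the obstacle you anticipated.
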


     \[ \xymatrix@=5ex{
     	\RR^d \ar[rrrrr]^{F} \ar[ddddd]_{H} \ar[dr]^{\pi}  &&&&& \RR^d\ar[dl]_{\pi} \ar[ddddd]^{H}\\
     	&\TT^d\ar[dr]\ar[rrr]^f  &&&\TT^d \ar[dl]  \\
     	&&\TT^d/\mathcalf^s\ar[r]^{f_s} \ar[d]_{\mathcal{S}_{f,g}(H)}&\TT^d/\mathcalf^s\ar[d]^{\mathcal{S}_{f,g}(H)}\\
        &&\TT^d/\mathcalg^s\ar[r]_{g_s} &\TT^d/\mathcalg^s\\
        &\TT^d\ar[ur] \ar[rrr]_g &&&\TT^d \ar[ul] \\
        \RR^d \ar[rrrrr]_{G} \ar[ur]_{\pi}  &&&&& \RR^d\ar[ul]^{\pi}
     } 
     \]
      $$\text{Figure 2. The map}\  \mathcal{S}_{f,g}:\mathcal{H}(f,g) \to  \mathcal{S}^*(f,g) \text{\ is  surjective.}$$

\begin{proof}
	It is clear that $\mathcal{S}_{f,g}(H)$ is a homeomorphism satisfying \eqref{eq. 2.8.1}.  For getting the first item, it suffices to prove that $H_s$ is bounded from Id. 	
	
	By Proposition \ref{2.1 prop lifting conjugate}, there exists  $C_1>0$ such that $\|H-{\rm Id}_{\RR^d}\|_{C^0}<C_1$. Moreover  we claim that	there exists $C_0>0$ such that 
	\begin{align}
		\tilde{\mathcal{F}}^{s/u}_{\sigma}(x)\subset B_{C_0}\big(\tildeL^{s/u}(x)\big) ,\quad \forall x\in\RR^d \;\; {\rm and }\;\; \sigma=f,g \label{eq. 2.10.1}
	\end{align}
	where $B_{C_0}(\mathcal{L})$ is the $C_0$ neighborhood of the set $\mathcal{L}$.

	We need just prove the claim  for $\tildeF^s$. Indeed, let $H_F$ be the conjugacy between $F$ and $A$ given by  Proposition \ref{2.1 prop lifting conjugate} and take $C_2$ such that $\|H_F-{\rm Id}_{\RR^d}\|_{C^0}<C_2$. Then for any $y\in\tildeF^s(x)$ one has that $y\in B_{C_2}\big( \tildeL^s\big( H_F(x) \big)\big)$. It follows that $y\in B_{2C_2}\big( \tildeL^s( x)\big)$ and we can take $C_0=2C_2$.

	By  \eqref{eq. 2.10.1}, one has $\tildeG^u(q_0)\subset B_{C_0}\big(\tildeL^u(q_0)\big)$ and $\tildeG^s\big( H(x)\big)\subset B_{C_1}\big(  \tildeF^s(x) \big) \subset  B_{C_0+C_1}\big(\tildeL^s(x)\big) $ for any $x\in\RR^d$. Then we have that 
	$$x^u:=\  \tildeG^u(q_0) \cap\tildeG^s(x)\ \in\   B_{C_0}\big(\tildeL^u(q_0)\big) \cap B_{C_0+C_1}\big(\tildeL^s(x)\big),$$
	and 
	$$H(x)^u:=\  \tildeG^u(q_0)\cap \tildeG^s\big( H(x)\big)\  \in\  B_{C_0}\big(\tildeL^u(q_0)\big) \cap B_{C_0+C_1}\big(\tildeL^s(x)\big).$$
	It follows that $d_G\big([x]_G, H^s([x]_F)\big)= d\big(x^u, H(x)^u\big)$ is uniformly bounded for all $x\in\RR^d$ and hence $H_s$ is bounded from Id.

	Now we prove the second item.	Let $s\in\mathcal{S}(f,g)$.  By the definition of $\mathcal{S}(f,g)$, there exist liftings $(F,p_0)$ and $(G,q_0)$  of $f$ and $g$  with $s^P\big(\pi(p_0)\big)=\pi(q_0)\in{\rm Fix}(g)$ such that the lifting  $\tilde{s}$ of $s$ corresponding $(F,p_0)$ and $(G,q_0)$ is bounded from Id. For short, let $p=\pi(p_0)$ and $q=\pi(q_0)$.
	
	Let $H$ be the conjugacy between $F$ and $G$ given by Proposition \ref{2.1 prop lifting conjugate}. By the first item, we already have that $\mathcal{S}_{f,g}(H)\in\mathcal{S}(f,g)$. Hence  it suffices to prove that the conjugacy  which is bounded from Id  between $F_s:\RR^d\big/\tildeF^s\to \RR^d\big/\tildeF^s$ and $G_s:\RR^d\big/\tildeG^s\to \RR^d\big/\tildeG^s$ is unique and hence $\mathcal{S}_{f,g}(H)=s$.
	
	Indeed, assume that there exist two different conjugacies $h_i:\RR^d\big/\tildeF^s\to \RR^d\big/\tildeG^s\  (i=1,2)$ and both of them are bounded from Id. Then there exists $x\in\RR^d$ such that $h_1([x]_F)\neq h_2([x]_F)$ and this implies that these two leaves intersect $\tildeG^u(q_0)$ at two distinct points $y_1$ and $y_2$. Note that  for any $n\in\ZZ$, one has 
	$$G^ny_i=h_i\big( [F^nx]_F  \big)\cap \tildeG^u(q_0), \quad i=1,2.$$
	Let $H_G$ be the conjugacy between $(G,q_0)$ and $(A,0)$. Then  $H_G(y_1)\in \tildeL^u\big( H_G(y_2) \big)$ and  $$d\big( H_G(G^ny_1),H_G(G^ny_2)  \big)= d\big( A^n\circ H_G(y_1),  A^n\circ H_G(y_2)   \big) \to +\infty, $$
	as $n\to + \infty$. It follows that $d(G^ny_1,G^ny_2 )\to +\infty$ since $d_{C^0}(H_G,{\rm Id}_{\RR^d})$ is bounded. 
	Note that the assumption that $h_1$ and $h_2$ are both bounded from Id implies that both $G^ny_1$ and $G^ny_2$  have bounded distance with $[F^nx]_G\cap \tildeG^u(q_0)$, for any $n\in\NN$. It is a contradiction.
\end{proof}

Recall a (stable) \textit{leaf conjugacy} $h\in {\rm Home}_0(\TT^d)$ between $f$ and $g$ maps stable leaves of $f$ to ones of $g$ such that  $h\circ f \big(\mathcalf^s(x)\big)= g \circ h \big(\mathcalf^s(x)\big)$ for every $x\in\TT^d$. Let $f,g\in\mathcal{A}^1_1(\TT^d)$ be homotopic, the next proposition actually says that there exists leaf conjugacy between $f$ and $g$.

\begin{proposition}\label{2.2 prop leaf conjugate}
	Let $f\in\mathcal{A}^1_1(\TT^d)$ and $A$ be its linearization. 
	Then $f$ is leaf conjugate to $A$.
\end{proposition}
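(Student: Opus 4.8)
The plan is to compare $f$ directly with its linearization $A$, which is itself a (special) Anosov map in $\mathcal{A}^1_1(\TT^d)$, and to produce the leaf conjugacy by projecting the leaf-space conjugacy of the lifts. First I would fix a lift $F:\RR^d\to\RR^d$ of $f$ with its unique fixed point $p_0$ and the corresponding lift $A:\RR^d\to\RR^d$ fixing $0$, and let $H$ be the conjugacy from Proposition \ref{2.1 prop lifting conjugate}, so $H\circ F=A\circ H$ and $\|H-{\rm Id}\|_{C^0}<\infty$. By \eqref{eq. 2.1.1} the map $H$ carries $\tildeF^s$-leaves to $\tildeL^s$-leaves, hence descends to a conjugacy $H_s:\RR^d/\tildeF^s\to\RR^d/\tildeL^s$ between $F_s$ and $A_s$; by Proposition \ref{2.1 prop H is Zd restricted on stable} this $H_s$ commutes with the $\ZZ^d$-action, so it projects to a homeomorphism $s:\TT^d/\mathcalf^s\to\TT^d/\mathcal{L}^s$ with $s\circ f_s=A_s\circ s$. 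This is exactly the content of Proposition \ref{2.2 prop: the conjugacies on leaf spaces} applied to the pair $(f,A)$, so $s$ lies in $\mathcal{S}(f,A)$ and in particular $s$ (or rather its lift $\tilde s$) is bounded from ${\rm Id}$.

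Next I would upgrade the leaf-space conjugacy $s$ to an honest homeomorphism $h\in{\rm Home}_0(\TT^d)$ satisfying the leaf-conjugacy relation \eqref{eq.1.leaf conjugacy}. Here I use that $\dim E^s_f=1$: the stable foliation $\tildeF^s$ is then $C^1$ and quasi-isometric (Propositions \ref{2.1 prop C1 foliation} and \ref{2.1 prop quasi-isometric}), so $\RR^d$ fibers as a $C^0$ (indeed $C^1$ in the leaf direction) trivial bundle over the leaf space $\RR^d/\tildeF^s$, with fibers the stable leaves, and likewise for $\tildeL^s$. I would define $\tilde h:\RR^d\to\RR^d$ by sending the point $x$, which lies on the leaf $\tildeF^s(x)$ parametrized by arclength from its intersection with $\tildeF^u(p_0)$, to the point on the leaf $\tildeL^s\big(\tilde s([x]_F)\big)$ at the same arclength parameter from its intersection with $\tildeL^u(0)$. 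This $\tilde h$ is a homeomorphism of $\RR^d$ by the Global Product Structure, it maps $\tildeF^s$-leaves to $\tildeL^s$-leaves, it is bounded distance from ${\rm Id}$ (since $H$ is, $\tilde s$ is bounded from ${\rm Id}$, and the leaves are quasi-isometric), and it commutes with deck transformations because $\tilde s$ does and the arclength parametrizations are $\ZZ^d$-equivariant. Hence $\tilde h$ descends to $h\in{\rm Home}_0(\TT^d)$. The leaf relation $h\circ f(\mathcalf^s(x))=A\circ h(\mathcalf^s(x))$ follows because on the level of leaves $h$ intertwines $f_s$ and $A_s$ by construction: $h$ sends the stable leaf through $x$ to the stable leaf labeled $s([x]_f)$, and $f$ sends $\mathcalf^s(x)$ to the leaf labeled $f_s([x]_f)$, while $A$ sends $h(\mathcalf^s(x))$ to the leaf labeled $A_s(s([x]_f))=s(f_s([x]_f))$, and these two output leaves coincide.

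The main obstacle I expect is not the leaf-space conjugacy itself, which is essentially Proposition \ref{2.2 prop: the conjugacies on leaf spaces}, but the verification that the fiberwise (arclength) reparametrization used to build $\tilde h$ genuinely yields a \emph{homeomorphism} with the required boundedness and equivariance. The delicate point is that a stable leaf of $f$ and the corresponding stable leaf of $A$ need not have comparable lengths a priori; this is precisely where one-dimensionality and quasi-isometry (Proposition \ref{2.1 prop quasi-isometric}) are indispensable, giving a uniform comparison $d_{\tildeF^s}(x,y)\asymp d(x,y)$ and $d_{\tildeL^s}\asymp d$, so that the arclength-matching map is uniformly bicontinuous and stays a bounded distance from the identity. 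Once $\tilde h$ is seen to be a proper bounded homeomorphism commuting with $\ZZ^d$, descending to the torus and checking \eqref{eq.1.leaf conjugacy} is routine. Finally, I would remark that transitivity of toral Anosov maps together with the uniform contraction along stable leaves guarantees that every periodic stable leaf carries exactly one periodic point, so that the $h$ just produced induces the bijection $h^P:{\rm Per}(f)\to{\rm Per}(A)$ of \eqref{eq.1.hp}, which is what the subsequent use of this proposition requires.
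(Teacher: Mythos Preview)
Your construction has a genuine gap at the step where you claim $\tilde h$ commutes with deck transformations. The assertion that ``the arclength parametrizations are $\ZZ^d$-equivariant'' is not justified: the transversals you use to anchor the arclength, namely $\tildeF^u(p_0)$ on the $f$ side and $\tildeL^u(0)$ on the $A$ side, are \emph{not} $\ZZ^d$-invariant. For $n\in\ZZ^d$ one has $\tildeL^u(0)+n=\tildeL^u(n)\neq\tildeL^u(0)$, and on the $f$ side $\tildeF^u(p_0)+n$ need not even be a leaf of $\tildeF^u$ when $f$ is non-special. Consequently the basepoint $(x+n)^u=\tildeF^s(x+n)\cap\tildeF^u(p_0)$ differs from $x^u+n$, so the arclength parameter $t(x+n)$ differs from $t(x)$ by a quantity depending on both $n$ and the leaf through $x$; on the linear side the analogous shift is the $L^s$-component of $n$, which is independent of the leaf. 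There is no reason these two shifts coincide, hence $\tilde h(x+n)\neq\tilde h(x)+n$ in general and $\tilde h$ does not descend to $\TT^d$.

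The paper handles this by a different mechanism: rather than choosing a transversal and matching arclengths, it \emph{averages} $H$ over deck transformations. Setting $H_n(x)=H(x-n)+n$ and $\overline H_k=(2k+1)^{-d}\sum_{n\in C_k}H_n$, Proposition~\ref{2.1 prop H is Zd restricted on stable} guarantees that all the $H_n$ send a given $\tildeF^s$-leaf into the \emph{same} $\tildeL^s$-leaf, so the convex combination is a well-defined point on that one-dimensional leaf. An Arzel\`a--Ascoli limit $\tilde H$ is then $\ZZ^d$-equivariant by construction, remains a leaf conjugacy bounded from ${\rm Id}_{\RR^d}$, and is injective (this last step also uses $\dim E^s=1$). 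Your leaf-space reduction via $s\in\mathcal S(f,A)$ is correct and agrees with Proposition~\ref{2.2 prop: the conjugacies on leaf spaces}; what is missing is a $\ZZ^d$-equivariant lift of $s$ back to $\RR^d$, and the averaging device supplies exactly that.
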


\begin{proof}
	Let $H:\mathbb{R}^d\to \mathbb{R}^d$ 
	be the unique conjugacy given by Proposition \ref{2.1 prop lifting conjugate} such that $H\circ F=A\circ H$.
 Then  by averaging $H$ over all $\mathbb{Z}^d$-actions, we can get a leaf conjugacy on $\mathbb{R}^d$ which can descend to $\mathbb{T}^d$. This method can be found in \cite[Section 6]{Hammerlindl2013} where one may get more details. We sketch the proof here.
	\begin{enumerate}
		\item Let $H_n(x) = H(x-n)+n$ for all $n\in \mathbb{Z}^d$ and $x\in \mathbb{R}^d$. Then by Proposition \ref{2.1 prop H is Zd restricted on stable}, for any  $k>0$, 
		\begin{align*}
			\overline{H}_k:=\sum_{n\in C_k}\frac{1}{(2k+1)^d}H_n,
		\end{align*}
		where $C_k=\{(k_1,...,k_d)\in \mathbb{Z}^d | {\rm max} \{ |k_1|,...,|k_d|\}\le k \}$ is well defined on each stable leaf.
		\item $\{\overline{H}_k\}_{k\in \mathbb{N}}$ is uniformly equicontinuous and has uniformly distance with Id$_{\RR^d}$, note that these 
		two properties are genetic from $\{ H_n \}_{n\in \mathbb{Z}^d}$.  
		By Arzel$\grave{\rm a}$-Ascoli theorem, we obtain an accumulation point of $\{\overline{H}_k\}_{k\in \mathbb{N}}$, denoted by $\tilde{H}$.
		\item $\tilde{H}$ is a leaf conjugacy between $F$ and $A$ which has uniformly distance with Id$_{\RR^d}$ and satisfies $\tilde{H}(x+n)=\tilde{H}(x)+n$ for all $n\in \mathbb{Z}^d$. Moreover, $\tilde{H}$ is injetive. We note that the injection relies on ${\rm dim}E^s=1$. Descend $\tilde{H}$ to  $h:\TT^d\to\TT^d$, then $h\in {\rm Home}_0(\TT^d)$ is a leaf conjugacy. 
	\end{enumerate}
\end{proof}

\begin{remark}
 Let $f,g\in\mathcal{A}^1_1(\TT^d)$ be homotopic. The last proposition actually shows that any conjugacy $H\in \mathcal{H}(f,g)$  can induce a leaf conjugacy $h\in {\rm Home}_0(\TT^d)$. It is not hard to see that for any  leaf conjugacy $h\in {\rm Home}_0(\TT^d)$ between $f$ and $g$, there exist liftings $F$ and $G$ such that the conjugacy $H\in \mathcal{H}(f,g)$ between  $F$ and $G$ induces  $h$. Indeed, since $h\in {\rm Home}_0(\TT^d)$ naturally induces a conjugacy on leaf space whose lifting is bounded from Id, by Proposition \ref{2.2 prop: the conjugacies on leaf spaces}, one can find $F, G$ and $H$.
\end{remark}

\begin{remark}\label{2.2 rmk: redefine p.d.s.}
	Through the  conjugacy on the stable leaf spaces, we redefine that $f,g\in\mathcal{A}^1_1(\TT^d)$ admit the  \textit{same stable periodic data} by matching periodic points via some $s\in\mathcal{S}(f,g)$,
   namely,
	\begin{align}
		\lambda^s_f(p)=\lambda^s_g\big( s^P(p) \big), \quad \forall p\in{\rm Per}(f),\label{eq. 2.2. rmk 1}
	\end{align} where $s^P$ is given by \eqref{eq.2.2.sp}. And this is equivalent to the definition given by leaf conjugacy (see \eqref{eq.1.pds}).
	Indeed, on the one side,  let $h\in {\rm Home}_0(\TT^d)$ be a leaf conjugacy  between $f$ and $g$ with \eqref{eq.1.pds} and $H\in \mathcal{H}(f,g)$ be a conjugacy on $\RR^d$ coresponding to  $h$. Then $H$  induces  $s=\mathcal{S}_{f,g}(H)\in \mathcal{S}(f,g)$   by Proposition \ref{2.2 prop: the conjugacies on leaf spaces} and it follows that $s$ satisfies \eqref{eq. 2.2. rmk 1}.
	On the other side, let $s\in\mathcal{S}(f,g)$ satisfy \eqref{eq. 2.2. rmk 1}. Then $s$ induces a conjugacy $H\in\mathcal{H}(f,g)$ by the second item of Proposition \ref{2.2 prop: the conjugacies on leaf spaces} and $H$ induces  a leaf conjugacy $h\in {\rm Home}_0(\TT^d)$ given by  Proposition \ref{2.2 prop leaf conjugate}. It is clear that $h$ satisfies \eqref{eq.1.pds}.
\end{remark}

The following proposition  says that every conjugacy $H\in \mathcal{H}(f,g)$  on $\RR^d$ can induce a conjugacy on inverse limit spaces. This will be helpful to "translate" the condition of matching stable periodic data on the leaf space into that on the inverse limit space (see Corollary \ref{2.2 cor: p.d.s on leaf space to inverse limit space }).

\begin{proposition}\label{2.2 prop: the relationship of 3 conjugacies inverse limit space}
	Let $f,g\in \mathcal{A}^1(\TT^d)$ be homotopic and $F,G$ be any two liftings of $f,g$ by  $\pi$. Assume that $H:\RR^d\to \RR^d$ is  the conjugacy given by Proposition \ref{2.1 prop lifting conjugate} between $F$ and $G$  with $H\circ F=G\circ H$. Then there exists  a  conjugacy $\bar{h}:\TT^d_f\to \TT^d_g$ between $\sigma_f:\TT^d_f\to\TT^d_f$ and $\sigma_g:\TT^d_g\to\TT^d_g$ such that $\bar{h}\circ\sigma_f=\sigma_g\circ \bar{h}$ and 
	\begin{align}
		\bar{h}\circ\pi\big( {\rm Orb}_F(x)  \big)  =  \pi\big( {\rm Orb}_G(H(x))  \big),\ \ \forall x\in\RR^d.\label{eq. 2.8.2}
	\end{align}	 
See the following commutative diagram {\rm (Figure 3)}, where $\pi_0:\TT^d_{f/g}\to \TT^d$ is the projection $\pi_0\big((x_i)\big)=x_0$.
\end{proposition}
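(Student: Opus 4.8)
The plan is to build $\bar h$ by transporting $F$-orbits in $\RR^d$ through $H$. For $x\in\RR^d$ put $p_F(x):=(\pi F^i x)_{i\in\ZZ}\in\TT^d_f$ (a genuine point of the inverse limit, since $\pi F^{i+1}=f\circ\pi F^i$); as $H\circ F=G\circ H$ one gets $p_G(Hx)=(\pi H F^i x)_i\in\TT^d_g$. So on the subset $p_F(\RR^d)\subset\TT^d_f$ the desired identity \eqref{eq. 2.8.2} already forces $\bar h\big(p_F(x)\big):=p_G(Hx)$, and it remains to see that this extends to all of $\TT^d_f$. Given $\tilde x=(x_i)\in\TT^d_f$, for each $n\ge0$ I pick a \emph{partial lift} $\hat x^{(n)}:=F^{n}(\hat z_n)$ with $\hat z_n\in\pi^{-1}(x_{-n})$, so that $\pi F^{j}\hat x^{(n)}=x_j$ for every $j\ge-n$ (and any $p_F(x)$ is recovered by taking $\hat x^{(n)}\equiv x$). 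I will then define $\bar h(\tilde x):=\lim_{n\to\infty}p_G\big(H\hat x^{(n)}\big)$, which makes \eqref{eq. 2.8.2} hold by construction.

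The crux is that this limit exists and is independent of all choices. The input is a $\ZZ^d$-equivariance of $H$ along the stable foliation: $H(z+n)-n\in\tildeG^s\big(H(z)\big)$ for all $z\in\RR^d$, $n\in\ZZ^d$. This follows from Proposition \ref{2.1 prop H is Zd restricted on stable} together with the fact that the conjugacies of Proposition \ref{2.1 prop lifting conjugate} send stable leaves to stable leaves, applied to the factorization $H=H_G^{-1}\circ H_F$ (with $H_F\circ F=A\circ H_F$, $H_G\circ G=A\circ H_G$, so $H\circ F=G\circ H$). Now fix $m>n$. For each $j\ge-n$ the points $F^{j}\hat x^{(n)}$ and $F^{j}\hat x^{(m)}$ project to $x_j$, hence differ by some $n_j\in\ZZ^d$, and the equivariance above places $HF^{j}\hat x^{(n)}-n_j$ on the leaf $\tildeG^s\big(HF^{j}\hat x^{(m)}\big)$ at Euclidean distance $<2\|H-{\rm Id}_{\RR^d}\|_{C^0}$. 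Projecting, $\pi HF^{j}\hat x^{(n)}$ and $\pi HF^{j}\hat x^{(m)}$ lie on one leaf of $\mathcalg^s$ at some uniformly bounded leaf-distance $L_0$; since $g$ uniformly contracts $\mathcalg^s$ (Definition \ref{2.1 defprz}) and $g^{k}\circ\pi H F^{j}=\pi H F^{j+k}$, we get
\[
d\big(\pi H F^{i}\hat x^{(n)},\ \pi H F^{i}\hat x^{(m)}\big)\ \le\ C\,\lambda^{\,i+n}\,L_0\qquad(i\ge-n).
\]
Weighting the $i$-th coordinate by $2^{-|i|}$ and estimating the remaining tails trivially by the diameter, this shows $\{p_G(H\hat x^{(n)})\}_n$ is Cauchy in $(\TT^d_g,\bar d)$ and that the limit is unaffected by other admissible choices of the $\hat z_n$; the same bound gives uniform continuity of $\bar h$.

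It then remains to check that $\bar h$ is a conjugating homeomorphism. Running the same construction with $H^{-1}\in\mathcal H(g,f)$ produces $\bar g:\TT^d_g\to\TT^d_f$. The set $p_F(\RR^d)$ is dense in $\TT^d_f$ — the partial lift $\hat x^{(N)}$ makes $p_F(\hat x^{(N)})$ agree with $\tilde x$ on $[-N,\infty)$ — and on it $\bar g\circ\bar h\circ p_F(x)=p_F(H^{-1}Hx)=p_F(x)$, so $\bar g\circ\bar h={\rm id}$ by continuity, and symmetrically $\bar h\circ\bar g={\rm id}$; thus $\bar h$ is a homeomorphism. Finally $\bar h\circ\sigma_f\circ p_F(x)=p_G(HFx)=\sigma_g\circ p_G(Hx)=\sigma_g\circ\bar h\circ p_F(x)$, which extends to $\bar h\circ\sigma_f=\sigma_g\circ\bar h$ by density and continuity, and \eqref{eq. 2.8.2} holds by construction; Figure 3 is then immediate.

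I expect the only non-formal step to be the key estimate of the second paragraph: one must track, coordinate by coordinate, the $\ZZ^d$-ambiguity that $H$ introduces — which is genuinely nontrivial precisely when $f$ fails to be special — and show it is a bounded displacement \emph{along a stable leaf} that is killed by forward iteration; the auxiliary stable $\ZZ^d$-equivariance of $H=H_G^{-1}\circ H_F$ is itself a short corollary of Proposition \ref{2.1 prop H is Zd restricted on stable}. A quicker alternative, if one opens up the proof of Proposition \ref{2.1 prop: conjugacy on the inverse limit spaces}, is that it yields conjugacies $\tilde h_\tau:\TT^d_\tau\to\TT^d_A$ with $\tilde h_\tau\circ p_\tau=p_A\circ H_\tau$ for $\tau=f,g$; then $\bar h:=\tilde h_G^{-1}\circ\tilde h_F$ works at once, since $H_G\circ H=H_F$ forces $\bar h\circ p_F=p_G\circ H$ and $\bar h$ is a conjugacy by composition.
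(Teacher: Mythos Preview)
Your proof is correct and takes a genuinely different route from the paper's. Both arguments define $\bar h$ on the dense set $\pi\big({\rm Orb}_F(\RR^d)\big)\subset\TT^d_f$ via \eqref{eq. 2.8.2} and then extend by continuity; the divergence is in how well-definedness of the extension is verified. The paper works with arbitrary approximating sequences $\{y_k\},\{z_k\}$ and controls the $\ZZ^d$-ambiguity using the \emph{asymptotic} equivariance of Proposition~\ref{2.1 prop nmH}: if two orbits agree modulo $\ZZ^d$ on a long window $[-i_0,i_0]$, then the relevant deck translation lies in $A^{i_0}\ZZ^d$ and $H$ is nearly $\ZZ^d$-periodic for such translations. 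You instead fix concrete partial lifts $\hat x^{(n)}$, use only the \emph{stable-leaf} equivariance of Proposition~\ref{2.1 prop H is Zd restricted on stable} (correctly transferred to $H=H_G^{-1}\circ H_F$), and kill the resulting bounded stable displacement by forward contraction. Your argument is more elementary --- it avoids Proposition~\ref{2.1 prop nmH} entirely --- and makes the mechanism more transparent: the only obstruction to descending $H$ is a stable-leaf error, which the dynamics erases. The paper's argument, on the other hand, handles general approximating sequences at once.

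One small caveat: your sentence ``at some uniformly bounded leaf-distance $L_0$'' needs a word of justification for arbitrary ${\rm dim}\,E^s$, since quasi-isometry (Proposition~\ref{2.1 prop quasi-isometric}) is stated only in dimension one. You can either invoke the analogue of Claim~\ref{4  claim: uniform control for unstable distance} for the (genuinely $\ZZ^d$-periodic) stable foliation, or bypass leaf distance altogether: conjugate through $H_G$ to the linear foliation $\tildeL^s$, where bounded Euclidean distance on a leaf is contracted by $\|A|_{L^s}\|^k$ under $A^k$, and pull back via the uniform continuity of $H_G^{-1}$. Either way the estimate $d\big(\pi HF^i\hat x^{(n)},\pi HF^i\hat x^{(m)}\big)\to 0$ uniformly as $i+n\to\infty$ goes through, and the rest of your argument is routine. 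Your final remark about the shortcut $\bar h=\tilde h_G^{-1}\circ\tilde h_F$ via Proposition~\ref{2.1 prop: conjugacy on the inverse limit spaces} is also valid, provided one checks that the conjugacies produced there satisfy $\tilde h_\tau\circ p_\tau=p_A\circ H_\tau$.
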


	\[ \xymatrix@=5ex{
	\TT^d_f \ar[rrrrr]^{\sigma_f} \ar@{-->}[ddddd]_{\bar{h}} \ar[drr]^{\pi_0}  &&&&& \TT^d_f\ar[dll]_{\pi_0} \ar@{-->}[ddddd]^{\bar{h}}\\
	&&\TT^d\ar[r]^f  &\TT^d \\
	&\RR^d \ar[ur]^{\pi}\ar[d]_{H}\ar[rrr]^F&&&\RR^d\ar[ul]_{\pi}\ar[d]^{H}\\
	&\RR^d \ar[dr]_{\pi}\ar[rrr]_G&&&\RR^d\ar[dl]^{\pi}\\
	&&\TT^d\ar[r]_g  &\TT^d   \\
	\TT^d_f \ar[rrrrr]_{\sigma_g} \ar[urr]_{\pi_0}  &&&&& \TT^d_f\ar[ull]^{\pi_0}
} 
\]
$$\text{Figure 3. The conjugacy}\  H \text{\ induces a conjugacy}\ \bar{h}\  \text{on the inverse limit space.}$$

\begin{proof}
	Let $\RR^d_F$ be the orbit space of $F:\RR^d\to \RR^d$ and $\sigma_F:\RR^d_F\to\RR^d_F$ be the (left) shift homeomorphism just like \eqref{eq. 2.1 inverse limit space} and define $\sigma_G:\RR^d_G\to\RR^d_G$ by the same way. Then $H$ induces a conjugacy $\overline{H}: \RR^d_F\to \RR^d_G$ between shift maps $\sigma_F$  and $\sigma_G$ such that $\overline{H}\circ \sigma_F= \sigma_G\circ \overline{H}$ and $\overline{H}\big({\rm Orb}_F(x)\big)= {\rm Orb}_G\big(H(x)\big)$ for all $x\in\RR^d$. It is clear that 
	\begin{align}
		\TT^d_f=\overline{\bigcup_{x\in\RR^d}   \pi \big( {\rm Orb}_F(x)  \big)}. \label{eq. 2.2. R orbit to T}
	\end{align}
	Indeed, for any $\tilde{x}=(x_i)\in \TT^d_f$ and $k\leq 0$, let $y_k$ be a lifting of $x_k$. Then the projection of $F$-orbit of $F^k(y_k)$ is approaching to $\tilde{x}$.  If one need, we refer to \cite[Proposition 2.5]{MicenaTahzibi2016} for more details of \eqref{eq. 2.2. R orbit to T}. 
	
	We are going to prove that $\overline{H}$ can descend to $\TT^d_f$ by the following lemma.

	\begin{lemma}\label{2.2 lem: orbit conjugacy well defined}
		For any orbit $\bar{x}\in\TT^d_f$ and any two sequences $\{y_k\}$ and $\{z_k\}$ with  $\pi\big( {\rm Orb}_F(y_k) \big) \to \bar{x}$ and  $\pi\big( {\rm Orb}_F(z_k) \big) \to \bar{x}$ as $k\to +\infty$,  one has
		$$\bar{d} \Big( \pi\big( {\rm Orb}_G(H(y_k)) \big) \;,  \; \pi \big( {\rm Orb}_G(H(z_k))\big)    \Big)\to 0. $$ 
	\end{lemma}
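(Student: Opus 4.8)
The plan is to reduce everything to the already-established control on the universal cover. The key point is that the projection $\pi:\RR^d \to \TT^d$ of an $F$-orbit being close (in $\bar d$ on $\TT^d_f$) to a fixed orbit $\bar x = (x_i)$ forces, for each fixed coordinate $i$, that $\pi(F^i(\text{base point}))$ is close to $x_i$ in $\TT^d$; but this does \emph{not} pin down the lift, since the correct lift may differ by a deck transformation $T_n$. So I would first show that the relevant lifts of $y_k$ and $z_k$ can be chosen so that $F^i(y_k)$ and $F^i(z_k)$, projected to $\TT^d$, converge to $x_i$ for every $i\le 0$ (and then automatically for every $i\in\ZZ$ since $f$ is just applied forward). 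The truncation argument behind \eqref{eq. 2.2. R orbit to T} gives exactly this.

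Next I would exploit that the ambiguity is a deck transformation along the \emph{stable} direction after enough backward iteration. Concretely: fix a large $N$; for $i \ge -N$ the points $\pi(F^i(y_k))$ and $\pi(F^i(z_k))$ are within $\e_k \to 0$ of $x_i$, so suitable lifts $\hat y_k^{(i)}$ and $\hat z_k^{(i)}$ of these can be taken within $\e_k$ of each other in $\RR^d$. The issue is consistency across $i$: the honest orbits ${\rm Orb}_F(y_k)$ and ${\rm Orb}_F(z_k)$ need not have their $(-N)$-th entries close — they may differ by $T_n$ for some $n \in \ZZ^d$ that depends on $k$ and $N$. But here is where I would use that $F$ is an Anosov \emph{diffeomorphism} on $\RR^d$ (Ma\~n\'e--Pugh) with the Global Product Structure, and that the backward iterates of $F$ contract the unstable direction: writing $w_k = F^{-N}(y_k)$-lift and $w_k' = F^{-N}(z_k)$-lift chosen with $\pi(w_k)=\pi(w_k')$ up to $\e_k$, the two full orbits agreeing (up to $\e_k$) in all coordinates $\ge -N$ means $H(w_k)$ and $H(w_k')$, compared via the conjugacy to the linear model $A$, must lie on a common stable leaf up to bounded error, because any unstable separation would be amplified by $A^N$ and would already be visible in the coordinates between $-N$ and $0$. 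Iterating $F$ forward by $N$ then contracts that stable discrepancy: $d\big(F^i(w_k), F^i(w_k')\big)$ for $i$ from $0$ to $N$ is controlled by $C\mu^{i}\cdot(\text{bounded})$ plus something going to $0$ with $k$.

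Finally I would transport this through $H$. Since $H\circ F = G\circ H$ and $\|H - {\rm Id}_{\RR^d}\|_{C^0} < C$, the estimate $d\big(F^i(w_k), F^i(w_k')\big) \le C\mu^{i-(-N)} + \e_k'$ on the $F$-side translates (losing only the additive constant $2C$, then absorbing it into the $C\mu$ term by choosing $N$ large) into $d\big(G^i(H(w_k)), G^i(H(w_k'))\big)$ being uniformly small for $i \ge -N$ large negative and for $i \ge 0$ up to moderate size, and tending to $0$ as $k\to\infty$ for each fixed $i$. Summing the series $\bar d = \sum_i 2^{-|i|} d\big(\pi(G^i H(w_k)), \pi(G^i H(w_k'))\big)$: the tail $|i| > N$ is bounded by $\sum_{|i|>N} 2^{-|i|}\cdot{\rm diam}(\TT^d)$, which is small for $N$ large; the central block $|i|\le N$ tends to $0$ as $k\to\infty$ by the above. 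Hence $\bar d\big(\pi({\rm Orb}_G(H(y_k))), \pi({\rm Orb}_G(H(z_k)))\big)\to 0$, which is the claim.

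The main obstacle, and the step I would spend the most care on, is the middle one: controlling the \emph{deck-transformation ambiguity} between the true orbits ${\rm Orb}_F(y_k)$ and ${\rm Orb}_F(z_k)$. The naive hope that close projections give close lifts fails precisely because $H$ is not commutative with deck transformations (the maps are non-special in general); the fix is that after backward iteration the discrepancy is forced into the stable direction — where Proposition \ref{2.1 prop H is Zd restricted on stable} and the quasi-isometry of the one-dimensional stable foliation (Proposition \ref{2.1 prop quasi-isometric}), together with the uniform contraction in $E^s_F$, give quantitative control — and then forward iteration kills it. Everything else is the standard $\bar d$-series bookkeeping.
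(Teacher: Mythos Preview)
Your overall strategy is sound and shares the underlying mechanism of the paper's proof, but the paper packages it much more cleanly via Proposition~\ref{2.1 prop nmH}. There, after matching the two orbits at depth $-i_0$ so that $d(z_k+A^{i_0}n_k,y_k)<\delta$, uniform continuity of $H$ together with Proposition~\ref{2.1 prop nmH} give $d\big(H(z_k)+A^{i_0}n_k,\,H(y_k)\big)<\alpha$; this yields $\bar d\big(\pi({\rm Orb}_G(H(z_k)+A^{i_0}n_k)),\pi({\rm Orb}_G(H(y_k)))\big)<\e/2$ by uniform continuity of $x\mapsto\pi({\rm Orb}_G(x))$, and since $A^{i_0}n_k\in A^{i_0}\ZZ^d$ the $G$-orbits of $H(z_k)+A^{i_0}n_k$ and $H(z_k)$ project to the same point in all coordinates $\geq -i_0$, giving the remaining $\e/2$. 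Your route essentially re-derives the content of Proposition~\ref{2.1 prop nmH} inline from Proposition~\ref{2.1 prop H is Zd restricted on stable} plus the stable contraction of $G$, which can be made to work but is heavier.

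Two points of your write-up are wrong as stated and would need repair. First, it is \emph{not} true that $H(w_k)$ and $H(w_k')$ ``lie on a common stable leaf up to bounded error'' (their distance is $\approx|n_k|$, which is unbounded), nor that $d(F^i(w_k),F^i(w_k'))\le C\mu^{N+i}+\e_k'$; the correct statement is that $H(w_k'+n_k)-n_k\in\tildeG^s(H(w_k'))$ at bounded leaf-distance---this is precisely Proposition~\ref{2.1 prop H is Zd restricted on stable}, not an amplification heuristic---and the $\mu^{N+i}$-contraction appears only on the $G$-side after stripping off the integer $A^{N+i}n_k$, not on the $F$-side. Second, the lemma is stated for general $f,g\in\mathcal{A}^1(\TT^d)$ with no hypothesis on $\dim E^s$, so invoking the one-dimensional quasi-isometry of Proposition~\ref{2.1 prop quasi-isometric} is out of place; the paper's route sidesteps this entirely, since in Proposition~\ref{2.1 prop nmH} the relevant contraction is along the \emph{linear} stable subspace $L^s$ of $A$, where no quasi-isometry is needed.
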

	\begin{proof}[Proof of Lemma \ref{2.2 lem: orbit conjugacy well defined}]
		It is clear that for any $\e>0$, there exists $\alpha>0$ such that for any $x,y\in\RR^d$ with $d(x,y)<\alpha$, one has 
		$$\bar{d} \Big( \pi\big( {\rm Orb}_G(x) \big) \;, \;  \pi \big( {\rm Orb}_G(y)\big)    \Big) <\frac{\e}{2}. $$ 
		And for any $\e>0$, there exists $I_0\in\NN$ such that for all $x\in\RR^d$, all $i\geq I_0$ and all $n\in A^i\ZZ^d$, one has
		$$\bar{d} \Big( \pi\big( {\rm Orb}_G(x) \big)\; , \;  \pi \big( {\rm Orb}_G(x+n)\big)   \Big) <\frac{\e}{2}. $$ 
		Indeed, just note that $\pi\big(G^j(x+n)\big)=\pi\big(G^jx\big)$, for any $n\in A^i\ZZ^d$ and $-i\leq j <+\infty$.
		
		By Proposition \ref{2.1 prop nmH}, for given $\alpha>0$, there exists $I_1\in\NN$ such that for any $x\in\RR^d$, $i\geq I_1$ and  $n\in A^i\ZZ^d$, one has
		$$d\big( H(x+n)\;,\; H(x)+n \big)<\frac{\alpha}{2}.$$
		And by the uniform continuity of $H$, there exists $\delta>0$ such that $d(x,y)<\delta$, one has 
		$d\big(H(x),H(y)\big)<\frac{\alpha}{2}$.
		
		Now fix $\e>0$, then we can fix $\alpha>0$, $\delta>0$ and $i_0\geq{\rm max}\{I_0,I_1\}$.   The conditions that both $\pi\big( {\rm Orb}_F(y_k) \big) $ and  $\pi\big( {\rm Orb}_F(z_k) \big) $ converge to $ \bar{x}$ means that there exists $N_0\in\NN$ such that for every $k\geq N_0$, there exists $n_k\in\ZZ^d$ satisfying 
		$$d\left(F^j(F^{-i_0}z_k+n_k )\;,\; F^j\circ F^{-i_0}(y_k)\right)<\delta,$$
		for any $0\leq j\leq 2i_0$.   Especially  one has $d\left(z_k+ A^{i_0}n_k \; ,\; y_k\right)<\delta$ and hence
		$$d\big(H(z_k+ A^{i_0}n_k)\; , \; H(y_k)\big)<\frac{\alpha}{2}.$$
		Since $i_0\geq I_1$, one has $d\big(H(z_k+ A^{i_0}n_k) ,  H(z_k)+ A^{i_0}n_k\big) < \frac{\alpha}{2}$. It follows that
		$$d\big( H(z_k)+ A^{i_0}n_k \; , \; H(y_k)  \big)<\alpha.$$
		Then we get that 
		$$\bar{d} \Big( \pi\big({\rm Orb }_G(H(z_k)+A^{i_0}n_k)\big)\;,\;  \pi\big( {\rm Orb}_G(H(y_k))  \big)        \Big)<\frac{\e}{2}.$$
		On the other hand, since $i_0\geq I_0$, 
		$$\bar{d} \Big( \pi\big({\rm Orb }_G(H(z_k)+A^{i_0}n_k)\big)\;,\;  \pi\big( {\rm Orb}_G(H(z_k))  \big)        \Big)<\frac{\e}{2}. $$
		As a result, for any $\e>0$,  there exists $N_0\in\NN$ such that if $k\geq N_0$, one has 
		$$\bar{d} \Big( \pi\big({\rm Orb }_G(H(y_k))\big)\;,\;  \pi\big( {\rm Orb}_G(H(z_k))  \big) \Big)<\e. $$
	\end{proof}
	
	\begin{remark}
		Lemma \ref{2.2 lem: orbit conjugacy well defined} also implies that when $\pi\left( {\rm Orb}_F(y_k) \right) \to \bar{x}$, the sequence $\Big\{  \pi\left( {\rm Orb}_G(Hy_k)\right)  \Big\}$ is convergent in $\TT^d_g$. Indeed using the same notations in the proof of Lemma \ref{2.2 lem: orbit conjugacy well defined}, for every $\e>0$, every $i_0\geq {\rm max}\{I_0,I_1\}$ and every Cauchy sequence $\Big\{\pi\left( {\rm Orb}_F(y_k) \right)\Big\}\subset \TT^d_f$, there exists $N>0$ such that  for any $k,m>N$, there exists $n\in\ZZ^d$ with $$d\big(F^j\circ F^{-i_0}(y_k)\;,\; F^j(F^{-i_0}y_m+n) \big)<\delta,$$ for all $0\leq j\leq 2i_0$. 
		Then one has
		$$\bar{d}\big( \pi\circ {\rm Orb}_G(H(y_m))\;,\; \pi\circ {\rm Orb}_G(H(y_k))   \big)< \e.$$
		So $\big\{   \pi\circ {\rm Orb}_G(H(y_k))  \big\}$ is a Cauchy sequence  and hence convergent in the compact metric space $(\TT^d_g,\bar{d})$.
	\end{remark}
	\vspace{2mm}

	We continue to prove Proposition \ref{2.2 prop: the relationship of 3 conjugacies inverse limit space}. For any $\bar{x}\in\TT^d_f$, there exists $\{y_k\}\subset\RR^d$ such that $\pi\left( {\rm Orb}_F(y_k) \right)$ converges to $ \bar{x}$. By Lemma \ref{2.2 lem: orbit conjugacy well defined}, we can define $\bar{h}: \TT^d_f\to \TT^d_g$ as 
	\begin{align}
		\bar{h}(\bar{x})= \lim_{k\to\infty}\pi\left( {\rm Orb }_G(H(y_k))\right).\label{eq. 2.8.3}
	\end{align}
	By the  proof of Lemma \ref{2.2 lem: orbit conjugacy well defined},  we also get that $\bar{h}$ is a continuous. 
	
	Similarly, for any $\bar{x}\in \TT^d_g$ and $\{z_k\}\subset \RR^d$ with $\pi\big( {\rm Orb}_G(z_k) \big)$ converging to $ \bar{x}$.  Let $\bar{h}^{-1}$ be defined by
	$$\bar{h}^{-1}(\bar{x})= \lim_{n\to\infty}\pi\left( {\rm Orb }_F(H^{-1}(z_k))\right).$$
	It is well defined and also continuous. Note that  $\bar{h}\circ\bar{h}^{-1}={\rm Id}_{\TT^d_g}$ and $\bar{h}^{-1}\circ\bar{h}={\rm Id}_{\TT^d_f}$ by the definition. So we have that $\bar{h}$ is a homeomorphism. Moreover,
	\begin{align*}
		\sigma_g\circ \bar{h} (\bar{x})
		&=  \sigma_g \Big(\lim_{k\to\infty} \pi \circ {\rm Orb}_G\big(H(y_k)\big)\Big),\\
		&= \lim_{k\to\infty} \left( g \circ \pi \circ G^i\big(H(y_k)\big)  \right)_{i\in\ZZ},\\
		&= \lim_{k\to\infty} \left( \pi \circ G \circ G^i\big(H(y_k)\big)  \right)_{i\in\ZZ},\\
		&= \lim_{k\to\infty} \left( \pi \circ G^i\big(H(F(y_k))\big)  \right)_{i\in\ZZ},\\
		&= \lim_{k\to\infty} \pi  \circ {\rm Orb}_G\big(H(F(y_k))\big).
	\end{align*}
	Since $\pi\left( {\rm Orb}_F(y_k) \right)\to  \bar{x}$,  we have $\pi\left( {\rm Orb}_F(F(y_k)) \right)\to \sigma_f (\bar{x})$. Then by \eqref{eq. 2.8.3}, we get $$\sigma_g\circ \bar{h} (\bar{x})=\lim_{k\to\infty} \pi  \circ {\rm Orb}_G\big(H(F(y_k))\big)=\bar{h}\circ \sigma_f(\bar{x}).$$
\end{proof}

Let $\bar{h}:\TT^d_f\to\TT^d_g$ be a conjugacy between $\sigma_f:\TT^d_f\to\TT^d_f$ and $\sigma_g:\TT^d_g\to\TT^d_g$. For any $p\in{\rm Per}(f)$, we denote the periodic $f$-orbits of $p$ by $\bar{p}\in\TT^d_f$. It is clear that $\bar{h}(\bar{p})$ is a perodic point of $\sigma_g:\TT^d_g\to\TT^d_g$ with the same period of $\bar{p}$. We further assume $f\in\mathcal{A}^1_1(\TT^d)$ and define the stable Lyapunov exponent for $\bar{p}\in{\rm Per}(\sigma_f)$ by 
$$\lambda^s(f,\bar{p}):=\lambda^s_f\left(\pi_0(\bar{p})\right) ,$$
where $\lambda^s_f(x)$ is the  stable Lyapunov exponent of $x$ for $f$.

\begin{corollary}\label{2.2 cor: p.d.s on leaf space to inverse limit space }
	Let $f,g\in\mathcal{A}^1_1(\TT^d)$ be homotopic. If  $f$ and $g$ admit the same periodic data on the stable bundles, then there exist  liftings $F$ and $G$ of $f$ and $g$ by  $\pi$ such that the conjugacy $\bar{h}:\TT^d_f \to \TT^d_g$ between $\sigma_f$ and $\sigma_g$ given by Proposition \ref{2.2 prop: the relationship of 3 conjugacies inverse limit space} satisfies
	$$\lambda^s(f,\bar{p})=\lambda^s\big(g,\bar{h}(\bar{p})\big), \quad \forall \bar{p}\in {\rm Per}(\sigma_f).$$
\end{corollary}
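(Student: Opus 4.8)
The plan is to reduce the statement to a single assertion: that the bijection between periodic points induced by $\bar h$ on the inverse limit spaces coincides with the one induced by the (stable) leaf conjugacy on the stable leaf spaces. By the definition of "same stable periodic data" adopted in Remark~\ref{2.2 rmk: redefine p.d.s.}, I would first fix $s\in\mathcal{S}(f,g)$ with $\lambda^s_f(p)=\lambda^s_g(s^P(p))$ for all $p\in{\rm Per}(f)$, and then invoke the second item of Proposition~\ref{2.2 prop: the conjugacies on leaf spaces} to produce liftings $F,G$ of $f,g$ by $\pi$ and a conjugacy $H\in\mathcal{H}(f,g)$ with $\mathcal{S}_{f,g}(H)=s$; these $F,G$ are the ones the corollary asserts to exist, and $\bar h$ is the conjugacy that Proposition~\ref{2.2 prop: the relationship of 3 conjugacies inverse limit space} manufactures from $H$. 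With this setup the corollary follows immediately once one shows
$$
\pi_0\big(\bar h(\bar p)\big)=s^P\big(\pi_0(\bar p)\big),\qquad\forall\,\bar p\in{\rm Per}(\sigma_f),
$$
because then $\lambda^s(f,\bar p)=\lambda^s_f(\pi_0\bar p)=\lambda^s_g(s^P(\pi_0\bar p))=\lambda^s_g(\pi_0\bar h(\bar p))=\lambda^s(g,\bar h(\bar p))$.

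To prove that matching I would argue on the universal cover. Let $k$ be the $\sigma_f$-period of $\bar p$, let $p=\pi_0(\bar p)$ (so $f^k(p)=p$), fix any $\tilde p\in\pi^{-1}(p)$, and set $z_n=F^{nk}(\tilde p)$. A coordinatewise computation shows $\pi\big({\rm Orb}_F(z_n)\big)\to\bar p$ in $\TT^d_f$ as $n\to\infty$, since $\pi(F^{i+nk}\tilde p)=f^{i+nk}(p)=(\bar p)_i$ already holds for every $i\ge -nk$. Applying $\bar h$ and using \eqref{eq. 2.8.2} together with $H\circ F=G\circ H$ yields
$$
\pi_0\Big(\bar h\circ\pi\big({\rm Orb}_F(z_n)\big)\Big)=\pi\big(H(z_n)\big)=\pi\big(G^{nk}H(\tilde p)\big)=g^{nk}(w),\qquad w:=\pi\big(H(\tilde p)\big),
$$
and by continuity of $\bar h$ and $\pi_0$ we get $g^{nk}(w)\to q:=\pi_0(\bar h(\bar p))$. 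It then remains to identify $q$. On the one hand, \eqref{eq. 2.8.1} with $x=\tilde p$ gives $[w]_g=\mathcal{S}_{f,g}(H)\big([\pi(\tilde p)]_f\big)=s([p]_f)$; since $[p]_f$ is an $f_s$-periodic leaf, $s([p]_f)$ is the $g_s$-periodic leaf carrying the periodic point $s^P(p)$, so $w\in\mathcalg^s\big(s^P(p)\big)$. On the other hand, a sufficiently high power $g^{Nk}$ maps this periodic leaf into itself and contracts it, with unique fixed point $s^P(p)$, so $g^{nNk}(w)\to s^P(p)$; comparing with the subsequence of $g^{nk}(w)\to q$ forces $q=s^P(p)$, which completes the argument.

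The one place that needs care — and which I expect to be the main obstacle — is that a periodic orbit $\bar p\in{\rm Per}(\sigma_f)$ generally does \emph{not} lie in $\{\pi({\rm Orb}_F(x)):x\in\RR^d\}$ (this set is only dense in $\TT^d_f$), so one cannot simply lift $\bar p$ to $\RR^d$ and transport it by $H$. The device above circumvents this by replacing $\bar p$ with the explicit approximants $\pi\big({\rm Orb}_F(F^{nk}\tilde p)\big)$: these are $\bar h$-transportable through \eqref{eq. 2.8.2}, and their zeroth coordinates are exactly the forward orbit $\{g^{nk}(w)\}$ of the single point $w=\pi(H(\tilde p))$, which turns the matching into the identification of a limit point of that orbit. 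One must also avoid trying to deduce the matching from a soft "agrees on a dense set" argument, since the stable leaf space is typically non-Hausdorff (the stable foliation being minimal); the contraction of $g^{Nk}$ along the periodic leaf through $s^P(p)$ is precisely what substitutes for that soft argument.
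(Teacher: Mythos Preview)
Your proof is correct and follows essentially the same route as the paper: fix $s\in\mathcal S(f,g)$ realizing the stable periodic data, produce $H$ via the second item of Proposition~\ref{2.2 prop: the conjugacies on leaf spaces}, build $\bar h$ from $H$, and reduce everything to the identity $\pi_0(\bar h(\bar p))=s^P(p)$, which both you and the paper verify through \eqref{eq. 2.8.1} and \eqref{eq. 2.8.2}. The only difference is in the last identification: the paper observes that $[\pi(H(p_n))]_g$ is the constant leaf $[s^P(p)]_g$ and then passes to the limit in the leaf space, whereas you choose the explicit approximants $z_n=F^{nk}(\tilde p)$ so that the zeroth coordinates become the forward orbit $g^{nk}(w)$ and then invoke contraction of $g^{k}$ on the periodic stable leaf to pin down the limit as $s^P(p)$; your version is a bit more explicit and sidesteps the non-Hausdorff issue you flagged.
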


\begin{proof}
	By the assumption and Remark \ref{2.2 rmk: redefine p.d.s.}, there exists  $s\in\mathcal{S}(f,g)$ such that $\lambda^s_f(p)=\lambda^s_g\big( s^P(p) \big)$ for all $p\in{\rm Per}(f)$. Let $\bar{h}$ be given by Proposition \ref{2.2 prop: the relationship of 3 conjugacies inverse limit space} in which $H$ is given by the second item of  Proposition \ref{2.2 prop: the conjugacies on leaf spaces}.
	Take $\bar{p}\in{\rm Per}(\sigma_f)$, we will show that $\bar{h}(\bar{p})$ and $s^P(p)$ express the same periodic orbit and in fact $\big(\bar{h}(\bar{p})\big)_0=s^P(p)$. It follows that $\lambda^s(f,\bar{p})= \lambda^s_f(p)=\lambda^s_g\big(s^P(p)\big)=\lambda^s\big(g,\bar{h}(\bar{p})\big)$.
	
	Since $\bar{h}(\bar{p})\in{\rm Per}(\sigma_g)$ and every periodic stable leaf admits exactly one periodic point, it suffices to prove that $$\mathcalg^s\big(\left(\bar{h}(\bar{p})\right)_0\big)=\mathcalg^s\big( s^P(p)\big).$$
	Indeed, let $p_n\in\RR^d$ with $\pi(p_n)=p$ such that $\pi\circ{\rm Orb}_F(p_n)\to \bar{p}$ as $n\to +\infty$. Then by \eqref{eq. 2.8.2} or \eqref{eq. 2.8.3}, one has $$\bar{h}(\bar{p})= \lim_{n\to +\infty} \pi\circ {\rm Orb}_G\big(H(p_n)\big).$$
	By the second item of Proposition \ref{2.2 prop: the conjugacies on leaf spaces}  and  \eqref{eq. 2.8.1}, one has 
	$$\big[\pi \big(H(p_n)\big)\big]_g=\pi\big[ H(p_n) \big]_G=s\circ\pi \big([p_n]_F\big)= s\big([p]_f\big)=[s^P(p)]_g.$$
	Therefore, 
	$$\big[\left(\bar{h}(\bar{p})\right)_0\big]_g=\lim_{n\to+\infty}\big[\pi(H(p_n))\big]_g=\big[s^P(p)\big]_g.$$
\end{proof}

	\subsection{Livschitz Theorem  on the inverse limit space }\label{subsec 2.4}
In this subsection, we found a Livschitz Theorem for  shift  homeomorphisms on the inverse limit spaces of Anosov maps. This will be helpful to construct an affine sturcture in Section \ref{sec 3}. Although we already have the Livschitz Theorem for Anosov maps (see Theorem \ref{2.3 thm Livschit for Anosov map}),  we can only use the Livschitz Theorem for the inverse limit space on the assumption of Corollary \ref{2.2 cor: p.d.s on leaf space to inverse limit space }. Recall that $M$ is a $C^{\infty}$-smooth closed Riemannian manifold. 

\begin{theorem}[Livschitz Theorem for Anosov map \cite{katokbook}]\label{2.3 thm Livschit for Anosov map}
	Let $f\in\mathcal{A}^{1+\alpha}(M)$ be transitive  and $\psi_1, \psi_2 :M\to \RR$  be two H$\ddot{o}$lder functions  with 
	$$\sum_{i=1}^{\pi({p})}\psi_1\big(f^i({p})\big)= \sum_{i=1}^{\pi({p})}\psi_2\big(f^i({p})\big), \quad \forall p\in {\rm Per}(f),$$
	where $\pi({p})$ is the period of $p$. Then  there exists a H$\ddot{o}$lder function $v:M\to \RR$ such that $$\psi_1(x)=\psi_2(x)+v\circ f(x)-v(x),\quad \forall x\in M.$$
\end{theorem}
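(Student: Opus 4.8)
The plan is to run the classical Livschitz argument of \cite{katokbook}, with the caveat that $f$ has no backward orbits, so the closing lemma must be imported from the shift homeomorphism on the inverse limit space. Put $\psi:=\psi_1-\psi_2$; it is H\"older and, by hypothesis, $\sum_{j=0}^{N-1}\psi\bigl(f^j(p)\bigr)=0$ for every $p\in{\rm Per}(f)$ of period $N$. It suffices to produce a H\"older $v:M\to\RR$ with $\psi=v\circ f-v$, for then $\psi_1=\psi_2+v\circ f-v$.

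The first ingredient is an Anosov closing lemma in the non-invertible setting: there exist $C_0>0$, $0<\theta<1$ and $\delta_0>0$ such that whenever $x\in M$ and $N\ge1$ satisfy $d\bigl(f^N(x),x\bigr)<\delta_0$, there is $p\in M$ with $f^N(p)=p$ and
\[
d\bigl(f^j(x),f^j(p)\bigr)\le C_0\,d\bigl(f^N(x),x\bigr)\,\theta^{\min\{j,\,N-j\}},\qquad 0\le j\le N.
\]
The route I would take is to pass to the inverse limit space $M_f$: choose a backward orbit of $x$ (possible since an Anosov map on a compact connected manifold is surjective, its image being open and closed) to obtain $\bar x=(x_i)\in M_f$ with $(\bar x)_0=x$, close up the resulting $\delta_0$-pseudo-orbit of $\sigma_f$ by reinserting $x_0$ after $x_{N-1}$, and shadow it by a genuine periodic point of $\sigma_f$ using the local product structure of Proposition \ref{2.1 prop: local product sturcture} together with expansiveness of $\sigma_f$; projecting by $\pi_0$ yields $p$, and the uniform hyperbolic contraction/expansion rates give the exponential estimate, uniformly in $x$ and $N$.

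Next, fix $x_0\in M$ with dense forward orbit $\mathcal O^+(x_0)=\{f^n(x_0):n\ge0\}$ (transitivity), and define $v$ on $\mathcal O^+(x_0)$ by $v(x_0)=0$ and $v(f^n(x_0))=\sum_{i=0}^{n-1}\psi(f^i(x_0))$; this is unambiguous because a transitive point is not preperiodic, so $\mathcal O^+(x_0)$ is an injective orbit (and anyway a coincidence $f^n(x_0)=f^m(x_0)$ would force the intervening sum to vanish by the periodic hypothesis). By construction $v\circ f-v=\psi$ along $\mathcal O^+(x_0)$. The key estimate is: if $n>m$ and $d(f^n(x_0),f^m(x_0))<\delta_0$, then applying the closing lemma to the segment issued from $f^m(x_0)$ gives a periodic $p$ of period $N=n-m$, and subtracting $\sum_{j=0}^{N-1}\psi(f^j(p))=0$ yields
\[
\bigl|v(f^n(x_0))-v(f^m(x_0))\bigr|
=\Bigl|\sum_{j=0}^{N-1}\bigl(\psi(f^{m+j}(x_0))-\psi(f^j(p))\bigr)\Bigr|
\le[\psi]_\alpha\,C_0^\alpha\Bigl(\sum_{k\ge0}2\theta^{\alpha k}\Bigr)d(f^n(x_0),f^m(x_0))^\alpha,
\]
a bound independent of $N$. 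Thus $v$ is locally $\alpha$-H\"older with uniform constants on the dense set $\mathcal O^+(x_0)$, hence uniformly continuous there, hence extends to a continuous function on $M$; since $M$ is a compact connected Riemannian manifold, local H\"older continuity with uniform constants upgrades to global $\alpha$-H\"older continuity of the extension $v$. Finally $v\circ f-v$ and $\psi$ are continuous and coincide on the dense orbit, so they coincide on $M$, which gives $\psi_1=\psi_2+v\circ f-v$.

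The main obstacle is the first step: establishing the closing lemma with an exponentially small shadowing error on the forward segment $0\le j\le N$ without invertibility. One must be careful that the periodic orbit produced inside $M_f$ really projects to a period-$N$ point of $f$ and that the error constants $C_0,\theta$ are independent of the point and the length $N$; this is exactly where the local product structure and expansiveness of $\sigma_f$ are used. Once this is in hand, the remainder is the standard Livschitz bookkeeping.
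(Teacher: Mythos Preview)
Your proposal is correct and follows essentially the same route the paper indicates: the paper does not give a detailed proof but simply remarks that the Local Product Structure (Proposition~\ref{2.1 prop: local product sturcture}) yields the exponential Anosov closing lemma as in \cite[Corollary 6.4.17]{katokbook}, after which the classical Livschitz argument of \cite[Theorem 19.2.1]{katokbook} goes through verbatim. Your write-up is a faithful expansion of this sketch, with the only cosmetic difference that you route the closing lemma through $\sigma_f$ on $M_f$ rather than invoking the Local Product Structure on $M$ directly; both lead to the same exponential shadowing estimate on the forward segment.
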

As stating in \cite[Section 4]{AGGS2022},  the proof  is quite similar to the Livschitz Theorem for transitive Anosov diffeomorphisms whose  proof can be found in \cite[Corollary 6.4.17 and Theorem 19.2.1]{katokbook}. Note that we still have the Local Product Structure (see Proposition \ref{2.1 prop: local product sturcture}) which  is useful to obtain the exponential Anosov closing lemma (see \cite[Corollary 6.4.17 ]{katokbook}) and hence the Livschitz Theorem for Anosov maps.

\begin{theorem}[Livschitz Theorem on inverse limit space]\label{2.3 thm: Livshitz for inverse limit space}
	Let $f\in\mathcal{A}^{1+\alpha}(M)$ be transitive and $\sigma_f:M_f\to M_f$ be the shift homeomorphism on the inverse limit space.  Assume that $\va_1, \va_2 :M_f\to \RR$  are two H$\ddot{o}$lder functions  with 
	$$\sum_{i=1}^{\pi(\bar{p})}\va_1\big(\sigma_f^i(\bar{p})\big)= \sum_{i=1}^{\pi(\bar{p})}\va_2\big(\sigma_f^i(\bar{p})\big), \quad \forall \bar{p}\in {\rm Per}(\sigma_f),$$
	where $\pi(\bar{p})$ is the period of $\bar{p}$. Then $\va_1$ is H$\ddot{o}$lder-cohomology to $\va_2$, i.e.,  there exists a H$\ddot{o}$lder function $u:M_f\to \RR$ such that $$\va_1(\bar{x})=\va_2(\bar{x})+u\circ \sigma_f(\bar{x})-u(\bar{x}),\quad \forall \bar{x}\in M_f.$$
\end{theorem}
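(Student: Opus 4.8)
The plan is to reduce the Livschitz theorem on the inverse limit space $M_f$ to the already-known Livschitz theorem for the Anosov map $f$ itself (Theorem \ref{2.3 thm Livschit for Anosov map}). The first obstacle is that the hypotheses are stated in terms of H\"older functions on $M_f$, which need not descend to functions on $M$ — a function on $M_f$ genuinely depends on the whole backward orbit. So the strategy will be to run the Livschitz machinery directly on $(M_f,\sigma_f)$, mimicking the classical proof, rather than pushing everything down to $M$. The key structural input is that $\sigma_f:M_f\to M_f$ is a homeomorphism which is hyperbolic in a suitable sense: $M_f$ carries (local) stable sets coming from $\mathcal{F}^s_f$ and (local) unstable sets coming from the unstable leaves $\mathcal{F}^u_{f,R}(x_0,\tilde x)$, which by Proposition \ref{2.1 prop: unstable leaf continuity} vary continuously and by Proposition \ref{2.1 prop: local product sturcture} give a genuine local product structure on $M_f$. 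Together with transitivity of $\sigma_f$ (inherited from transitivity of $f$) and the contraction/expansion estimates of Definition \ref{2.1 defprz}, this is exactly the package used in the hyperbolic Livschitz proof.

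First I would set $\va=\va_1-\va_2$, a H\"older function on $M_f$ whose sums over all $\sigma_f$-periodic orbits vanish, and aim to produce H\"older $u:M_f\to\RR$ with $\va=u\circ\sigma_f-u$. Second, I would establish the exponential closing lemma for $\sigma_f$: given a point whose $\sigma_f$-orbit $\e$-returns after time $n$, the local product structure (Proposition \ref{2.1 prop: local product sturcture}) produces a genuine periodic orbit of period $n$ that shadows it exponentially in the metric $\bar d$, with the shadowing constant controlled by $\mu^{\min(i,n-i)}$ along the orbit segment. This is where one uses that distances in $M_f$ are weighted sums $\sum d(x_i,y_i)/2^{|i|}$ together with the stable contraction forwards and unstable contraction backwards. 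Third, I would define $u$ on a dense orbit: fix $\bar z$ with dense $\sigma_f$-orbit, set $u(\sigma_f^n\bar z)=\sum_{k=0}^{n-1}\va(\sigma_f^k\bar z)$ (with the usual sign convention for $n<0$), and show $u$ is uniformly H\"older-continuous along the dense orbit. The estimate for $|u(\sigma_f^n\bar z)-u(\sigma_f^m\bar z)|$ when $\sigma_f^n\bar z$ is close to $\sigma_f^m\bar z$ uses the closing lemma to insert a periodic orbit, kills the Birkhoff sum over it by the periodic-data hypothesis, and bounds the difference by a convergent geometric series of H\"older increments $\operatorname{const}\cdot d(\cdot,\cdot)^\alpha\cdot\sum\mu^{\alpha\min(i,n-i)}$. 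Fourth, $u$ extends uniquely to a H\"older function on all of $M_f$ by density, and the cohomological equation $\va=u\circ\sigma_f-u$ holds on the dense orbit, hence everywhere by continuity.

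The main obstacle, as noted in the remark preceding Theorem \ref{2.3 thm: Livshitz for inverse limit space}, is really just the bookkeeping needed to check that the standard hyperbolic Livschitz proof transfers verbatim to $(M_f,\sigma_f,\bar d)$: one must verify that $\sigma_f$ genuinely behaves like an Anosov homeomorphism with respect to $\bar d$, i.e. that the stable/unstable decomposition given by $\mathcal{F}^s_f$ and the backward-orbit-dependent unstable leaves yields uniform exponential contraction in both time directions on $M_f$, and that the holonomies are H\"older. Once the closing lemma and the uniform H\"older control of Birkhoff sums along stable/unstable sets of $\sigma_f$ are in place, the construction of $u$ and the verification of the cohomology equation are routine, exactly as in \cite[Theorem 19.2.1]{katokbook}. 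Since the contraction estimates along $E^s_f$ and $E^u_f(x_0,\tilde x)$ in Definition \ref{2.1 defprz} are uniform and the weighting $2^{-|i|}$ only helps, I do not expect any genuinely new difficulty beyond writing the adaptation carefully; hence I would present this proof as "the same as the classical one, with $M$ replaced by $M_f$ and $f$ by $\sigma_f$," spelling out only the closing lemma and the H\"older estimate in detail.
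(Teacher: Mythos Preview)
Your approach is correct but takes a genuinely different route from the paper. You abandon the reduction to $M$ after noting that functions on $M_f$ need not descend, and instead run the full Livschitz machinery (closing lemma, Birkhoff-sum construction along a dense orbit, H\"older extension) directly on $(M_f,\sigma_f,\bar d)$. This works: $\sigma_f$ is a transitive homeomorphism with a genuine local product structure (Proposition~\ref{2.1 prop: local product sturcture}) and uniform hyperbolic rates, so the classical argument of \cite[Theorem 19.2.1]{katokbook} transfers as you outline.

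The paper instead makes the reduction to $M$ go through. Its key step is Lemma~\ref{2.3 lem: Cohomology to forward orbits function}: every H\"older $\varphi:M_f\to\RR$ is H\"older-cohomologous to a function depending only on $(x_i)_{i\ge 0}$, hence descending to $M$. The construction is the Bowen trick from symbolic dynamics \cite{Bowen1975}: fix a choice of backward orbit $\bar a(x)$ for each $x\in M$, define $r(\bar y)$ by splicing the forward half of $\bar y$ with $\bar a(y_0)$, and set $u(\bar y)=\sum_{j\ge 0}\bigl(\varphi(\sigma_f^j\bar y)-\varphi(\sigma_f^j r(\bar y))\bigr)$; then $\psi=\varphi+u\circ\sigma_f-u$ depends only on forward coordinates. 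The nontrivial part is checking $r$ and $u$ are H\"older in $\bar d$, which uses the local product structure and a careful estimate splitting sums at a scale $\e^{1/2}$. Once both $\varphi_1,\varphi_2$ are replaced by functions on $M$, the already-stated Theorem~\ref{2.3 thm Livschit for Anosov map} finishes the job.

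What each buys: the paper's route is shorter on the page because it black-boxes the Livschitz theorem on $M$ and isolates the new content (the Bowen reduction in the non-symbolic setting) into one lemma. Your route is more self-contained and conceptually direct, but requires re-deriving the closing lemma and H\"older estimates in $(M_f,\bar d)$ from scratch. Neither has a gap.
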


Since we already have Theorem \ref{2.3 thm Livschit for Anosov map}, we need just reduce  Theorem \ref{2.3 thm: Livshitz for inverse limit space} to the above case. And this can be done by the following lemma. Briefly, it  says that a function on the inverse limit space can be  cohomology  to a function on the base manifold by "cutting" the backward orbit.  We mention that this method originated from the case of the shift homeomorphism of finite type  in \cite[Section 1]{Bowen1975}. But our case is not discrete and we need more delicate calculation with the help of the Local Product Structure (see Proposition \ref{2.1 prop: local product sturcture}).  Note that this lemma only need $f$ be $C^{1}$-smooth.

\begin{lemma}\label{2.3 lem: Cohomology to forward orbits function}
	Let $f\in\mathcal{A}^1(M)$ and	$\va:M_f\to \RR$ be a  H$\ddot{o}$lder function. Then there exists a H$\ddot{o}$lder function $\psi:M_f\to \RR$ H$\ddot{o}$lder-cohomology   to $\va$ such that for all $\bar{x},\bar{y}\in M_f$, if  $x_i=y_i$ for all $i\geq 0$, then $\psi(\bar{x})=\psi(\bar{y})$. 
	Especially, $\psi$ can descend to $M$.
\end{lemma}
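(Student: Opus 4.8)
The plan is to construct $\psi$ explicitly by a telescoping-sum correction that "forgets" the negative coordinates of an orbit, exactly in the spirit of Bowen's classical argument for subshifts of finite type, but carried out in the metric (non-symbolic) setting with the Local Product Structure (Proposition \ref{2.1 prop: local product sturcture}) playing the role that the cylinder partition plays in the symbolic case. Concretely, fix a reference point and use the Local Product Structure to choose, for each $\bar x=(x_i)\in M_f$, a "normalized" orbit $r(\bar x)=(\bar x)^*$ whose negative part is slaved to the negative part of a fixed point in the same local stable set as $x_0$; then set
$$
u(\bar x)=\sum_{n=0}^{\infty}\Big(\va\big(\sigma_f^{-n}\bar x\big)-\va\big(\sigma_f^{-n} r(\bar x)\big)\Big),
\qquad
\psi=\va+u\circ\sigma_f-u .
$$
The point is that $\sigma_f^{-n}\bar x$ and $\sigma_f^{-n}r(\bar x)$ agree in all coordinates $\geq n$ and their $x_0$-coordinates lie on a common local unstable leaf whose backward iterates contract; combined with Hölder continuity of $\va$ this makes the series converge geometrically and $u$ Hölder. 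The telescoping then arranges that $\psi(\bar x)$ depends only on $(x_i)_{i\geq 0}$.

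The steps, in order: (1) Using Proposition \ref{2.1 prop: local product sturcture}, cover $M$ by finitely many "product boxes" and in each box designate a base orbit, so that for every $\bar x$ there is a canonical orbit $r(\bar x)$ with $(r(\bar x))_i = x_i$ for all $i\geq 0$ (or for all $i\geq N$ for a fixed $N$), with $r(\bar x)$ depending Hölder-continuously on $\bar x$; the backward orbits of $x_0$ and of $(r(\bar x))_0$ stay within a fixed small $\delta$ of each other, so they are on the same local unstable leaf and $d\big((\sigma_f^{-n}\bar x)_0,(\sigma_f^{-n}r(\bar x))_0\big)\le C\mu^{n}$ by the unstable contraction under backward iteration. (2) Show the defining series for $u$ converges uniformly and $u$ is Hölder: estimate each term by $|\va(\sigma_f^{-n}\bar x)-\va(\sigma_f^{-n} r(\bar x))|\le [\va]_{\alpha}\,\bar d(\sigma_f^{-n}\bar x,\sigma_f^{-n}r(\bar x))^{\alpha}$ and bound the latter distance geometrically using that the two orbits agree forward and contract backward; Hölder dependence of $u$ on $\bar x$ uses Hölder dependence of $r(\bar x)$ and a standard split-the-sum argument (first finitely many terms controlled by continuity, the tail by the uniform geometric bound). (3) Verify the cohomology identity $\psi=\va+u\circ\sigma_f-u$ holds by construction, so $\psi\sim_{\sigma_f}\va$. (4) Check that $\psi(\bar x)$ depends only on the forward coordinates: writing out $u(\sigma_f\bar x)-u(\bar x)$ as a telescoping difference, the negative-coordinate contributions cancel against $r$'s negative coordinates, leaving an expression in $(x_i)_{i\geq 0}$ only; hence $\psi$ factors through $\pi_0:M_f\to M$, giving the "descends to $M$" conclusion. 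If one prefers $r(\bar x)$ to match $\bar x$ only for $i\geq N$, one absorbs the finitely many mismatched coordinates $x_0,\dots,x_{N-1}$, which are themselves functions of $x_0$ under $f$, so $\psi$ still factors through $\pi_0$.

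The main obstacle I expect is Step (1)–(2): making the selection $\bar x\mapsto r(\bar x)$ genuinely Hölder-continuous (not merely continuous) while being compatible with the shift, and getting clean geometric decay of $\bar d(\sigma_f^{-n}\bar x,\sigma_f^{-n}r(\bar x))$ in the metric $\bar d$ on $M_f$ rather than just in the $x_0$-coordinate. The subtlety is that $\bar d$ weights all coordinates, and although the forward coordinates of the two orbits agree exactly, one must still control how fast the backward coordinates come together; this is exactly where the Local Product Structure (Proposition \ref{2.1 prop: local product sturcture}) — specifically the uniqueness and continuity of the intersection point $z(x_0,\tilde y)$ together with the existence of a backward orbit staying $\delta$-close — does the work, and where care is needed because, unlike the diffeomorphism case, the "unstable" object depends on the chosen negative orbit. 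Once the decay is in hand, the remaining estimates are the routine Livschitz-type bookkeeping and the reduction to Theorem \ref{2.3 thm Livschit for Anosov map} is immediate: apply that theorem to $\psi_1,\psi_2$ obtained from $\va_1,\va_2$, noting that the periodic sums are unchanged since $\sum_{i=1}^{\pi(\bar p)}(u\circ\sigma_f-u)(\sigma_f^i\bar p)=0$ around a periodic orbit.
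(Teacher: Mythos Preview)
Your overall shape is Bowen's argument, but the direction of iteration in your definition of $u$ is backwards, and this breaks the convergence. You set
\[
u(\bar x)=\sum_{n\ge 0}\Big(\va\big(\sigma_f^{-n}\bar x\big)-\va\big(\sigma_f^{-n} r(\bar x)\big)\Big),
\]
with $r(\bar x)$ agreeing with $\bar x$ for $i\ge 0$ and having a reference negative tail depending only on $x_0$. Under $\sigma_f^{-n}$ the two orbits agree only for indices $i\ge n$; in particular their $0$-th coordinates are $x_{-n}$ and $(r(\bar x))_{-n}$, two \emph{different} $n$-th preimages of $x_0$. For a non-invertible Anosov map these preimages are in general spread over $M$ and are certainly not on a common local unstable leaf, so your claimed bound $d\big(x_{-n},(r(\bar x))_{-n}\big)\le C\mu^n$ is false. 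Consequently $\bar d\big(\sigma_f^{-n}\bar x,\sigma_f^{-n}r(\bar x)\big)$ does not decay (it is bounded below by the $i=0$ term, which is $O(1)$), and your series for $u$ diverges. The intuition ``backward iterates contract along the unstable leaf'' applies to two orbits that share the \emph{same} negative tail up to a local unstable holonomy; here you have two orbits that share the same \emph{positive} tail, which is the opposite situation.

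The paper (following Bowen) sums in the \emph{forward} direction:
\[
u(\bar y)=\sum_{j\ge 0}\Big(\va\big(\sigma_f^{j}\bar y\big)-\va\big(\sigma_f^{j} r(\bar y)\big)\Big).
\]
Since $\bar y$ and $r(\bar y)$ agree for $i\ge 0$, after shifting forward by $j$ they agree for $i\ge -j$, so purely from the definition of $\bar d$ one gets $\bar d\big(\sigma_f^{j}\bar y,\sigma_f^{j}r(\bar y)\big)\le C\cdot 2^{-j}$ and the series converges geometrically; no dynamical contraction estimate is needed for this step. The Local Product Structure enters later, not to make the series converge, but to prove that $r:M_f\to M_f$ is H\"older (controlling $d\big(r(\bar y)_{-i},r(\bar z)_{-i}\big)$ for nearby $y_0,z_0$ via the LPS intersection point), which is what you need for the split-the-sum H\"older estimate on $u$. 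Your Steps (3)--(4) are fine once the sign of the iterate is fixed: one checks directly that
\[
\psi(\bar y)=\va\big(r(\bar y)\big)+\sum_{j\ge 0}\Big(\va\big(\sigma_f^{j+1}r(\bar y)\big)-\va\big(\sigma_f^{j}r(\sigma_f\bar y)\big)\Big),
\]
which visibly depends only on $(y_i)_{i\ge 0}$.
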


\begin{proof}
	For any $x\in M$, we fix an orbit $\bar{a}(x)=(a_i(x))\in M_f$ with $a_0(x)=x$.	Let $r: M_f\to M_f$ be defined as $r(\bar{y})=\bar{y}^*$ such that $(\bar{y}^*)_i=y_i$ for $i\geq 0$ and   $(\bar{y}^*)_i=a_{i}(y_0)$ for $i\leq 0$. Let 
	$$u(\bar{y})=\sum_{j=0}^{+\infty}\left( \va \circ \sigma_f^j(\bar{y})-   \va \circ \sigma_f^j\circ r(\bar{y})  \right).$$
	
	Firstly, we claim that $u: M\to \RR$ is well defined and bounded. Indeed, let $\va$ be H$\ddot{\rm o}$lder with constant $C_{\va}>0$ and exponent $\alpha\in(0,1)$. Then  for any $j>0$, since $\big(\sigma_f^j(\bar{y})\big)_i= \big(\sigma_f^j\circ r(\bar{y}) \big)_i$ for $i\geq -j$, we have
	\begin{align*}
		\left|  \va \circ \sigma_f^j(\bar{y}) -   \va \circ \sigma_f^j\circ r(\bar{y})    \right| 
		\leq C_{\va}\cdot \bar{d}^{\alpha}\left(  \sigma_f^j(\bar{y}) \sigma_f^j\circ r(\bar{y}) \right)
		\leq C_{\va} \cdot \left(  \sum_{i=j+1}^{+\infty} \frac{1}{2^i}   \right) ^{\alpha}
		\leq C_{\va} \cdot 2^{-j\alpha}.
	\end{align*}
	It follows that $u(\bar{y})\leq C_{\va} \cdot \sum_{j=0}^{+\infty}2^{-j\alpha}<\infty$.
	
	Then we prove that $r:M_f \to M_f$ is H$\ddot{\rm o}$lder continuous.  Let $\delta_0$ satisfy the size of Local Product Structure (see  Proposition \ref{2.1 prop: local product sturcture}).  Then there exists $C_1=C_1(\delta)>0$ such that for any $\e<\delta_0$ and any two points $y_0,z_0$ with $d(y_0,z_0)<\e$, one has that $\mathcalf^s(y_0,C_1\e)\cap \mathcalf^u(z_0,\bar{z}, C_1\e)$ is a single point set, denote this point by $w_0=w_0(y_0,\bar{z})$ and the orbit $\bar{w}=(w_i)$ given by Proposition \ref{2.1 prop: local product sturcture}.   Let $\lambda :={\rm sup}_{\bar{x}\in\TT^d_f}\|Df^{-1}|_{E_f^s(\bar{x})}\| $. Then for $0\leq i\leq i_{\e}:= [{\rm log}_{\lambda}\frac{\delta_0}{2C_1\e}]$, one has
	\begin{align*}
		d\left( r(\bar{y})_{-i}, r(\bar{z})_{-i}  \right)
		\leq d_{\mathcalf^s}\big(r(\bar{y})_{-i}, w_{-i} \big) +d_{\mathcalf^u} \big(r(\bar{z})_{-i}, w_{-i} \big)
		\leq 2 C_1\e\lambda^i<\delta_0.
	\end{align*}
	So when $\bar{d}(\bar{y},\bar{z})=\e\ll\delta_0$, we have
	\begin{align*}
		\bar{d}\big(r(\bar{y}), r(\bar{z})\big)
		&=\sum_{i=i_{\e}+1}^{+\infty}\frac{d(r(\bar{y})_{-i},    r(\bar{z})_{-i})}{2^i}
		+\sum_{i=1}^{i_{\e}}\frac{d(   r(\bar{y})_{-i},    r(\bar{z})_{-i})}{2^i}
		+\sum_{i=0}^{+\infty}\frac{d(y_i,    z_i)}{2^i},\\
		&\leq C\cdot 2^{-i_{\e}}  +  2C_1 \e \cdot \sum_{i=1}^{i_{\e}}\frac{\lambda^i}{2^i} + \bar{d}(\bar{y},\bar{z}),\\
		&\leq C \cdot \left(  \frac{2C_1\e}{\delta_0}  \right)^{\frac{ln2}{ln \lambda}  } + 2C\e \left(  \frac{2C_1\e}{\delta_0}  \right)^{-\frac{ln\frac{\lambda}{2}}{ln\lambda}} +\e,
	\end{align*}
	for some $C>1$. We can assume that $\lambda>2$, otherwise the sum $\sum_{i=1}^{i_{\e}}\frac{\lambda^i}{2^i}$ is controlled by a constant.  Then there exist  $C_{\delta_0}$ and  $0<\beta<1$ such that 
	\begin{align}
		\bar{d}\big(r(\bar{y}), r(\bar{z})\big)	\leq C_{\delta_0}\cdot \e^{\beta}= C_{\delta_0}\cdot \bar{d}(\bar{y},\bar{z})^{\beta}. \label{eq. 2.4.1 livsic} 
	\end{align}
	
	Now we show that $u$ is  H$\ddot{\rm o}$lder continuous.  Note that for all $\bar{y}=(y_i), \bar{z}=(z_i) \in  M_f$, one has
	\begin{align}
		\bar{d}\left(\sigma_f(\bar{y}),\sigma_f(\bar{z}) \right) \leq 2\bar{d}(\bar{y},\bar{z}).\label{eq. 2.4.2 livsic}
	\end{align}   
	Indeed, it follows from the next two equations,
	\begin{align*}
		\sum_{i=1}^{+\infty} \frac{ d\left( (\sigma_f\bar{y})_{-i},  (\sigma_f\bar{z})_{-i}       \right)}{2^i}
		= \frac{1}{2}\cdot\sum_{i=1}^{+\infty} \frac{ d\left( y_{1-i}, z_{1-i}       \right)}{2^{i-1}}
		=\frac{1}{2}\cdot\sum_{i=0}^{+\infty} \frac{ d\left( y_{-i}, z_{-i}       \right)}{2^{i}},
	\end{align*}   
	and
	\begin{align*}
		\sum_{i=0}^{+\infty} \frac{ d\left( (\sigma_f\bar{y})_i,  (\sigma_f\bar{z})_i       \right)}{2^i}
		= 2\cdot\sum_{i=0}^{+\infty} \frac{ d\left( y_{i+1}, z_{i+1}       \right)}{2^{i+1}}
		= 2\cdot\sum_{i=1}^{+\infty} \frac{ d\left( y_{i}, z_{i}       \right)}{2^{i}}.
	\end{align*}

	Let $\e:=\bar{d}(\bar{y},\bar{z})$ and $\e_0=\e^{1/2}$. Then for $0\leq j\leq j_0:=  [{\rm log}_2\frac{\e_0}{\e}]$ by \eqref{eq. 2.4.2 livsic}, 
	\begin{align}
		\left|   \va \circ \sigma_f^j(\bar{y}) -   \va \circ \sigma_f^j(\bar{z}) \right|  \leq C_{\va} \cdot (2^j \e)^{\alpha} \leq C_{\va} \cdot \e_0^{\alpha}. \label{eq. 2.4.3 livsic}
	\end{align}
	Therefore by the choice of $j_0$ and equations \eqref{eq. 2.4.1 livsic} and \eqref{eq. 2.4.3 livsic}, we have
	\begin{align*}
		|u(\bar{y})- u(\bar{z})| 
		&\leq \sum_{j=0}^{j_0}\left|  \va \circ \sigma_f^j(\bar{y}) -   \va \circ \sigma_f^j(\bar{z})   \right|
		+ \sum_{j=0}^{j_0}\left|  \va \circ \sigma_f^j\circ r (\bar{y}) -   \va \circ \sigma_f^j\circ r (\bar{z})   \right|\\
		&+ \sum_{j=j_0+1}^{\infty}\left|    \va \circ \sigma_f^j(\bar{y})-   \va \circ \sigma_f^j\circ r(\bar{y})     \right|
		+  \sum_{j=j_0+1}^{\infty}\left|    \va \circ \sigma_f^j(\bar{z})-   \va \circ \sigma_f^j\circ r(\bar{z})     \right|,\\
		&\leq  \sum_{j=0}^{j_0} C_{\va} \cdot (2^j \e)^{\alpha} +   \sum_{j=0}^{j_0} C_{\va} C_{\delta_0} \cdot (2^j \e)^{\alpha\beta}
		+ 2\sum_{j=j_0+1}^{\infty} C_{\va} \cdot 2^{-j\alpha},\\
		&\leq C\cdot 2^{j_0\alpha}\e^{\alpha}   +   C\cdot 2^{j_0\alpha\beta}\e^{\alpha\beta}  +    C\cdot 2^{-j_0\alpha},\\
		&\leq C\cdot \e^{\alpha/2}   +   C\cdot \e^{\alpha\beta/2}   +     C\cdot \e^{\alpha/2}  
		\leq C\cdot \bar{d}(\bar{y},\bar{z})^{\alpha_0},
	\end{align*}
	where $0<\alpha_0<1$ and both $\alpha_0$ and $C$ only depend on $\va$ and $f$. Hence $u$ is H$\ddot{\rm o}$lder continuous.
	
	Finally, let
	\begin{align*}
		\psi(\bar{y})&= \va(\bar{y})+u\circ \sigma_f(\bar{y})- u(\bar{y}),\\
		&= \va(r(\bar{y}))+ \sum_{j=0}^{+\infty}\left(   \va(\sigma_f^{j+1}\circ r(\bar{y}))-     \va(\sigma_f^j\circ r(\sigma_f\bar{y}))  \right).
	\end{align*}
	It follows that  $\psi(\bar{y})$ only depends on $\{y_i \}_{i\geq 0}$ and is H$\ddot{\rm o}$lder continuous.
\end{proof}

\begin{proof}[Proof of Theorem \ref{2.3 thm: Livshitz for inverse limit space} ]
	By Lemma \ref{2.3 lem: Cohomology to forward orbits function}, there exist two H$\ddot{\rm o}$lder functions $u_1$ and $u_2$ defined on $ M_f$ such that $\psi_1=\va_1+u_1\circ\sigma_f-u_1$ and $\psi_2=\va_2+u_2\circ\sigma_f-u_2$ are both independent of  negative orbits. This means that $\psi_1$ and $\psi_2$ are actually functions defined on  $ M$. Then by  the Livschitz Theorem for Anosov maps, there exists a  H$\ddot{\rm o}$lder function $v: M\to\RR$ such that $\psi_1=\psi_2+v\circ f-v$. Denote the lifting of $v$ on the inverse limit space $M_f$ by $v_0$. It is clear that $v_0: M_f\to \RR$ is  H$\ddot{\rm o}$lder continuous under the metric $\bar{d}(\cdot,\cdot)$. Now, let $u=v_0+u_2-u_1$, we have that $$\va_1=\va_2 +u\circ \sigma_f - u,$$
	where $u: M_f\to \RR$ is H$\ddot{\rm o}$lder continuous.
\end{proof}

\subsection{Topological classification for higher-dimensional Anosov maps}\label{subsec 2.2}
As mentioned in the Introduction, Theorem \ref{1 thm main thm} and Corollary \ref{1 cor h smooth along stable} also hold for $d$-torus $(d\geq 2)$  on the assumption that $f\in\mathcal{N}^r_1(\TT^d)$ $(r>1)$ is irreducible. In this subsection, we state these two results and give the frame to prove them and also Theorem \ref{1 thm main thm}. 

We say that an Anosov map $f\in\mathcal{A}^1(\TT^d)$ is \textit{irreducible}, if its linearization $A=f_*: \pi_1(\TT^d)\to \pi_1(\TT^d)$ has irreducible characteristic polynomial over $\mathbb{Q}$ as a matrix $A\in GL_d(\RR)\cap M_d(\ZZ)$.

It is clear that the Theorem \ref{1 thm main thm} is a direct corollary of the following one.

\begin{theorem}\label{2 thm main theorem}
	Let $f,g\in \mathcal{N}^{r}_1(\TT^d)\ (r>1)$ be homotopic and  irreducible. Then $f$ is conjugate to $g$ if and only if  $f$ and $g$ admit the same stable periodic data. 
\end{theorem}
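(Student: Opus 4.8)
The plan is to prove the two implications separately, and to organize the harder "sufficiency" direction (same stable periodic data $\Rightarrow$ conjugacy) around building a conjugacy on the inverse limit space first, then pushing it down. For the "necessity" direction, suppose $h\in{\rm Home}_0(\TT^d)$ satisfies $h\circ f=g\circ h$. Then $h$ maps periodic points to periodic points and stable leaves to stable leaves, and along a periodic orbit $p=f^n(p)$ the conjugacy intertwines $Df^n$ and $Dg^n$ restricted to the (one-dimensional) stable bundles; since $h$ is known to be $C^r$ along the stable foliation (this is Corollary \ref{1 cor h smooth along stable}, or can be extracted from the Journé-type regularity argument along $\mathcal{F}^s$), the stable derivative cocycles are conjugate by a positive continuous function, forcing equality of the stable Lyapunov exponents at corresponding periodic points. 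One must also check that the matching of periodic points induced by $h$ agrees with the one induced by a leaf conjugacy, which follows because $h$ itself is a leaf conjugacy and any leaf conjugacy induces the canonical bijection ${\rm Per}(f)\to{\rm Per}(g)$ via Proposition \ref{2.2 prop: the conjugacies on leaf spaces} and Remark \ref{2.2 rmk: redefine p.d.s.}.

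For the sufficiency direction I would proceed as follows. First, by Remark \ref{2.2 rmk: redefine p.d.s.} and Corollary \ref{2.2 cor: p.d.s on leaf space to inverse limit space }, the hypothesis of equal stable periodic data can be translated into: there exist lifts $F,G$ of $f,g$ and the induced conjugacy $\bar h:\TT^d_f\to\TT^d_g$ between the shift maps with $\lambda^s(f,\bar p)=\lambda^s(g,\bar h(\bar p))$ for every $\bar p\in{\rm Per}(\sigma_f)$. Second, consider the two Hölder functions $\varphi_f=\log\|Df|_{E^s_f}\|\circ\pi_0$ and $\varphi_g=\log\|Dg|_{E^s_g}\|\circ\pi_0\circ\bar h$ on $\TT^d_f$; their periodic sums over $\sigma_f$-orbits agree by the translated hypothesis, so by the Livschitz Theorem on the inverse limit space (Theorem \ref{2.3 thm: Livshitz for inverse limit space}) they are Hölder-cohomologous via $\sigma_f$. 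This cohomological equation is exactly what is needed to match the contraction rates along stable leaves uniformly, not just at periodic points. Third — and this is where the construction happens, deferred to Section \ref{sec 3} — one uses this to build an affine-type structure along the stable foliations (coordinates in which $F$ acts affinely on each stable leaf) and then uses the fact that we already have a leaf conjugacy together with the matching of the normal/unstable expansion to deform the leaf conjugacy into an actual conjugacy $H$ on $\RR^2$ (resp.\ $\RR^d$) commuting with $F$ and $G$; since $f$ and $g$ are irreducible, $H$ descends to a homeomorphism $h$ of the torus homotopic to the identity with $h\circ f=g\circ h$.

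The main obstacle is the third step: producing the genuine conjugacy from the leaf conjugacy plus the cohomology relation. The leaf conjugacy $h_0$ does not commute with $f,g$; one must adjust it along each stable leaf so that the dynamics match. The natural idea is, on each stable leaf of $F$, to define the conjugacy as a limit of the form $\lim_{n\to\infty} G_s^{-n}\circ(\text{leaf map})\circ F_s^n$ composed with a stable-leaf identification built from the affine structure, and to show this limit exists and is a homeomorphism using the uniform contraction and the cohomology-controlled distortion. The delicate points will be (a) convergence and continuity of this limit in the non-special case, where unstable directions depend on the negative orbit, so one must work on the inverse limit space or the universal cover rather than on $\TT^d$ directly; (b) injectivity of the resulting map, which in the leaf-conjugacy construction (Proposition \ref{2.2 prop leaf conjugate}) already required ${\rm dim}\,E^s=1$, so irreducibility and the codimension-one hypothesis will be used again here; and (c) checking that the constructed $H$ is bounded from ${\rm Id}_{\RR^d}$ and $\ZZ^d$-equivariant, so that it descends. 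I expect the bulk of Section \ref{sec 3} to be devoted to (a) and (b).
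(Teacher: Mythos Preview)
Your necessity argument is circular. You invoke Corollary \ref{1 cor h smooth along stable} to conclude that $h$ is $C^r$ along the stable foliation, and then read off equality of stable eigenvalues from the conjugacy of derivative cocycles. But Corollary \ref{1 cor h smooth along stable} is deduced \emph{from} the necessity direction of this theorem, not the other way around: the paper first proves that the stable periodic data agree, and only then upgrades $h$ to $C^r$ along $\mathcal{F}^s_f$. A priori $h$ is merely H\"older, so there is no $Dh|_{E^s_f}$ to compare eigenvalues with. The paper's actual route is the dichotomy of Proposition \ref{1 prop dichotomy of non-invertible Anosov map}: if $f$ is special, then both $f$ and $g$ are conjugate to $A$ and Theorem \ref{1 theorem AGGS} forces every stable periodic exponent to equal $\lambda^s(A)$; if $f$ is $u$-accessible, one argues by contradiction (Lemma \ref{5 lemma: differential at p}) that a mismatch $\lambda^s_f(p)>\lambda^s_g(h(p))$ forces $\|Dh|_{E^s_f(p)}\|=0$ via a direct growth estimate using only continuity of $h$, and then $u$-accessibility together with $C^1$ holonomy (Lemma \ref{5 lemma: u-holonomy send differentiable}) spreads this zero derivative everywhere, contradicting that $h$ is a homeomorphism. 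You are missing this entire mechanism.

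Your sufficiency outline has the right ingredients (translate to the inverse limit, apply Livschitz there, build affine structures on stable leaves) but misidentifies the target of the construction. You propose to \emph{build} a conjugacy $H$ by deforming the leaf conjugacy via a limit $\lim G_s^{-n}\circ(\cdot)\circ F_s^n$. In the paper the conjugacy $H:\RR^d\to\RR^d$ already exists for free by Proposition \ref{2.1 prop lifting conjugate}; the only question is whether $H$ commutes with the $\ZZ^d$-action so that it descends. Section \ref{sec 3} uses the affine structure and the Livschitz transfer function to show $H$ is differentiable along $\tilde{\mathcal{F}}^s_f$ with uniformly continuous derivative, and then runs a direct contradiction argument (Lemmas \ref{3 lem: control the deviation} and \ref{3 lem: keepdeviation}, using minimality of $\mathcal{F}^s_f$ from irreducibility and the asymptotic $\ZZ^d$-equivariance of Proposition \ref{2.1 prop nmH}) to rule out $H(x+n)\neq H(x)+n$. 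Your limit construction is an unnecessary detour, and your worries about injectivity and convergence of that limit are not the real obstacles.
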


Here we mention that in this $d$-torus case, we also have leaf conjugacy (see Proposition \ref{2.2 prop leaf conjugate}) to match the stable periodic data of $f$ and $g$ in the sense of \eqref{eq.1.pds}. The case that $g$ is special has been studied in \cite{AGGS2022} as the following theorem.

\begin{theorem}[\cite{AGGS2022}]\label{1 theorem AGGS}
	Let $f\in  \mathcal{N}^{r}_1(\TT^d)\ (r>1)$ be irreducible. 
	Then $f$ is conjugate to its linearization $A$, if and only if,  $f$  admits the same stable periodic data as $A$.    Moreover, both conditions implies that the conjugacy is smooth along each leaf of the stable foliation.
\end{theorem}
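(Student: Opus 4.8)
Because $A$ is linear, $\lambda^s_A$ is the constant $c:=\log\|A|_{L^s}\|$ at every periodic point, so the hypothesis that $f$ has the same stable periodic data as $A$ says exactly that $\frac{1}{n}\log\big|Df^n|_{E^s_f}(p)\big|=c$ for every $p=f^n(p)$. The main content is that this numerical hypothesis produces a conjugacy $h\in{\rm Home}_0(\TT^d)$ with $h\circ f=A\circ h$ which is moreover $C^r$ along $\mathcal{F}^s_f$; the converse (a conjugacy forces the stable data to agree, using that $f$ is then special by Proposition \ref{2.1 prop special and conjugate}) I address at the end. The plan for the main implication is to take a leaf conjugacy between $f$ and $A$ and \emph{deform it, leaf by leaf, into a genuine conjugacy}, the deformation being governed by the solution of a cohomological equation that the periodic-data hypothesis makes solvable.

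Concretely, fix a leaf conjugacy $h_0\in{\rm Home}_0(\TT^d)$ between $f$ and $A$ (Proposition \ref{2.2 prop leaf conjugate}), lifts $F,A$ of $f,A$ together with the conjugacy $H$ of Proposition \ref{2.1 prop lifting conjugate} (recall $H$ is already commutative with deck transformations along stable leaves, Proposition \ref{2.1 prop H is Zd restricted on stable}), and the induced conjugacy $\bar h:\TT^d_f\to\TT^d_A$ between $\sigma_f$ and $\sigma_A$ (Proposition \ref{2.2 prop: the relationship of 3 conjugacies inverse limit space}). Let $\varphi:\TT^d_f\to\RR$ be $\varphi(\bar x)=\log\|D_{x_0}f|_{E^s_f}\|$, which is well defined since $E^s_f$ is a continuous subbundle independent of the orbit and is H\"older because $r>1$. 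The hypothesis says precisely that $\sum_{i=1}^{n}\varphi(\sigma_f^i\bar p)=nc$ for every $\sigma_f$-periodic $\bar p$, so the Livschitz Theorem on the inverse limit space (Theorem \ref{2.3 thm: Livshitz for inverse limit space}, applicable because $f$ is transitive and $C^{1+\alpha}$; cf.\ also Corollary \ref{2.2 cor: p.d.s on leaf space to inverse limit space }) yields a H\"older $u:\TT^d_f\to\RR$ with $\varphi=c+u\circ\sigma_f-u$. Equivalently, rescaling $E^s_f$ by the factor $e^{-u}$ makes $Df|_{E^s_f}$ a constant contraction by $\|A|_{L^s}\|$, matching the linear model.

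I would then use $u$ to linearize $f$ along its stable leaves. Since $f$ moves each leaf onto another leaf, the natural setting is $\RR^d$ together with the inverse limit (where $\sigma_f$ is invertible): for each orbit $\bar x$ one builds a diffeomorphism $\theta_{\bar x}$ from the stable leaf through $x_0$ onto the affine line $\tilde{L}^s$, depending H\"older-continuously on $\bar x$, and intertwining the leaf-to-leaf action of $F$ with multiplication by the stable eigenvalue of $A$. In dimension one a $C^r$ contraction has no resonances, so this is the classical $C^r$-linearization of a one-dimensional $C^r$ contraction; the point of $u$ is to make these leafwise linearizations \emph{simultaneously compatible} across the whole system — the relevant leafwise density is an infinite product of the ratios $\|Df|_{E^s_f}\|/\|A|_{L^s}\|$, which converges thanks to the coboundary relation and the uniform contraction. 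One then glues $\theta$ with the leaf conjugacy $h_0$: on each stable leaf of $f$ compose $\theta$ with the affine identification of the target leaf provided by $h_0$, pinning the additive constant by requiring periodic points to correspond as under $\bar h$. Using $\dim E^s_f=1$ for injectivity (as in the proof of Proposition \ref{2.2 prop leaf conjugate}), density of periodic points, continuity, and the equivariance of $\theta$ and $h_0$, the resulting map descends to $h\in{\rm Home}_0(\TT^d)$ with $h\circ f=A\circ h$; since $h$ agrees with $\theta$ along $\mathcal{F}^s_f$, it is $C^r$ along the stable foliation.

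For the converse, I would show that \emph{any} conjugacy $h$ between $f$ and $A$ is automatically $C^r$ along $\mathcal{F}^s_f$, so that its leafwise derivative at a periodic point $p=f^n(p)$ conjugates $Df^n|_{E^s_f}(p)$ to $A^n|_{L^s}$ and the stable data must agree. Here non-invertibility is essential: on each stable leaf the $|\det A|$-to-one structure of $f$ induces a self-similar family of identifications whose holonomies, together with a bounded-distortion estimate using $r>1$ (the mechanism underlying Corollary \ref{1 cor h smooth along stable}), force the leafwise $h$ to possess a nonvanishing H\"older derivative. Finally, I expect the genuine difficulty to be the leafwise linearization step — upgrading the merely H\"older cocycle $u$ on $\TT^d_f$ into an honestly $C^r$-along-leaves change of coordinates with continuous transverse dependence. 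This is the bootstrapping step: along each leaf one must control an infinite product of derivative ratios by the H\"older modulus of $Df$ and the exponential contraction, and it is precisely here that the inverse-limit viewpoint of Subsection \ref{subsec 2.4} pays off, since there the leaf-to-leaf dynamics of $f$ is carried by an invertible map and the appropriate Livschitz Theorem is available.
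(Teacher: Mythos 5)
First, note that the paper does not prove this statement itself: it is quoted from \cite{AGGS2022}, and the paper's own contribution is a different machinery (Livschitz on the inverse limit space, affine structures along $\tilde{\mathcal{F}}^s$, Theorem \ref{2 thm s-rigidity implies conjugacy} and Theorem \ref{4 thm conjugacy implies s-rigidity}) which reproves and generalizes it. Your opening move — Livschitz gives ${\rm log}\|Df|_{E^s_f}\|={\rm log}\|A|_{L^s}\|+u\circ\sigma_f-u$, hence an $f$-equivariant affine structure on stable leaves with constant contraction — is exactly the common starting point of \cite{AGGS2022} (the affine metric) and of Section \ref{sec 3}. The genuine gap is in your gluing step. ``Pinning the additive constant by requiring periodic points to correspond'' does not define $h$ on a typical leaf: most stable leaves contain no periodic point, and even after fixing $h$ on periodic leaves you must (a) verify the choices are consistent under the \emph{non-invertible} leaf dynamics (each leaf of $A$ has $|\det A|$ preimage leaves, so the translation parameters satisfy a global compatibility constraint, not a simple forward recursion), (b) prove a uniform modulus of continuity so the definition extends to all leaves and yields a homeomorphism homotopic to the identity with $h\circ f=A\circ h$ pointwise. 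This is precisely where the difficulty is concentrated. The paper avoids it by starting from the exact conjugacy $H$ on $\RR^d$ of Proposition \ref{2.1 prop lifting conjugate} (so equivariance is automatic) and proving instead that $H$ commutes with deck transformations; that descent argument requires the leafwise differentiability of $H$, an order-preservation lemma, the asymptotic commutation of Proposition \ref{2.1 prop nmH}, and, crucially, minimality of $\mathcal{F}^s_f$ coming from irreducibility (Lemma \ref{3 lem: keepdeviation}). Your outline never invokes irreducibility at all, which is a clear sign that the global consistency/descent step — the actual content of the theorem, since by Proposition \ref{2.1 prop special and conjugate} existence of the conjugacy is equivalent to $f$ being special — is missing.

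The converse direction is also a gap as written. You assert that \emph{any} conjugacy between $f$ and $A$ is automatically $C^r$ along $\mathcal{F}^s_f$ because of a ``self-similar family of identifications'' coming from non-invertibility plus bounded distortion, and then read off the exponents. But this automatic leafwise smoothness is essentially the rigidity statement itself (it is the special case of Corollary \ref{1 cor h smooth along stable}), and no mechanism is actually given: bounded distortion controls ratios of derivatives of $f$ along one orbit, not the conjugacy, and without matching periodic data there is no Livschitz solution to produce the leafwise derivative. Note also that the paper's own route to ``conjugacy forces equal stable data'' (Theorem \ref{4 thm conjugacy implies s-rigidity}) goes through $u$-accessibility and $C^1$ unstable holonomies, which is unavailable here because a map conjugate to $A$ is special, hence not $u$-accessible (Proposition \ref{4 prop: dichotomy of non-invertible Anosov map}); for the special case the paper simply cites \cite{AGGS2022}. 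So both the forward gluing step and the converse need real arguments that the proposal does not supply.
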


The sufficient part in \cite{AGGS2022} relies on the existence of affine metric on each leaf of the stable foliation $\mathcalf^s$  which requires the periodic stable Lyapunov exponents of $f$ actually to be constant. Instead of the above affine metric, we will build the relation between affine structures of $\tildeF^s$ and $\tildeG^s$,  via  Livschitz Theorem on the inverse limit space (see Theorem \ref{2.3 thm: Livshitz for inverse limit space}). For this, we need match the stable periodic data by the conjugacy on inverse limit spaces  by Corollary \ref{2.2 cor: p.d.s on leaf space to inverse limit space }.  Hence it suffices to prove the following theorem for obtaining the sufficient part of  Theorem \ref{2 thm main theorem}. We mention that the proof in this paper for the sufficient part of  Theorem \ref{2 thm main theorem}  is also effective to the case of \cite{AGGS2022}. 

\begin{theorem}\label{2 thm s-rigidity implies conjugacy}
	Let $f,g\in\mathcal{A}^{r}_1(\TT^d)\ (r>1)$ be homotopic and irreducible. Let $F,G$ be liftings of $f,g$ respectively by  $\pi:\RR^d\to \TT^d$. 	Let  $H:\RR^d \to \RR^d$ be the conjugacy given by Proposition \ref{2.1 prop lifting conjugate} such that $H\circ F= G\circ H$. Denote by $\bar{h}:\TT^d_f\to \TT^d_g$ the conjugacy induced by $H$ given by Proposition \ref{2.2 prop: the relationship of 3 conjugacies inverse limit space}. If
	$$\lambda^s(f,\bar{p})=\lambda^s\big(g,\bar{h}(\bar{p})\big), \quad \forall \bar{p}\in {\rm Per}(\sigma_f),$$  then $H$ is commutative with deck transformation. This implies that $f$ is topologically conjugate to $g$ on $\TT^d$.
\end{theorem}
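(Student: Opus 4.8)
The plan is to translate the periodic‑data hypothesis into a cohomology equation for the stable Jacobian cocycles of $F$ and $G$ by means of the Livschitz Theorem on the inverse limit space, to use that equation to exhibit $H$ as an affine map along the (one‑dimensional) stable leaves, and then to force $H$ to commute with deck transformations by combining this affine rigidity with the bound $\|H-{\rm Id}_{\RR^d}\|_{C^0}<\infty$ and the contracting dynamics.

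\smallskip
\noindent\emph{Step 1 (cohomology of the stable Jacobian cocycles).} Set $\tilde\va_f:=\log\|Df|_{E^s_f}\|\circ\pi_0$ on $\TT^d_f$ and $\tilde\va_g:=\log\|Dg|_{E^s_g}\|\circ\pi_0$ on $\TT^d_g$; since $r>1$ these are H\"older. The hypothesis $\lambda^s(f,\bar p)=\lambda^s\big(g,\bar h(\bar p)\big)$ for all $\bar p\in{\rm Per}(\sigma_f)$ says exactly that $\tilde\va_f$ and $\tilde\va_g\circ\bar h$ have equal sums along every $\sigma_f$-periodic orbit, so Theorem~\ref{2.3 thm: Livshitz for inverse limit space} produces a H\"older $u\colon\TT^d_f\to\RR$ with $\tilde\va_f=\tilde\va_g\circ\bar h+u\circ\sigma_f-u$. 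Carrying this to $\RR^d$ through the identification of $F$-orbits with points of $\TT^d_f$ (see \eqref{eq. 2.2. R orbit to T}) and the relation $\bar h\circ\pi\big({\rm Orb}_F(x)\big)=\pi\big({\rm Orb}_G(H(x))\big)$ of \eqref{eq. 2.8.2}, and writing $\hat u(x):=u\big(\pi({\rm Orb}_F(x))\big)$ (a bounded function on $\RR^d$), one obtains
\[
\log\|D_xF|_{E^s}\|\;=\;\log\|D_{H(x)}G|_{E^s}\|\;+\;\hat u\big(F(x)\big)-\hat u(x),\qquad x\in\RR^d .
\]

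\smallskip
\noindent\emph{Step 2 (affine structures, and $H$ affine along stable leaves).} Because $r>1$, the contraction along each stable leaf can be $C^{r_*}$-linearized, which equips every leaf of $\tildeF^s$ and of $\tildeG^s$ with a canonical affine structure; the non-periodic leaves are handled by pushing the linearization problem to the inverse limit space, where compactness supplies uniform estimates. In these affine coordinates $F$ and $G$ act leaf-wise linearly, and the logarithm of the linearized multiplier cocycle of $F$, pulled back to $\TT^d_f$, is cohomologous over $\sigma_f$ to $\tilde\va_f$ (likewise for $G$ over $\sigma_g$); combined with the identity of Step~1, this shows that, pulled back along $H$, the two multiplier cocycles are cohomologous over $\sigma_f$ — which is precisely why the relevant Livschitz Theorem is the one on the inverse limit space. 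Absorbing this coboundary into the parametrizations makes the multipliers agree, so $H$ restricted to each stable leaf is affine; normalizing the parametrizations along $\tildeF^u(p_0)$ and $\tildeG^u(q_0)$ (where $p_0,q_0$ are the fixed points of $F,G$ and $H(p_0)=q_0$) kills the constant term, so $H|_{\ell}$ is linear with a slope $a_\ell$ depending only on the leaf and satisfying $a_{F(\ell)}=a_\ell$. This is the step that replaces the affine \emph{metric} of \cite{AGGS2022}, available only for constant stable periodic exponents, by a \emph{relation} between affine structures.

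\smallskip
\noindent\emph{Step 3 (deck-equivariance).} From $\|H-{\rm Id}_{\RR^d}\|_{C^0}<\infty$ and the quasi-isometry of the one-dimensional stable foliation (Proposition~\ref{2.1 prop quasi-isometric}), the linear maps $H|_\ell$ cannot distort lengths, so $a_\ell\equiv 1$: in the adjusted coordinates $H$ is the identity along stable leaves, and since $H$ carries stable leaves of $F$ equivariantly onto stable leaves of $G$ (Proposition~\ref{2.2 prop: the conjugacies on leaf spaces}), the only remaining discrepancy is translational. Comparing $T_n\circ H$ with $H\circ T_n$ on a stable leaf, this discrepancy is a bounded cocycle over the $\ZZ^d$-action on the stable leaf space which is in addition damped, in the limit, by the (expanding) quotient dynamics of $G$ coupled with $A$-inflation on $\ZZ^d$; using that $H$ intertwines $F$ with $G$, the cocycle identity, and — crucially, via irreducibility — the minimality of the linear stable foliation $\mathcal{L}^s$ (absence of nonzero invariant data on the leaf space), this cocycle must vanish. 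Hence $H(x+n)=H(x)+n$ for all $n\in\ZZ^d$, so $H$ descends to $h\in{\rm Home}_0(\TT^d)$ with $h\circ f=g\circ h$; moreover $h$ is $C^{r_*}$ along $\mathcalf^s$ by Step~2, giving Corollary~\ref{1 cor h smooth along stable} (and Theorem~\ref{1 theorem AGGS} in the special case).

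\smallskip
\noindent\emph{Main obstacle.} The technical core is Step~2 together with the vanishing argument in Step~3: constructing the affine structures coherently over \emph{all} — in particular non-periodic — stable leaves, with uniform $C^{r_*}$ and bi-Lipschitz control, verifying that the identity of Step~1 is exactly the compatibility that trivializes the multiplier cocycle, and then killing the residual bounded translational cocycle. This is where $r>1$ is indispensable (for $r=1$ the statement fails, by the structural instability of non-invertible Anosov maps), where non-invertibility genuinely forces the work onto $\TT^d_f$ and Theorem~\ref{2.3 thm: Livshitz for inverse limit space} rather than onto $\TT^d$ (the ``fixed point'' of the leaf dynamics on a non-periodic leaf exists only in the inverse limit space), and where irreducibility and ${\rm dim}\,E^s_f=1$ enter — as in \cite{AGGS2022} — through quasi-isometry, the $C^1$-regularity of the codimension-one foliation (Propositions~\ref{2.1 prop quasi-isometric} and \ref{2.1 prop C1 foliation}), and the minimality of $\mathcal{L}^s$. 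Once these are in place, the descent to $\TT^d$ in Step~3 is soft.
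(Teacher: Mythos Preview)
Your Step~1 and the setup of Step~2 match the paper: the Livschitz identity on $\TT^d_f$ and the affine structures $\rho_F,d^s_F$ on stable leaves are exactly the tools used. But two genuine gaps remain. In Step~2 you assert that $H$ is affine along each stable leaf without proving it; the paper's version of this claim (differentiability of $H$ along $\tildeF^s$, with $\|DH|_{E^s_F(y)}\|=\tfrac{P(x)}{P(y)}\|DH|_{E^s_F(x)}\|$) requires real work --- the bi-Lipschitz bound coming from the cohomology equation gives differentiability only Lebesgue-a.e.\ on each leaf, and upgrading to \emph{every} point passes through the inverse limit space: one builds an $f$-invariant measure with absolutely continuous stable conditionals, finds a $\sigma_f^{-1}$-transitive point of $\TT^d_f$ at which $\bar h_0$ is differentiable, and propagates by recurrence. ``Absorbing the coboundary into the parametrization'' does not replace this step.

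The more serious gap is Step~3: the paper's equivariance argument is not a generic cocycle-vanishing principle but a one-dimensional \emph{order} argument. Assuming $\alpha:=d_{\tildeG^s}\big(H(x_0),H(x_0+n)-n\big)>0$ with, say, $H(x_0)\prec H(x_0+n)-n$, one first shows --- using the affine structures, the uniform differentiability of $H$ along $\tildeF^s$, and the asymptotic commutativity $|H(x+n_m)-H(x)-n_m|\to 0$ for $n_m\in A^m\ZZ^d$ --- that the order $H(z)\prec H(z+n)-n$ persists for all $z$ far along $\tildeF^s(x_0)$; minimality of $\mathcalf^s$ (from irreducibility) then propagates this order relation to every $y\in\RR^d$. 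Iterating gives $d_{\tildeG^s}\big(H(x_0),H(x_0+ln)-ln\big)\geq\alpha$ for all $l\in\NN$, and taking $l_k=|\det A|^k$ (so that $l_kn\in A^k\ZZ^d$) contradicts the same asymptotic commutativity. None of these ingredients --- the order on the one-dimensional leaves, the $A^m\ZZ^d$-asymptotic commutativity of $H$, the $|\det A|^k$ trick --- appear in your sketch, and ``absence of nonzero invariant data on the leaf space'' is not a substitute for them.
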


We prove this theorem in Section \ref{sec 3}. 
\vspace{3mm}

For  the necessary part of Theorem \ref{2 thm main theorem}, especially for the non-special case, we first give a dichotomy for $f\in\mathcal{A}^1_1(\TT^d)$ which says that a non-special Anosov map possesses a kind of accessibility.  

Przytycki constructed in  \cite{Przytycki1976} a class of Anosov maps on $\TT^d$  such that each of them admits a certain point on which the set of unstable directions  contains a curve homeomorphic to interval in the (dim$M-$dim$E^s$)-Grassman space. By this, it is reasonable to consider the \textit{$u$-accessible class} for $f\in\mathcal{A}^1(M)$ as the following definition.

\begin{definition}\label{2 def u-accessible class}
	For any $f\in\mathcal{A}^1(M)$ and any $x\in M$, the \textit{$u$-accessible class} of $x$ for $f$ is defined as 
	\begin{align*}
		Acc(x,f):=\big\{ y\in\TT^d \; | \;  \exists x=y^0, y^1,...,y^k=y \;\;&{\rm and} \;\; \tilde{y}^i\in M_f \;\; {\rm with}\;\; (\tilde{y}^i)_0=y^i \\
		&{\rm such \;\; that} \;\; y^{i+1}\in\mathcalf^u (y^i,\tilde{y}^i), \forall 0\leq i\leq k-1 \big\}.
	\end{align*}
	We say $f$ is \textit{$u$-accessible}, if $Acc(x,f)=M$, for any $x\in M$. And this means that every pair of points on $M$ can be connected by path of unstable manifolds with respect to  different negative orbits.
\end{definition}

We have the following dichotomy which will be proved in Section \ref{sec accessible}.
\begin{proposition}\label{1 prop dichotomy of non-invertible Anosov map}
	Let $f\in \mathcal{A}^1_1(\TT^d)$. Then  either $f$ is special, or $f$ is $u$-accessible. 
\end{proposition}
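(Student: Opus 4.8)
The plan is to prove the only nontrivial implication: if $f$ is not special then $f$ is $u$-accessible (there is nothing to prove when $f$ is special). I would argue essentially on $\TT^d$, using the inverse limit space $\TT^d_f$ only to keep track of negative orbits. First some preliminaries. $u$-accessibility is an equivalence relation on $\TT^d$: transitivity is immediate from Definition \ref{2 def u-accessible class}, and symmetry holds because if $y'\in\mathcalf^u(y,\tilde y)$ then $y'$ carries a negative orbit $\tilde y'$ backward-shadowing $\tilde y$, whence $\tilde y$ backward-shadows $\tilde y'$ and $y\in\mathcalf^u(y',\tilde y')$. In particular each class $Acc(x,f)$ is saturated by unstable leaves (taken with respect to all negative orbits of all its points). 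Since every Anosov map on $\TT^d$ is transitive, each unstable leaf $\mathcalf^u(z,\tilde z)$ is dense (standard, via the Local Product Structure, Proposition \ref{2.1 prop: local product sturcture}), so every $Acc(x,f)$ is dense. Hence it is enough to find one $x$ with ${\rm int}\,Acc(x,f)\neq\emptyset$: then every unstable leaf, being dense, meets that interior, hence (by saturation) lies in $Acc(x,f)$, so $Acc(x,f)=\TT^d$; and since $u$-accessibility is an equivalence relation, this gives $Acc(x,f)=\TT^d$ for every $x$.

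The heart of the proof is therefore to find an open set inside some $Acc(x,f)$, and this is where non-specialness enters. Because the unstable direction at some point depends on its negative orbit, continuity of $(x,\tilde x)\mapsto E_f^u(x,\tilde x)$ (Proposition \ref{2.1 prop: unstable leaf continuity}) together with compactness of $\TT^d_f$ yields $w\in\TT^d$ and $\tilde w^1,\tilde w^2\in\TT^d_f$ with $(\tilde w^i)_0=w$ and $E_f^u(w,\tilde w^1)\neq E_f^u(w,\tilde w^2)$; since $\dim E_f^u=d-1$ these two hyperplanes span $T_w\TT^d$. I would then promote this one point carrying two transverse unstable directions to two transverse codimension-one foliations $\mathcal G^1,\mathcal G^2$ of a small ball $B'\ni w$ whose leaves are local unstable leaves of $f$. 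Note that near $w$ the local unstable leaves do not by themselves form a foliation — there may be several through $w$ — so one must choose coherently: over a simply connected $B'$ the fibration $\pi_0^{-1}(B')\to B'$ splits as $B'\times(\text{Cantor set})$ compatibly with the unstable holonomy of $\sigma_f$ (this is the Local Product Structure of $\sigma_f$ on $\TT^d_f$), so one may pick, coherently in $p\in B'$, a negative orbit $s^i(p)$ of $p$ close to $\tilde w^i$, put $\mathcal G^i:=\{\mathcalf^u(p,s^i(p),\delta):p\in B'\}$, and shrink $B'$ so that $\mathcal G^1,\mathcal G^2$ stay transverse and their leaves do not cross.

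Finally, fix $x$ and, using density, choose $p_0\in Acc(x,f)\cap B'$. The $\mathcal G^1$-leaf $N_1$ through $p_0$ is a local unstable leaf of $f$, hence reachable from $p_0$, so $N_1\subseteq Acc(x,f)$; and for each $q\in N_1$ the $\mathcal G^2$-leaf through $q$ is reachable from $q$, so the union of all $\mathcal G^2$-leaves through points of $N_1$ lies in $Acc(x,f)$. Since $N_1$ is transverse to $\mathcal G^2$ (its tangent space is $T\mathcal G^1$, and $T\mathcal G^1+T\mathcal G^2$ spans $T\TT^d$), this $\mathcal G^2$-saturation of $N_1$ is open: the $\mathcal G^2$-holonomy carries the hypersurface $N_1$ across a full neighbourhood of $p_0$ (a flow-box argument; for $d=2$ this is just a chart adapted to two transverse foliations). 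Thus ${\rm int}\,Acc(x,f)\neq\emptyset$, and by the preliminaries $f$ is $u$-accessible.

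I expect the real obstacle to be the construction in the second paragraph: turning the two transverse directions at the single point $w$ into honest, non-crossing transverse foliations of a neighbourhood in $\TT^d$ — rather than merely two families of leaves meeting at $w$ — which is precisely what forces one to pass through the inverse limit space and the Local Product Structure of $\sigma_f$; the density arguments and the flow-box step should then be routine. (One should also note that the naive approach of lifting to $\RR^d$ and a fixed lift $F$ is not enough, since not every negative orbit downstairs is realised by an $F$-backward orbit, which is why the inverse limit picture is the right setting.)
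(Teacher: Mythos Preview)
Your overall strategy matches the paper's: find a point whose accessible class has nonempty interior, then use density of every unstable leaf to upgrade to full $u$-accessibility. Two comments on execution.

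First, density of unstable leaves is not as immediate as ``Local Product Structure plus transitivity''. For diffeomorphisms that works, but here the unstable leaf depends on a choice of backward orbit; the paper proves density (Lemma~\ref{4 lemma: u-dense with uniform size}) by passing to $\RR^d$ and using the conjugacy to the linearization together with minimality of the linear unstable foliation. It is true, but it is its own lemma.

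Second, and more importantly: your parenthetical dismissal of the universal-cover approach is exactly backwards, and this is where the paper diverges from your sketch. You propose to build $\mathcal G^1,\mathcal G^2$ via the sheet trivialisation $\pi_0^{-1}(B')\cong B'\times(\text{Cantor})$. That trivialisation exists, but the resulting section $s^i$ need not be compatible with the unstable lamination of $\sigma_f$: if $p'\in\mathcalf^u(p,s^i(p),\delta)$, the backward orbit of $p'$ that shadows $s^i(p)$ need not be $s^i(p')$, because membership in the ``same sheet'' of $f^{-k}(B')$ says nothing about backward-asymptotic behaviour (the $|\det A|^k$ sheets accumulate as $k\to\infty$ while their diameters in the stable direction grow). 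So your $\mathcal G^i$ may fail to be foliations and the flow-box step collapses.

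The paper fixes this by doing precisely what you warn against: it lifts to $\RR^d$. There $\tildeF^u$ is a genuine foliation, and the two transverse unstable directions at $x_0$ come from two lifts $x^*,x_*$ of $x_0$, i.e.\ $\bar x^*=\pi({\rm Orb}_F(x^*))$, $\bar x_*=\pi({\rm Orb}_F(x_*))$; by density of projected $F$-orbits in $\TT^d_f$ any $\tilde w^i$ can be approximated this way. Then, rather than a second unstable foliation, the paper parametrises by the \emph{stable} leaf---orbit-independent and one-dimensional: for $z\in\mathcalf^s(x_0,r')$ take $z^*\in\tildeF^s(x^*,r')$ over $z$ and set $\bar z^*=\pi({\rm Orb}_F(z^*))$. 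The family $\{\mathcalf^u(z,\bar z^*,r)\}$ is then just the projection of a piece of $\tildeF^u$ near $x^*$, hence an honest local foliation; each leaf meets the fixed leaf $\mathcalf^u(x_0,\bar x_*,r)$ by transversality, so the union lies in $Acc(x_0,f)$, and openness is immediate since it is a family of codimension-one leaves over a one-dimensional transversal. The hypothesis $\dim E^s_f=1$ is doing real work here, not only in making the two unstable hyperplanes span.
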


\begin{remark}
	We give another view of $u$-accessibility in the sense of that one can see the preimage set \eqref{eq. 1.1. preimage set} of a point as its strongest stable leaf.
	Let  $f_1:\TT^3\to\TT^3$ be  an Anosov diffeomorphism with partially hyperbolic splitting $T\TT^3=E_{f_1}^{ss}\oplus E_{f_1}^{ws}\oplus E_{f_1}^u$. If $E_{f_1}^{ss}$ and $E_{f_1}^u$ are not jointly integrable, then $f_1$ is $su$-accessible (see \cite{HammerlindlUres2014, GRZ2017}), i.e., any two points on $\TT^3$ can be connected by finite many strong stable manifolds tangent to $E_{f_1}^{ss}$ and unstable manifolds.  Hence, one can see  the joint integrability of $E_{f_1}^{ss}\oplus E_{f_1}^u$ as the special property of $f$ and see  $su$-accessibility of $f_1$ as $u$-accessiblity of $f$.  There are also  famous works about the $su$-accessible classes of  partially hyperbolic diffeomorphisms e.g.  \cite{Hertz2005, HHU2008}. Meanwhile some reseachers have studied the accessibility of non-invertible partially hyperbolic local diffeomorphisms e.g.  \cite{He2017}.
\end{remark}

\vspace{1mm}

Let $f,g\in \mathcal{N}^{r}_1(\TT^d)\ (r>1)$ be homotopic and irreducible. Since $f$ is special if and only if $f$ is conjugate to its linearization $A$ by Proposition \ref{2.1 prop special and conjugate},  we have that $f$ and $g$ are both special or $u$-accessible simultaneously from Proposition \ref{1 prop dichotomy of non-invertible Anosov map}. 
If $f$ and $g$ are special, then by Theorem \ref{1 theorem AGGS} one has $$\lambda^s_f(p)=\lambda^s(A)=\lambda^s_g(h(p)),\quad  \forall p\in{\rm Per}(f),$$ where $h\in {\rm Home}_0(\TT^d)$  is a conjugacy between $f$ and $g$. The necessary part of the Theorem \ref{2 thm main theorem} can be deduced by the following theorem.

\begin{theorem}\label{2 thm conjugacy implies s-rigidity}
	Let $f,g\in\mathcal{N}^r_1(\TT^d)$ $(r>1)$ be conjugate via a homeomorphism $h:\TT^d\to \TT^d$. If $f$ is $u$-accessible.
	Then $\lambda_f^s(p)=\lambda_g^s\big(h(p)\big)$,  for every $p\in{\rm Per}(f)$. 
\end{theorem}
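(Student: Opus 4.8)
The plan begins by recording that $h$ preserves the invariant structures. Stable leaves of an Anosov map admit the purely topological characterization $\mathcal{F}^s_\tau(x)=\{y:\ d(\tau^nx,\tau^ny)\to0\}$ (with the local-leaf version excluding preimage sets), so $h\circ f=g\circ h$ forces $h\big(\mathcal{F}^s_f(x)\big)=\mathcal{F}^s_g(h(x))$ for every $x$; lifting and using Proposition \ref{2.2 prop: the relationship of 3 conjugacies inverse limit space}, the induced conjugacy $\bar h:\TT^d_f\to\TT^d_g$ also carries each unstable leaf $\mathcal{F}^u_f(x_0,\tilde x)$ onto $\mathcal{F}^u_g\big(h(x_0),\bar h(\tilde x)\big)$. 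Next, put
\[
\psi(x):=\log\|D_xf|_{E^s_f}\|-\log\|D_{h(x)}g|_{E^s_g}\|,
\]
which is H\"older on $\TT^d$ (the stable bundles of the $C^{1+\alpha}$ maps $f,g$ are H\"older, and $h$ is H\"older). Since $g^i\circ h=h\circ f^i$, for $p=f^n(p)$ one has $\lambda^s_f(p)-\lambda^s_g\big(h(p)\big)=\tfrac1n\sum_{i=0}^{n-1}\psi\big(f^i(p)\big)$, so the theorem is equivalent to saying that $\psi$ has zero Birkhoff sum over every periodic orbit of $f$; by the Livschitz Theorem (Theorem \ref{2.3 thm Livschit for Anosov map}) this is in turn equivalent to $\psi$ being an $f$-coboundary, and I will in fact exhibit such a coboundary. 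The natural candidate transfer function is $v(x)=\log\big(\text{leafwise derivative of }h\text{ along }\mathcal{F}^s_f\text{ at }x\big)$: differentiating $h\circ f=g\circ h$ along the one-dimensional stable leaves yields exactly $\psi=v-v\circ f$ as soon as $h$ is $C^1$ along $\mathcal{F}^s_f$ with continuous positive leafwise derivative. So it suffices to establish this regularity of $h$, which is a priori much weaker than Corollary \ref{1 cor h smooth along stable}.

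\textbf{Where $u$-accessibility enters.}
On a single stable leaf there is nothing to exploit: $h$ there merely conjugates one $C^{1+\alpha}$ contraction of $\RR$ to another, and such conjugacies impose no relation between the two multipliers — this is precisely why the statement fails for Anosov diffeomorphisms, and it is why $u$-accessibility is indispensable. The plan is to transport the leafwise behaviour of $h$ across $\TT^d$ along unstable leaves and then close it up using $Acc(x,f)=\TT^d$. Concretely, for a point and a choice of negative orbit, sliding along stable leaves defines a holonomy from $\mathcal{F}^s_f(x)$ to a neighbouring stable leaf inside $\mathcal{F}^u_f(x,\tilde x)$; since both foliations are absolutely continuous (Proposition \ref{2.1 prop C1 foliation}) these holonomies are absolutely continuous, and a telescoping estimate identifies their log-Jacobians with convergent forward sums of $\log\|Df|_{E^s_f}\|$ (and, for $g$, of $\log\|Dg|_{E^s_g}\|$). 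Composing holonomies along a $u$-accessibility chain from a periodic point to $x$ and along a \emph{loop} based at $x$ — loops exist in abundance precisely because $f$ is $u$-accessible — gives a self-holonomy of $\mathcal{F}^s_f(x)$ whose log-Jacobian is a prescribed sum of $\log\|Df|_{E^s_f}\|$-terms; the \emph{same} combinatorial loop is carried by $h$ (and $\bar h$) to a loop for $g$, whose log-Jacobian is the corresponding sum of $\log\|Dg|_{E^s_g}\|$-terms. Because $h$ intertwines the two self-holonomies, equating the two expressions supplies the rigidity missing at the single-leaf level: it forces the leafwise distortion of $h$ to be consistent, shows $h$ is $C^1$ along $\mathcal{F}^s_f$ with the continuous log-derivative $v$ above, and hence $\psi=v-v\circ f$. (If $f$ is special this is already contained in Theorem \ref{1 theorem AGGS}; by the dichotomy Proposition \ref{1 prop dichotomy of non-invertible Anosov map}, ``$u$-accessible'' is exactly the complementary case, so nothing is lost.)

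\textbf{Main obstacle.}
Everything rests on the transport step: since a single stable leaf carries no rigidity, the whole content is that $u$-accessibility, together with the absolute continuity of the two foliations, pins down the leafwise derivative of the a priori merely H\"older conjugacy $h$. The delicate points I expect to fight are (i) controlling long compositions of unstable holonomies and their interaction with $h$ — showing the relevant conditional Lebesgue classes are genuinely preserved and the resulting Jacobians continuous rather than only measurable — and (ii) promoting the identity obtained ``around $u$-accessibility loops'' to an honest coboundary, i.e.\ to a statement holding at every periodic point; here one uses the transitivity packaged in $Acc(x,f)=\TT^d$ together with an Anosov closing/approximation argument for periodic orbits.
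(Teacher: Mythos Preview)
Your setup is sound: $h$ preserves stable leaves, and the problem reduces to showing $\psi=\log\|Df|_{E^s_f}\|-\log\|Dg|_{E^s_g}\|\circ h$ sums to zero over every periodic orbit. You also correctly identify that the unstable holonomies between stable leaves are the transport mechanism, and that $h$ intertwines them. But the crucial step---``equating the two expressions supplies the rigidity\dots shows $h$ is $C^1$ along $\mathcal{F}^s_f$''---does not go through as stated. The self-holonomy around a $u$-loop is a $C^1$ map of a one-dimensional stable leaf fixing the base point; $h$ conjugates the $f$-loop-holonomy to the $g$-loop-holonomy. But two $C^1$ interval maps being topologically conjugate imposes \emph{no} relation between their derivatives at the fixed point, and certainly does not force the conjugacy to be $C^1$. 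So knowing the two log-Jacobians (your ``two expressions'') does not by itself constrain $h$. You have, in effect, traded the original problem (conjugating two stable contractions at a periodic point) for an equivalent one (conjugating two loop-holonomies), without gaining leverage. Your own ``main obstacle (i)'' is exactly this, and the proposal does not indicate how to overcome it.

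The paper's argument sidesteps this completely by working by contradiction. If $\lambda^s_f(p)>\lambda^s_g(h(p))$ at some periodic $p$, then an elementary estimate (comparing iterates of two contractions with different rates) shows that $h|_{\mathcal{F}^s_f(p)}$ is differentiable at $p$ with $\|Dh|_{E^s_f(p)}\|=0$. This single piece of information is then transported: since $\dim E^s_f=1$, the unstable foliation is codimension-one and hence $C^1$ (not merely absolutely continuous---this matters), so the holonomy maps ${\rm Hol}^f$, ${\rm Hol}^g$ are $C^1$; writing $H$ locally as $H={\rm Hol}^g\circ T_n\circ{\rm Hol}^g\circ H\circ{\rm Hol}^f\circ T_{-n}\circ{\rm Hol}^f$ shows that differentiability of $H$ along stable at one point propagates along any $u$-chain, and the derivative $0$ propagates with it. By $u$-accessibility (in the quantitative form of Proposition~\ref{4 prop: dichotomy of non-invertible Anosov map}) this reaches every point, giving $\|Dh|_{E^s_f}\|\equiv 0$, contradicting that $h$ is a homeomorphism. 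The point is that one never needs to prove $h$ is $C^1$ along stable leaves a priori; one only needs a \emph{single} differentiable point, and the contradiction hypothesis hands you one for free.
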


We mention that in the $u$-accessible case, there is no need to assume that $f$ is irreducible for the necessary part of Theorem \ref{2 thm main theorem}. However, in the special case, $f$ must be a priori irreducible ( see \cite{AGGS2022}). 

\vspace{2mm}
We also have a version of  Corollary \ref{1 cor h smooth along stable} for higher-dimensional  case.   Note that Theorem \ref{1 theorem AGGS} state only $C^{1+\alpha}$-regularity of conjugacy along stable leaves. Hence we give the corollary for both  special case and $u$-accessible case.

\begin{corollary}\label{2 cor conjugacy implies smooth}
	Let $f,g\in\mathcal{N}^r_1(\TT^d)$ $(r>1)$ be irreducible and conjugate via a homeomorphism $h:\TT^d\to \TT^d$.
	Then $h$ is $C^r$-smooth along the stable foliation.
\end{corollary}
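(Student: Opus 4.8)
The plan is to reduce the statement to two earlier results depending on which alternative of the dichotomy Proposition 4.5 places $f$ (and hence $g$) in. First I would invoke Proposition 4.5: since $f$ is irreducible and lies in $\mathcal{N}^r_1(\TT^d)$, either $f$ is special or $f$ is $u$-accessible, and by Proposition 2.3 (special $\iff$ conjugate to linearization) the same alternative holds simultaneously for $g$, because $f$ and $g$ are conjugate. So the argument splits into two cases.

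\textbf{Case 1: $f$ and $g$ are $u$-accessible.} Here I would combine Theorem 4.7 with Theorem 2.10. By Theorem 4.7, the conjugacy $h$ forces $\lambda^s_f(p)=\lambda^s_g(h(p))$ for every $p\in{\rm Per}(f)$; that is, $f$ and $g$ have the same stable periodic data, matched by $h$ itself. But then Theorem 2.10 (the ``sufficient'' direction) applies: lifting $h$ to the conjugacy $H$ on $\RR^d$ between appropriate lifts $F,G$ and to $\bar h$ on the inverse limit space, the hypothesis $\lambda^s(f,\bar p)=\lambda^s(g,\bar h(\bar p))$ holds, and the construction in Section 3 yields in particular that $H$ conjugates the stable affine structures, so $h$ is $C^r$ along $\mathcal{F}^s_f$. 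More precisely, the $C^r$-regularity along stable leaves is a byproduct of the proof of Theorem 2.10, which builds an affine parametrization of each leaf of $\tildeF^s$ and of $\tildeG^s$ intertwined by $H$; since $f,g$ are $C^r$ and the stable bundle is one-dimensional, these affine parametrizations are $C^r$ along the leaves, hence so is $H|_{\tildeF^s}$ and therefore $h|_{\mathcal{F}^s_f}$.

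\textbf{Case 2: $f$ and $g$ are special.} Then by Proposition 2.3 both are conjugate to the common linearization $A$, and Theorem 1.5 (= Theorem of \cite{AGGS2022}) gives that each of these conjugacies is $C^{1+\alpha}$ along the stable foliation and that $\lambda^s_f(p)=\lambda^s(A)=\lambda^s_g(h(p))$. To upgrade $C^{1+\alpha}$ to the full $C^r$, I would again feed the matched periodic data into Theorem 2.10 (whose proof, as the authors note, ``is also effective to the case of \cite{AGGS2022}''): running the Section 3 argument with $g$ replaced by $A$ and then composing, or directly running it between $f$ and $g$, produces the $C^r$ stable-leaf regularity. Finally, $h$ is the composition (on leaf spaces) of these intertwiners, so $h$ is $C^r$ along $\mathcal{F}^s_f$.

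\textbf{Main obstacle.} The delicate point is not the case division but extracting $C^r$ (rather than merely $C^{1+\alpha}$ or Lipschitz) regularity along stable leaves from the affine-structure comparison of Section 3. The affine parametrizations of stable leaves are produced via a Livschitz-type cohomological equation on the inverse limit space (Theorem 2.15), whose solution is a priori only Hölder; one must argue that, because the stable bundle is one-dimensional and $f,g$ are $C^r$, the relevant cocycle $\log\|Df|_{E^s_f}\|$ and its cohomology solution are in fact $C^r$ \emph{along the leaves} even though only Hölder transversally, and that the resulting leaf identification inherits this leafwise $C^r$ smoothness. This is the same mechanism that underlies Corollary 1.4, and I expect the proof to reference that argument rather than repeat it.
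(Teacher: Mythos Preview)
Your proposal is correct and follows essentially the same architecture as the paper: invoke the special/$u$-accessible dichotomy, establish that the stable periodic data match in each case, and then bootstrap the conjugacy to $C^r$ along stable leaves via the affine (density-function) structure. Two minor points of comparison are worth noting. First, in the $u$-accessible case the paper does not re-run the full Section~3 inverse-limit argument to get leafwise differentiability everywhere; instead it uses Proposition~\ref{3 prop: bi-Lipschitz} to obtain one bi-differentiable point and then spreads differentiability to all of $\TT^d$ via the $C^1$ holonomies along unstable leaves together with the $u$-accessibility (Lemma~\ref{5 lemma: u-holonomy send differentiable} and Proposition~\ref{4 prop: dichotomy of non-invertible Anosov map}). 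Your route through Corollary~\ref{3 cor H is differentiable} also works, but the holonomy-spreading argument is shorter here and avoids the measure-theoretic appendix. Second, your ``main obstacle'' is resolved exactly as you anticipate: once $h$ is known to be differentiable along leaves, the paper defines $\tilde\rho_g(h(x),h(y)):=\frac{\|Dh|_{E^s_f(x)}\|}{\|Dh|_{E^s_f(y)}\|}\rho_f(x,y)$, checks via the uniqueness clause of Proposition~\ref{3 prop: density function} that $\tilde\rho_g=\rho_g$, and then reads off that $\|Dh|_{E^s_f}\|$ is $C^{r-1}$ along leaves because $\rho_f,\rho_g$ are. This is precisely your ``affine parametrizations are $C^r$ along leaves'' intuition made concrete, and it handles both the special and the $u$-accessible case uniformly.
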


We will give the proofs of Theorem \ref{2 thm conjugacy implies s-rigidity}  and Corollary \ref{2 cor conjugacy implies smooth} in Section \ref{sec 4}.
\vspace{3mm}

  To end this subsection, we explain Theorem \ref{2 thm main theorem} in the sense of mapping spaces. For a linear Anosov map $A\in\mathcal{N}^1_1(\TT^d)\ (d\geq 2)$, let $f\in\mathcal{A}^r(A)$ and $(f,H)\in\tilde{\mathcal{A}}^r(A)$ be a tuple defined by \ref{A-maps}, where $H\in\mathcal{H}(f,A)$ with $A\circ H=H\circ F$ for some lifting $F$ of $f$. 
   Define two tuples $(f_1,H_1), (f_2,H_2)$ are  equivalent denoted by $(f_1,H_1)\sim_H (f_2,H_2)$, if  $H_2^{-1}\circ H_1: \RR^d\to\RR^d$ can descend to $h\in{\rm Home}_0(\TT^d)$.   Denote by $\mathcal{T}^r_H(A)$ the space of equivalence classes of such tuples.
  Denote by $\mathcal{F}^{\rm H}(\TT^d_A)$ the set of {\rm H${\rm \ddot{o}}$lder} functions on $\TT^2_A$. Define $\tilde{\phi}_1 \sim_{\sigma_A}\tilde{ \phi}_2 $ in $\mathcal{F}^{\rm H}(\TT^d_A)$ as 
  $$\tilde{\phi}_1 \sim_{\sigma_A}\tilde{ \phi}_2 \iff \exists\;  {\rm H\ddot{o}lder} \;\tilde{u}:\TT^d_A\to \RR {\rm \;\; such\;\; that\;\;} \tilde{\phi}_1-\tilde{\phi}_2=\tilde{u}\circ \sigma_A-\tilde{u}.$$ 
  Theorem \ref{thm 1.1'}'  is a direct corollary of the following one. 
   \begin{theorem3}\label{thm 2.1'}
   	Let $A\in GL_d(\RR)\cap M_d(\ZZ)$ induce a non-invertible irreducible  Anosov map with one-dimensional stable bundle. Then for every $r>1$, there is  a natural injection
   	\begin{align*}
   		\mathcal{T}^r_{H}(A)\hookrightarrow \mathcal{F}^{\rm H}(\TT^d_A)\big/_{\sim_{\sigma_A}}\qquad  {\rm and}\qquad  (f,H)\mapsto {\rm log}\|Df|_{E_f^s}\|\circ \bar{h}^{-1},
   	\end{align*}
   where $\bar{h}$ is induced by $H$ via Proposition \ref{2.2 prop: the relationship of 3 conjugacies inverse limit space}.
   	 \end{theorem3}

   	\begin{proof}
   Let  $f\in\mathcal{A}^r(A)$ and $F,A$ be  liftings  of $f,A$ by $\pi$ respectively. Let  $(f,H)$ be the tuple such that $H$ is the unique conjugacy between $F$ and $A$ given by Proposition \ref{2.1 prop lifting conjugate} such that $A\circ H= H\circ F$.  Let 
  	$$i:(f,H)\mapsto {\rm log}\|Df|_{E_f^s}\|\circ \bar{h}^{-1},$$ 
  	where $\bar{h}$ is induced by $H$ via Proposition \ref{2.2 prop: the relationship of 3 conjugacies inverse limit space}.
  	
  	On the one hand, $i$ naturally induces a well defined map $\tilde{i}:\mathcal{T}^r_{H}(A)\to \mathcal{F}^{\rm H}(\TT^d_A)\big/_{\sim_{\sigma_A}}$. Indeed, let $(f_1,H_1)\sim (f_2,H_2)$ and by definition the map $H=H_2^{-1}\circ H_1$ induces a homeomorphism $h\in{\rm Home}_0(\TT^d)$ such that $h \circ f_1=f_2\circ h$. Then by the Theorem \ref{2 thm conjugacy implies s-rigidity} and the Livschitz Theorem for Anosov maps (see Theorem \ref{2.3 thm Livschit for Anosov map}), one has that there exists a H${\rm \ddot{o}}$lder function $u:\TT^d\to\RR$ such that  
  	$$
  	{\rm log}\|Df_2|_{E_{f_2}^s}\|\circ h={\rm log}\|Df_1|_{E_{f_1}^s}\|+u\circ f_1-u.
  	$$ 
  	Define $\bar{u}:\TT^d_{f_1}\to\RR$ as $\bar{u}(\bar{x}):=u\big( (\bar{x})_0\big)$. Then we have 
  	$${\rm log}\|Df_2|_{E_{f_2}^s}\|\circ \bar{h}={\rm log}\|Df_1|_{E_{f_1}^s}\|+\bar{u}\circ \sigma_{f_1}-\bar{u}.$$ 
  	Let $\bar{h}_i\ (i=1,2)$ be conjugacy between $\sigma_{f_i}$ and $\sigma_A$ induced by $H_i$ by Proposition \ref{2.2 prop: the relationship of 3 conjugacies inverse limit space}. Composing $\bar{h}_1^{-1}$ to the last equation, then we get
  	$${\rm log}\|Df_2|_{E_{f_2}^s}\|\circ \bar{h}_2^{-1}={\rm log}\|Df_1|_{E_{f_1}^s}\|\circ\bar{h}_1^{-1}+(\bar{u}\circ\bar{h}_1^{-1})\circ \sigma_A-(\bar{u}\circ\bar{h}_1^{-1}).$$ It is clear that $\bar{u}\circ\bar{h}_1^{-1 }$ is a H${\rm \ddot{o}}$lder function defined on $\TT^d_A$. 
  	
  	On the other hand, $\bar{i}$ is injective. In fact, let  $(f_1,H_1)$ and $(f_2,H_2)$ satisfy $${\rm log}\|Df_1|_{E_{f_1}^s}\|\circ \bar{h}_1^{-1}\sim_{\sigma_A} {\rm log}\|Df_2|_{E_{f_2}^s}\|\circ \bar{h}_2^{-1}.$$  Denote $\bar{h}=\bar{h}_2^{-1} \circ \bar{h}_1$, then one has that 
  	\begin{align}
  			{\rm log}\|Df_1|_{E_{f_1}^s}\| \sim_{\sigma_{f_1}} {\rm log}\|Df_2|_{E_{f_2}^s}\|\circ \bar{h}.\label{eq. 2.3. condition}
  	\end{align}
  	Since both $\bar{h}_1$ and $\bar{h}_2$ satisfy Proposition \ref{2.2 prop: the relationship of 3 conjugacies inverse limit space}, the conjugacy $\bar{h}$ can be given by $H_2^{-1}\circ H_1$ by Proposition \ref{2.2 prop: the relationship of 3 conjugacies inverse limit space}.  
  	Hence \eqref{eq. 2.3. condition} actually gives us the condition stated in Theorem \ref{2 thm s-rigidity implies conjugacy}.  It follows that $H_2^{-1}\circ H_1$ can descend to $\TT^d$.
 	\end{proof}

 \section{The existence of topological conjugacy on $\TT^d$}\label{sec 3}
	
	In this section, we prove Theorem \ref{2 thm s-rigidity implies conjugacy}. Assume that $f,g\in\mathcal{A}^{1+\alpha}_1(\TT^d)$ are homotopic and irreducible. Let $F,G$ be liftings of $f,g$ respectively by the natural projection $\pi:\RR^d\to \TT^d$. 	Let  $H:\RR^d \to \RR^d$ be the conjugacy such that $H\circ F= G\circ H$ given by Proposition \ref{2.1 prop lifting conjugate} which induces a conjugacy $\bar{h}:\TT^d_f\to \TT^d_g$ between $\sigma_f$ and $\sigma_g$ by Proposition \ref{2.2 prop: the relationship of 3 conjugacies inverse limit space}. Suppose 
	$$\lambda^s(f,\bar{p})=\lambda^s\big(g,\bar{h}(\bar{p})\big), \quad \forall \bar{p}\in {\rm Per}(\sigma_f).$$ 
 Theorem \ref{2 thm s-rigidity implies conjugacy} says that for obtaining the conjugacy between $f$ and $g$, we  prove that $H$ is commutative with deck transformations. 
    
  Firstly, we endow  affine structures on each leaf of stable foliations of $F$ and $G$ on $\RR^d$.  Denote the stable/unstable bundle of $F/G$ by $E^{s/u}_{F/G}$ respectively. 
  For any $x\in\RR^d$ and $y\in\tildeF^s(x)$, define the density function
   $$\rho_F(x,y):=\prod_{i=0}^{+\infty} \frac{\| DF|_{E^s_F(F^i(y))} \|}{\| DF|_{E^s_F(F^i(x))} \|}.$$
   Note that $D\pi\big(E^s_F(x)\big)=E^s_f\big(\pi(x)\big)$ for all $x\in\RR^d$. Hence $\rho_F(\cdot,\cdot)$ can descend to $\TT^d$  denoted by $\rho_f(\cdot,\cdot)$, sicne $\| DF|_{E^s_F(x)} \|=\| Df|_{E^s_f(\pi (x))} \|=\| DF|_{E^s_F(x+n)} \|$ for any $x\in\RR^d$ and $n\in\ZZ^d$.
   \begin{proposition}\label{3 prop: density function}
   	The function $\rho_F(\cdot,\cdot)$  have the following properties:
   	\begin{enumerate}
   		\item For any $x\in\RR^d$, $\rho_F(x,\cdot)$ is well defined and uniformly continuous.
   		\item  For any $y,z\in \tildeF^s(x)$,  $\rho_F(x,y) \rho_F(y,z)= \rho_F(x,z)$.
   		\item For any $y\in\tildeF^s(x)$, $\rho_F(F(x),F(y))= \frac{ \| DF|_{E^s_F(y)} \|}{\| DF|_{E^s_F(x)} \|} \rho_F(x,y) $.
   		\item The function $\rho_F(\cdot,\cdot)$ is the only uniformly continuous function satisfying $\rho_F(x,x)=1$ and the third item.
   		\item For any $K>0$, there exists $C>1$ such that, if $d_{\tildeF^s}(x,y)<K$, then $C^{-1}<\rho_F(x,y)<C$. 
   	\end{enumerate}
   \end{proposition}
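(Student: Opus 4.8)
The plan is to reduce all five items to two standard facts about $C^{1+\alpha}$ Anosov maps, after which everything is either a formal manipulation of the infinite product or a short limiting argument. The two facts are: \emph{(a)} the $\ZZ^d$-periodic function $\ell_F(x):=\log\|D_xF|_{E^s_F(x)}\|$ is $\theta$-H\"older for some $\theta\in(0,1)$ — this follows from $f\in C^{1+\alpha}$ together with the H\"older regularity of the stable bundle $E^s_f$ on the compact torus (for $d=2$ one has the sharper fact that $\mathcal{F}^s$ is $C^1$, Proposition \ref{2.1 prop C1 foliation}); and \emph{(b)} uniform exponential contraction along stable leaves, i.e.\ there are $C_0>0$ and $\mu\in(0,1)$ with $d_{\tildeF^s}(F^ix,F^iy)\le C_0\mu^i\,d_{\tildeF^s}(x,y)$ for every $y\in\tildeF^s(x)$ and every $i\ge0$, which is immediate from Definition \ref{2.1 defprz} since arclength along stable leaves is uniformly contracted. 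Writing $a_i(x):=\|D_{F^ix}F|_{E^s_F(F^ix)}\|=e^{\ell_F(F^ix)}$, the defining product becomes $\log\rho_F(x,y)=\sum_{i\ge0}\bigl(\ell_F(F^iy)-\ell_F(F^ix)\bigr)$.

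I would first dispose of items 2 and 3, which are purely formal once the series above is known to converge absolutely (established in the next paragraph). Item 2 is telescoping, $\rho_F(x,y)\rho_F(y,z)=\prod_{i\ge0}\frac{a_i(y)}{a_i(x)}\cdot\prod_{i\ge0}\frac{a_i(z)}{a_i(y)}=\prod_{i\ge0}\frac{a_i(z)}{a_i(x)}=\rho_F(x,z)$, the rearrangement being justified by absolute convergence of $\sum_i\log(\cdot)$. Item 3 is the index shift $a_i(Fx)=a_{i+1}(x)$: it gives $\rho_F(Fx,Fy)=\prod_{i\ge0}\frac{a_{i+1}(y)}{a_{i+1}(x)}$, which differs from $\rho_F(x,y)$ precisely by the single $i=0$ factor $\frac{a_0(y)}{a_0(x)}=\frac{\|DF|_{E^s_F(y)}\|}{\|DF|_{E^s_F(x)}\|}$, i.e.\ the stated recursion.

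For items 1 and 5, fix $y\in\tildeF^s(x)$ with $d_{\tildeF^s}(x,y)\le K$. Combining (a) and (b), and using $d(p,q)\le d_{\tildeF^s}(p,q)$, one gets $|\ell_F(F^iy)-\ell_F(F^ix)|\le C_1\,d_{\tildeF^s}(F^ix,F^iy)^\theta\le C_1(C_0\mu^i)^\theta K^\theta$, which is summable in $i$; hence the product converges absolutely and $|\log\rho_F(x,y)|\le C_2K^\theta$, giving both the well-definedness in item 1 and item 5 with $C=e^{C_2K^\theta}$. For (uniform) continuity I would then invoke item 2 to write $\log\rho_F(x,y')-\log\rho_F(x,y)=\log\rho_F(y,y')$ for $y,y'\in\tildeF^s(x)$, and apply the same bound to the pair $(y,y')$ to obtain $|\log\rho_F(x,y')-\log\rho_F(x,y)|\le C_2\,d_{\tildeF^s}(y,y')^\theta$; thus $\rho_F(x,\cdot)$ is locally $\theta$-H\"older along the leaf with constants independent of $x$, and the computation also shows $\rho_F$ descends to the claimed $\rho_f$ on $\TT^d$. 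Finally, for item 4, let $\rho'$ be any uniformly continuous function on $\{(x,y):y\in\tildeF^s(x)\}$ with $\rho'(x,x)=1$ satisfying the functional equation of item 3; the functional equation together with uniform continuity forces $\rho'>0$ (if $\rho'(x,y)=0$ then $\rho'(F^ix,F^iy)=0$ for all $i$, contradicting $\rho'(F^ix,F^iy)\to1$ as $d_{\tildeF^s}(F^ix,F^iy)\to0$). Set $q:=\rho_F/\rho'$; since both factors obey the \emph{same} functional equation the multiplier cancels and $q(Fx,Fy)=q(x,y)$, hence $q(x,y)=q(F^ix,F^iy)$ for all $i\ge0$; by (b) and uniform continuity $q(F^ix,F^iy)\to1$, so $q\equiv1$ and $\rho'=\rho_F$.

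I do not expect a serious obstacle. The only point needing genuine care is the uniformity in the base point in item 1: since $\RR^d$ is noncompact one must ensure the H\"older constant of $\ell_F$ and the contraction data $C_0,\mu$ are chosen once and for all, which is exactly what the $\ZZ^d$-periodicity of $\ell_F$ (equivalently, working on the compact quotient where $\rho_F$ descends to $\rho_f$) provides; everything else is telescoping and a single limiting argument using uniform continuity.
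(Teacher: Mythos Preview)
Your proposal is correct and follows essentially the same approach as the paper. The paper defers items 1, 2, 3, 5 to \cite[Subsection 4.2]{GG2008} (working with $\rho_f$ on the compact torus and lifting), while you spell out the standard H\"older-plus-contraction estimate directly; for item 4 the paper's argument is exactly your ratio argument---the quotient $\rho_F/\rho'$ is $F$-invariant by the common functional equation, hence constant along forward orbits, but must tend to $1$ by uniform continuity and $\rho(x,x)=1$.
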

   
   By lifting $\rho_f(\cdot,\cdot)$ on $\TT^d$, one  can get these properties of $\rho_F(\cdot,\cdot)$ by ones of $\rho_f(\cdot,\cdot)$ which can be found in \cite[Subsection 4.2]{GG2008}. However we would like to explain the forth item, the uniqueness of $\rho_F(\cdot,\cdot)$. Indeed if there are points $x\in\RR^d, y\in\tildeF^s(x)$ and function $\bar{\rho}(x,y)\neq \rho_F(x,y)$ satisfing the third item, then one has that 
   $$\lim_{n\to +\infty}\frac{\bar{\rho}(F^n(x),F^n(y))}{\rho_F(F^n(x),F^n(y))}=\lim_{n\to +\infty} \frac{\bar{\rho}(x,y)}{\rho_F(x,y)}\neq 1.$$
   On the other hand, since $\rho_F(x,x)=1=\bar{\rho}(x,x)$ for any $x\in\RR^d$, we have that as $n\to +\infty$, $d(F^n(x),F^n(y))\to 0$ and hence $\bar{\rho}(F^n(x),F^n(y))\to 1$, $\rho_F(F^n(x),F^n(y))\to 1$. By the uniform continuity, one has $$\lim_{n\to +\infty}\frac{\bar{\rho}(F^n(x),F^n(y))}{\rho_F(F^n(x),F^n(y))}=1,$$ which is a contradiction.

   Let $d^s_F(\cdot,\cdot)$ be a function defined on $\tildeF^s$ as 
   $$d^s_F(x,y):=\int_{x}^{y} \rho_F(z,x) d{\rm Leb}_{\tildeF^s}(z),$$
   where $y\in\tildeF^s(x)$. Note that $d^s_F(\cdot,\cdot)$ can also descend to $\TT^d$  denoted by $d^s_f(\cdot,\cdot)$.

    \begin{proposition} \label{3 prop: affine metric}
   	The function $d^s_F(\cdot,\cdot)$ satisfies the following properties:
   	\begin{enumerate}
   		\item For any $y\in\tildeF^s(x)$,  $d^s_F(x,y)=d_{\tildeF^s}(x,y)+o\left(  d_{\tildeF^s}(x,y)\right)$. That is for given $x\in\RR^d$ $$\lim_{y\to x}\ \frac{d^s_F(x,y)}{d_{\tildeF^s}(x,y)}=1,$$ and the convergence is uniform with respect to $x$.
   		\item  For any $y\in\tildeF^s(x)$,  $d^s_F(Fx,Fy)= \| DF|_{E^s_F(x)} \| \cdot d^s_F(x,y)$.
   		\item For any $K>0$, there exists $C>1$, if $d_{\tildeF^s}(x,y)<K$, 
   		then $C^{-1}\cdot d_{\tildeF^s}(x,y)\leq d^s_F(x,y )\leq C\cdot d_{\tildeF^s}(x,y)$.
   	\end{enumerate}
   \end{proposition}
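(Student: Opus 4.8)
The plan is to read off all three items directly from Proposition \ref{3 prop: density function} together with the one-variable change-of-variables formula along the leaves of $\tildeF^s$. Recall that here $\dim E^s_F=1$, so the ``integral from $x$ to $y$'' in the definition of $d^s_F$ is a genuine line integral over the arc of $\tildeF^s(x)$ joining $x$ to $y$, and the total ${\rm Leb}_{\tildeF^s}$-mass of that arc is exactly the leafwise length $d_{\tildeF^s}(x,y)$; moreover every point $z$ of that arc satisfies $d_{\tildeF^s}(z,x)\le d_{\tildeF^s}(x,y)$, a fact I will use repeatedly. Since $\rho_F$ and $d_{\tildeF^s}$ are $\ZZ^d$-periodic, one may equally work on $\TT^d$ with $d^s_f$ if convenient.

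For item (1), I would use that $z\mapsto\rho_F(z,x)$ is continuous with $\rho_F(x,x)=1$ (Proposition \ref{3 prop: density function}, items (1) and (2)), and, crucially, that this continuity is \emph{uniform} in the base point: given $\e>0$ there is $\delta>0$, independent of $x$, with $|\rho_F(z,x)-1|<\e$ once $d_{\tildeF^s}(z,x)<\delta$. Integrating this bound over the arc from $x$ to $y$ yields $(1-\e)\,d_{\tildeF^s}(x,y)\le d^s_F(x,y)\le(1+\e)\,d_{\tildeF^s}(x,y)$ whenever $d_{\tildeF^s}(x,y)<\delta$, which is precisely the asserted $d^s_F(x,y)=d_{\tildeF^s}(x,y)+o\big(d_{\tildeF^s}(x,y)\big)$ with convergence uniform in $x$.

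Item (2) is the computational heart. In the defining integral for $d^s_F(Fx,Fy)$ I would substitute $w=F(z)$ with $z$ running over the arc of $\tildeF^s(x)$ from $x$ to $y$; since $F$ restricted to a stable leaf is a $C^1$ diffeomorphism onto a stable leaf, one has $d{\rm Leb}_{\tildeF^s}(w)=\|DF|_{E^s_F(z)}\|\,d{\rm Leb}_{\tildeF^s}(z)$. For the integrand, combine $\rho_F(Fz,Fx)=\rho_F(Fx,Fz)^{-1}$ (Proposition \ref{3 prop: density function}(2)) with item (3) of that proposition to get
$$\rho_F(Fz,Fx)=\frac{\|DF|_{E^s_F(x)}\|}{\|DF|_{E^s_F(z)}\|}\,\rho_F(z,x).$$
Substituting both identities, the factors $\|DF|_{E^s_F(z)}\|$ cancel and $\|DF|_{E^s_F(x)}\|$ pulls out as a constant, leaving $\|DF|_{E^s_F(x)}\|\int_x^y\rho_F(z,x)\,d{\rm Leb}_{\tildeF^s}(z)=\|DF|_{E^s_F(x)}\|\cdot d^s_F(x,y)$, as claimed.

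Item (3) is then immediate: if $d_{\tildeF^s}(x,y)<K$, every $z$ on the arc from $x$ to $y$ has $d_{\tildeF^s}(z,x)<K$, so Proposition \ref{3 prop: density function}(5) gives $C^{-1}<\rho_F(z,x)<C$ for a constant $C=C(K)$; integrating over the arc yields $C^{-1}d_{\tildeF^s}(x,y)\le d^s_F(x,y)\le C\,d_{\tildeF^s}(x,y)$. I do not expect a genuine obstacle here: the only delicate point is the uniformity of the $o(\cdot)$ in item (1), which is exactly why the uniform continuity of $\rho_F$ is recorded in Proposition \ref{3 prop: density function}(1); the remainder is the change-of-variables bookkeeping in item (2).
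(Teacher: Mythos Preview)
Your proposal is correct and follows exactly the route the paper indicates: the paper simply states that Proposition~\ref{3 prop: affine metric} is a direct corollary of Proposition~\ref{3 prop: density function} (referring to \cite{GG2008} for details), and you have carried out precisely that derivation --- uniform continuity of $\rho_F$ near the diagonal for item~(1), the change-of-variables $w=F(z)$ together with Proposition~\ref{3 prop: density function}(2)--(3) for item~(2), and the uniform bound Proposition~\ref{3 prop: density function}(5) for item~(3). The only point worth noting is that the uniformity in $x$ needed in item~(1) is not literally the content of Proposition~\ref{3 prop: density function}(1) as stated, but you correctly observe that $\ZZ^d$-periodicity lets one work on $\TT^d$ where compactness supplies it.
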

   
    Proposition \ref{3 prop: affine metric} is a direct corollary of Proposition \ref{3 prop: density function}. One  can also get these properties of $d^s_F(\cdot,\cdot)$  from ones of $d^s_f(\cdot,\cdot)$ on $\TT^d$ which are given in \cite[Subsection 4.2]{GG2008}. It is similar to define $\rho_G(\cdot,\cdot),\rho_g(\cdot,\cdot)$ and $d^s_G(\cdot,\cdot), d^s_g(\cdot,\cdot)$. They also have properties stated in Proposition \ref{3 prop: density function} and Proposition \ref{3 prop: affine metric}.
  
   \vspace{2mm}
   Define an order $"\prec"$ on each leaf of  $\tildeF^s$ and $\tildeG^s$, we can assume that all of $F,G$ and $H$ preserve this order. And  $"x \preceq y"$ means that "$x\prec y$" or "$x=y$". 
   
   \begin{remark}\label{3 rmk not metric}
The function $d^s_F(\cdot,\cdot)$ is not additive or dual. So it cannot be a metric on the stable leaf. This is different from the situation of p.d.s being constant in \cite{AGGS2022}. Indeed, we have 
\begin{enumerate}
	\item For any $x\in\RR^d$ and $y\in\tildeF^s(x)$,
	\begin{align*}
		d^s_F(y,x)=\int_{y}^{x} \rho_F(z,y) d{\rm Leb}_{\tildeF^s}(z)= \rho_F(x,y)\cdot \int_{y}^{x} \rho_F(z,x) d{\rm Leb}_{\tildeF^s}(z)=\rho_F(x,y)d^s_F(x,y).
	\end{align*}
\item For any $x\in\RR^d$ and $y,w\in\tildeF^s(x)$ with $x\prec y\prec w$,
\begin{align*}
	d^s_F(x,w)&=\int_{x}^{w} \rho_F(z,x) d{\rm Leb}_{\tildeF^s}(z)=\int_{x}^{y} \rho_F(z,x) d{\rm Leb}_{\tildeF^s}(z)+\int_{y}^{w} \rho_F(z,x) d{\rm Leb}_{\tildeF^s}(z),\\
	&=d^s_F(x,y)+ \rho_F(y,x)\cdot \int_{y}^{w} \rho_F(z,y) d{\rm Leb}_{\tildeF^s}(z)= d^s(x,y)+ \rho_F(y,x)\cdot d^s_F(y,w).
\end{align*}
\end{enumerate}
   \end{remark}  
   
  To prove that $H$ is commutative with deck transformations on the assumption that $f$ and $g$ admit the same stable periodic data, we first show that 
  $H$ is bi-differentiable  along each leaf of the stable foliation $\tildeF^s$.
  The preparation step is that $H$ is Lipschitz continuous along stable leaves with the help of the affine structure and Livschitz Theorem. The following lemma is an adaption of Livschtiz Theorem for the universal cover $\RR^d$.
  
     \begin{lemma}\label{3 lem: s rigidity induce to RRd}
    	If $\lambda^s(f,\bar{p})=\lambda^s(g,\bar{h}(\bar{p}))$, for all $\bar{p}\in{\rm Per}(\sigma_f)$, then there exist  H\"older continuous $u:\TT^d_f\to\RR$ and $U:\RR^d\to \RR$ such that,  
    	$${\rm log}\| Df|_{E^s_f(\bar{x})}\| = {\rm log} \| Dg|_{E^s_g(\bar{h}(\bar{x}))}\|+u\circ \sigma_f(\bar{x})- u(\bar{x}),\quad \forall \bar{x}\in \TT^d_f,$$
    	and
    	$$ {\rm log}\| DF|_{E^s_F(x)}\|= {\rm log}\| DG|_{E^s_G(H(x))}\|+U\circ F(x)- U(x),\quad \forall x\in\RR^d.$$
    	 Moreover, $U$ is bounded and uniformly continuous on $\RR^d$.
    \end{lemma}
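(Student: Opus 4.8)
The plan is to obtain $u$ from the Livschitz theorem on the inverse limit space (Theorem~\ref{2.3 thm: Livshitz for inverse limit space}) and then to pull it back to $\RR^d$ along forward $F$-orbits. First I would put on $\TT^d_f$ the two functions
\[
\varphi_1(\bar x):=\log\big\|Df|_{E^s_f(\bar x)}\big\|,
\qquad
\varphi_2(\bar x):=\log\big\|Dg|_{E^s_g(\bar h(\bar x))}\big\|,
\]
so that $\varphi_1$ is literally the left-hand side of the first displayed identity. Since $f\in\mathcal{A}^{1+\alpha}_1(\TT^d)$, the bundle $E^s_f$ and hence $\log\|Df|_{E^s_f}\|$ is H\"older on $\TT^d$, so $\varphi_1=(\log\|Df|_{E^s_f}\|)\circ\pi_0$ is H\"older; likewise $\varphi_2=(\log\|Dg|_{E^s_g}\|)\circ\pi_0\circ\bar h$ is H\"older \emph{provided $\bar h$ is H\"older}. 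I would then verify the periodic-orbit hypothesis of Theorem~\ref{2.3 thm: Livshitz for inverse limit space}: for $\bar p\in{\rm Per}(\sigma_f)$ of period $\pi(\bar p)$ one has $(\sigma_f^i\bar p)_0=f^i(\pi_0(\bar p))$, so, $E^s_f$ being orbit-independent, $\sum_{i=1}^{\pi(\bar p)}\varphi_1(\sigma_f^i\bar p)=\pi(\bar p)\,\lambda^s_f(\pi_0(\bar p))=\pi(\bar p)\,\lambda^s(f,\bar p)$; since $\bar h$ conjugates $\sigma_f$ to $\sigma_g$ and preserves periods, the same computation gives $\sum_{i=1}^{\pi(\bar p)}\varphi_2(\sigma_f^i\bar p)=\pi(\bar p)\,\lambda^s(g,\bar h(\bar p))$, and these agree by hypothesis. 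Theorem~\ref{2.3 thm: Livshitz for inverse limit space} then produces a H\"older $u:\TT^d_f\to\RR$ with $\varphi_1=\varphi_2+u\circ\sigma_f-u$, which is the first displayed identity of the lemma.

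For the passage to $\RR^d$, recall that $F$ is an Anosov diffeomorphism of $\RR^d$ (\cite{ManePugh1975}), so every $x\in\RR^d$ has a unique full $F$-orbit; set $\Phi(x):=(\pi F^i x)_{i\in\ZZ}\in\TT^d_f$, which satisfies $\sigma_f\circ\Phi=\Phi\circ F$ and, by \eqref{eq. 2.8.2}, $\bar h\circ\Phi=\Phi'\circ H$ with $\Phi'(y):=(\pi G^i y)_{i\in\ZZ}$. Because $\pi$ is a local isometry with $\pi\circ F=f\circ\pi$ and $\pi\circ G=g\circ\pi$, evaluating along $\Phi$ gives $\varphi_1\circ\Phi(x)=\log\|DF|_{E^s_F(x)}\|$ and $\varphi_2\circ\Phi(x)=\log\|DG|_{E^s_G(H(x))}\|$. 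Setting $U:=u\circ\Phi$ and evaluating $\varphi_1=\varphi_2+u\circ\sigma_f-u$ at $\Phi(x)$, while using $u\circ\sigma_f\circ\Phi=u\circ\Phi\circ F=U\circ F$, yields the second displayed identity.

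For the final clause, $U$ is bounded because $\TT^d_f$ is compact and $u$ continuous, and $U$ is uniformly continuous provided $\Phi:\RR^d\to\TT^d_f$ is, since $u$ is uniformly continuous on the compact metric space $\TT^d_f$. To see that $\Phi$ is uniformly continuous: given $\e>0$ pick $N$ with $\sum_{|i|>N}2^{-|i|}{\rm diam}(\TT^d)<\e/2$; as $F$ is the lift of a $C^1$ map on the compact manifold $\TT^d$ it has bounded derivative, so each $F^i$ with $|i|\le N$ is Lipschitz with a constant $C_N$ depending only on $N$, whence $\bar d\big(\Phi(x),\Phi(y)\big)\le \e/2+C_N\,d(x,y)$, which is $<\e$ once $d(x,y)$ is small enough.

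The main obstacle is the input used in the first step, that $\bar h$ — equivalently the conjugacy $H$ between the Anosov diffeomorphisms $F$ and $G$ — is H\"older rather than merely a homeomorphism; without it the inverse-limit Livschitz theorem does not apply to $\varphi_2$. (One cannot instead run Livschitz directly on $\RR^d$: the Anosov diffeomorphism $F$ has a single periodic point there, so passing to the compact inverse limit is essential.) This is where uniform hyperbolicity enters: I would derive the H\"older regularity of $\bar h$ from the local product structure and the uniform contraction/expansion rates of $\sigma_f,\sigma_g$ (Proposition~\ref{2.1 prop: local product sturcture}) — i.e.\ from the standard bi-H\"older regularity of conjugacies between uniformly hyperbolic systems. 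Everything else is routine bookkeeping with $\pi$ and the identifications of Figure~3.
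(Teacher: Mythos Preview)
Your proposal is correct and follows essentially the same approach as the paper: apply the inverse-limit Livschitz theorem (Theorem~\ref{2.3 thm: Livshitz for inverse limit space}) to $\varphi_1,\varphi_2$ to obtain $u$, then pull back to $\RR^d$ via $U:=u\circ\pi\circ{\rm Orb}_F$ (your $u\circ\Phi$). The paper's proof is terser---it does not explicitly verify the periodic hypothesis or the H\"older regularity of $\bar h$ that you flag, and for the regularity of $U$ it simply notes that $\pi\circ{\rm Orb}_F$ is H\"older (indeed Lipschitz if one reweights the metric on $\TT^d_f$ by the hyperbolic rate).
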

    \begin{proof}
    	Note that for any $x\in\RR^d$ and any $(x_i)=\bar{x}\in M_f$ with $x_0=\pi(x)$, one has $$D\pi\circ DF|_{E^s_F(x)}= Df|_{E^s_f(\pi(x))}= Df|_{E^s_f(\bar{x})}.$$
    	By Proposition \ref{2.2 prop: the relationship of 3 conjugacies inverse limit space},  
    	\begin{align*}
    		D\pi\circ DG|_{E^s_G(H(x))}
    		&= Dg|_{E^s_g(\pi\circ H(x))}= Dg|_{E^s_g(\pi\circ {\rm Orb}_G\circ H(x))},\\
    		&=  Dg|_{E^s_g(\bar{h}\circ\pi \circ {\rm Orb}_F(x))}= Dg|_{E^s_g(\bar{h}(\bar{x}))}.
    	\end{align*}
    	
    	Since Anosov maps on $d$-torus  are always transitive (\cite{AokiHiraide1994}, also see \cite[Section 4]{AGGS2022}), then by the stable periodic data condition and the Livschitz Theorem on the inverse limit space (Theorem \ref{2.3 thm: Livshitz for inverse limit space}), there exists 
    	a  H$\ddot{\rm o}$lder continuous $u:\TT^d_f\to\RR$ such that 
    	$${\rm log}\| Df|_{E^s_f(\bar{x})}\| = {\rm log} \| Dg|_{E^s_g(\bar{h}(\bar{x}))}\|+u\circ \sigma_f(\bar{x})- u(\bar{x}).$$
    	Since $\bar{h}$ is given by Proposition \ref{2.2 prop: the relationship of 3 conjugacies inverse limit space}, it follows that  
    	$$ {\rm log}\| DF|_{E^s_F(x)}\|= {\rm log}\| DG|_{E^s_G(H(x))}\|+U\circ F(x)- U(x) ,$$
    	where the function $U:\RR^d\to \RR$ defined as $U(x)=u\circ \pi\big({\rm Orb}_F(x)\big)$ is bounded and  H$\ddot{\rm o}$lder continuous.
    	Note that $\pi\circ{\rm Orb}_F: \big(\RR^d, d(\cdot,\cdot) \big)\to \big(\TT^d_f, \bar{d}(\cdot,\cdot)\big)$ is H$\ddot{\rm o}$lder continuous.
    	Indeed if we choose $\bar{d}$ related with hyperbolic rate such as 
    	$$\bar{d}(\bar{x},\bar{y}):=\sum_{i=-\infty}^{+\infty}\frac{d(x_i,y_i)}{(2\lambda)^{|i|}},$$
    	where $\lambda:={\rm sup}_{\bar{x}\in\TT^d_f}\big\{\|Df^{-1}|_{E^s_f(\bar{x})}\|,   \|Df|_{E^u_f(\bar{x})}\| \big\}$,
    	then  $\pi\circ{\rm Orb}_F$ would be Lipschitz continuous. 
    \end{proof}
    
    Now we will prove the Lipschitz continuity of $H$ restricted on each stable leaf.
    \begin{proposition}\label{3 prop: bi-Lipschitz}
   	There exists $C_0>0$ such that for every $x\in\RR^d$ and $y\in\tildeF^s(x)$, one has $$d_{\tildeG^s}\big(H(x),H(y) \big)< C_0\cdot d_{\tildeF^s}(x,y).$$
   \end{proposition}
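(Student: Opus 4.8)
Here is the plan.

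The strategy is a bounded–distortion (renormalization) argument built on the affine structures $d^s_F,d^s_G$ of Proposition~\ref{3 prop: affine metric} together with the coboundary produced in Lemma~\ref{3 lem: s rigidity induce to RRd}. Since $H\circ F=G\circ H$ and $H$ is a homeomorphism, the topological characterization of stable manifolds yields $H\big(\tildeF^s(x)\big)=\tildeG^s\big(H(x)\big)$, analogously to \eqref{eq. 2.1.1}; in particular $H(x)$ and $H(y)$ lie on a common leaf of $\tildeG^s$ whenever $y\in\tildeF^s(x)$. For such a pair I would set
\[
\Phi_n:=\frac{d^s_G\big(G^n H(x),G^n H(y)\big)}{d^s_F\big(F^n x,F^n y\big)},\qquad n\in\ZZ .
\]
Using equivariance of the affine structures (item~(2) of Proposition~\ref{3 prop: affine metric}) and then Lemma~\ref{3 lem: s rigidity induce to RRd} at the point $F^n x$ (noting $H(F^n x)=G^n H(x)$), one gets $\Phi_{n+1}=e^{U(F^n x)-U(F^{n+1}x)}\,\Phi_n$, hence by telescoping
\[
\frac{d^s_G\big(H(x),H(y)\big)}{d^s_F(x,y)}=e^{\,U(F^n x)-U(x)}\,\Phi_n\qquad\text{for every }n\in\ZZ ,
\]
where the exponent is bounded in absolute value by $2\|U\|_{C^0}<\infty$ since $U$ is bounded. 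Validity of this identity for negative $n$ uses that $F,G$ are diffeomorphisms of $\RR^d$ and $H$ is a bijection (Proposition~\ref{2.1 prop lifting conjugate}).

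The next step is to choose, for each pair $y\in\tildeF^s(x)$, an integer $n=n(x,y)$ for which $d_{\tildeF^s}(F^n x,F^n y)$ lies in a fixed compact window $[\,m\delta_0,\delta_0\,]$, where $\delta_0>0$ is small and $m=\inf_x\|DF|_{E^s_F(x)}\|>0$; such an $n$ exists because forward iteration of $F$ contracts and backward iteration expands stable distances, with one–step factors bounded away from $0$ and $\infty$. For this $n$ the denominator $d^s_F(F^n x,F^n y)$ is bounded below by $C(\delta_0)^{-1}m\delta_0>0$ via item~(3) of Proposition~\ref{3 prop: affine metric}; for the numerator, $d(F^n x,F^n y)\le d_{\tildeF^s}(F^n x,F^n y)\le\delta_0$, so $d\big(H(F^n x),H(F^n y)\big)$ is bounded by Proposition~\ref{2.1 prop lifting conjugate}(3), then $d_{\tildeG^s}\big(H(F^n x),H(F^n y)\big)$ is bounded by quasi-isometry of the one–dimensional foliation $\tildeG^s$ (Proposition~\ref{2.1 prop quasi-isometric}, using $\dim E^s=1$), and finally $d^s_G\big(G^n H(x),G^n H(y)\big)$ is bounded by item~(3) of Proposition~\ref{3 prop: affine metric} applied to $g$. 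Thus $\Phi_{n(x,y)}$ is bounded by a constant independent of $x,y$, and the displayed identity gives a uniform $C_2$ with $d^s_G\big(H(x),H(y)\big)\le C_2\,d^s_F(x,y)$ for all $x\in\RR^d$ and $y\in\tildeF^s(x)$.

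It then remains to pass from the functions $d^s_F,d^s_G$ to the Riemannian leaf metrics. If $d_{\tildeF^s}(x,y)\le 1$, then $d^s_F(x,y)\le C(1)\,d_{\tildeF^s}(x,y)$ by comparability, while $d_{\tildeG^s}\big(H(x),H(y)\big)$ is a priori bounded (quasi-isometry of $\tildeG^s$ together with $\|H-{\rm Id}_{\RR^d}\|_{C^0}<\infty$), so comparability also applies on the $\tildeG^s$ side and yields $d_{\tildeG^s}\big(H(x),H(y)\big)\le C_0\,d_{\tildeF^s}(x,y)$ for a uniform $C_0$. For $d_{\tildeF^s}(x,y)>1$ I would subdivide the stable segment from $x$ to $y$ into $N=\lceil d_{\tildeF^s}(x,y)\rceil$ consecutive pieces of $d_{\tildeF^s}$–length $\le 1$; since $H$ preserves the order $\prec$ along leaves, the images stay in order, and $d_{\tildeG^s}$ is additive along an ordered chain, so summing the unit–scale estimate over the pieces gives $d_{\tildeG^s}\big(H(x),H(y)\big)\le C_0\,d_{\tildeF^s}(x,y)$ in general. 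I expect the main technical point to be exactly this last conversion: because $d^s_F$ is not additive and is comparable to $d_{\tildeF^s}$ only on bounded scales (Remark~\ref{3 rmk not metric}), the renormalization can only be carried out on a uniformly bounded window, so the global Lipschitz bound has to be assembled through the subdivision; a secondary point is to keep track of where one–dimensionality of $E^s$ (via quasi-isometry) is genuinely used to supply the a priori ambient bounds feeding the estimate.
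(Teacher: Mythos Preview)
Your proposal is correct and follows essentially the same renormalization strategy as the paper: use the affine structures together with the coboundary from Lemma~\ref{3 lem: s rigidity induce to RRd} to reduce the ratio $d^s_G(H(x),H(y))/d^s_F(x,y)$ at an arbitrary scale to the same ratio at a fixed reference scale, where it can be bounded directly via $\|H-{\rm Id}_{\RR^d}\|_{C^0}<\infty$ and quasi-isometry of the one-dimensional stable foliations.

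The organization differs slightly. The paper first disposes of the large-scale case $d_{\tildeF^s}(x,y)\ge C_1C_2$ by the elementary estimate $d(H(x),H(y))<3d(x,y)$ (no affine structure needed), and then, for small pairs, iterates \emph{backward} until the leaf distance enters that same large window, transporting the bound via the affine identity~\eqref{eq. 3.5.2}. You instead renormalize every pair to a fixed \emph{small} window $[m\delta_0,\delta_0]$ (allowing $n$ of either sign), obtain a uniform bound on $d^s_G/d^s_F$ at all scales, and only afterwards convert back to the Riemannian leaf metrics, handling large $d_{\tildeF^s}(x,y)$ by subdividing into unit pieces and summing. Both routes work; the paper's is marginally shorter because the large-scale case is trivial and serves directly as the target window, avoiding your final subdivision step, while your version makes the role of the affine telescoping identity more uniformly visible.
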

   
   \begin{proof}
   By Proposition \ref{2.1 prop lifting conjugate}, there exists $C_1$ such that $\|H-{\rm Id}_{\RR^d}\|_{C^0}<C_1$. Then for any $x,y\in\RR^d$ with $d(x,y)\geq C_1$, one has $$d\big(H(x),H(y)\big)<d(x,y)+2C_1<3d(x,y).$$
   Note that the one-dimensional foliation $\tilde{\mathcal{F}}_{\sigma}^s$ $(\sigma=f,g)$ is quasi-isometric (see Proposition \ref{2.1 prop quasi-isometric}), i.e., there exists $C_2>1$ such that $$d_{\tilde{\mathcal{F}}_{\sigma}^s}(x,y)\leq C_2\cdot d(x,y), \quad \forall x\in\RR^d, \forall y\in\tilde{\mathcal{F}}_{\sigma}^s(x).$$
   Hence let $x\in\RR^d$ and $y\in\tildeF^s(x)$ with $d_{\tildeF^s}(x,y)\geq C_1\cdot C_2$. We have
   \begin{align}
   	 d_{\tildeG^s}\big(H(x),H(y)\big)<C_2\cdot d\big(H(x),H(y)\big)<3C_2\cdot d(x,y)\leq 3C_2 \cdot d_{\tildeF^s}(x,y). \label{eq. 3.5.1}
   \end{align}

  Now for every $x\in\RR^d$ and $y\in\tildeF^s(x)$ with $d_{\tildeF^s}(x,y)<C_1\cdot C_2$, we can choose the smallest $N\in\NN$ such that $d_{\tildeF^s}\big(F^{-N}(x),F^{-N}(y)\big)$ is bigger than $C_1\cdot C_2$. It follows  that 
  \begin{align}
  	C_1\cdot C_2 <d_{\tildeF^s}\big(F^{-N}(x),F^{-N}(y)\big)\leq C_1\cdot C_2\cdot \sup_{z\in\TT^d}\|Df|_{E_f^s(z)}\|^{-1}. \label{eq. 3.3.5.1}
  \end{align}
  Again by $\|H-{\rm Id}_{\RR^d}\|_{C^0}<C_1$ and the quasi-isometric property of $\tildeG^s$, there exists $C_3>0$ such that 
  \begin{align}
  	 d_{\tildeG^s}\big(G^{-N}\circ H(x)\;,\; G^{-N}\circ H(y)\big)\leq C_3.\label{eq. 3.3.5.2} 
  \end{align}
  By the affine structure and the Livschitz Theorem, one has
 	\begin{align}
 	\frac{d_G^s\big(H(x),H(y)\big)}{d^s_F(x,y)}
 	&=\prod_{i=-N}^{-1}\frac{\|DG|_{E^s_G(G^i\circ H(x))}\|}{\|DF|_{E^s_F(F^i(x))}\|} \cdot \frac{d_G^s\big(G^{-N}\circ H(x) \; ,\;   G^{-N}\circ H(y)\big)}{d_F^s\big( F^{-N}(x) \; ,\;   F^{-N}(y)\big)},\\
 	&= \frac{P(F^{-N}(x))}{P(x)}\cdot \frac{d_G^s\big(G^{-N}\circ H(x) \; ,\;  G^{-N}\circ H(y)\big)}{d_F^s\big( F^{-N}(x) \; ,\;   F^{-N}(y)\big)}, \label{eq. 3.5.2}
 \end{align}
 where $P(x)=e^{U(x)}$ and $U(x)$ is given by Lemma \ref{3 lem: s rigidity induce to RRd}. Note that $P(x)$ is uniformly away from $0$ and upper bounded. By \eqref{eq. 3.3.5.1} and \eqref{eq. 3.3.5.2}, it follows from the third item of Proposition \ref{3 prop: affine metric} for both $d_F^s(\cdot,\cdot)$ and $d_G^s(\cdot,\cdot)$ that there exists $C_4>0$ such that
 $$\frac{d_{\tildeG^s}\big(H(x),H(y) \big)}{d_{\tildeF^s}(x,y)} \leq C_4\cdot \frac{d_{\tildeG^s}\big(G^{-N}\circ H(x), G^{-N}\circ H(y) \big)}{d_{\tildeF^s}\big(F^{-N}(x),F^{-N}(y)\big)}.$$
 Finally, by \eqref{eq. 3.5.1}, there exists $C_0>0$ such that
  $d_{\tildeG^s}\big(H(x),H(y) \big) <C_0 \cdot d_{\tildeF^s}(x,y)$.
   \end{proof}
 
 \begin{remark}\label{3 rmk H almost differentiable}
 Similarly,  we have that $H^{-1}$ is also Lipschitz continuous along stable leaves. It follows that there exist  bi-differentiable points which form a full Lebesgue set restricted on a local stable leaf.  
 \end{remark}
 
 \vspace{3mm}
 For proving the total differentiability of $H$ restricted on $\tildeF^s$, we need some compactness which can be gotten on the inverse limit space. Therefore, we will discuss the "differentiability"  on $\TT^d_f$. 
 
 Recall that $\pi_0:(\TT^d)^{\ZZ}\to \TT^d$ is the projection on the $0$th position.   Define $\bar{h}_0:\TT^d_f\to \TT^d$ as  $\bar{h}_0(\tilde{x})=\pi_0\circ \bar{h}(\tilde{x})$.  Let $\tilde{x}=(x_i),\tilde{y}=(y_i)\in\TT^d_f$,  we use $\tilde{y}\in \mathcalf^s(\tilde{x},R)$ to denote $y_i\in\mathcalf^s(x_i)\ (i\in\ZZ)$ and $y_0\in\mathcalf^s(x_0,R)$. It is clear that $\bar{h}_0(\tilde{y})\in \mathcalg^s(\bar{h}_0(\tilde{x}))$. By Lemma \ref{3 lem: s rigidity induce to RRd} and the proof of Proposition \ref{3 prop: bi-Lipschitz}, we have the following corollary. 
 
 \begin{corollary}\label{3 cor h0 almost differentiable}
 	There exists $C_0>1$ such that for every $\tilde{x}\in\TT^d_f$ and $\tilde{y}\in\mathcalf^s(\tilde{x})$, one has 
 	$$C_0^{-1}<\frac{d_{\mathcalg^s}\big(\bar{h}_0(\tilde{x}),\bar{h}_0(\tilde{y}) \big)}{d_{\mathcalf^s}\big(\pi_0(\tilde{x}),\pi_0(\tilde{y})\big)}<C_0.$$
   Hence, for every $\tilde{x}\in\TT^d_f$, there is a full Lebesgue set $\Gamma(\tilde{x})$ on $\mathcalf^s(\pi_0(\tilde{x}), R)$ for some $R>0$ such that for any $ y\in \Gamma(\tilde{x})$ and the unique point $\tilde{y}\in \mathcalf^s(\tilde{x},R)$ with $\pi_0(\tilde{y})=y$, one has that the restriction $\bar{h}_0|_{\mathcalf^s(\pi_0(\tilde{x}))} : \mathcalf^s(\pi_0(\tilde{x})) \to \mathcalg^s(\bar{h}_0(\tilde{x}))$ by $y \mapsto \bar{h}_0(\tilde{y})$ is differentiable at $y$. Denote the derivative by $D\bar{h}_0|_{E^s_f(\tilde{y})}$.
 \end{corollary}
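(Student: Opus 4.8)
The plan is to transport the bi-Lipschitz estimate of Proposition~\ref{3 prop: bi-Lipschitz} and Remark~\ref{3 rmk H almost differentiable} from $\RR^d$ to the inverse limit space, where it becomes \emph{uniform}, and then to extract a.e.\ differentiability from one-dimensional bi-Lipschitz regularity. First I record two structural facts. Since $\bar h\circ\sigma_f=\sigma_g\circ\bar h$ and $\pi_0\circ\sigma_g=g\circ\pi_0$, the map $\bar h_0=\pi_0\circ\bar h$ satisfies $\bar h_0\circ\sigma_f=g\circ\bar h_0$ and carries $\mathcalf^s(\tilde x)$ into $\mathcalg^s(\bar h_0(\tilde x))$. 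Moreover, exactly as for Anosov diffeomorphisms the one-dimensional stable foliations of $f$ and $g$ have no compact leaf (transitivity forces every stable leaf to be dense), so, using that $\tildeF^{s/u}$ is quasi-isometric (Proposition~\ref{2.1 prop quasi-isometric}), $\pi$ restricts to an isometry from each leaf of $\tildeF^s$ onto the corresponding leaf of $\mathcalf^s$, and likewise for $\tildeG^s$ and $\mathcalg^s$.

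For the first inequality, fix $\tilde x=(x_i)\in\TT^d_f$, $\tilde y=(y_i)\in\mathcalf^s(\tilde x)$, and set $x_0=\pi_0(\tilde x)$, $y_0=\pi_0(\tilde y)$. For each $k\ge 0$ choose a lift $a_k\in\RR^d$ of $x_{-k}$, let $b_k\in\tildeF^s(a_k)$ be the lift of $y_{-k}$ (available since $y_{-k}\in\mathcalf^s(x_{-k})$), and put $\hat x^k=F^k(a_k)$, $\hat y^k=F^k(b_k)$. Then $\pi(\hat x^k)=x_0$, $\pi(\hat y^k)=y_0$, $\hat y^k\in\tildeF^s(\hat x^k)$, and by the leaf isometry $d_{\tildeF^s}(\hat x^k,\hat y^k)=d_{\mathcalf^s}(x_0,y_0)$ for all $k$; moreover $\pi\circ{\rm Orb}_F(\hat x^k)\to\tilde x$ and $\pi\circ{\rm Orb}_F(\hat y^k)\to\tilde y$, so $\bar h_0(\tilde x)=\lim_k\pi(H(\hat x^k))$ and $\bar h_0(\tilde y)=\lim_k\pi(H(\hat y^k))$ by the construction of $\bar h$ in Proposition~\ref{2.2 prop: the relationship of 3 conjugacies inverse limit space}. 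Since $H$ sends $\tildeF^s$ to $\tildeG^s$, Proposition~\ref{3 prop: bi-Lipschitz} together with Remark~\ref{3 rmk H almost differentiable} gives
\[
C_0^{-1}\,d_{\mathcalf^s}(x_0,y_0)\ \le\ d_{\tildeG^s}\big(H(\hat x^k),H(\hat y^k)\big)\ \le\ C_0\,d_{\mathcalf^s}(x_0,y_0),
\]
and applying the leaf isometry for $\tildeG^s$ and letting $k\to\infty$ yields the claimed bounds with the uniform constant $C_0$ of Proposition~\ref{3 prop: bi-Lipschitz}.

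For the last assertion, fix $R$ small enough that each iterate of $f$ has a well-defined stable-local inverse branch near $x_0$, so that over each $y\in\mathcalf^s(x_0,R)$ there is a unique $\tilde y\in\mathcalf^s(\tilde x,R)$; by the estimate just proved the map $\mathcalf^s(x_0,R)\to\mathcalg^s(\bar h_0(\tilde x))$, $y\mapsto\bar h_0(\tilde y)$, is a bi-Lipschitz homeomorphism of one-dimensional manifolds, hence differentiable Lebesgue-a.e.\ (the one-dimensional Rademacher theorem). Taking $\Gamma(\tilde x)$ to be its set of differentiability points gives a full Lebesgue set, and its derivative there, nonzero by the lower bound, is the claimed $D\bar h_0|_{E^s_f(\tilde y)}$. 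The step I expect to require the most care is keeping the stable-leaf distances comparable along the passage $\RR^d\to\TT^d\to\TT^d_f$, i.e.\ the isometry of $\pi$ on one-dimensional stable leaves and the convergence $\pi\circ{\rm Orb}_F(\hat x^k)\to\tilde x$ with $d_{\tildeF^s}(\hat x^k,\hat y^k)$ held fixed. One may also argue intrinsically on $\TT^d_f$: iterate the invertible map $\sigma_f$ backward to the local product scale and use the transformation rule of Proposition~\ref{3 prop: affine metric} with Lemma~\ref{3 lem: s rigidity induce to RRd}; the ratio of affine stable distances then changes only by $e^{u(\sigma_f^{-n}\tilde x)-u(\tilde x)}$, whose boundedness is precisely where the compactness of $\TT^d_f$ enters.
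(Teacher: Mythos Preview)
Your argument is correct. The paper treats this corollary as immediate from Lemma~\ref{3 lem: s rigidity induce to RRd} and the \emph{proof} of Proposition~\ref{3 prop: bi-Lipschitz}, i.e.\ it intends the reader to rerun that bi-Lipschitz argument intrinsically on $\TT^d_f$ using the transfer function $u$ on the inverse limit space---exactly the alternative you sketch in your last sentence. Your primary route is instead a transport argument: lift to $\RR^d$, apply Proposition~\ref{3 prop: bi-Lipschitz} there, and push the estimate down via the leaf isometry $\pi|_{\tildeF^s}$ and the defining approximation $\bar h(\tilde x)=\lim_k\pi\big({\rm Orb}_G(H(\hat x^k))\big)$. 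This buys you the result without repeating the affine-structure computation, at the cost of having to justify (i) that $\pi$ is a genuine isometry on one-dimensional stable leaves (your density/no-compact-leaf remark is the right reason; the appeal to quasi-isometry is only needed to conclude that $T_n$-invariance of a leaf would force a compact projection), and (ii) that $d_{\mathcalg^s}$ passes to the limit along the approximating sequences, which is fine since the leaf distances stay uniformly bounded and local stable leaves vary $C^1$-continuously. The Rademacher step for the a.e.\ differentiability is the same in both approaches.
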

 
  Let $\tilde{P}(\tilde{x})=e^{u(\tilde{x})}$, for any $\tilde{x}\in \TT^d_f$ where $u:\TT^d_f\to \RR$ is given by Lemma  \ref{3 lem: s rigidity induce to RRd}. Note that $\tilde{P}$ is H\"older continuous, bounded and uniformly away from $0$. By a method originated from \cite{GG2008}, we can get the following proposition from Corollary \ref{3 cor h0 almost differentiable}. 
 \begin{proposition}\label{3 prop h0 differentiable}
 	For any $\tilde{x}\in\TT^d_f$, the derivative  $D\bar{h}_0|_{E^s_f(\tilde{x})}$ exists, moreover $$\|D\bar{h}_0|_{E^s_f(\tilde{y})}\|=\frac{\tilde{P}(\tilde{x})}{\tilde{P}(\tilde{y})}\cdot\|D\bar{h}_0|_{E^s_f(\tilde{x})}\|,\quad \forall \tilde{y}\in\TT^d_f.$$
 \end{proposition}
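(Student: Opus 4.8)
The plan is to follow the scheme of \cite{GG2008}, transporting leafwise differentiability along the $\sigma_f$-dynamics and exploiting the compactness of $\TT^d_f$, which the universal cover lacks.

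First I would record a cohomological equation for the putative leafwise derivative. Since $\bar{h}\circ\sigma_f=\sigma_g\circ\bar{h}$ and $\bar{h}_0=\pi_0\circ\bar{h}$, one has $\bar{h}_0\circ\sigma_f=g\circ\bar{h}_0$, where $\sigma_f$ (resp. $g$) restricted to a stable leaf is a $C^1$ immersion with leafwise derivative $Df|_{E^s_f}$ (resp. $Dg|_{E^s_g}$). Because $\dim E^s_f=1$, on the $\sigma_f$-invariant set of full Lebesgue measure on each leaf where $D\bar{h}_0|_{E^s_f}$ exists (Corollary \ref{3 cor h0 almost differentiable}), the chain rule gives, writing $\theta(\tilde{x}):=\|D\bar{h}_0|_{E^s_f(\tilde{x})}\|$,
\[
\theta(\sigma_f\tilde{x})\cdot\|Df|_{E^s_f(\tilde{x})}\|=\|Dg|_{E^s_g(\bar{h}_0\tilde{x})}\|\cdot\theta(\tilde{x}).
\]
Feeding in Lemma \ref{3 lem: s rigidity induce to RRd} (using that $E^s_g$ is independent of the chosen orbit, so $E^s_g(\bar{h}(\tilde{x}))=E^s_g(\bar{h}_0(\tilde{x}))$), that is $\|Df|_{E^s_f(\tilde{x})}\|=\|Dg|_{E^s_g(\bar{h}_0\tilde{x})}\|\cdot\tilde{P}(\sigma_f\tilde{x})/\tilde{P}(\tilde{x})$, I obtain that $\Phi:=\theta\cdot\tilde{P}$ is $\sigma_f$-invariant wherever $\theta$ is defined.

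The core step is to upgrade this almost-everywhere statement to one valid everywhere. Fix $\tilde{x}_0$. Using the affine structure of Proposition \ref{3 prop: affine metric}, I would parametrize the local stable leaf of $f$ at $\pi_0(\tilde{x}_0)$ by signed $d^s_F$-length $t$ and that of $g$ at $\bar{h}_0(\tilde{x}_0)$ by signed $d^s_G$-length, and let $\eta$ be the nondecreasing, bi-Lipschitz (Corollary \ref{3 cor h0 almost differentiable} together with Proposition \ref{3 prop: affine metric}(3)) representation of $\bar{h}_0$ with $\eta(0)=0$; by Proposition \ref{3 prop: affine metric}(1), $\eta'(t)=\theta(\tilde{x}_t)$ wherever the right side exists. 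Applying $\sigma_f^{-n}$ (a homeomorphism of $\TT^d_f$) and Proposition \ref{3 prop: affine metric}(2), the representation $\eta^{(n)}$ of $\bar{h}_0$ at $\sigma_f^{-n}\tilde{x}_0$ satisfies $\eta^{(n)}(A_nt)=B_n\eta(t)$ with $A_n=\prod_{i=1}^n\|Df|_{E^s_f(\sigma_f^{-i}\tilde{x}_0)}\|^{-1}\to\infty$ and $B_n=\prod_{i=1}^n\|Dg|_{E^s_g(\bar{h}_0\sigma_f^{-i}\tilde{x}_0)}\|^{-1}\to\infty$; by the cohomological relation above, the distortion $B_n/A_n$ telescopes to $\tilde{P}(\tilde{x}_0)/\tilde{P}(\sigma_f^{-n}\tilde{x}_0)$, which stays in a fixed compact subset of $(0,\infty)$ since $u$ is bounded. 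Because $\TT^d_f$ is compact and the stable foliation (in the $C^1$ topology), the densities $\rho_F,\rho_G$ and the homeomorphism $\bar{h}_0$ all vary continuously, each accumulation point $\tilde{z}$ of $\{\sigma_f^{-n}\tilde{x}_0\}$ produces, along a subsequence, a uniform-on-compact-sets limit $\eta^{(n_k)}\to\zeta$, where $\zeta$ is the representation of $\bar{h}_0$ at $\tilde{z}$; after rescaling this exhibits $\zeta$ as a blow-up limit of $\eta$ at $0$ with controlled distortion. Following \cite{GG2008}, combining these renormalizations with the $\sigma_f$-invariance of $\Phi$ (hence its constancy along forward orbits) and with the absolute continuity of the Lipschitz map $\eta$ forces the two-sided limit $\lim_{t\to0}\eta(t)/t$ to exist; equivalently $\theta(\tilde{x}_0)$ exists, and $\Phi(\tilde{x}_0)$ equals the essentially constant value of $\Phi$.

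Granting this, $\theta$ is defined on all of $\TT^d_f$ and $\Phi=\theta\tilde{P}$ is continuous and $\sigma_f$-invariant; since every toral Anosov map is transitive, so is $\sigma_f$, and a continuous $\sigma_f$-invariant function is constant, whence $\theta(\tilde{y})\tilde{P}(\tilde{y})=\theta(\tilde{x})\tilde{P}(\tilde{x})$ for all $\tilde{x},\tilde{y}\in\TT^d_f$, which is the claimed identity. I expect the main obstacle to be exactly the third paragraph: a derivative at a single point does not survive limits, so one must use the self-similarity produced by the contraction of $\sigma_f$ along stable leaves together with the compactness of the inverse limit space to control difference quotients at all scales at once — the Livschitz identity of the first step being precisely what keeps the relevant distortions bounded, and the reason the argument is carried out on $\TT^d_f$ rather than on $\RR^d$.
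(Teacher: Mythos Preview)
Your first paragraph (the cohomological identity $\theta\cdot\tilde P$ is $\sigma_f$-invariant on the leafwise full-measure set where $\theta$ exists) and your last paragraph (once $\theta$ is everywhere defined and continuous, transitivity of $\sigma_f$ forces $\theta\tilde P$ to be constant) are fine and are exactly what one wants. The gap is in the middle paragraph, and it is a real one.

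Your blow-up scheme gives, for each $t_k\to 0$, a choice of $n_k$ with $s_k:=A_{n_k}t_k\in[1,\Lambda]$ and
\[
\frac{\eta(t_k)}{t_k}=\frac{A_{n_k}}{B_{n_k}}\cdot\frac{\eta^{(n_k)}(s_k)}{s_k}
=\frac{\tilde P(\sigma_f^{-n_k}\tilde x_0)}{\tilde P(\tilde x_0)}\cdot\frac{\eta^{(n_k)}(s_k)}{s_k}.
\]
Passing to a subsequence with $\sigma_f^{-n_k}\tilde x_0\to\tilde z$ and $s_k\to s$, the right-hand side tends to $\big(\tilde P(\tilde z)/\tilde P(\tilde x_0)\big)\cdot\zeta(s)/s$, where $\zeta$ is the affine-coordinate representation of $\bar h_0$ at $\tilde z$. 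But $\tilde z$ is an \emph{arbitrary} accumulation point and nothing yet tells you $\zeta$ is linear, so the value $\zeta(s)/s$ may depend on both $\tilde z$ and $s$; different sequences $t_k\to 0$ can produce different $(\tilde z,s)$ and hence different subsequential limits. The $\sigma_f$-invariance of $\Phi$ ``along forward orbits'' does not help here, because $\Phi$ is so far only defined Lebesgue-a.e.\ on each leaf, and you have no reason why the particular limit points $\tilde z$ lie in that good set. In short, the renormalization controls difference quotients only along special geometric sequences of scales, not along all sequences, and there is no self-similarity argument forcing the blow-up limits to be linear. This is precisely the ``main obstacle'' you anticipated, and your sketch does not overcome it.

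The paper runs the argument in the opposite direction. Instead of starting at an arbitrary $\tilde x_0$ and blowing up, it first \emph{finds a single point} $\tilde z\in\TT^d_f$ which is simultaneously (i) a $\sigma_f^{-1}$-transitive point and (ii) a point where $D\bar h_0|_{E^s_f(\tilde z)}$ exists. This costs real work: one builds an $f$-invariant probability $\mu$ on $\TT^d$ with conditionals on $\mathcal F^s_f$ absolutely continuous with respect to $m^s$ (an SRB-type construction by averaging pushforwards of a leaf measure through $f_0^{-n}$), lifts it to $\tilde\mu$ on $\TT^d_f$, and shows via a Hopf-type argument that $\tilde\mu$-a.e.\ point has dense backward $\sigma_f$-orbit. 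Intersecting with the full-$m^s$ set from Corollary~\ref{3 cor h0 almost differentiable} (pulled back over all deck translates so that it is full on \emph{every} lifted local leaf) produces the desired $\tilde z$. Once such a $\tilde z$ is in hand, for any $\tilde y$ one approximates $\tilde y$ by $\sigma_f^{-n}(\tilde z)$ and uses the affine structures $d^s_f,d^s_g$ together with the telescoping identity to show that the $d^s$-difference quotient of $\bar h_0$ at $\tilde y$ is actually \emph{independent of the increment} and equals $\big(\tilde P(\tilde z)/\tilde P(\tilde y)\big)\|D\bar h_0|_{E^s_f(\tilde z)}\|$; differentiability and the claimed formula follow at once. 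The crucial difference from your sketch is that the propagation step uses the known derivative at $\tilde z$ as an anchor, so nothing circular enters.
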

 
  Although the proof of Proposition \ref{3 prop h0 differentiable}  seems like to a version of Anosov diffeomorphisms in \cite{GG2008}, there is something quite different. For the cohesion of reading, we leave the proof to the Appendix. Since $\bar{h}$ is induced by $H$ in the way of Proposition \ref{2.2 prop: the relationship of 3 conjugacies inverse limit space}, we have the following corollary from Proposition \ref{3 prop h0 differentiable}, immediately. 
 
 \begin{corollary}\label{3 cor H is differentiable}
 	For any $x\in\RR^d$, $H$ is differentiable at $x$ along $\tildeF^s(x)$ and 
 	$$\|DH|_{E^s_F(y)}\|=\frac{P(x)}{P(y)}\cdot\|DH|_{E^s_F(x)}\|,\quad \forall y\in\RR^d,$$
 	where $P(x)=e^{U(x)}$ and $U(x)$ is given by Lemma \ref{3 lem: s rigidity induce to RRd}. 
 \end{corollary}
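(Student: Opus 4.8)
The plan is to push Proposition \ref{3 prop h0 differentiable} from the inverse limit $\TT^d_f$ down to $\RR^d$ through the maps $\iota_F:\RR^d\to\TT^d_f$, $\iota_F(x):=\pi\big({\rm Orb}_F(x)\big)$, and $\iota_G(x):=\pi\big({\rm Orb}_G(x)\big)$. The first step is to record what Proposition \ref{2.2 prop: the relationship of 3 conjugacies inverse limit space} says in this notation, namely $\bar h\circ\iota_F=\iota_G\circ H$; composing with $\pi_0$ and using $\pi_0\circ\iota_G=\pi$ together with $\bar h_0=\pi_0\circ\bar h$ gives the key identity
$$\bar h_0\circ\iota_F=\pi\circ H.$$

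Next, fix $x\in\RR^d$, put $\tilde x=\iota_F(x)$, so $\pi_0(\tilde x)=\pi(x)$, and work on a small local stable leaf $\tildeF^s(x,\delta)$. I would check that for $z\in\tildeF^s(x,\delta)$ the orbit $\iota_F(z)$ lies in $\mathcalf^s(\tilde x,R)$ and has zeroth coordinate $\pi(z)\in\mathcalf^s(\pi(x))$ — this holds because the $F$-stable leaves are $F$-invariant and are carried by $\pi$ into the $f$-stable leaves, and because, exactly as in Corollary \ref{3 cor h0 almost differentiable}, an orbit in $\TT^d_f$ lying along a stable leaf is determined by its zeroth coordinate. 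Hence $\iota_F$ identifies $\tildeF^s(x,\delta)$ homeomorphically with a neighbourhood of $\tilde x$ in $\mathcalf^s(\tilde x,R)$, compatibly with the local diffeomorphism $\pi|_{\tildeF^s(x,\delta)}$ and with the parametrization $y\mapsto\tilde y$ used in Corollary \ref{3 cor h0 almost differentiable}. Combined with the identity above and with $H\big(\tildeF^s(x)\big)=\tildeG^s(H(x))$, this shows that, as maps into $\mathcalg^s\big(\pi(H(x))\big)$,
$$\big(\bar h_0|_{\mathcalf^s(\pi_0(\tilde x))}\big)\circ\big(\pi|_{\tildeF^s(x,\delta)}\big)=\pi\circ\big(H|_{\tildeF^s(x,\delta)}\big).$$
Since $\pi$ is a local isometry, $\pi|_{\tildeF^s(x,\delta)}$ is a diffeomorphism onto a local leaf of $\mathcalf^s$ and $D\pi$ restricts to linear isometries $E^s_F(x)\to E^s_f(\pi(x))$ and $E^s_G(H(x))\to E^s_g(\pi(H(x)))$; so Proposition \ref{3 prop h0 differentiable} (differentiability of $\bar h_0$ along the stable leaf at $\pi_0(\tilde x)$) yields that $H|_{\tildeF^s(x)}$ is differentiable at $x$, with $\|DH|_{E^s_F(x)}\|=\|D\bar h_0|_{E^s_f(\tilde x)}\|$.

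Finally, by the definition of $U$ in Lemma \ref{3 lem: s rigidity induce to RRd} one has $U=u\circ\iota_F$, hence $P(x)=e^{U(x)}=\tilde P(\iota_F(x))$, and applying the identity of Proposition \ref{3 prop h0 differentiable} to $\tilde x=\iota_F(x)$ and $\tilde y=\iota_F(y)$ gives
$$\|DH|_{E^s_F(y)}\|=\|D\bar h_0|_{E^s_f(\iota_F(y))}\|=\frac{\tilde P(\iota_F(x))}{\tilde P(\iota_F(y))}\,\|D\bar h_0|_{E^s_f(\iota_F(x))}\|=\frac{P(x)}{P(y)}\,\|DH|_{E^s_F(x)}\|$$
for every $y\in\RR^d$. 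The only delicate point is the bookkeeping in the middle paragraph: verifying that $\iota_F$ transports the stable-leaf parametrization on $\RR^d$ onto the one on $\TT^d_f$ in a way compatible with $\pi_0$ and $\pi$, and that the metric conventions are set up so that the operator norm passes unchanged through $D\pi$; granting this, the statement is a direct substitution into Proposition \ref{3 prop h0 differentiable}.
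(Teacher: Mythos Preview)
Your proof is correct and is precisely the argument the paper has in mind: the paper only says the corollary follows ``immediately'' from Proposition~\ref{3 prop h0 differentiable} since $\bar h$ is induced by $H$ via Proposition~\ref{2.2 prop: the relationship of 3 conjugacies inverse limit space}, and you have simply unpacked that sentence by writing out the map $\iota_F=\pi\circ{\rm Orb}_F$, verifying $\bar h_0\circ\iota_F=\pi\circ H$ from \eqref{eq. 2.8.2}, and using that $\pi$ is a local isometry together with $P=\tilde P\circ\iota_F$ (which is exactly the definition of $U$ in the proof of Lemma~\ref{3 lem: s rigidity induce to RRd}). There is no gap and no alternative route here; your version is just the explicit form of the paper's one-line remark.
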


 \vspace{2mm}
 By Corollary \ref{3 cor H is differentiable}, we can obtain the following properties of the derivative of $H$ along $\tildeF^s$.

\begin{proposition}\label{3 prop: property of differentible points}
The conjugacy $H$ restricted on $\tildeF^s$ satisfy the following there properties:
\begin{enumerate}
	\item (Boundedness)	There exists $C>1$ such that for every  $x\in\RR^d$, 
	$$ C^{-1}< \|DH|_{E^s_F(x)}\|<C.$$
	\item (Uniform continuity)	For every $\e>0$, there exists $\delta>0$ such that for any $x,y\in\RR^d$  with $d(x,y)<\delta$, $$\Big|  \|DH|_{E^s_F(x)}\|-\|DH|_{E^s_F(y)}\|  \Big|<\e.$$ 
	\item (Uniform convergence) 	For every $\e>0$, there exists $\delta>0$ such that for any $x\in\RR^d$ and $y\in\tildeF^s(x,\delta)$,  $$\Big|  \frac{d_{\tildeG^s}(H(x),H(y))}{d_{\tildeF^s}(x,y)}-\|DH|_{E^s_F(x)}\|  \Big|< \e.$$
\end{enumerate}
\end{proposition}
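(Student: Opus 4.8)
The plan is to obtain all three assertions as consequences of Corollary \ref{3 cor H is differentiable}, which already determines the leafwise derivative of $H$ completely. Indeed, the formula $\|DH|_{E^s_F(y)}\|=\tfrac{P(x)}{P(y)}\|DH|_{E^s_F(x)}\|$ says that $P(z)\|DH|_{E^s_F(z)}\|$ does not depend on $z$; calling this constant $c$, we get
$$\|DH|_{E^s_F(x)}\|=\frac{c}{P(x)}=c\,e^{-U(x)},\qquad\forall x\in\RR^d,$$
where $U$ is bounded and uniformly continuous by Lemma \ref{3 lem: s rigidity induce to RRd}. Items (1) and (2) will be read off directly from this formula, and item (3) will follow from (2) together with a fundamental–theorem–of–calculus computation along each stable leaf.

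First I would pin down that $0<c<\infty$. Proposition \ref{3 prop: bi-Lipschitz} shows $H$ is $C_0$–Lipschitz along $\tildeF^s$, hence $\|DH|_{E^s_F(x)}\|\le C_0$ for all $x$; since $P(x)=e^{U(x)}\le e^{\sup|U|}$, this gives $c<\infty$. By Remark \ref{3 rmk H almost differentiable}, $H^{-1}$ is Lipschitz along stable leaves, say with constant $C_0'$, so $d_{\tildeG^s}(H(x),H(y))\ge (C_0')^{-1}d_{\tildeF^s}(x,y)$ and therefore $\|DH|_{E^s_F(x)}\|\ge (C_0')^{-1}>0$; with $P(x)\ge e^{-\sup|U|}$ this gives $c>0$. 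Consequently $c\,e^{-\sup U}\le\|DH|_{E^s_F(x)}\|\le c\,e^{-\inf U}$ for every $x$, which is item (1). For item (2), the mean value theorem gives
$$\Big|\,\|DH|_{E^s_F(x)}\|-\|DH|_{E^s_F(y)}\|\,\Big|=c\,\big|e^{-U(x)}-e^{-U(y)}\big|\le c\,e^{-\inf U}\,|U(x)-U(y)|,$$
so the uniform continuity of $U$ on $\RR^d$ yields $\delta$ depending only on $\e$.

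For item (3), fix $x\in\RR^d$, recall that $H$ maps $\tildeF^s(x)$ onto $\tildeG^s(H(x))$ preserving the order $\prec$, and let $\gamma:\RR\to\tildeF^s(x)$ be the unit–speed, $\prec$–increasing parametrization with $\gamma(0)=x$, so that $d_{\tildeF^s}(x,\gamma(t))=|t|$. Set $\Psi(t):=\pm d_{\tildeG^s}(H(x),H(\gamma(t)))$, signed so that $\Psi$ is increasing with $\Psi(0)=0$. Applying Corollary \ref{3 cor H is differentiable} at each point $\gamma(t)$ shows $\Psi$ is differentiable everywhere with $\Psi'(t)=\|DH|_{E^s_F(\gamma(t))}\|$; by item (2) (and continuity of $\gamma$) this derivative is continuous in $t$, so $\Psi\in C^1$ and the fundamental theorem of calculus gives $\Psi(t)=\int_0^t\|DH|_{E^s_F(\gamma(s))}\|\,ds$. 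Hence, writing $y=\gamma(t)$ with $t\neq0$,
$$\frac{d_{\tildeG^s}(H(x),H(y))}{d_{\tildeF^s}(x,y)}-\|DH|_{E^s_F(x)}\|=\frac{1}{t}\int_0^t\Big(\|DH|_{E^s_F(\gamma(s))}\|-\|DH|_{E^s_F(x)}\|\Big)\,ds,$$
and since $d(\gamma(s),x)\le d_{\tildeF^s}(\gamma(s),x)=|s|\le|t|$, the right side is at most $\sup_{d(z,x)\le|t|}\big|\,\|DH|_{E^s_F(z)}\|-\|DH|_{E^s_F(x)}\|\,\big|$ in absolute value, which by item (2) is $<\e$ as soon as $|t|<\delta$, with $\delta$ independent of $x$. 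This is item (3).

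The only point demanding care is the fundamental–theorem step: one must know that $t\mapsto\Psi(t)$ is not merely differentiable but genuinely $C^1$, which is exactly why item (2) is established first and why the explicit formula $\|DH|_{E^s_F(\cdot)}\|=c\,e^{-U(\cdot)}$ with $U$ uniformly continuous is so convenient — it makes every estimate automatically uniform in the basepoint. Beyond that, the proposition is essentially bookkeeping on top of Corollary \ref{3 cor H is differentiable} (whose real content, in turn, comes from Proposition \ref{3 prop h0 differentiable}); I do not expect a genuine obstacle here.
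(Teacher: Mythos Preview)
Your proof is correct and follows essentially the same approach as the paper: both deduce items (1) and (2) directly from the formula $\|DH|_{E^s_F(y)}\|=\frac{P(x_0)}{P(y)}\|DH|_{E^s_F(x_0)}\|$ together with the boundedness and uniform continuity of $P=e^U$, and both obtain item (3) by writing $d_{\tildeG^s}(H(x),H(y))$ as an integral of $\|DH|_{E^s_F(\cdot)}\|$ along the leaf and invoking item (2). Your treatment is slightly more explicit in verifying that the constant $c$ lies in $(0,\infty)$ via the bi-Lipschitz bounds, a point the paper leaves implicit.
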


  \begin{proof}
  	Fix $x_0\in\RR^d$, by Corollary \ref{3 cor H is differentiable}, we have that 	$$\|DH|_{E^s_F(y)}\|=\frac{P(x_0)}{P(y)}\cdot\|DH|_{E^s_F(x_0)}\|,\quad \forall y\in\RR^d.$$
  	Note that $P(x)=e^{U(x)}$ is uniformly continuous, bounded and uniformly away from $0$. Then one can immediately get the first two items.

  	For the third item. Since  for any $\e>0$, there exists $\delta>0$ such that $\Big|  \|DH|_{E^s_F(x)}\|-\|DH|_{E^s_F(z)}\|  \Big|<\e,$ for any  $x$ and  $z\in\tildeF^s(x,\delta)$ by the second item. Then by
  	$$d_{\tildeG^s}(H(x),H(y))=\int_{x}^{y} \|DH|_{E^s_F(z)}\| {\rm d} {\rm Leb}_{\tildeF^s} (z),$$
    one has that
  	$$\frac{d_{\tildeG^s}(H(x),H(y))}{d_{\tildeF^s}(x,y)}\in \big[ \|DH|_{E^s_F(x)}\|-\e  \;,\;   \|DH|_{E^s_F(x)}\|+\e  \big].$$
  	This completes the proof.
  	  \end{proof}

Now, we prove Theorem \ref{2 thm s-rigidity implies conjugacy}.

\begin{proof}[Proof of Theorem \ref{2 thm s-rigidity implies conjugacy}]
	Recall that $H(x+n)-n\in\tildeG^s\big(  H(x) \big)$ for all $x\in\RR^d$ and $n\in\ZZ^d$ by Proposition \ref{2.1 prop H is Zd restricted on stable}. Assume by contradiction that there exist $x_0\in\RR^d$ and $n\in\ZZ^d$ such that 
	\begin{align}
		d_{\tildeG^s} \big( H(x_0) \;,\; H(x_0+n)-n \big)=\alpha>0. \label{eq. 3. assumption}
	\end{align}
	Without loss of  generality, we assume that $H(x_0)\prec H(x_0+n)-n$.
	
	\begin{lemma}\label{3 lem: control the deviation}
	There exists $C_0>1$ such that for all point $z\in\tildeF^s(x_0)$, if  $d_{\tildeF^s}\left(x_0, z \right)>C_0$, then
	$$d_{\tildeG^s}\big(H(z+n)-n, H(z)  \big)>0,$$
	moreover, one has $ H(z) \prec H(z+n)-n$.
	\end{lemma}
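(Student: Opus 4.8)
The plan is to track the signed ``deviation'' $D(z):=d^s_G\big(H(z),\,H(z+n)-n\big)$, with sign taken with respect to the order $\prec$ on the leaf $\tildeG^s(H(z))$, as $z$ ranges over $\tildeF^s(x_0)$. The hypothesis gives $D(x_0)=\alpha'>0$ once we pass from $d_{\tildeG^s}$ to the comparable affine metric $d^s_G$ by \ref{3 prop: affine metric}(3), and the goal is to show $D(z)>0$ as soon as $d_{\tildeF^s}(x_0,z)>C_0$. First I would note that $D(z)$ is well defined since $H(z+n)-n\in\tildeG^s(H(z))$ by \ref{2.1 prop H is Zd restricted on stable} (as recalled just above, in the proof of Theorem~\ref{2 thm s-rigidity implies conjugacy}), and record the translation invariances $\rho_F(x+n,y+n)=\rho_F(x,y)$, $d^s_F(x+n,y+n)=d^s_F(x,y)$ and likewise for $G$ — these hold because $A^in\in\ZZ^d$ for $i\ge0$ and the stable bundles descend to $\TT^d$.

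Next I would compare the two order-preserving maps $z\mapsto H(z)$ and $\hat H:z\mapsto H(z+n)-n$ of $\tildeF^s(x_0)$ onto $\tildeG^s(H(x_0))$. By \ref{3 cor H is differentiable} and the chain rule they are differentiable along the leaf with $\|DH|_{E^s_F(z)}\|=c_0/P(z)$ and $\|D\hat H|_{E^s_F(z)}\|=\|DH|_{E^s_F(z+n)}\|=c_0/P(z+n)$, where $c_0:=\|DH|_{E^s_F(x_0)}\|\,P(x_0)$. Using the continuity of these derivatives (\ref{3 prop: property of differentible points}) to change variables in the integral defining $d^s_G$, the scaling rule $d^s_G(Gx,Gy)=\|DG|_{E^s_G(x)}\|\,d^s_G(x,y)$ of \ref{3 prop: affine metric}(2), the translation invariances above, and the twisted additivity of $d^s_G$ (\ref{3 rmk not metric}(2), whose two orientation cases cover the two directions along the leaf), I would write $D(z)$ explicitly as $\alpha'$ plus a term proportional to $d^s_F(x_0,z)$ whose coefficient involves $P(x_0),P(x_0+n)$ and the density $\rho_G$ of $G$ evaluated between the $G$-orbits of $H(x_0)$ and of $H(x_0+n)-n$.

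The heart of the proof is evaluating that density factor. For this I would invoke \ref{2.1 prop nmH} with $n_m=A^mn\in A^m\ZZ^d$, which gives $|H(F^mx+A^mn)-H(F^mx)-A^mn|<C\|A|_{L^s}\|^m$; thus the two $G$-orbits $\big(G^iH(x_0)\big)_i$ and $\big(G^i(H(x_0+n)-n)\big)_i$ are exponentially asymptotic, so (using H\"older continuity of $\log\|Dg|_{E^s_g}\|$) the product defining $\rho_G$ converges, and — via the cocycle identity of \ref{3 lem: s rigidity induce to RRd}, equivalently by comparing $U=\log P$ on the sequences $\pi\,{\rm Orb}_F(x_0)$ and $\pi\,{\rm Orb}_F(x_0+n)$, which share the same forward part — it is pinned down exactly in terms of $P(x_0)$ and $P(x_0+n)$. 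Plugging this back shows the $d^s_F(x_0,z)$-coefficient in $D(z)$ has a definite sign, so choosing $C_0$ larger than the bounded range where the relevant ordering of $H(z)$ and $H(x_0+n)-n$ changes yields $D(z)>0$, i.e. $H(z)\prec H(z+n)-n$, whenever $d_{\tildeF^s}(x_0,z)>C_0$; the conclusion for $d_{\tildeG^s}$ then follows from \ref{3 prop: affine metric}(3) together with \ref{2.1 prop quasi-isometric}.

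I expect the main obstacle to be precisely this evaluation of $\rho_G$ between the two asymptotic orbits and the check that it combines with the derivative ratios $c_0/P(x_0)$ and $c_0/P(x_0+n)$ so as to give the $d^s_F(x_0,z)$-coefficient the correct sign; this is the one place where the asymptotic commutativity of $H$ (\ref{2.1 prop nmH}) and the Livschitz cocycle (\ref{3 lem: s rigidity induce to RRd}) must be used together, and it is also what forces the qualifier $d_{\tildeF^s}(x_0,z)>C_0$ rather than ``all $z$'' — near $x_0$ one has to keep separate track of which of $H(z)$ and $H(x_0+n)-n$ comes first along the leaf.
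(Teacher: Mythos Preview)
There is a genuine gap in your strategy. You aim to write the signed deviation as $D(z)\approx\alpha'+c\cdot d^s_F(x_0,z)$ and invoke a ``definite sign'' of $c$ to conclude $D(z)>0$ for large $d_{\tildeF^s}(x_0,z)$. But the lemma asserts $H(z)\prec H(z+n)-n$ for $z$ on \emph{both} sides of $x_0$ on $\tildeF^s(x_0)$. If one passes to the signed affine coordinate $\psi_F(z)$ on $\tildeF^s(x_0)$ (so $\psi_F$ changes sign when $z$ crosses $x_0$), your own formulas actually give $\Phi(\hat H(z))-\Phi(H(z))=\alpha'+c\,\psi_F(z)$ with the \emph{same} constant $c$ on both sides. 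A nonzero $c$ therefore forces the deviation to become negative on one side as $|\psi_F(z)|\to\infty$, contradicting the conclusion. So what you would really have to establish is $c=0$, not merely a definite sign; and this is precisely where your sketch is incomplete. The vanishing of $c$ amounts to the identity $\rho_G\big(H(x_0{+}n){-}n,\,H(x_0)\big)=\|DH|_{E^s_F(x_0)}\|\big/\|DH|_{E^s_F(x_0{+}n)}\|$, and your suggestion to read this off from ``the sequences $\pi\,{\rm Orb}_F(x_0)$ and $\pi\,{\rm Orb}_F(x_0{+}n)$ sharing the same forward part'' does not suffice: the transfer function $U=\log P$ from Lemma~\ref{3 lem: s rigidity induce to RRd} is $u$ evaluated on the full bi-infinite orbit in $\TT^d_f$, not just its forward half, so there is no reason for $P(x_0)$ and $P(x_0{+}n)$ to agree, nor for the $\rho_G$-value to combine with $P(x_0),P(x_0{+}n)$ in the way you need.

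The paper sidesteps this delicate identity entirely. Rather than compute $D(z)$ as an affine function of $\psi_F(z)$, it iterates forward: using the uniform convergence of $d^s_G/d^s_F$ to $\|DH|_{E^s_F}\|$ (Proposition~\ref{3 prop: property of differentible points}(3)) together with the comparison $\|DH|_{E^s_F(F^kx_0)}\|-\|DH|_{E^s_F(F^ky_0)}\|\to 0$ (Claim~\ref{3 claim: nm DH close}, which is where Proposition~\ref{2.1 prop nmH} enters), it shows that the \emph{ratio} $d^s_F\big(F^k(x_0{+}n),F^k(z{+}n)\big)\big/d^s_F\big(F^ky_0,F^kw\big)\to 1$, where $y_0=H^{-1}(H(x_0)+n)$ and $w=H^{-1}(H(z)+n)$. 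Unwinding the affine scaling and the twisted additivity of $d^s_F$ from Remark~\ref{3 rmk not metric} yields an exact additive relation on $\tildeF^s(x_0{+}n)$ that directly forces the ordering $w\prec z{+}n$ (equivalently $H(z)\prec H(z+n)-n$), handled separately for $x_0\prec z$ and $z\prec x_0$, without ever isolating or evaluating the coefficient $c$. The threshold $d_{\tildeF^s}(x_0,z)>C_0$ is used only to fix the preliminary ordering $y_0\prec x_0{+}n\prec w$ needed for that decomposition.
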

	\begin{proof}[Proof of Lemma \ref{3 lem: control the deviation}]
	Without loss of generality, we assume that $x_0\prec z$. For short, let $$y_0=H^{-1}(H(x_0)+n)\quad {\rm and}\quad w=H^{-1}(H(z)+n).$$
	It is clear that $y_0, w\in\tildeF^s(x_0+n)$ and $y_0\prec x_0+n\prec w$. Indeed, since $x_0\prec z$, we have $x_0+n \prec z+n$ and $H(x_0)+n\prec H(x_0+n)\prec H(z+n)$.  Moreover, since $H$ is bounded from Id$_{\RR^d}$ and $\tilde{\mathcal{F}}^s_{\sigma}$ $(\sigma=f,g)$ is quasi-isomertic, there exists $C_0>0$ such that if $d_{\tildeF^s}(x,z)>C_0$, one has $H(x_0)+n\prec H(x_0+n)\prec H(z)+n$.
	 
	 Now, we are going to prove $w\prec z+n$ and it completes the proof of this lemma. By the third item of Proposition \ref{3 prop: property of differentible points} and Proposition \ref{3 prop: affine metric},  for each $\e>0$ there exists $K\in\NN$ such that if $k>K$, we have 
	\begin{align}
		\Big|\frac{d^s_G\left(    G^k\circ H(x_0),  G^k\circ H(z)   \right)}{d^s_F\left(    F^k(x_0),  F^k(z)   \right)} - \|DH|_{E^s_F(F^k(x_0))}\|   \Big|<\e, \label{5.2.1}
	\end{align}
    and
    \begin{align}
    	\Big|\frac{d^s_G\left(    G^k\circ H(y_0),  G^k\circ H(w)   \right)}{d^s_F\left(    F^k(y_0),  F^k(w)   \right)} - \|DH|_{E^s_F(F^k(y_0))}\|   \Big|<\e. \label{5.2.2}
    \end{align}
   
   \begin{claim}\label{3 claim: nm DH close}
   	For every $\e>0$, there exists $K>0$ such that when $k>K$,
   	\begin{align}
   		\Big|  \|DH|_{E^s_F(F^k(x_0))}\|  -\|DH|_{E^s_F(F^k(y_0))}\|   \Big| <\e.\label{5.2.3}
   	\end{align}
   \end{claim}
   \begin{proof}[Proof of Claim \ref{3 claim: nm DH close}]
   	Since $F^k(y_0)= F^k \big( H^{-1}(H(x_0)+n) \big) =H^{-1} \big( H\circ F^k(x_0) + A^kn\big)$, then by Proposition \ref{2.1 prop nmH}, we have 
   	$$d\Big(F^k(y_0)\;,\;  H^{-1} \big( H\circ F^k(x_0) \big) + A^kn  \Big) \to 0,$$
   	as $k\to +\infty$.
   	It follows that $d\big(F^k(y_0)\;,\;   F^k(x_0)  + A^kn  \big) \to 0,$ as $k\to +\infty$.
   By  the second item of Proposition \ref{3 prop: property of differentible points}, it suffices to show that for any $\e>0$, there exists $K\in\NN$ such that when $k>K$,
   \begin{align}
   	\Big|  \|DH|_{E^s_F(x)}\|  -\|DH|_{E^s_F(x+A^kn)}\|   \Big| <\e,\label{eq. 3.11.1}
   \end{align}
   	for any $x\in\RR^d$ and any $n\in\ZZ^d$. 
   	
   	Indeed, by the uniform convergence, for fixed $\e>0$, we take $\delta>0$ in the third item of Proposition \ref{3 prop: property of differentible points}. For this $\delta$,  there is $K\in\NN$ such that $$\Big|d_{\tildeG^s}\big(H(x),H(x')\big)- d_{\tildeG^s}\big(H(x+A^kn),H(x'+A^kn)\big)\Big|<\frac{\delta}{2}\cdot \e,$$ for any $k>K$ and any $x\in\RR^d, x'\in\tildeF^s(x)$ with $d_{\tildeF^s}(x,x')=\frac{\delta}{2}$. Since 
   	 $$\Big|  \frac{d_{\tildeG^s}(H(x),H(x'))}{d_{\tildeF^s}(x,x')}-\|DH|_{E^s_F(x)}\|  \Big|< \e,$$ and $$ \ \ \Big|  \frac{d_{\tildeG^s}\big(H(x+A^kn),H(x'+A^kn)\big)}{d_{\tildeF^s}(x+A^kn,x'+A^kn)\big)}-\|DH|_{E^s_F(x+A^kn)}\|  \Big|< \e,$$
   	 then one has that 
   \begin{align*}
   	\Big|  \|DH|_{E^s_F(x)}\|  -\|DH|_{E^s_F(x+A^kn)}\|   \Big|&< 2\e+
   	\Big|  \frac{d_{\tildeG^s}\big(H(x+A^kn),H(x'+A^kn)\big)}{d_{\tildeF^s}(x+A^kn,x'+A^kn)\big)}-\frac{d_{\tildeG^s}(H(x),H(x'))}{d_{\tildeF^s}(x,x')}\Big|\\
   	&<2\e+ \big(\frac{\delta}{2}\cdot \e\big) / \frac{\delta}{2}=3\e.
   \end{align*}
It compelets the proof of this claim.
   \end{proof}
   
   It is clear that for any $k\in\NN$, 
   \begin{align*}
   	d^s_F\left(    F^k(x_0),  F^k(z)   \right)= d^s_F\left(    F^k(x_0+n),  F^k(z+n)   \right), \ \  d^s_G\left(    G^k \circ H(x_0),  G^k\circ H(z)   \right)= d^s_G\left(   G^k\circ H(y_0),  G^k \circ H(w)   \right)
   \end{align*}
   Hence, combining \eqref{5.2.1} with \eqref{5.2.2} and \eqref{5.2.3}, we have
    \begin{align}
   \frac{\|DH|_{E^s_F(F^k(x_0))}\| -2\e}{\|DH|_{E^s_F(F^k(x_0))}\| +\e} < \frac{d^s_F\left(    F^k(x_0+n),  F^k(z+n)   \right) }{d^s_F\left(    F^k(y_0),  F^k(w)   \right) } < \frac{\|DH|_{E^s_F(F^k(x_0))}\| +2\e}{\|DH|_{E^s_F(F^k(x_0))}\| -\e}.
   \end{align}
   Since $\|DH|_{E^s_F(F^k(x_0))}\|$ is bounded and uniformly away from $0$ (see the first item of Proposition \ref{3 prop: property of differentible points}), when $\e\to 0$ and $k\to+\infty$, one has 
   \begin{align}
   	 \frac{d^s_F\left(    F^k(x_0+n),  F^k(z+n)   \right) }{d^s_F\left(    F^k(y_0),  F^k(w)   \right) } \longrightarrow 1. \label{5.2.6}
   \end{align}
     On the other hand, since $y_0\prec x_0+n\prec w$ and by Remark \ref{3 rmk not metric},
   \begin{align*}
   	d^s_F\left(    F^k(y_0),  F^k(w)   \right)  
   	&= d^s_F\left(    F^k(y_0),  F^k(x_0+n)   \right) \\
   	&\qquad  +  \rho_F \left( F^k(x_0+n),F^k(y_0)\right) \cdot d^s_F\left(    F^k(x_0+n),  F^k(w)   \right),\\
   	&=  \rho_F \left(F^k(x_0+n), F^k(y_0)\right) \cdot d^s_F\left(    F^k(x_0+n),  F^k(y_0)   \right) \\
   	& \qquad  +  \rho_F\left( F^k(x_0+n),F^k(y_0)\right) \cdot d^s_F\left(    F^k(x_0+n),  F^k(w)   \right).
   \end{align*}
  Then by \eqref{5.2.6} and $d^s_F\left(    F^k(x_0+n),  F^k(x)   \right) =\prod_{i=0}^{k-1}\|DF|_{E_F^s(F^i(x_0+n))}\| \cdot d^s_F(x_0+n,x)$ for all $x\in\tildeF^s(x_0+n)$, we have 
   \begin{align}
  	\frac{d^s_F\left( x_0+n,  z+n   \right) }{ \rho_F \left(F^k(x_0+n), F^k(y_0)\right) \cdot \Big(d^s_F \left(   x_0+n,  y_0  \right) +  d^s_F\left(    x_0+n,  w   \right)\Big)} \longrightarrow 1. \label{5.2.7}
  \end{align}
  Since $\bar{d}\left( \pi\circ {\rm Orb}_F(F^k(x_0)), \pi\circ {\rm Orb}_F(F^k(y_0)) \right) \to 0$ as $k\to +\infty$,  then
  $\rho_F \left(F^k(x_0+n), F^k(y_0)\right) \to 1$  and 
  $$ d^s_F\left( x_0+n,  z+n   \right) = d^s_F \left(   x_0+n,  y_0  \right) +  d^s_F\left(    x_0+n,  w   \right). $$   
   It follows that $y_0\prec x_0+n\prec w\prec z+n$, moreover,
   \begin{align*}
   	d^s_F \left(   x_0+n,  y_0  \right) +  d^s_F\left(    x_0+n,  w   \right)
   	&=d^s_F\left( x_0+n,  z+n   \right) ,\\
   	&= d^s_F\left( x_0+n, w  \right) +\rho_F(w, x_0+n )d^s_F\left( w,  z+n   \right). 
   \end{align*}
   Hence
   $ d^s_F \left(   x_0+n,  y_0  \right) =\rho_F(w, x_0+n )d^s_F\left( w,  z+n   \right)$.
   
   We mention that for the case of $H(x_0)+n\prec H(x_0+n)$ and  $z\prec x_0$, by the same way, one has $$d_F^s(y_0,w)=d^s_F(y_0,x_0+n)+d^s_F(y_0,z+n).$$ It follows that $w\prec z+n\prec y_0\prec x_0+n$ and  $d^s_F(y_0,x_0+n)=\rho_F(z+n,y_0)d^s_F(z+n,w)$.
  
	\end{proof}
   
   Now we continue to prove Theorem \ref{2 thm s-rigidity implies conjugacy}.
   Since $A=f_*$ is irreducible, one has that the foliation $\mathcal{L}^s$ is minimal, i.e., $\mathcal{L}^s(x)$ is dense in $\TT^d$ for every $x\in\TT^d$. Then by the leaf conjugacy between $f$ and $A$ (see Proposition \ref{2.2 prop leaf conjugate}), $\mathcalf^s$ is also minimal. Combining the minimal foliation with Lemma \ref{3 lem: control the deviation}, we can get a stronger lemma as follow.

  \begin{lemma}\label{3 lem: keepdeviation}
  For any $y\in \RR^d$, $H(y)\preceq H(y+n)-n$.
  \end{lemma}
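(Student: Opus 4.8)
The plan is to run a minimality/denseness argument that promotes the asymptotic estimate of Lemma~\ref{3 lem: control the deviation} from ``all points far enough along a fixed leaf $\tildeF^s(x_0)$'' to ``all points on all leaves''. First I would fix the point $x_0$ and integer $n$ from the contradiction hypothesis \eqref{eq. 3. assumption}, where we have arranged $H(x_0)\prec H(x_0+n)-n$, and recall from Lemma~\ref{3 lem: control the deviation} that there is $C_0>1$ with $H(z)\prec H(z+n)-n$ for every $z\in\tildeF^s(x_0)$ with $d_{\tildeF^s}(x_0,z)>C_0$. The key observation is that the sign of the ``deviation'' $d^s_G\big(H(z),\,H(z+n)-n\big)$, read with the order $\prec$, is determined leafwise and is actually constant along $\tildeF^s(x_0)$ once one is outside a bounded piece; moreover, by applying $F$ and $G$ and using that $H$ conjugates them together with Proposition~\ref{2.1 prop H is Zd restricted on stable}, the relation $H(z)\prec H(z+n)-n$ is preserved under the dynamics, so it holds on the whole forward and backward orbit of such $z$ inside $\tildeF^s$.

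Next I would invoke minimality of $\mathcalf^s$ (which holds because $A=f_*$ is irreducible, hence $\mathcal{L}^s$ is minimal, and $f$ is leaf conjugate to $A$ by Proposition~\ref{2.2 prop leaf conjugate}) to conclude that the projection $\pi\big(\tildeF^s(x_0)\big)=\mathcalf^s(\pi(x_0))$ is dense in $\TT^d$. Therefore, given an arbitrary $y\in\RR^d$, I can pick points $z_k\in\tildeF^s(x_0)$ with $d_{\tildeF^s}(x_0,z_k)>C_0$ such that $\pi(z_k)\to\pi(y)$; lifting appropriately (adjusting by a deck transformation $m_k\in\ZZ^d$) I get $z_k+m_k\to y$ in $\RR^d$. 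For each $k$ we already know $H(z_k)\preceq H(z_k+n)-n$, and since $H$ is commutative with deck transformations along stable leaves in the weak sense of Proposition~\ref{2.1 prop H is Zd restricted on stable} (so $H(z_k+m_k)-m_k\in\tildeG^s(H(z_k))$ and the order is unchanged by adding $m_k$ to both entries), the inequality persists for $z_k+m_k$. Passing to the limit $k\to\infty$ and using continuity of $H$ together with continuity of the order structure on nearby stable leaves (the local product structure and $C^1$-continuity of the leaves, Proposition~\ref{2.1 prop C1 foliation}), I obtain $H(y)\preceq H(y+n)-n$.

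The main obstacle I anticipate is the passage to the limit: the points $y$ and $z_k+m_k$ lie on \emph{different} stable leaves, so ``$\preceq$'' must be shown to be a closed condition across leaves, not just within one leaf. To handle this I would phrase the deviation quantitatively as the signed $d^s_G$-distance along $\tildeG^s$ from $H(z_k+m_k)$ to $H(z_k+m_k+n)-n$ (using that by Proposition~\ref{2.1 prop H is Zd restricted on stable} both points lie on the same $\tildeG^s$-leaf), note this signed quantity is non-negative for every $k$, and use uniform continuity of $H$ (Proposition~\ref{2.1 prop lifting conjugate}) plus the quasi-isometry of the one-dimensional foliation $\tildeG^s$ (Proposition~\ref{2.1 prop quasi-isometric}) to show it depends continuously on the base point; hence its limit, the signed distance from $H(y)$ to $H(y+n)-n$, is also $\ge 0$, i.e.\ $H(y)\preceq H(y+n)-n$. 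A secondary subtlety is making sure the deck-transformation correction $m_k$ does not destroy the estimate — but this is exactly what Proposition~\ref{2.1 prop H is Zd restricted on stable} guarantees, since adding $m_k$ to both arguments of $H$ keeps the two image points on a common $\tildeG^s$-leaf and preserves their order.
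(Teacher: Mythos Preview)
Your overall strategy---minimality of $\mathcalf^s$ to approximate an arbitrary $y$ by translates $z_k+m_k$ of points $z_k$ far out on $\tildeF^s(x_0)$, then pass to the limit---is the right shape and matches the paper's. But there is a genuine gap at the point you flag as ``secondary'': the claim that Proposition~\ref{2.1 prop H is Zd restricted on stable} ``preserves their order'' when you translate by an arbitrary $m_k\in\ZZ^d$ is simply not true. That proposition only tells you $H(z_k+m_k)-m_k$ and $H(z_k)$ lie on the same $\tildeG^s$-leaf; it gives no control on \emph{where} along that leaf $H(z_k+m_k)-m_k$ lands. So from $H(z_k)\prec H(z_k+n)-n$ you cannot deduce $H(z_k+m_k)\prec H(z_k+m_k+n)-n$: the order could flip under a generic deck translation. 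Your limit argument therefore has nothing non-negative to take the limit of.

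The paper repairs exactly this step by restricting the deck translations to $k_m\in A^N\ZZ^d$ with $N$ large and invoking Proposition~\ref{2.1 prop nmH}, which gives the quantitative estimate $d\big(H(x+k_m),\,H(x)+k_m\big)<\tfrac{\e}{2}$. With this approximate commutativity in hand, one gets $d\big(H(z_m)+k_m,\,H(y)\big)<\e$ and $d\big(H(z_m+n)-n+k_m,\,H(y+n)-n\big)<\e$, so the known order $H(z_m)+k_m\prec H(z_m+n)-n+k_m$ (which \emph{does} follow from $H(z_m)\prec H(z_m+n)-n$ by rigid translation) forces $H(y)\prec H(y+n)-n$ once one has first assumed $d\big(H(y),H(y+n)-n\big)>4\e$. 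Minimality still guarantees such $z_m$ and $k_m\in A^N\ZZ^d$ exist. In short: replace ``$m_k\in\ZZ^d$ plus Proposition~\ref{2.1 prop H is Zd restricted on stable}'' by ``$k_m\in A^N\ZZ^d$ plus Proposition~\ref{2.1 prop nmH}'', and argue by first separating the two target points by $4\e$.
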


\begin{proof}[Proof of Lemma \ref{3 lem: keepdeviation}]
We assume that $H(y)\neq H(y+n)-n$, it suffices to prove $H(y)\prec H(y+n)-n$. Let $C_0$ is  given by Lemma \ref{3 lem: control the deviation}. By Proposition \ref{2.1 prop lifting conjugate} and Proposition \ref{2.1 prop nmH},  for any $\e>0$, there exists $\delta>0$ and $N\in\NN$ such that 
\begin{enumerate}
	\item For any $x,y\in\RR^d$ with $d(x,y)<\delta$, one has $d\big(H(x),H(y)\big)<\frac{\e}{2}$.
	\item For any $x\in\RR^d, m\geq N$ and $k_m\in A^m\ZZ^d$, one has $d\big( H(x+k_m), H(x)+k_m  \big)<\frac{\e}{2}$.
\end{enumerate}

Let $\e>0$ satisfy $d\big( H(y), H(y+n)-n  \big)>4\e$. Then we can fix $\delta>0$ and $N\in\NN$ as above. Meanwhile, by the minimality of $\mathcalf^s$, for any $y\in\RR^d$, there exist $z_m\in\tildeF^s(x_0)$ with $d_{\tildeF^s}(z_m,x_0)>C_0$ and $k_m\in A^N\ZZ^d$ such that $d(z_m+k_m , y)<\delta$. Then, we have 
$$d\big(H(z_m+k_m), H(y)\big)<\frac{\e}{2}\quad  {\rm and }\quad d\big(H(z_m)+k_m, H(z_m+k_m)\big)<\frac{\e}{2}.$$ 
Hence, 
\begin{align}
	d\big(H(z_m)+k_m, H(y)\big)<\e.\label{eq claim3.11-1}
\end{align}
Meanwhile, since $d(z_m+k_m+n, y+n)<\delta$, we also have $$d\big(H(z_m+n)+k_m, H(y+n)\big)<\e.$$
Hence,
\begin{align}
	d\big(H(z_m+n)-n+k_m, H(y+n)-n\big)<\e.\label{eq claim3.11-2}
\end{align}
By Lemma \ref{3 lem: control the deviation}, one has 
$H(z_m)\prec H(z_m+n)-n$,
it follows that 
\begin{align}
	H(z_m)+k_m\prec H(z_m+n)-n+k_m.\label{eq claim3.11-3}
\end{align}
Since $d\big( H(y), H(y+n)-n  \big)>4\e$, combining with $\eqref{eq claim3.11-1}$, $\eqref{eq claim3.11-2}$ and $\eqref{eq claim3.11-3}$, one has  $$H(y)\prec H(y+n)-n.$$
\end{proof}

  Apply Lemma \ref{3 lem: keepdeviation} for $y=x_0+ln$, for all $l\in\NN$, then we have 
  $$   H(x_0+ln) \preceq H(x_0+ln+n)-n,$$
  it follows that 
  $$  H(x_0+ln)-ln \preceq H(x_0+ln+n)-(l+1)n.$$
  Then by the assumption \eqref{eq. 3. assumption}, we have that for all $l\in\NN$,
  \begin{align}
  	  d_{\tildeG^s} \big( H(x_0+ln)-ln \;,\; H(x_0)\big)\geq \alpha.\label{eq theorem3.1-1}
  \end{align}
  Now let $l_k=|{\rm det}(A)|^k$, for any $k\in\NN$. Then one has $l_kn\in A^k\ZZ^d$. By Proposition \ref{2.1 prop nmH},  one has  $$d_{\tildeG^s} \big( H(x_0+l_kn)-l_kn \;,\; H(x_0)\big)\to 0, \quad{\rm as}\;\; k\to+\infty.$$
  It contradicts with $\eqref{eq theorem3.1-1}$.
 
  Consequently, all $x\in\RR^d$ and all $n\in\ZZ^d$ must be $H(x+n)-n=H(x)$. 
  It means that $H$ is commutative with deck transformations and hence can descend to $h:\TT^d\to\TT^d$ such that $h\circ \pi=\pi\circ H$. Then $h$ is a topological conjugacy on $\TT^d$ between $f$ and $g$. Moreover,  $H$ is differentiable along each leaf of stable foliation $\tildeF^s$, so is $h$ along $\mathcalf^s$.
\end{proof}

   \section{Accessibility of non-invertible Anosov maps}\label{sec accessible}
   Let $f\in\mathcal{A}^1(M)$. Recall the set $Acc(x,f)$, the $u$-accessible class of $x\in M$, collects points which can reach $x$ by finite unstable manifolds (see Definition \ref{2 def u-accessible class}).  We say $f$ is $u$-accessible, if $Acc(x,f)=M$ for some $x$.
   In this section, we prove Proposition \ref{1 prop dichotomy of non-invertible Anosov map} which says that for $f\in\mathcal{A}^1_1(\TT^d)$, if $f$ is not special, then it is $u$-accessible.  Moreover, in the  $u$-accessible case, we show that for any two points on $\TT^d$, there exist at most four local unstable manifolds with  uniformly bounded size connecting them (see Proposition \ref{4 prop: dichotomy of non-invertible Anosov map}). We refer readers to \cite{HertzUres2019} for  a class Anosov diffeomorphisms with one-dimensional center bundles, where two points can be connected by at most three stable and unstable manifolds.
   
   We  first consider $f\in\mathcal{A}^1_1(M)$.  Recall that $M$ is a $C^{\infty}$-smooth close Riemmanian manifold. Let $\tilde{M}$ be the universal cover of $M$ and $\pi:\tilde{M}\to M$ is the natural projection.  The following lemma focuses on the local property of $u$-accessible classes.

   \begin{lemma}\label{4 lemma: s-one accessible  open}
   	Let $f\in\mathcal{A}^1_1(M)$. If $f$ is not special, then there exists $x_0\in M$ such that $x_0\in {\rm Int}\;Acc(x_0,f)$.  Moreover, we have the following two properties:
   	\begin{enumerate}
   		\item    There exists an open neighborhood $V_0$ of $x_0$ and constant $r>0$, such that for any $y,z\in V_0$, there exist $\tilde{y},\tilde{z}\in M_f$ with $(\tilde{y})_0=y$ and $(\tilde{z})_0=z$ such that $$\mathcalf^u(y,\tilde{y},r)\cap \mathcalf^u(z,\tilde{z},r) \neq \emptyset.$$
   		\item	There exist two points $x^*,x_*\in \tilde{M}$ with $\pi(x^*)=\pi(x_*)=x_0$, an open set $U_0\subset M$ and a constant $r>0$ such that  for any $z\in U_0$ and $\tilde{z} \in M_f$ with $(\tilde{z})_0=z$,  $$\mathcalf^u(x_0,\bar{x}^*,r)\cap \mathcalf^u(z,\tilde{z},r) \neq \emptyset \quad {\rm or} \quad \mathcalf^u(x_0,\bar{x}_*,r)\cap \mathcalf^u(z,\tilde{z},r) \neq \emptyset, $$
   		where  $\bar{x}^*= \pi\big( {\rm Orb}_F(x^*)\big)$ and $\bar{x}_*= \pi\big( {\rm Orb}_F(x_*)\big)$. 
   	\end{enumerate}
   	
   \end{lemma}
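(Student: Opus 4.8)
The plan is to realize the two required negative orbits as projections of the $F$-orbits of two lifts of a single point — using that non-speciality is exactly the failure of $\Gamma$-invariance of the unstable bundle upstairs — and then to run an elementary transversality argument between nearly-flat codimension-one discs.

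First I would produce $x_0$ together with the two orbits. Let $F\colon\tilde M\to\tilde M$ be a lift of $f$; by Ma\~n\'e--Pugh it is an Anosov diffeomorphism, and its unstable bundle $\tilde E^u_F$ is the \emph{unique} continuous $DF$-invariant complement of $\tilde E^s_F$ that is uniformly expanded. If $\tilde E^u_F$ were invariant under the deck group $\Gamma$ it would descend to a continuous $Df$-invariant complement of $E^s_f$, i.e. $f$ would be special; so non-speciality gives a point $x^*\in\tilde M$ and $\gamma\in\Gamma$ with $\gamma_*\tilde E^u_F(x^*)\neq\tilde E^u_F(\gamma x^*)$. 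Put $x_*:=\gamma x^*$, $x_0:=\pi(x^*)=\pi(x_*)$, $\bar x^*:=\pi\big({\rm Orb}_F(x^*)\big)$ and $\bar x_*:=\pi\big({\rm Orb}_F(x_*)\big)$, so $\bar x^*,\bar x_*\in M_f$ both lie over $x_0$. Applying $D\pi$ (an isomorphism at each point) and using $\pi\circ\gamma=\pi$ yields $d_1:=E^u_f(x_0,\bar x^*)\neq E^u_f(x_0,\bar x_*)=:d_2$: two distinct hyperplanes in $T_{x_0}M$, each transverse to the line $E^s_f(x_0)$ since $f\in\mathcal A^1_1(M)$. For small $r>0$ set $W^*:=\mathcal F^u_f(x_0,\bar x^*,r)$ and $W_*:=\mathcal F^u_f(x_0,\bar x_*,r)$; these are codimension-one $C^1$-discs through $x_0$, tangent there to $d_1$ and $d_2$, and uniformly transverse to $\mathcal F^s_f$. (A more hands-on alternative to the lift argument is to extract two distinct negative orbits at $x_0$ directly from the definition of "not special'' and recover the lifts by a K\"onig-type argument on the preimage tree; I would keep whichever is cleaner.)

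The two workhorses come next. On the one hand, the local unstable leaves of $f$ form a uniformly $C^1$ family depending continuously on the base orbit (Proposition \ref{2.1 prop: unstable leaf continuity}); hence for $r$ small every size-$r$ local unstable leaf is $C^1$-close to its tangent plane, and there is an angle $\theta_0\sim\angle(d_1,d_2)>0$ and a neighbourhood $V$ of $x_0$ so that any two size-$r$ local unstable leaves through points of $V$ whose tangent directions make an angle $\ge\theta_0$ must meet \emph{inside} the two discs (two nearly-flat codimension-one discs through nearby points with uniformly transverse tangents intersect, by an intermediate-value/degree argument in the one-dimensional stable direction). On the other hand, since $f$ is a local diffeomorphism the projection $\pi_0\colon M_f\to M$ is open, so, together with continuity of $E^u_f(\cdot,\cdot)$, every $y$ in a small enough neighbourhood of $x_0$ carries negative orbits $\tilde y^{(1)},\tilde y^{(2)}\in M_f$ with $E^u_f(y,\tilde y^{(j)})$ as close as desired to $d_j$ for $j=1,2$.

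Finally I would shrink $r$ and take $V_0=U_0$ to be this neighbourhood. For the interior-point claim: given $y\in V_0$, pick $\tilde y=\tilde y^{(2)}$; the crossing statement gives $w\in\mathcal F^u_f(y,\tilde y,r)\cap W^*$, and since $w$ and $y$ lie on the common leaf $\mathcal F^u_f(y,\tilde y)$ there is an orbit $\tilde w$ through $w$ with $\mathcal F^u_f(w,\tilde w)\ni y$; as $w\in W^*\subset Acc(x_0,f)$ (Definition \ref{2 def u-accessible class}) we get $y\in Acc(x_0,f)$, hence $V_0\subset Acc(x_0,f)$ and the exhibited path uses only two local unstable leaves of size $\le r$. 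Property (1) is the same with $\tilde y$ of direction near $d_1$ and $\tilde z$ of direction near $d_2$. Property (2) uses that no single direction can be $\theta_0$-close to \emph{both} $d_1$ and $d_2$: for $z\in U_0$ and any $\tilde z$ with $(\tilde z)_0=z$, the direction $E^u_f(z,\tilde z)$ makes angle $\ge\theta_0$ with $d_1$ or with $d_2$, and the crossing statement forces $\mathcal F^u_f(z,\tilde z,r)$ to meet $W^*$ or $W_*$. The main obstacle will be making the crossing statement genuinely uniform — coupling the choices of $r$, of $\theta_0\sim\angle(d_1,d_2)$, and of $V$ so the transverse intersection actually lands inside the size-$r$ discs rather than escaping them — which rests on the uniform $C^1$-continuity of the unstable lamination (Proposition \ref{2.1 prop: unstable leaf continuity}) and on openness of $\pi_0$; the reduction "$f$ not special $\Longleftrightarrow$ $\tilde E^u_F$ not $\Gamma$-invariant'' also deserves a careful write-up via uniqueness of the unstable bundle of $F$.
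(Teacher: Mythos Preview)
Your proposal is correct and shares the paper's overall architecture: produce two distinct unstable directions $d_1,d_2$ at a point $x_0$ (the paper does this via \eqref{eq. 2.23.1}, you via non-$\Gamma$-invariance of $\tilde E^u_F$; these are equivalent), then use Proposition~\ref{2.1 prop: unstable leaf continuity} to propagate to nearby points and a transversality argument to force intersections. Item~(1) is proved identically.

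The genuine difference is in item~(2). The paper's argument is \emph{topological}: the two discs $W^*,W_*$ cut a small ball $B(x_0,\delta)$ into four components $U^{\tau,\sigma}$, and $U_0$ is chosen inside the ``off-diagonal'' piece $U^{+,-}$ (so $x_0\notin U_0$). Any local unstable leaf through $z\in U_0$ must, by the Local Product Structure, meet $\mathcal F^s_f(x_0)\cap B(x_0,\delta)$, which lies in $U^{+,+}\cup U^{-,-}$; hence the leaf crosses $W^*$ or $W_*$. No angle estimate on $E^u_f(z,\tilde z)$ is needed. Your argument is \emph{metric}: you take $U_0=V_0\ni x_0$ and use that for hyperplanes the angle satisfies a triangle inequality (via unit normals), so any $E^u_f(z,\tilde z)$ makes angle $\ge\tfrac12\angle(d_1,d_2)$ with at least one of $d_1,d_2$, and your crossing statement then applies. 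Both are valid; the paper's version is slightly more robust (it never needs to compare tangent directions at different base points or invoke an angle triangle inequality), while yours has the minor advantage that $U_0$ is an actual neighbourhood of $x_0$. For the interior-point claim the paper builds $U(x_0)=\bigcup_{z\in\mathcal F^s_f(x_0,r')}\mathcal F^u_f(z,\bar z^*,r)$ directly as a foliation box, whereas you exhibit for each $y\in V_0$ a two-step $u$-path to $x_0$; again both work.
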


   \begin{proof}
   	Since $f$ is not special and dim$E^s=1$, there exist a point $x_0\in M$ and two orbit $\bar{x}^*, \bar{x}_*\in M_f$ such that $E^u_f(x_0,\bar{x}^*)+E^u_f(x_0,\bar{x}_*)=T_xM$. As the same reason of \eqref{eq. 2.2. R orbit to T}, one has
   	\begin{align}
   		\bigcup_{\tilde{x}\in M_f , (\tilde{x})_0=x} E^s_f(x,\tilde{x})= \overline{\bigcup_{y\in \tilde{M}, \pi(y)=x} D\pi \big(\tilde{E}^s_f(y)\big)}.\label{eq. 2.23.1}
   	\end{align}  
   	Then we can assume that $\bar{x}^*$ and $\bar{x}_*$ are both projections of two $F$-orbits for points $x^*\in \tilde{M}$ and $x_*\in \tilde{M}$ respectively. Namely, $\bar{x}^*= \pi\big( {\rm Orb}_F(x^*)\big)$ and $\bar{x}_*= \pi\big( {\rm Orb}_F(x_*)\big)$. 
   	
   	Let $\angle(\cdot,\cdot)$ be the angle between two subspaces in $\RR^d$ defined as 
   	$$\angle(A_1,A_2):=\max \Big\{ \max_{v\in A_1, \|v\|=1} \big\{ \min_{w\in A_2}\|v-w\|\big\}\;,\;   \max_{v\in A_2, \|v\|=1} \big\{\min_{w\in A_1}\|v-w\| \big\}  \Big\}.$$
   	Assume that $$\angle\big(E^u_f(x_0,\bar{x}^*), E^u_f(x_0,\bar{x}_*)\big)=\alpha>0.$$
   	
   	By the continuity of unstable directions with respect to $f$-orbits ( see Proposition \ref{2.1 prop: unstable leaf continuity}), for any $\e>0$ there exists $r'>0$ such that 
   	for every $z\in\mathcalf^s(x_0,r')$, there exists $\bar{z}^*\in M_f$ with $(\bar{z}^*)_0=z$ satisfying
   	$$\angle\big(E^u_f(z,\bar{z}^*), E^u_f(x_0,\bar{x}_*)\big)<\alpha/4.$$
   	This implies that $E^u_f(z,\bar{z}^*)$ intersects with $E^u_f(x_0,\bar{x}_*)$ as two subspaces in $\RR^d$ since dim$E^s_f=1$. Note that the unstable manifolds $\mathcalf^u(x_0,\bar{x}^*,r)$ and $\mathcalf^u(x_0,\bar{x}_*,r)$ are $C^1$-smooth and tangent to $E^u_f(x_0,\bar{x}^*)$ and $E^u_f(x_0,\bar{x}_*)$ respectively. Hence, fix $r>0$ and take $\e>0$ small enough, there exists $r'>0$ such that for every $z\in\mathcalf^s(x_0,r')$, there exists $\bar{z}^*\in M_f$ with $(\bar{z}^*)_0=z$ satisfying 	$$ \mathcalf^u(z,\bar{z}^*,r)\cap   \mathcalf^u(x_0, \bar{x}_*,r) \neq \emptyset.$$
   	In particular, we can choose $z^*\in \tildeF^s(x^*,r')$ such that $\bar{z}^*=\pi\big( {\rm Orb}_F(z^*)\big)$.

   	It is clear that $x_0$ is an interior point of the set $$U(x_0):=\bigcup_{ z\in\mathcalf^s(x_0,r')} \mathcalf^u(z,\bar{z}^*,r),$$ and $U(x_0)\subset Acc(x_0,f)$. Hence, $x_0\in {\rm Int}\; Acc(x_0,f)$.

   	For the first item, again by Proposition \ref{2.1 prop: unstable leaf continuity}, there exists a neighborhood $V_0$ of $x_0$ such that for every point $y\in V_0$, there exist two orbit $\bar{y}^*, \bar{y}_* \in M_f$ such that $$\angle\big(E^u_f(x_0,\bar{x}^*), E^u_f(y,\bar{y}^*)\big)<\alpha/4 \quad {\rm and} \quad \angle\big(E^u_f(x_0,\bar{x}_*), E^u_f(y,\bar{y}_*)\big)<\alpha/4.$$
   	It follows that for any $y,z\in V_0$ there exist $\bar{y}^*, \bar{z}_*\in M_f$ such that 
   	\begin{align}
   		\angle\big(E^u_f(y,\bar{y}^*), E^u_f(z,\bar{z}_*)\big)>\alpha/2. \label{eq.4.1.1}
   	\end{align}
   	Up to shrinking $V_0$, \eqref{eq.4.1.1} implies $\mathcalf^u(y,\bar{y}^*,r)\cap \mathcalf^u(z,\bar{z}_*,r)\neq \emptyset$.

   	For the second item, let $B(x_0,\delta)\subset U(x_0)$ be a $\delta$-ball of $x_0$. Then there exists $\delta'>0$ such that $B(x_0,\delta)$ is divided into four connected components by $U_1:=\mathcalf^u(x_0, \bar{x}^*,\delta')$ and  $U_2:=\mathcalf^u(x_0, \bar{x}_*,\delta') $. Indeed,  we give an orientation to the local stable leaf $\mathcalf^s(x_0,r')$.  Then by the Local Product Structure, $U_1$ cuts $B(x_0,\delta)$ into two components $U_1^+$ and $U_1^-$, where $U_1^+$ contains the positive-orientation part of $\mathcalf^s(x_0,r')\cap B(x_0,\delta)$ and $U_1^-$ is the other one. Similarly, we get $U_2^+$ and $U_2^-$. Hence one has that $$U^{\tau,\sigma}=U_1^{\tau}\cap U_2^{\sigma},\quad \tau,\sigma\in\{+,-\}, $$
   	are the four components. Note that $\mathcalf^s(x_0,r')\cap B(x_0,\delta)\setminus \{x_0\}$ is located in $U^{+,+}$ and $U^{-,-}$.
   	
   	Now, let $U_0\subset U^{+,-}$ be an open set. Again by the Local Product Structure and shorten $r$, for every point $z\in U_0$ and every  $\tilde{z}\in M_f$, 
   	the local unstable manifold $\mathcalf^u(z,\tilde{z},r)\subset B(x_0,\delta)$ intersects $\mathcalf^s(x_0,r')\cap B(x_0,\delta)$, hence it should intersect one of $U_1$ and $U_2$.
   \end{proof}

   \begin{remark}
   	As mentioned in the Subsection \ref{subsec 2.1}, if $f$ is transitive and not special, then there exists a residual set $\mathcal{R}$ on $M$ in which each point has  infinite u-directions \cite{MicenaTahzibi2016}. Combining with Lemma \ref{4 lemma: s-one accessible  open}, one has that every point $x\in \mathcal{R}$ is an interior point of $Acc(x,f)$. Recall that when $M=\TT^d$, the Anosov map $f$  is always transitive.
   \end{remark}

   The next lemma implies that every unstable manifold (relying on orbits) of $f$  is dense in $\TT^d$, when we restirct $M$ to be $\TT^d$ (also see  \cite[Lemma 8.6.1]{AokiHiraide1994}).  And this is the key point to get the $u$-accessibility of $f$ from Lemma \ref{4 lemma: s-one accessible  open}.
   
   \begin{lemma}\label{4 lemma: u-dense with uniform size}
   	Let  $f\in\mathcal{A}^1(\TT^d)$. Let $F:\RR^d\to \RR^d$ be a lifting of $f$ by $\pi:\RR^d\to \TT^d$. Then for any $\delta>0$, there exists $R>0$ such that for any $x\in\RR^d$ and $y\in \RR^d$,  $$\tildeF^u(y,R) \cap \big( B(x,\delta) +\ZZ^d \big) \neq \emptyset.$$
   	In particular, $\pi\big(\tildeF^u(y)\big)$ is dense in $\TT^d$, for all $y\in\RR^d$. 
   \end{lemma}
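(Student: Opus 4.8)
The plan is to reduce the statement to the linearization $A=f_*$ by means of the conjugacy $H:\RR^d\to\RR^d$ of Proposition \ref{2.1 prop lifting conjugate} — which satisfies $H\circ F=A\circ H$, $\|H^{\pm1}-\mathrm{Id}_{\RR^d}\|_{C^0}<C_0$ for some $C_0>0$, and carries each leaf $\tildeF^u(y)$ onto the affine subspace $\tildeL^u(H(y))$ — to prove the statement for the linear foliation $\tildeL^u$, and then to push it back to $F$ using hyperbolicity.

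\emph{The linear case.} First I would show that for every $\delta>0$ there is $R_0>0$ such that for all $x,z\in\RR^d$ the set $\tildeL^u(z)\cap(B(x,\delta)+\ZZ^d)$ contains a point at distance at most $R_0$ from $z$. Since $\tildeL^u(z)=z+\tildeL^u$ with $\tildeL^u:=\tildeL^u(0)$ the unstable subspace, this reduces to the density of $\pi(\tildeL^u)$ in $\TT^d$, the uniform $R_0$ then following from compactness of $\TT^d$. Now $\overline{\pi(\tildeL^u)}$ is a connected closed subgroup of $\TT^d$, hence a subtorus $\TT'$; if $\TT'\neq\TT^d$ then its preimage is $V+\ZZ^d$ for a proper rational subspace $V\supseteq\tildeL^u$, and $V$ is $A$-invariant because $A$ is a finite covering of $\TT^d$ with $A(\TT')=\TT'$. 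Then $A$ descends to an integer matrix on the quotient torus $\TT^d/\TT'$, of positive dimension, all of whose eigenvalues have modulus $<1$ — since $A$ is hyperbolic and all its eigenvalues of modulus $>1$ lie on $\tildeL^u\subseteq V$ — so that $1\le|\det|=\prod_i|\lambda_i|<1$, a contradiction. Hence $V=\RR^d$ and $\pi(\tildeL^u)$ is dense. (If $E^s_f=\{0\}$ the original statement is trivial, so we may assume $\tildeL^u\neq\RR^d$.)

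\emph{Transfer and bootstrap.} Combining the linear case with $\|H^{\pm1}-\mathrm{Id}_{\RR^d}\|_{C^0}<C_0$ and the claim (proved in this section, cf.\ Claim \ref{4 claim: uniform control for unstable distance}) that an arc of $\tildeF^u$ of bounded ambient diameter has bounded intrinsic length, one obtains only a \emph{coarse} version: there exist $\delta_1>0$ (of size comparable to $C_0$) and $R_1>0$ with $\tildeF^u(y,R_1)\cap(B(x,\delta_1)+\ZZ^d)\neq\emptyset$ for all $x,y\in\RR^d$. The main obstacle is that $H$ does not commute with deck transformations, so this coarse density cannot be sharpened directly; to remove the slack I would iterate forward. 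Recall that $F$ is a diffeomorphism of $\RR^d$, so $F^{-k}$ is defined, and that $F(x+n)=F(x)+An$ for $n\in\ZZ^d$, whence $F^k(x+n)=F^k(x)+A^kn$. Fix $\delta>0$ and take $k$ large. Apply the coarse version to the leaf $\tildeF^u(F^{-k}(y))$ and the point $F^{-k}(x)$: there are $p'\in\tildeF^u(F^{-k}(y),R_1)$ and $n\in\ZZ^d$ with $\|p'-(F^{-k}(x)+n)\|<\delta_1$. Using the Global Product Structure of $\tildeF^s$ and $\tildeF^u$ in $\RR^d$, slide $p'$ inside its unstable leaf to the unique point $p''$ of $\tildeF^u(F^{-k}(y))\cap\tildeF^s(F^{-k}(x)+n)$; since $\tildeF^{s/u}$ stays within bounded distance of $\tildeL^{s/u}$, the stable distance from $p''$ to $F^{-k}(x)+n$ is bounded by a uniform constant (ambient by transversality, hence intrinsic by the length claim), and $p''\in\tildeF^u(F^{-k}(y),R_1')$ with $R_1'$ uniform. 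Now push forward: $F^k(p'')\in\tildeF^u(y)$ and $F^k(F^{-k}(x)+n)=x+A^kn$ with $A^kn\in\ZZ^d$; since $p''$ lies on the stable leaf of $F^{-k}(x)+n$, the Anosov estimate $\|DF^k|_{E^s}\|\le C\mu^k$ gives $d(F^k(p''),x+A^kn)\le C\mu^k\cdot(\text{uniform bound})<\delta$ once $k$ is large, uniformly in $x,y$. Finally $k$ depends only on $\delta$ and the uniform constants, and $F$ expands $\tildeF^u$ by a bounded factor per iterate, so $F^k(p'')$ lies in $\tildeF^u(y,R)$ for a uniform $R$; this proves the displayed inequality, and the density of $\pi(\tildeF^u(y))$ in $\TT^d$ is then immediate.
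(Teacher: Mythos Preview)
Your proof is correct and takes a genuinely different route from the paper's. Both arguments start from the minimality of $\tildeL^u$ on $\TT^d$ (which you prove via the closed-subgroup argument; the paper simply invokes it) and both face the obstacle that $H$ does not commute with $\ZZ^d$. The paper resolves this by appealing to Proposition~\ref{2.1 prop nmH}: given $\e>0$ there is $m_0$ so that $d(H(z+n),H(z)+n)<\e/2$ for all $n\in A^{m_0}\ZZ^d$; one then finds $w\in\tildeL^u(H(y),C)$ and $n\in A^{m_0}\ZZ^d$ with $w+n\in B(H(x),\e/2)$, sets $z=H^{-1}(w)\in\tildeF^u(y)$, and the proposition yields $z+n\in B(x,\delta)$ directly. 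Your bootstrap instead accepts only the coarse $\delta_1\sim C_0$ from the naive transfer through $H$, and removes the slack dynamically: you apply the coarse statement at time $-k$, slide onto a stable leaf via the Global Product Structure, and let the contraction of $F^k$ along $\tildeF^s$ shrink the error below $\delta$, at the price of inflating the unstable radius by a factor $\sim\Lambda^k$ (uniform once $k$ is fixed). Your argument is more self-contained---it bypasses Proposition~\ref{2.1 prop nmH} entirely, using only $\|H-{\rm Id}\|<C_0$, the Global Product Structure, and the raw Anosov estimates---while the paper's route is shorter once that proposition is available and avoids the slide-and-iterate mechanism.
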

   
   \begin{proof}
   	Let $H:\RR^d\to \RR^d$ be the conjugacy between $F$ and its linearization $A$. By the uniform continuity of $H^{-1}$, for any $\delta >0$ there exists $\e>0$ such that $H^{-1} \Big( B\big(   H(x), \e \big)  \Big) \subseteq B(x,\delta)$, for every $x\in\RR^d$.  By Proposition \ref{2.1 prop nmH}, fix $\e>0$, there exists $m_0\in\NN$ such that for any $z\in\RR^d$ and $n\in A^{m_0}\ZZ^d$, one has
   	\begin{align}
   		d\big( H(z+n), H(z)+n\big)\leq \frac{\e}{2}.  \label{udense1}
   	\end{align}
   	On the other hand, since the  unstable foliation of $A$ is minimal on $\TT^d$, then fix $m_0\in\NN$, there exists $C=C(m_0)>0$ such that there exist $w\in\tildeL^u\big( H(y), C \big)$ and $n\in A^{m_0}\ZZ^d$ satisfying 
   	\begin{align}
   		(w+n)\in B\big(H(x),\frac{\e}{2} \big), \label{udense2}
   	\end{align}
   	for any $x,y\in\RR^d$.   Let $z=H^{-1}(w)\in \tildeF^u(y)$,  by \eqref{udense1} and \eqref{udense2}, one has $H(z+n)\in B\big(H(x),\e\big)$. It follows that $(z+n)\in B(x,\delta)$.
   	
   	Since $d\big(H(y),w \big)\leq C(m_0)$ and $\|H-{\rm Id}_{\RR^d}\|_{C^0}<C_0$, we have
   	\begin{align*}
   		d(y,z)\leq C+2C_0.
   	\end{align*}
   	Then the proof for this lemma is completed by the following claim. 
   	
   	\begin{claim}\label{4  claim: uniform control for unstable distance}
   		For any $C_1>0$ there exists $R>0$ such that for any $y\in\RR^d$ and $z\in\tildeF^u(y)$, if $d(y,z)<C_1$, then $d_{\tildeF^u}(y,z)<R$.
   	\end{claim}
   	
   	\begin{proof}[Proof of Claim \ref{4 claim: uniform control for unstable distance}]
   		We mention that for the case of dim$E^u_f=1$, the claim follows from the quasi-isometric property (see Proposition \ref{2.1 prop quasi-isometric}).
   		
   		Let $\tilde{\mathcal{F}}^u_{f,C_1}(y)$ be the local leaf containing all points in $B(y,C_1)\cap \tildeF^u(y)$. We claim that for any $\e_1>0$, there exists $m_1\in\NN$ such that for any $y\in\RR^d$ and $n\in A^{m_1}\ZZ^d$,  one has 
   		\begin{align}
   			d_{C^1}\big( \tilde{\mathcal{F}}^u_{f,C_1}(y+n)-n\;,\; \tilde{\mathcal{F}}^u_{f,C_1}(y)    \big)<\e_1, \label{eq. 2.17.2}
   		\end{align}
   		where $d_{C^1}(\cdot,\cdot)$ is the $C^1$-topology metric for treating the local unstable leaf as a $C^1$-immersion. 
   		Indeed, this claim is just a corollary of the continuity of unstable manifolds (see Proposition \ref{2.1 prop: unstable leaf continuity}), since $\pi\left({\rm Orb}_F(y+n)\right)$ is arbitrarily closed to $\pi\left({\rm Orb}_F(y)\right)$ when $m_1$ is big enough.
   		
   		Now we fix $\e_1>0$ and $m_1\in\NN$. It is clear that there exists $R'>0$ such that for any $y\in A^{m_1}\big([0,1]^d\big)$ and $z\in \tilde{\mathcal{F}}^u_{f,C_1}(y)$,  one has $d_{\tildeF^u}(y,z)<R'$. Note that for any $y'\in \RR^d$ there exist $y\in A^{m_1}\big([0,1]^d\big)$ and $n\in A^{m_1}\ZZ^d$ such that $y'=y+n$. Let $R=(1+2\e_1)R'$. By \eqref{eq. 2.17.2},  for any $y\in\RR^d$ and $z\in \tilde{\mathcal{F}}^u_{f,C_1}(y)$, we get $d_{\tildeF^u}(y,z)<R$.
   	\end{proof}
   \end{proof}

   Now we can get Proposition \ref{1 prop dichotomy of non-invertible Anosov map}. In particular, for using  easily  in Section \ref{sec 4}, we show that there exist two unstable manifolds such that  any  unstable manifold  intersects one of them as following proposition. This also implies that any two points can be connected by at most four unstable manifolds.
   
   \begin{proposition}\label{4 prop: dichotomy of non-invertible Anosov map}
   	Let $f\in\mathcal{A}^1_1(\TT^d)$.  Then, 
   	\begin{enumerate}
   		\item either $f$ is special,
   		\item or $f$ is $u$-accessible. Moreover, there exist $x_0\in \TT^d$ and $R>0$ such that for any $y\in\TT^d$ and $\tilde{y} \in \TT^d_f$ with $(\tilde{y})_0=y$, there exists  $\tilde{x} \in \TT^d_f$ with $(\tilde{x})_0=x_0$ satisfying
   		$$ \mathcalf^u(y,\tilde{y},R)\cap \mathcalf^u(x_0,\tilde{x},R)\neq \emptyset.$$
   		In particular, $\tilde{x}$ can be chosen in $\{\bar{x}^*,\bar{x}_*\}$ given by Lemma \ref{4 lemma: s-one accessible  open}. 
   	\end{enumerate}
   \end{proposition}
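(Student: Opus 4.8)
The plan is to settle the first alternative immediately — if $f$ is special it is already listed — and then assume throughout that $f$ is \emph{not} special, so that Lemma~\ref{4 lemma: s-one accessible  open} applies. That lemma hands us a point $x_0\in\TT^d$, two lifts $x^*,x_*\in\RR^d$ of $x_0$ with associated orbits $\bar{x}^*=\pi\big({\rm Orb}_F(x^*)\big)$ and $\bar{x}_*=\pi\big({\rm Orb}_F(x_*)\big)$ in $\TT^d_f$, an open set $U_0\subset\TT^d$, and a constant $r>0$ such that every local unstable manifold of size $r$ based at a point of $U_0$ meets $\mathcalf^u(x_0,\bar{x}^*,r)$ or $\mathcalf^u(x_0,\bar{x}_*,r)$. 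The entire statement then reduces to the following uniform claim: there is $R_1>0$ so that for every $y\in\TT^d$ and every $\tilde{y}\in\TT^d_f$ with $(\tilde{y})_0=y$ one has $\mathcalf^u(y,\tilde{y},R_1)\cap U_0\neq\emptyset$.

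Granting the claim, fix $(y,\tilde{y})$ and pick $z\in\mathcalf^u(y,\tilde{y},R_1)\cap U_0$. Choosing a negative orbit of $z$ that shadows that of $y$ produces $\tilde{z}\in\TT^d_f$ with $(\tilde{z})_0=z$ and $y\in\mathcalf^u(z,\tilde{z})$. By Lemma~\ref{4 lemma: s-one accessible  open}(2) there is $w\in\mathcalf^u(z,\tilde{z},r)$ with $w\in\mathcalf^u(x_0,\tilde{x},r)$ for some $\tilde{x}\in\{\bar{x}^*,\bar{x}_*\}$. Since $w$'s backward orbit shadows $z$'s, which shadows $y$'s, $w$ lies on the connected leaf $\mathcalf^u(y,\tilde{y})$ and $d_{\mathcalf^u}(y,w)\le d_{\mathcalf^u}(y,z)+d_{\mathcalf^u}(z,w)\le R_1+r$. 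Taking $R:=R_1+r$ gives $w\in\mathcalf^u(y,\tilde{y},R)\cap\mathcalf^u(x_0,\tilde{x},R)$, which is the second item; in particular $f$ is $u$-accessible, and concatenating such connections through $x_0$ joins any two points of $\TT^d$ by at most four unstable manifolds.

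To prove the uniform claim I would first fix $p\in U_0$ and $\delta_0>0$ with $\overline{B(p,2\delta_0)}\subset U_0$, together with a lift $\hat{p}\in\pi^{-1}(p)$, and apply Lemma~\ref{4 lemma: u-dense with uniform size} with $\delta=\delta_0$ to obtain $R_1>0$ such that $\tildeF^u(\hat{y},R_1)\cap\big(B(\hat{p},\delta_0)+\ZZ^d\big)\neq\emptyset$ for every $\hat{y}\in\RR^d$. Projecting by $\pi$, every \emph{realized} local unstable leaf $\pi\big(\tildeF^u(\hat{y},R_1)\big)=\mathcalf^u\big(\pi(\hat{y}),\pi({\rm Orb}_F(\hat{y})),R_1\big)$ meets $B(p,\delta_0)\subset U_0$. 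For a general $\tilde{y}\in\TT^d_f$, use that realized orbits are dense in $\TT^d_f$ (cf.~\eqref{eq. 2.2. R orbit to T}): pick $\hat{y}_k\in\RR^d$ with $\pi({\rm Orb}_F(\hat{y}_k))\to\tilde{y}$, so $y_k:=\pi(\hat{y}_k)\to y$; each leaf $\mathcalf^u\big(y_k,\pi({\rm Orb}_F(\hat{y}_k)),R_1\big)$ contains a point $z_k\in B(p,\delta_0)$, and by the $C^1$-continuity of local unstable manifolds in the orbit (Proposition~\ref{2.1 prop: unstable leaf continuity}) these leaves converge in $C^1$ to $\mathcalf^u(y,\tilde{y},R_1)$. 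A subsequential limit $z$ of $\{z_k\}$ therefore lies on $\mathcalf^u(y,\tilde{y},R_1)$ and in $\overline{B(p,\delta_0)}\subset U_0$, establishing the claim with a single $R_1$ valid for all $(y,\tilde{y})$.

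The hard part is precisely this last transfer: Lemma~\ref{4 lemma: u-dense with uniform size} concerns the genuine unstable leaves of the lift $F$ — equivalently the ``realized'' unstable leaves of $f$ — whereas a non-special $f$ carries many unstable leaves that arise only as $C^1$-limits of realized ones, so the argument must combine density of realized orbits in the inverse limit space with the continuity of local unstable manifolds, and it must be run with a ball strictly interior to $U_0$ so that the limiting point cannot escape through $\partial U_0$. Everything else — reducing to the uniform claim and upgrading an intersection of size-$r$ arcs to one of size-$R$ arcs via the triangle inequality along the connected leaves (together with Claim~\ref{4  claim: uniform control for unstable distance} to keep intrinsic distances comparable to ambient ones) — is routine bookkeeping.
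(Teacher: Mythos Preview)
Your proposal is correct and follows essentially the same route as the paper: obtain $x_0$ and $U_0$ from Lemma~\ref{4 lemma: s-one accessible  open}, use Lemma~\ref{4 lemma: u-dense with uniform size} on lifts to hit $U_0$ with a uniform-size unstable arc, pass from realized orbits to arbitrary $\tilde{y}\in\TT^d_f$ by density \eqref{eq. 2.2. R orbit to T} and $C^1$-continuity of local unstable leaves (Proposition~\ref{2.1 prop: unstable leaf continuity}), and finish with item~(2) of Lemma~\ref{4 lemma: s-one accessible  open}. Your version is in fact slightly more careful than the paper's terse argument, in that you work with a ball $\overline{B(p,\delta_0)}$ compactly contained in $U_0$ so that the limit point of the $z_k$ cannot escape through $\partial U_0$; the paper simply asserts ``it follows that $\mathcalf^u(y,\tilde{y},R)\cap U_0\neq\emptyset$'' at this step.
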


   \begin{proof}
   	Assume $f$ is not special, then we can take $x_0$ and $U_0$ given by Lemma \ref{4 lemma: s-one accessible  open}. For any $y\in\TT^d$,
   	and $\tilde{y} \in \TT^d_f$ with $(\tilde{y})_0=y$, we can
   	choose $y_n=\pi^{-1}(y)\in\RR^d$ be  liftings of $y$ such that $\pi\big( {\rm Orb}_F(y_n)\big) \to \tilde{y}$. Applying Lemma \ref{4 lemma: u-dense with uniform size} to $y_n$, there exists $R>0$ such that $\pi\big(\tildeF^u(y_n, R)\big)\cap U_0\neq\emptyset$. It follows that $\mathcalf^u(y,\tilde{y}, R)\cap U_0\neq\emptyset$. This complete the proof of theorem by the second item of Lemma \ref{4 lemma: s-one accessible  open}. 
   \end{proof}

\section{The topological conjugacy is smooth along  stable leaves} \label{sec 4}
In this section, we prove Theorem \ref{2 thm conjugacy implies s-rigidity} and Corollary \ref{2 cor conjugacy implies smooth}. For convenience, we restate as follow. Note that  we already have the dichotomy for $\mathcal{A}^1_1(\TT^d)$ (see Proposition \ref{1 prop dichotomy of non-invertible Anosov map} and Proposition \ref{4 prop: dichotomy of non-invertible Anosov map}). 

\begin{theorem}\label{4 thm conjugacy implies s-rigidity}
	Let $f,g\in\mathcal{N}^r_1(\TT^d)$ $(r>1)$ be conjugate via a homeomorphism $h:\TT^d\to \TT^d$. If $f$ satisfies  one of the following two conditions,
	\begin{itemize}
		\item $f$ is $u$-accessible,
		\item $f$ is special and irreducible.
	\end{itemize} 
Then $\lambda_f^s(p)=\lambda_g^s\big(h(p)\big)$,  for every $p\in{\rm Per}(f)$.  Moreover, $h$ is $C^r$-smooth along the stable foliation.
\end{theorem}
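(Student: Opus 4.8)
Since $h\in{\rm Home}_0(\TT^d)$ we have $f_*=g_*=:A$. Set $\phi_f=\log\|Df|_{E^s_f}\|$ and $\phi_g=\log\|Dg|_{E^s_g}\|$, which are genuine H\"older functions on $\TT^d$ because $E^s_f,E^s_g$ are honest one--dimensional subbundles of $T\TT^d$. By the Livschitz Theorem for Anosov maps (Theorem~\ref{2.3 thm Livschit for Anosov map}), the desired equality $\lambda^s_f(p)=\lambda^s_g(h(p))$ for all $p\in{\rm Per}(f)$ is equivalent to $\phi_f$ and $\phi_g\circ h$ being H\"older--cohomologous over $f$; and once that equality is in hand, the $C^r$ smoothness of $h$ along $\mathcal{F}^s_f$ will follow from the machinery of Section~\ref{sec 3} plus a de~la~Llave--type bootstrap. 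So the plan has two halves: (i) establish the periodic data equality (where the two hypotheses on $f$ play different roles), and (ii) bootstrap to $C^r$ along stable leaves (uniformly for both cases). A preliminary, standard observation used throughout is that the conjugacy carries stable leaves to stable leaves: from the strong topological characterisation of local stable leaves, the uniform continuity of $h^{\pm1}$, and $h\circ f=g\circ h$, one gets $h(\mathcal{F}^s_f(x))=\mathcal{F}^s_g(h(x))$ for every $x$, and hence $\bar h((x_i)):=(h(x_i))$ conjugates $\sigma_f$ with $\sigma_g$.

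\textbf{The special and irreducible case.} Here I would argue directly. By Proposition~\ref{2.1 prop special and conjugate}, $f$ is conjugate to its linearisation $A$ by some $\phi\in{\rm Home}_0(\TT^d)$; then $\phi\circ h^{-1}\in{\rm Home}_0(\TT^d)$ conjugates $g$ to $A$, so $g$ is special as well (and irreducible, being homotopic to $f$). Theorem~\ref{1 theorem AGGS} then gives that $f$ and $g$ both have the same stable periodic data as $A$, which is the constant $\log\|A|_{L^s}\|$ since $A$ is linear with $\dim L^s=1$; hence $\lambda^s_f(p)=\log\|A|_{L^s}\|=\lambda^s_g(h(p))$ for every periodic $p$.

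\textbf{The $u$-accessible case, and the main obstacle.} Here the key step is to prove that \emph{$h$ is uniformly bi-Lipschitz along stable leaves}: there is $C>1$ with $C^{-1}\,d^s_f(x,y)\le d^s_g(h(x),h(y))\le C\,d^s_f(x,y)$ for all $y\in\mathcal{F}^s_f(x)$ (equivalently, up to a bounded factor, for leafwise Euclidean length, by Proposition~\ref{3 prop: affine metric}). Granting this, the periodic data comes out by a multiplier argument: for $p=f^n(p)$ I would apply the bound to the pairs $(f^{kn}x,f^{kn}y)$ and use that, by the second item of Proposition~\ref{3 prop: affine metric}, $d^s_f(f^{kn}x,f^{kn}y)$ is comparable to $e^{kn\lambda^s_f(p)}d^s_f(x,y)$ and $d^s_g(g^{kn}h(x),g^{kn}h(y))$ to $e^{kn\lambda^s_g(h(p))}d^s_g(h(x),h(y))$, with constants independent of $k$; letting $k\to\infty$ forces $\lambda^s_f(p)=\lambda^s_g(h(p))$. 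To obtain the bi-Lipschitz bound I would work near the distinguished point $x_0$ supplied by Proposition~\ref{4 prop: dichotomy of non-invertible Anosov map}, which carries two transverse unstable directions $E^u_f(x_0,\bar{x}^{*})$ and $E^u_f(x_0,\bar{x}_{*})$. Since the codimension-one unstable foliation of $f$ is $C^1$ (Proposition~\ref{2.1 prop C1 foliation}) and unstable directions vary continuously with the negative orbit (Proposition~\ref{2.1 prop: unstable leaf continuity}), near $x_0$ one has two mutually transverse local $C^1$ foliations by unstable leaves, both transverse to $\mathcal{F}^s_f$, and $h$ intertwines the $f$-unstable holonomies with the $g$-ones; the heart of the matter is that $h$ locally preserving $\mathcal{F}^s_f$ together with these two extra transverse $C^1$ foliations, and being compatible with the $f$- and $g$-dynamics, forces a two-sided bound on its leafwise distortion on an open neighbourhood of $x_0$. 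Then, since by Proposition~\ref{4 prop: dichotomy of non-invertible Anosov map} every point of $\TT^d$ is joined to $x_0$ by a single unstable leaf of uniformly bounded size, pushing the local bound along these bounded holonomies (which $h$ intertwines, with uniformly bounded distortion) and along the dynamics propagates it globally with a uniform constant. The local step --- extracting a genuine two-sided bound from the two transverse unstable foliations in spite of the merely H\"older regularity of $\mathcal{F}^s_f$ --- is the part I expect to be hardest.

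\textbf{Bootstrapping to $C^r$ along stable leaves.} Finally, once the stable periodic data of $f$ and $g$ agree (in either case), this equality holds for $\bar h$ on the inverse limit space as well, so Lemma~\ref{3 lem: s rigidity induce to RRd} (via the Livschitz Theorem on the inverse limit space, Theorem~\ref{2.3 thm: Livshitz for inverse limit space}) yields a bounded H\"older $U:\RR^d\to\RR$ with $\log\|DF|_{E^s_F(x)}\|=\log\|DG|_{E^s_G(H(x))}\|+U(F(x))-U(x)$, where $h$ lifts to the conjugacy $H$ of Proposition~\ref{2.1 prop lifting conjugate} between suitable lifts $F,G$ (and $H$ commutes with deck transformations because $h$ already lives on $\TT^d$). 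The portion of Section~\ref{sec 3} running from Proposition~\ref{3 prop: bi-Lipschitz} to Corollary~\ref{3 cor H is differentiable} uses no irreducibility, so carried out verbatim it shows that $H$, hence $h$, is differentiable along $\mathcal{F}^s_f$ with $\|Dh|_{E^s_f(y)}\|=e^{U(x)-U(y)}\|Dh|_{E^s_f(x)}\|$, whence $h$ is $C^{1+\alpha}$ along stable leaves. A standard de~la~Llave--type bootstrap along the one--dimensional stable leaves (\cite{Llave1992,GG2008,SY2019}), using that $\log\|Df|_{E^s_f}\|$ is $C^{r-1}$ along the $C^r$ stable leaves together with the affine structure of Proposition~\ref{3 prop: affine metric}, then upgrades this to $C^r$ regularity along the stable foliation, which is the remaining assertion.
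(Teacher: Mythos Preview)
Your special irreducible case is fine and matches the paper. The gap is in the $u$-accessible case, where your logic runs backwards: you attempt to prove a uniform bi-Lipschitz bound for $h$ along $\mathcal{F}^s_f$ \emph{before} knowing the periodic data coincide, and then derive the data from the bound. The step you yourself flag as hardest --- extracting a two-sided leafwise distortion bound near $x_0$ from the two transverse unstable directions --- is left unproved, and there are genuine obstacles: the image leaves $\mathcal{F}^u_g\big(h(x_0),\bar h(\bar x^*)\big)$ and $\mathcal{F}^u_g\big(h(x_0),\bar h(\bar x_*)\big)$ need not be transverse at $h(x_0)$, since $h$ is only a homeomorphism and can send transverse $C^1$ curves to tangent ones (e.g.\ $(x,y)\mapsto(x,y^3)$ sends $y=\pm x$ to $y=\pm x^3$); and even granting transversality on both sides, it is unclear how preserving two transverse $C^1$ unstable foliations together with $\mathcal{F}^s_f$ yields a Lipschitz bound \emph{along the stable leaf}, since unstable holonomies act between stable leaves, not within them.

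The paper reverses the order. It argues by contradiction: if $\lambda^s_f(p)>\lambda^s_g(h(p))$, a direct local estimate (Lemma~\ref{5 lemma: differential at p}) shows $h|_{\mathcal{F}^s_f(p)}$ is differentiable at $p$ with $\|Dh|_{E^s_f(p)}\|=0$. Then Lemma~\ref{5 lemma: u-holonomy send differentiable} expresses $H|_{\tilde{\mathcal F}^s_F}$ near any point $w$ reachable from a lift of $p$ via unstable arcs and deck translations as a composition of $C^1$ holonomies with $H$ near that lift; the chain rule gives $\|DH|_{E^s_F(w)}\|=(\text{nonzero})\cdot 0=0$. By $u$-accessibility (Proposition~\ref{4 prop: dichotomy of non-invertible Anosov map}) this forces $\|Dh|_{E^s_f}\|\equiv0$ on $\TT^d$, contradicting that $h$ is a leafwise homeomorphism. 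Only \emph{after} this does Proposition~\ref{3 prop: bi-Lipschitz} apply; the paper then again uses Lemma~\ref{5 lemma: u-holonomy send differentiable} (not the Appendix machinery behind Corollary~\ref{3 cor H is differentiable}, whose proof \emph{does} use irreducibility via minimality of $\mathcal{F}^s_f$, contrary to your claim) to propagate differentiability from a single bi-differentiable Lebesgue point to all of $\TT^d$.
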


   \begin{proof}
   	We first prove that the stable periodic data of $f$ and $g$ coincide.  By the analysis in Subsection \ref{subsec 2.2}, we need only prove it  for the case that $f$ and $g$ are both $u$-accessible. Assume by contradiction that there exists $p\in{\rm Per}(f)$ such that  $\lambda^s(p,f)\neq \lambda^s\big(h(p),g\big)$. Without loss of generality, let  $\lambda^s(p,f)> \lambda^s(h(p),g)$. If $\lambda^s(p,f)< \lambda^s(h(p),g)$, we can do the same proof for $h^{-1}$. 
   
   \begin{lemma}\label{5 lemma: differential at p}
   	The restriction $h:\mathcalf^s(p) \to \mathcalg^s\big(h(p)\big)$ is differentiable at $p$, moreover $\|Dh|_{E^s_f(p)}\|=0$.
   \end{lemma}
   \begin{proof}[Proof of Lemma \ref{5 lemma: differential at p}]
   	We can assume that $p$ is a fixed point of $f$, otherwise we go through the proof replacing $f$ by $f^{n_0}$,  where $n_0$ is a period of $p$. For convenience, we denote by $f_0^{-1}(x)$, the $f$-preimage of $x\in \mathcalf^s(p)$ on $\mathcalf^s(p)$ and similar to $g_0^{-1}(x)$ for $x\in \mathcalg^s(h(p))$. 
   	
   	For short, let $\tau_f:= {\rm exp}\big(\lambda_f^s(p) \big)$ and $\tau_g:= {\rm exp}\big(\lambda_g^s(h(p)) \big)$ and $\e_0:=\tau_f-\tau_g>0$. By the adpated metric (\cite[Claim 2.18]{AGGS2022}), there exists $\delta_0>0$ such that for every $x\in\mathcalf^s(p,\delta_0)$, one has 
   	$$\|Df|_{E^s_f(f_0^{-1}(x))}\| \in [\tau_f-\frac{\e_0}{3},\tau_f+\frac{\e_0}{3}] \quad {\rm and}\quad \|Dg|_{E^s_g(g_0^{-1}\circ h(x))}\| \in [\tau_g-\frac{\e_0}{3},\tau_g+\frac{\e_0}{3}].$$
   	
   	Fix $\delta_1>0$ such that $\delta_2:=\delta_1\cdot(\tau_f-\frac{\e_0}{3})^{-1}\leq \delta_0$. Since $h$ is uniformly continuous and the local stable leaves are $C^{r}$ submanifolds, there exist $\delta_1'>0$ and $\delta_2'>0$ such that for any two points $x_0,y_0\in\mathcalf^s(p,\delta_0)$ with $d_{\mathcalf^s}(x_0,y_0)\in [\delta_1,\delta_2]$, one has $d_{\mathcalg^s}\big(h(x_0),h(y_0)\big)\in [\delta_1',\delta_2']$.  It follows that there exists $C>0$ such that $d_{\mathcalg^s}\big(h(x_0),h(y_0)\big)<C\cdot d_{\mathcalf^s}(x_0,y_0)$.
   	
   	For any $\delta<\delta_1$ and any  point $x\in\mathcalf^s(p,\delta)$, there exists $k\in\NN$ such that $$d_{\mathcalf^s}\big(f_0^{-k}(x), f_0^{-k}(p)\big)\in [\delta_1,\delta_2].$$
    Note that $k\to +\infty$ as $\delta\to 0$. Then one has,
   	\begin{align*}
   		(\tau_g+\frac{\e_0}{3})^{-k} d_{\mathcalg^s}\big(h(x),h(p)\big)
   		&\leq d_{\mathcalg^s}\big(g_0^{-k}\circ h(x),g_0^{-k}\circ h(p)\big),\\
   		&\leq C\cdot d_{\mathcalf^s}\big(f_0^{-k}(x),f_0^{-k}(p)\big),\\
   		&\leq C\cdot (\tau_f-\frac{\e_0}{3})^{-k} d_{\mathcalf^s}(x,p).
   	\end{align*}
   It follows that 
   $$\frac{d_{\mathcalg^s}\big(h(x),h(p)\big)}{d_{\mathcalf^s}(x,p)} \leq C\cdot \Big(\frac{\tau_g+\frac{\e_0}{3}}{\tau_f-\frac{\e_0}{3}} \Big)^k \to 0,$$
   as $\delta \to 0$. Hence, we get $\|Dh|_{E^s_f(p)}\|=0$.
   \end{proof}
   
  Since dim$E_{f/g}^s=1$, the holonomy maps along the unstable manifolds are smooth (Proposition \ref{2.1 prop C1 foliation}). By the second item of  Proposition \ref{4 prop: dichotomy of non-invertible Anosov map}, we can send the differentiable point $p$ to the whole $\TT^d$ so that $h$ is differentiable along the stable leaf at every point. It is convenience to complete this on the universal cover $\RR^d$. Let $H:\RR^d \to \RR^d$ be the lifting of $h$, note that
\begin{align}
	H(x+n)=H(x)+n,  \quad {\rm for\;\; every}\;\; x\in\RR^d\;\; {\rm and} \;\;n\in \ZZ^d. \label{eq. H Zd-periodic}
\end{align}
   
   \begin{lemma}\label{5 lemma: u-holonomy send differentiable}
   Assume that  $H:\tildeF^s(y)\to \tildeG^s\big(H(y)\big)$ is differentiable at some point $y\in \RR^d$. Fix $R>0$. Then for any $z\in\tildeF^u(y,R)$, any $n\in\ZZ^d$ and $w\in\tildeF^u(z+n,R)$, the map $H:\tildeF^s(w)\to \tildeG^s\big(H(w)\big)$ is differentiable at the point $w$.
   \end{lemma}
  \begin{proof}[Proof of Lemma \ref{5 lemma: u-holonomy send differentiable}]
  	For $\sigma=f$ or $g$, let Hol$^{\sigma}_{x,x'}: \tilde{\mathcal{F}}_{\sigma}^s(x)\to \tilde{\mathcal{F}}_{\sigma}^s(x')$ be the holonomy map along unstable foliation $\tilde{\mathcal{F}}_{\sigma}^u$, defined as 
  	$${\rm Hol}^{\sigma}_{x,x'}(y)=\tilde{\mathcal{F}}_{\sigma}^u(y)\cap \tilde{\mathcal{F}}_{\sigma}^s(x'), \quad \forall x,x'\in\RR^d\;\; {\rm and}\;\; y\in\tilde{\mathcal{F}}_{\sigma}^s(x).$$
  	Since $H$ maps $\tildeF^{s/u}$ to $\tildeG^{s/u}$, we have $$H\circ {\rm Hol}^f_{x,x'}(y) ={\rm Hol}^g_{H(x),H(x')}\big(H(y)\big),$$ for all $x,x'\in\RR^d$ and $y\in\tildeF^s(x)$. Recall that $T_n:\RR^d \to \RR^d$ with  $ T_n(x)=x+n$ for all $x\in\RR^d$ is the deck transformation for any $n\in\ZZ^d$.    Since $H$ is commutative with the deck transformation $T_n$ \eqref{eq. H Zd-periodic},  for any $\delta>0$, we can redefine the map $H:\tildeF^s(w,\delta) \to \tildeG^s\big(H(w)\big)$ as follow,
  	$$H(x)= {\rm Hol}^g_{H(z)+n,H(w)} \circ T_n \circ {\rm Hol}^g_{H(y),H(z)}\circ H \circ {\rm Hol}^f_{z,y}\circ T_{-n} \circ  {\rm Hol}^f_{w, z+n} (x),$$ for every $x\in \tildeF^s(w,\delta)$,  where $z\in\tildeF^u(y,R)$ and $w\in\tildeF^u(z+n,R)$ for some $n\in\ZZ^d$ and $R>0$.
  	
  	Note that within size $R$, the holonomy maps ${\rm Hol}^f_{w, z+n}$ and ${\rm Hol}^f_{z,y}$ are both differentiable by Proposition \ref{2.1 prop C1 foliation}.
  	Meanwhile, since $\|H-{\rm Id}_{\RR^d}\|_{C^0}<C_0$, one has $$d\big(H(y), H(z)\big) <R+2C_0 \quad {\rm and} \quad d\big( H(z+n),H(w)\big)<R+2C_0.$$ It  follows that there exists $R'>0$ such that   $$H(z)\in\tildeG^u\big(H(y),R'\big) \quad {\rm and} \quad H(z)+n=H(z+n)\in\tildeG^u\big(H(w),R'\big),$$ by Claim \ref{4 claim: uniform control for unstable distance}. 
 Hence the holonomy maps  $ {\rm Hol}^g_{H(z)+n,H(w)}$ and  ${\rm Hol}^g_{H(y),H(z)}$ are also differentiable. It is clear that the deck transformation $T_n$ is differentiable along the stable foliation. As a result, we get the differentiability of  $H$  at $w$ along $\tildeF^s(w)$.
  \end{proof}

 Let $p_0\in\pi^{-1}(p)$ be a lifting of $p$ and let $x_0$ and $R$ be given in Proposition \ref{4 prop: dichotomy of non-invertible Anosov map} and $x^*,x_*$ be the liftings of $x$ given in Lemma \ref{4 lemma: s-one accessible  open}. Note that $H$ is differentiable at $x^*$ along $\tildeF^s(x^*)$ if and only if that for $x_*$. Applying Proposition  \ref{4 prop: dichotomy of non-invertible Anosov map}  for $p$ and $\tilde{p}:=\pi\big({\rm Orb}_F (p_0)\big)$, there exists $\tilde{x}=\bar{x}^* \;{\rm or}\; \bar{x}_*$ such that 
 $$ \mathcalf^u(p,\tilde{p},R)\cap \mathcalf^u(x_0,\tilde{x},R)\neq \emptyset,$$
 and without loss of generality, we can take $\tilde{x}=\bar{x}_*=\pi\big({\rm Orb}_F (x_*)\big)$. Namely,  there exists $z\in\tildeF^u(p_0,R)$ and $n\in\ZZ^d$ such that $z+n\in \tildeF^u(x_*,R)$. Hence by Lemma \ref{5 lemma: u-holonomy send differentiable}, one has that  $H$ is differentiable at $x_*$ along $\tildeF^s(x_*)$. Moreover, by Lemma \ref{5 lemma: differential at p}, we have $\|Dh|_{E^s_f(x_0)}\|=0$.
 
 For any $x\in\TT^d$, by the differentiability of $h$ at $x_0$ along $\mathcalf^s(x_0)$ and repeating the above method, we get  $$\|Dh|_{E^s_f(x)}\|=0, \quad \forall x\in\TT^d.$$
 It contradicts with the fact that $h$ is a homeomorphism along the stable foliation.
 \vspace{3mm}
  
  Now we show the regularity of the conjugacy $h$ restricted on each leaf of $\mathcalf^s$.
  
  In the  $u$-accessible case, we already have $\lambda^s(p,f)=\lambda^s(h(p),g)$ for all $p\in{\rm Per}(f)$.
  Then by Proposition \ref{3 prop: bi-Lipschitz}, we have that  $h$ is bi-Lipschitz along each stable leaf. It follows that there exists $z_0\in\TT^d$ such that $h$ is bi-differentiable at $z_0$ along $\mathcalf^s(z_0)$. Combining Proposition \ref{4 prop: dichotomy of non-invertible Anosov map}  and Lemma \ref{5 lemma: u-holonomy send differentiable}, we get that $h$ is smooth along  the stable foliation. In the special case, by Theorem \ref{1 theorem AGGS}, we have that $h$ is also  smooth along the stable foliation.

  For both two case, we show that the regularity of $h|_{\mathcalf^s}$ is same as one of $f$.
  Note that each stable leaf is a $C^r$-smooth submanifold. Let 
  \begin{align}
  	 \tilde{\rho}_g\big(h(x),h(y)\big):=\frac{\|Dh|_{E^s_f(x)}\|}{\|Dh|_{E^s_f(y)}\|}\cdot \rho_f(x,y), \quad \forall y\in\mathcalf^s(x).\label{eq.4.1.0}
  \end{align}
 Since $\|Dh|_{E^s_f(f(x))}\|=\|Dg|_{E^s_g(h(x))}\|\cdot \|Dh|_{E^s_f(x)}\|\cdot\|Df|_{E^s_f(x)} \|^{-1}$, by  the  fourth item of Proposition \ref{3 prop: density function}, one has $\tilde{\rho}_g=\rho_g$. Since $\rho_f$ and $\rho_g$ are both $C^{r-1}$-smooth when restricted on  local leaves with uniform size (see   \cite[Lemma 4.3]{Llave1992} and \cite[Lemma 2.3 and Lemma 2.4]{Gogolev2017}). Fix $x\in\TT^d$ and $R>0$, one has $Dh|_{E^s_f(y)}$ is $C^{r-1}$-smooth for $y\in\mathcalf^s(x,R)$. Hence, $h$ is $C^r$-smooth along each leaf of the stable foliation.
 \end{proof}

\section{Jacobian Rigidity}\label{sec 5}
In this section, we prove  Thorem  \ref{1 thm jacobian rigidty}. For convenience, we restate it as follow. Recall that for any $r\geq 1$, let   $r_*=\Big\{ \begin{array}{lr}
	r-\e, \; \ r\in\NN  \\ \;\;\; r,\ \ \ \ \ \ \  r\notin  \NN \ {\rm or}\ r=+\infty
\end{array}$, where $\e>0$ can be arbitrarily small.  

 \begin{theorem}\label{Jacobian rigidity}
	Let $f,g \in \mathcal{N}^r_1(\TT^2)$ $(r>1)$ are topologically conjugate $h\circ f=g\circ h$, where  $h\in{\rm Home}_0(\TT^2)$.  Assume that for every periodic point $p=f^n(p)$, we have 
	$$
	{\rm Jac}\big(f^n\big)(p)={\rm Jac}\big(g^n\big)(h(p)),
	$$
	then $h$ is a $C^{r_*}$-smooth diffeomorphism. 
\end{theorem}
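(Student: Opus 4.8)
The plan is to combine the stable-direction regularity already in hand with a classical argument for the unstable direction. By hypothesis $f$ and $g$ are topologically conjugate, so Corollary \ref{1 cor h smooth along stable} (equivalently Theorem \ref{4 thm conjugacy implies s-rigidity}) gives that $h$ is $C^r$-smooth along the stable foliation $\mathcalf^s$, and in particular the stable periodic data of $f$ and $g$ coincide. So it remains only to prove that $h$ is $C^{r_*}$-smooth along the unstable direction; once one has separate smoothness along two transverse $C^1$ foliations with uniformly bounded derivatives, a Journ\'e-type lemma (which applies here since $\dim E^s=1$ and the unstable foliation, though orbit-dependent, is locally $C^1$ by Proposition \ref{2.1 prop C1 foliation} and has bounded geometry) upgrades this to genuine $C^{r_*}$-smoothness of $h$ on $\TT^2$.

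For the unstable direction I would pass to the inverse limit space, or equivalently work with a fixed negative orbit, so that the unstable leaf through a point and its image under (a lift of) $h$ are well defined $C^1$ curves. The Jacobian hypothesis $\mathrm{Jac}(f^n)(p)=\mathrm{Jac}(g^n)(h(p))$ together with the already-established equality of stable periodic data $\lambda^s_f(p)=\lambda^s_g(h(p))$ forces the unstable periodic data to agree: $\lambda^u_f(p,\tilde p)=\lambda^u_g(h(p),\widetilde{h(p)})$ for every periodic orbit, since along a periodic orbit the Jacobian is the product of the stable and unstable eigenvalue moduli. Then, by the Livschitz Theorem for Anosov maps (Theorem \ref{2.3 thm Livschit for Anosov map}), $\log\|Df|_{E^u_f}\|$ and $\log\|Dg|_{E^u_g}\|\circ h$ are H\"older-cohomologous via $f$. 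This cohomological relation is precisely what is needed to run the de la Llave--Marco--Moriy\'on / Llave--Gogolev bootstrap for the holonomy-type conjugacy along one-dimensional unstable leaves: one constructs an affine structure (density function and affine parameter) on unstable leaves of $F$ and $G$ exactly as in Proposition \ref{3 prop: density function} and Proposition \ref{3 prop: affine metric}, shows $h$ is bi-Lipschitz along unstable leaves using the cohomology equation and quasi-isometry (Proposition \ref{2.1 prop quasi-isometric}), deduces differentiability on a full-measure set, and then propagates differentiability everywhere along the leaf using that $\log\|Dh|_{E^u}\|$ is determined by the transfer function. Bootstrapping the regularity of the leafwise derivative via the invariance equation $\|Dh|_{E^u_f(f x)}\| = \|Dg|_{E^u_g(h x)}\|\cdot\|Dh|_{E^u_f(x)}\|\cdot\|Df|_{E^u_f(x)}\|^{-1}$ and the $C^{r-1}$-regularity of the density functions on uniform-size leaves (as cited from \cite{Llave1992, Gogolev2017}) yields $C^{r_*}$-smoothness of $h$ along $\mathcalf^u$; the loss of $\e$ for integer $r$ is the usual one.

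The main obstacle, and the genuinely new point compared to the Anosov diffeomorphism and expanding-map cases, is that the unstable direction of a non-invertible non-special Anosov map depends on the negative orbit, so "$h$ is smooth along the unstable foliation" is not even well posed on $\TT^2$ itself. I would handle this by carrying out the entire unstable-direction argument on the inverse limit space $\TT^2_f$ (or on $\RR^2$ with the global product structure, where $\tildeF^u(x)$ is a genuine leaf), using Proposition \ref{2.2 prop: the relationship of 3 conjugacies inverse limit space} to transport the conjugacy $h$ to a conjugacy $\bar h:\TT^2_f\to\TT^2_g$ of shift maps, and Proposition \ref{2.1 prop: unstable leaf continuity} plus Proposition \ref{2.1 prop: local product sturcture} to control how unstable leaves vary with the orbit. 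A second, more technical, point is that the holonomy maps used in the Journ\'e step must be taken along unstable leaves with matching negative orbits; here the bounded-geometry estimates of Section \ref{sec accessible} (Claim \ref{4 claim: uniform control for unstable distance}) give the uniform control needed to glue the leafwise regularity in the stable and unstable directions into global $C^{r_*}$ regularity of $h$. Everything else — the Livschitz step, the affine structure, and the bootstrap — is a direct adaptation of the arguments already developed in Section \ref{sec 3} and in \cite{Llave1992, GG2008, SY2019}.
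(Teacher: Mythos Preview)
Your overall architecture matches the paper exactly: invoke Theorem~\ref{4 thm conjugacy implies s-rigidity} for the stable direction, combine Jacobian and stable data to get $\lambda^u_f(p)=\lambda^u_g(h(p))$, build an affine structure along unstable leaves, get bi-Lipschitz and hence a.e.\ differentiability, propagate to everywhere, bootstrap to $C^r$, and finish with Journ\'e on $\RR^2$ (where $\tildeF^s$ and $\tildeF^u$ are honest transverse foliations, so the ``matching negative orbits'' worry you raise for Journ\'e does not arise). One small correction: since $\log\|Df|_{E^u_f}\|$ is orbit-dependent in the non-special case, you must use the Livschitz theorem on the inverse limit space (Theorem~\ref{2.3 thm: Livshitz for inverse limit space}), not Theorem~\ref{2.3 thm Livschit for Anosov map}.

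The substantive divergence is in how differentiability is propagated from a full-measure set to every point along unstable leaves. You call this a ``direct adaptation'' of Section~\ref{sec 3} / \cite{GG2008}, and the paper agrees that in the \emph{special} case it is exactly that (the unstable foliation descends to $\TT^2$ and Proposition~\ref{3 prop h0 differentiable} applies verbatim). But in the \emph{non-special} case the paper does \emph{not} run the transitive-point argument on $\TT^2_f$; instead it exploits non-specialness itself. The key observation is that there exist $x_0\in\RR^2$ and $n\in\ZZ^2$ with $DT_n(E^u_F(x_0))\neq E^u_F(x_0+n)$, so the genuine unstable leaf $\tildeF^u(x_0,\delta)$ and the deck-translated leaf $\tildeF^{u,n}(x_0):=\tildeF^u(x_0+n,\delta)-n$ are \emph{transverse} one-dimensional $C^r$ curves. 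Since $H$ commutes with deck transformations (because $h$ already exists on $\TT^2$), it intertwines the $C^1$ holonomies of these two ``unstable'' foliations, and one can push a single differentiable point to a full open set $V_0$ by alternating $\tildeF^{u,n}$-holonomy and $\tildeF^s$-holonomy; minimality of $\mathcalf^s$ then covers $\TT^2$. This is the genuinely new ingredient (Proposition~\ref{4.2 prop H is C^r along u}).

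Your proposed route---run the measure/transitivity argument of the Appendix directly for unstable leaves on $\TT^2_f$---is plausible, but it is not the ``direct adaptation'' you suggest: the measure in Lemma~\ref{3 lemma AC measure} lives on $\TT^d$ and uses that $\mathcalf^s$ is orbit-independent and $\ZZ^d$-periodic, and Lemma~\ref{3 lem differentiable transitive point} uses that the full-measure differentiable set on a stable leaf is preserved under \emph{all} deck translates. Neither holds for $\tildeF^u$ in the non-special case, so you would have to rebuild the argument on $\TT^2_f$ from scratch. The paper's transversality trick sidesteps all of this and is both shorter and more elementary.
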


\begin{proof}

	By Theorem \ref{4 thm conjugacy implies s-rigidity}, we already have that $h$ is $C^r$-smooth along each stable leaf. From the following Journ$\acute{\rm e}$ Theorem, we can get the $C^{r_*}$-smoothness of $h$ by  proving that it is $C^r$-smooth along each unstable leaf.  
	\begin{lemma}[\cite{Journe1988}]\label{4 lemma journe}
		Let $M_i \; (i=1,2)$ be a smooth manifold and $\mathcal{F}_i^s, \mathcal{F}_i^u$ be continuous foliations on $M_i$ with uniformly $C^r$-smooth leaves and  intersect transversely with each other. Assume that $h:M_1\to M_2$ is a homeomorphism and maps $\mathcal{F}_1^{\sigma}$ to $\mathcal{F}^{\sigma}_2 (\sigma=s,u)$. If $h$ is uniformly $C^{r}$-smooth restricted on each leaf of both $\mathcal{F}_1^s$ and $\mathcal{F}_1^u$, then $h$ is $C^{r_*}$-smooth.
	\end{lemma}
	
	Denote the unstable Lyapunov exponent of $f$ at periodic point $p$ by $\lambda_f^u(p)$. By Theorem \ref{4 thm conjugacy implies s-rigidity} again, we  have $\lambda_f^s(p)=\lambda_g^s(h(p))$ for every $p\in{\rm Per}(f)$.  It follows from Jacobian rigidity that $\lambda_f^u(p)=\lambda_g^u\big(h(p)\big)$ for every $p\in{\rm Per}(f)$. Now we prove that $h$ is differentiable restricted on $\mathcalf^u(x,\tilde{x})$ for any $x\in\TT^2$ and  some $\tilde{x}$. More precisely, we have the following proposition which combining with Lemma \ref{4 lemma journe} implies Theorem \ref{Jacobian rigidity}.
	
	\begin{proposition}\label{4.2 prop H is C^r along u}
	Let $f,g \in \mathcal{N}^r_1(\TT^2)$ $(r>1)$ be conjugate via  $h\in{\rm Home}_0(\TT^2)$. Let $H$ be  a lifting of $h$ by the natural projection $\pi:\RR^2\to \TT^2$.   If $\lambda_f^u(p)=\lambda_g^u\big(h(p)\big)$ for every $p\in{\rm Per}(f)$. Then for any $x\in\TT^2$, there is  a  neighborhood $V(x)$ of $x$ such that  there exists a lifting $V_0(x)\subset \RR^2$ of $V(x)$ by $\pi$ satisfying that for any $y\in V_0(x)$, the restriction $H:\tildeF^u(y,\delta) \to \tildeG^u\big(H(y)\big)$ is $C^r$-smooth for some $\delta>0$.
	\end{proposition}

	\begin{proof}
		Let $F,G$ and $H$ be  liftings of $f,g$ and $h$ by the natural projection $\pi$ respectively. Define the density  function $\rho^u_F(\cdot,\cdot)$ and the affine structure $d^u_F(\cdot,\cdot)$ of $\tildeF^u$ by $$\rho^u_{F}(x,y):=\prod_{i=1}^{+\infty} \frac{\| DF|_{E^u_F(F^{-i}(y))} \|}{\| DF|_{E^u_F(F^{-i}(x))} \|}\quad {\rm and}\quad d^u_F(x,y)=\int_{x}^{y} \rho^u_F(z,x) d{\rm Leb}_{\tildeF^u}(z)$$
	where $x\in\RR^d$ and $y\in\tildeF^u(x)$. And we also define $\rho^u_G(\cdot,\cdot)$ and  $d^u_G(\cdot,\cdot)$ for $\tildeG^u$.  It is clear that these  functions satisfy all analogical properties in Proposition \ref{3 prop: density function} and Proposition \ref{3 prop: affine metric} but descending to $\TT^2$. And by th Livschitz Theorem for the inverse limit space, there exists a uniformly continuous and bounded function $P^u:\RR^2\to \RR$ such that $$\frac{\|DF|_{E^u_F(x)}\|}{\|DG|_{E^u_G(H(x))}\|}=\frac{P^u\big(F(x)\big)}{P^u(x)}, \quad \forall x\in\RR^2.$$ In particular, one has that $h$ is bi-Lipschitz along every unstable leaf from  Proposition \ref{3 prop: bi-Lipschitz}. Here we note that $\tildeF^u$ and $\tildeG^u$ are both quasi-isometric (see Proposition \ref{2.1 prop quasi-isometric}) so that the proof of Proposition \ref{3 prop: bi-Lipschitz} holds for this case. Then we have that for every local unstable leaf there exists a  full-Lebesgue set consisting of points differentiable along unstable leaf. 
	
	It suffices to prove that $H$ is differentiable along each leaf of ${\tildeF^u}$, since we can apply the same method in proof of Theorem \ref{4 thm conjugacy implies s-rigidity} (see \eqref{eq.4.1.0}) to get the $C^r$-regularity. By Proposition \ref{2.1 prop special and conjugate}, $f$ is special if and only if $g$ is special. Then we can complete the proof  by splitting in two situations: the special case and the $u$-accessible case. 
	
	Note that when $f$ is special, the differentiability of $H$ along $\tildeF^u$ can be obtained in the same way as Proposition \ref{3 prop h0 differentiable} and Corollary \ref{3 cor H is differentiable}, since the unstable foliation is well defined on $\TT^d$ in this case. We refer to \cite{GG2008} for the similar case that the differentiability along the weak unstable foliation of Anosov diffeomorphisms and to \cite{Llave1992} for the case of  unstable foliation of  Anosov diffeomorphisms on $\TT^2$. We also refer to \cite[Theorem 1.10]{Micena2022} for  the case of special Anosov endomorphisms on $\TT^2$ whose periodic data on unstable bundle is constant.

	Now we deal with the non-special case. Assume that $f$ is not special. We first  claim that there exist $x_0\in\RR^2$ and  $n\in\ZZ^2$  such that $DT_n\big( E^u_F(x_0)\big)\neq E^u_F(x_0+n)$ and $H$ is differentiable at $x_0$ restricted on $\tildeF^u(x_0)$. 
	Indeed, since $f$ is not special, there exist $z\in\TT^d$  has at least two  unstable directions. By the continuity of the unstable bundle with respect to orbits (see Proposition \ref{2.1 prop: unstable leaf continuity}), there exists a neighborhood $U$ of $z$ such that each point in $U$ has at least two unstable directions. Let $U_0$ be one of the lifting component of $U$. Since every local unstable leaf in $U_0$ has a full-Lebesgue set with points differentiable along unstable leaf, there exists $x_0\in U_0$ such that $H|_{\tildeF^u}$ is differentiable at $x_0$ and $\pi(x_0)\in U$ has at least two unstable direction. Since the projection of the $F$-orbit space on $\RR^2$ is dense in $\TT^s_f$ (see \eqref{eq. 2.23.1}) and the unstable direction is continuous with respect to orbits, there exists $n\in\ZZ^2$ such that $DT_n\big( E^u_F(x_0)\big)\neq E^u_F(x_0+n)$.
 	
 	Then we claim that there exists a neighborhood $V_0$ of $x_0$ in which $H|_{\tildeF^u}$ is differentiable  at every point. This can be done by using both  stable and unstable $C^1$-holonomy maps as follow. 
 	
 	Since $DT_n\big( E^u_F(x_0)\big)\neq E^u_F(x_0+n)$, the manifolds $\tildeF^u(x_0,\delta)$ transversely intersects with $\tildeF^u(x_0+n,\delta)-n$ at point $x_0$ for some small size $\delta>0$. Up to shrinking $\delta$, we can show that $H|_{\tildeF^u}$ is differentiable at every point of $\tildeF^u(x_0+n,\delta)-n$.  Indeed  by the continuity of local unstable leaf, there exists a neighborhood $V_1$ such that for any $y\in V_1$ and $z\in\tildeF^u(x_0+n,\delta)-n$, one has $\tildeF^u(z,\delta)$ transversely intersects with $\tildeF^u(y+n,\delta)-n$. For short, let $\tildeF^{u,n}(y):=\tildeF^u(y+n,\delta)-n$. It follows that we can define  holonomy maps along  $\tildeF^{u,n}$ between $\tildeF^u(x_0,\delta)$ and  $\tildeF^u(z,\delta)$ by $${\rm Hol}^{u,n}_F(y):=\tildeF^{u,n}(y)\cap \tildeF^u(z,\delta),$$ for any $y\in \tildeF^u(x_0,\delta)$. Since $H$ is commutative with the deck transformation $T_n$ and preserves the unstable foliation, $H$ maps  $\tildeF^{u,n}(y)$  to $\tildeG^{u,n}\big(H(y)\big):=\tildeG^u\big(H(y)+n,\e\big)-n$ for some $\e>0$. 
 	
 	\begin{figure}[htbp]
 		\centering
 		\includegraphics[width=14.2cm]{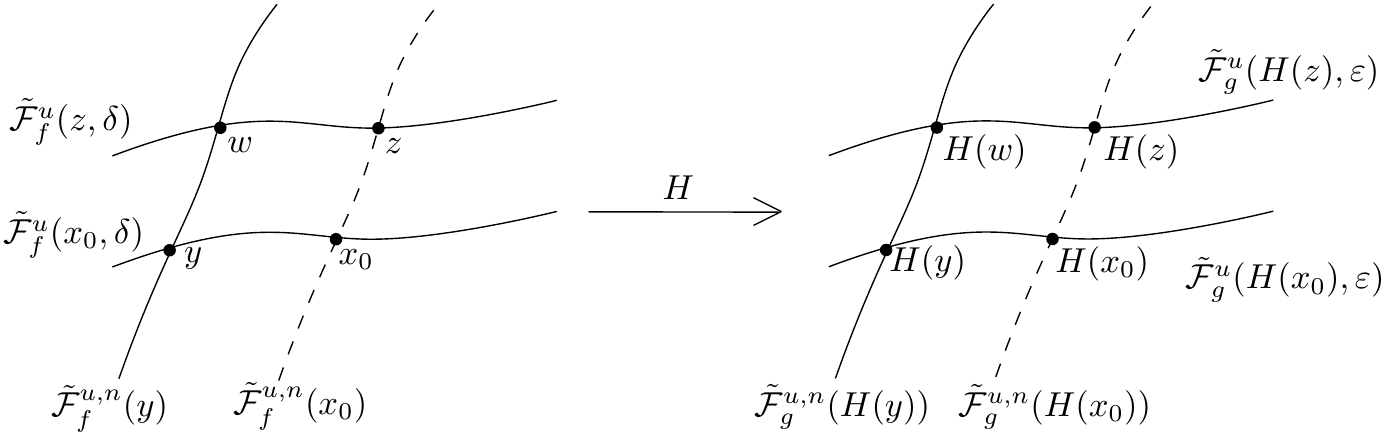}
 		\begin{center}
 			Figure 4. H is commutative with  holonomy maps induced by $\tildeF^{u,n}$ and $\tildeG^{u,n}$.
 		\end{center}
 	\end{figure}
 	
 	Note that we can assume that the manifolds $\tildeG^u(w_1,\e)$ transversely intersects with $\tildeG^{u,n}(w_2)$ for any two points $w_1,w_2\in H(V_1)$. Indeed, the homeomorphism $H$ maps the pair of transverse foliations $\tildeF^u$ and $\tildeF^{u,n}$ in $V_1$ to a pair of topologically transverse foliation $\tildeG^u$ and $\tildeG^{u,n}$ in $H(V_1)$. We can find a point $w_0\in H(V_1)$ such that $\tildeG^u(w_0,\e)$ transversely intersects with $\tildeG^{u,n}(w_0)$, or  these two foliations with $C^1$-smooth leaves are tangent everywhere  in $H(V_1)$. Taking a small neighborhood $V_2\subset H(V_1)$ of $w_0$ such that the transverse property is kept in $V_2$ and  let $H^{-1}(V_2), H^{-1}(w_0)$ replace $V_1, x_0$ respectively. Now we can define the holonomy maps ${\rm Hol}^{u,n}_G:\tildeG^u\big(H(x),\e\big)\to \tildeG^u\big(H(z),\e\big)$ given by $\tildeG^{u,n}$. Since $H$ maps $\tildeF^u$ and $\tildeF^{u,n}$ to $\tildeG^u$ and $\tildeG^{u,n}$ respectively, one has $$H(w)= {\rm Hol}^{u,n}_G \circ H \circ\big({\rm Hol}^{u,n}_F\big)^{-1}(w) : \tildeF^u(z,\delta)\to \tildeG^u\big(H(z)\big),$$ 
 	for any $w\in \tildeF^u(z,\delta)$ (see Figure 4). Then we get that $H|_{\tildeF^u}$ is differentiable at any point $z\in \tildeF^{u,n}(x_0)$, since both ${\rm Hol}^{u,n}_F$ and ${\rm Hol}^{u,n}_G$ are $C^1$-smooth by Proposition \ref{2.1 prop C1 foliation},

 	By the same way, since the  honolomy map along $\tildeF^s$ is $C^1$-smooth and $H$ maps $\tildeF^{s/u}$ to $\tildeG^{s/u}$, if $H|_{\tildeF^u}$ is differentiable at $x\in\RR^2$, then it is also differentiable at every point in $\tildeF^s(x,R)$ for any $R>0$. Hence, for any $z\in \tildeF^{u,n}(x)$ and $w\in\tildeF^s(z,R)$, one has $H|_{\tildeF^u}$ is differentiable. Since  $\tildeF^s$ is commutative with the deck transformation $T_n$, one has that $\tildeF^{u,n}$ and $\tildeF^s$ also admit the Local Product Structure and by this we get the neighborhood $V_0$ of the point $x_0$ in the above claim. 

   Moreover, for any $x\in V_0$ and $R>0$, $H|_{\tildeF^u}$ is differentiable at any point $y\in\tildeF^s(x,R)$. Since the foliation $\mathcalf^s$ is minimal by Proposition \ref{2.2 prop leaf conjugate}, we get Proposition \ref{4.2 prop H is C^r along u} for non-special case by projecting these differentiable points on $\RR^2$ to $\TT^2$. 	
	\end{proof}
	\end{proof}
	
	\begin{remark}
		We still do not know that if the Jacobian rigidity implies the smoothness of conjugacy for Anosov diffeomorphisms on $\TT^2$. 
		More precisely, let $f,g:\TT^2\to \TT^2$ be two $C^{r}$ non-conservative  Anosov diffeomorphisms conjugate via $h$ and ${\rm Jac}\big(f^n\big)(p)={\rm Jac}\big(g^n\big)(h(p))$ for every $p\in {\rm  Per}(f)$ with period $n$, if $h$ is  smooth or not?  
	\end{remark}
	
	\section{Appendix: The differentiability of $\bar{h}_0$}
	
	  We will prove Proposition \ref{3 prop h0 differentiable} in this Appendix. Although as mentioned before the proof has a similar  frame with one in \cite{GG2008}, we would like to point out the Lemma \ref{3 lemma full measure preimage dense} and Lemma \ref{3 lem differentiable transitive point} in the following proof have different taste with the orginal proof. For short, denote by $m^s$ the Lebesgue measure Leb$_{\mathcalf^s}$ on each leaf of $\mathcalf^s$.

	\begin{proof}[Proof of Proposition \ref{3 prop h0 differentiable}]
		We prove it by two steps. First of all, we show that there is a point $\tilde{z}\in\TT^d$ such that $\tilde{z}$ is a transitive point of $\sigma_f^{-1}:\TT^d_f\to\TT^d_f$ and $D\bar{h}_0|_{E^s_f(\tilde{z})}$ exists. Then by the transitive property which is in fact a point $\pi_0(\tilde{z})$ with dense preimage, we will get the differentiability of any other points $\tilde{y}\in\TT^d_f$. 
		
		For the first step, we construct a measure $\mu$ on $\TT^d$ whose conditional measure on local stable foliation $\mathcalf^s$ is absolutely continuous to the Lebesgue measure $m^s$, and for $\mu$-a.e. $z\in\TT^d$ the preimage of $z$ is dense in $\TT^d$. We mention that the construction of $\mu$ is quite similar to one in \cite{GG2008}. However to prove the $\sigma^{-1}_f$-transitivity need more discussion.
		
		Let $x_0\in\TT^d$ be a fixed point of $f$. Denote by $f^{-1}_0:\mathcalf^s(x_0)\to \mathcalf^s(x_0)$, the inverse of $f$ restricted on  $\mathcalf^s(x_0)$. Fix $\delta_0>0$, denote $V_0=\mathcalf^s(x_0,\delta_0)$ an open neighborhood of $x_0$ on $\mathcalf^s(x_0)$. Let $\nu^0\in \mathcal{M}(\TT^d)$, the set of Borel probability  measure on $\TT^d$, be supported on $V_0$ and define as 
		$$\nu^0(A)=\frac{\int_{A\cap V_0} \rho_f(x_0,z)dm^s(z)}{\int_{ V_0} \rho_f(x_0,z)dm^s(z)},
		\qquad \text{where} \qquad
		\rho_f(x_0,z)=\prod_{i=0}^{+\infty} \frac{\| Df|_{E^s_f(f^i(z))} \|}{\| Df|_{E^s_f(f^i(x_0))} \|},
		$$ 
		for any measurable set $A\subset \TT^d$. For any $n\in\NN$, let $V_n=f_0^{-n}(V_0)$ and $\nu^n=(f_0^{-n})_{*}\nu^0$. Then one has $\nu^n$ is supported  on $V_n$ and 
		$$\nu^n(A)=\frac{\int_{A\cap V_n} \rho_f(x_0,z)dm^s(z)}{\int_{ V_n} \rho_f(x_0,z)dm^s(z)},$$ for any measurable set $A\subset \TT^d$, by the affine sturcture (the third item of Proposition \ref{3 prop: density function}). 
		
		Let $\mu^n= \frac{1}{n}\sum_{i=0}^{n-1}\nu^i$. By the compactness of $\mathcal{M}(\TT^d)$, there is a subsequence of $\{\mu^{n_k} \}$ convergent in $\mathcal{M}(\TT^d)$. For short, we denote by $\mu=\lim_{n\to+\infty}\mu^n$. Note that $\mu$ is $f$-invariant.  Indeed, since $\nu^{i}$ is supported on $V_{i}$ for all $i\geq 0$ and the preimages of each local stable leaf are disjoint local stable leaves, we have $\nu^i=(f_0)_*\nu^{i+1}=f_*\nu^{i+1}$. Hence for every continuous function $\phi:\TT^d\to \RR$, one has 
		\begin{align*}
				\Big| \mu^n(\phi)-f_{*}\mu^n(\phi)\Big|&=\Big|\frac{1}{n}\sum_{i=0}^{n-1}\nu^i(\phi)- \frac{1}{n}\sum_{i=0}^{n-1}f_{*}\nu^i(\phi)\Big|\\
				&=\frac{|\nu^{n-1}(\phi)-f_*\nu^0(\phi)|}{n} \\
				&\leq \frac{2}{n}\|\phi\|_{C^0}.
		\end{align*}
	 It follows that $\mu$ is $f$-invariant.
	
		\begin{lemma}\label{3 lemma AC measure}
			The conditional  measure of $\mu$ on each local leaf of $\mathcalf^s$ is absolutely continuous with respect  to the Lebesgue measure $m^s$.
		\end{lemma}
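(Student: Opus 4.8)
The plan is to argue locally, in a foliation chart of $\mathcal{F}^s$, and to show that the conditional measures of $\mu$ on stable plaques are dominated by $m^s$, using that every $\nu^i$ (hence every $\mu^n$) already has this property with a density controlled by $\rho_f$. Fix $x\in\TT^d$ and a small foliated box $B\cong\mathbb{D}^s\times\Sigma$, where $\mathbb{D}^s$ models a local stable leaf and $\Sigma$ is a transversal; write $B=\bigsqcup_{t\in\Sigma}W_t$ for the plaque decomposition and disintegrate $\mu|_B=\int_\Sigma\mu_t\,d\hat\mu(t)$ with $\mu_t$ supported on $W_t$. By absolute continuity of the stable foliation (Proposition \ref{2.1 prop C1 foliation}), the ambient Lebesgue measure on $B$ disintegrates over $\Sigma$ with conditionals uniformly comparable to $m^s|_{W_t}$, so a Borel set meets every plaque in an $m^s$-null set precisely when it is $\mathrm{Leb}_B$-null; it therefore suffices to prove $\mu_t\ll m^s$ for $\hat\mu$-a.e.\ $t$. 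Recall that each $\nu^i$ is carried by the single leaf $\mathcal{F}^s(x_0)$, and on $V_i=f_0^{-i}(V_0)$ it equals $Z_i^{-1}\rho_f(x_0,\cdot)\,m^s$ with $Z_i=\int_{V_i}\rho_f(x_0,z)\,dm^s(z)$, by the affine change-of-variables of Proposition \ref{3 prop: density function}(3); moreover, inside a plaque $W_t\subset\mathcal{F}^s(x_0)\cap B$ one has $\rho_f(x_0,z)=\rho_f(x_0,z_t)\,\rho_f(z_t,z)$ with $\rho_f(z_t,z)$ in a fixed interval $[C_L^{-1},C_L]$ depending only on the plaque length, by Proposition \ref{3 prop: density function}(5).

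Next I would analyze the approximating measures plaque by plaque. Since $V_0\subset V_1\subset\cdots$ exhausts $\mathcal{F}^s(x_0)$, every fixed plaque $W_t\subset\mathcal{F}^s(x_0)\cap B$ is eventually contained in $V_i$; once this happens, $\nu^i|_{W_t}$ has $m^s|_{W_t}$-density equal to $\rho_f(x_0,z_t)Z_i^{-1}$ times a function valued in $[C_L^{-1},C_L]$. Averaging over $i<n$, and observing that the finitely many indices $i$ for which $V_i$ covers only part of $W_t$ contribute a proportion tending to $0$ as $n\to\infty$, I obtain that for each plaque $W_t$ and all sufficiently large $n$ the normalized conditional $\mu^n_t=\mu^n|_{W_t}/\mu^n(W_t)$ is absolutely continuous with respect to $m^s|_{W_t}$ with density bounded above by a constant $K=K(B)$ that does not depend on $t$ or $n$ (the common factor $\tfrac1n\sum_{i<n}Z_i^{-1}$ and $\rho_f(x_0,z_t)$ cancel in the normalization, and only the uniform bounds $C_L^{\pm1}$ and the plaque length survive).

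Finally I would pass to the weak-$*$ limit $\mu=\lim_n\mu^n$. The delicate point is that each $\mu^n$ is transversally singular (its factor measure $\hat\mu^n$ is atomic, supported on the countable set of parameters hit by $\mathcal{F}^s(x_0)$), whereas the limit $\mu$ should be transversally spread out because $\mathcal{F}^s(x_0)$ is minimal; so one cannot simply say "$\mu^n\in\mathcal K_K$, hence $\mu\in\mathcal K_K$", nor cover a leafwise $m^s$-null set by open sets with small leafwise slices. Instead I would combine the plaque-density bound from the previous step with the Fubini structure coming from absolute continuity of $\mathcal{F}^s$: for open $O\subset B$ one gets $\mu(O)\le\liminf_n\mu^n(O)$, and $\mu^n(O)\le K\sup_t m^s(O\cap W_t)$; feeding this through outer regularity together with the disintegration of $\mathrm{Leb}_B$ over $\Sigma$ yields $\mu|_B\le K'\,\mathrm{Leb}_B$, whence $\mu_t\ll m^s$ for $\hat\mu$-a.e.\ $t$. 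The main obstacle is exactly making this limiting step rigorous — reconciling the transversally-atomic approximants with the transversally-absolutely-continuous limit while keeping the leafwise density bound — and it is there that absolute continuity of the stable foliation (Proposition \ref{2.1 prop C1 foliation}) and the uniform estimates of Proposition \ref{3 prop: density function} must be used in tandem rather than naively.
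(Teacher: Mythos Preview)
Your setup and the plaque-by-plaque computation of the conditionals of $\nu^i$ and $\mu^n$ match the paper's. In fact you actually compute the key identity the paper uses: after normalization the conditional $\mu^n_t$ equals $\big(\int_{W_t}\rho_f(z_t,\cdot)\,dm^s\big)^{-1}\rho_f(z_t,\cdot)\,m^s$, which is \emph{independent of $n$}. You then discard this and keep only the uniform density bound $K$, and that is where the argument breaks.

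The implication you try to use in the last paragraph is false. From
\[
\mu(O)\ \le\ K\ \sup_{t}m^s(O\cap W_t)\quad\text{for all open }O\subset B
\]
one cannot conclude $\mu|_B\le K'\,\mathrm{Leb}_B$. A single plaque already gives a counterexample: if $\mu=m^s|_{W_{t_0}}$ (normalized), then for every open $O$ one has $\mu(O)=m^s(O\cap W_{t_0})\le\sup_t m^s(O\cap W_t)$, yet $\mu$ is singular to $\mathrm{Leb}_B$. Outer regularity does not help, because any open neighborhood of a leafwise-null set may still contain a full plaque, so $\sup_t m^s(O\cap W_t)$ need not be small. You identified this as ``the main obstacle'', and indeed it cannot be closed with only the bound $K$.

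The paper's resolution is to use precisely the $n$-independence you computed and then threw away. One tests $\mu^n$ against continuous $\phi$ on the foliation box and writes
\[
\int_D\phi\,d\mu^n=\int_T\Big(\int_{\mathcal{F}^s(x,R)}\phi(x,y)\,d\nu^n_x(y)\Big)\,d\nu^n_T(x),
\]
observing that the inner integral is a fixed continuous function of $x$ (the conditional $\nu^n_x$ does not depend on $n$). Hence the only thing that varies with $n$ is the transverse measure $\nu^n_T$, and weak convergence $\mu^n_T\to\mu_T$ passes through the integral directly; this identifies the conditionals of $\mu$ explicitly as the normalized $\rho_f(x,\cdot)\,m^s$ without ever comparing $\mu$ to $\mathrm{Leb}_B$. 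So the fix is: keep the equality $\mu^n_t=\big(\int_{W_t}\rho_f\big)^{-1}\rho_f\,m^s$ rather than just its bound, and argue by duality with continuous test functions instead of by outer regularity.
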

		\begin{proof}[Proof of Lemma \ref{3 lemma AC measure}]
			The proof is actually as same as the proof of  \cite[Lemma 5]{GG2008}. Hence we just skecth our proof here. For small $R>0$, consider any small $\mathcalf^s$ foliation box $D=\bigcup_{x\in T} \mathcalf^s(x,R)\subset \TT^d$, where $T$ is a $(n-1)$-dimensional transversal of $\mathcalf^s$ in $D$. Let $p:D\to T$ be the projection along each local leaf of $\mathcalf^s$ in $D$. Denote by $\mu_T:=p_*(\mu)$, the transverse measure on $T$.  Let $\mu_x$ be the conditional measure of $\mu$ on leaf $\mathcalf^s(x,R)$ for $x\in D$. Also we can define $\nu^n_T,\nu^n_x$ and $\mu^n_T, \mu^n_x$.
			\begin{enumerate}
				\item Considering the intersection of $V_n$ with $D$, we can only deal with the case that the end points of $V_n$ lie outside $D$. Let $\{a_1,...,a_m\}= V_n\cap T$. Then we have 
				$$\nu^n_T=\sum_{i=1}^{m}\Big( \int_{\mathcalf^s(a_i,R)} \rho_f(x_0, x) d m^s_{a_i}(x)\Big)\cdot \delta(a_i),$$
				and
				$$d\nu_x^n(y)=\sum_{i=1}^{m}\Big( \int_{\mathcalf^s(x,R)} \rho_f(x, y) d m^s_{x}(y)\Big)^{-1} \rho_f(x,y) d m^s_x(y).$$
			And the transverse measure $$\mu_T=\lim_{n\to+\infty}\mu^n_T=\lim_{n\to+\infty}\frac{1}{n}\sum_{i=0}^{n-1}\nu^i_T.$$
			\item By the characterize of conditional measure and the first item, let $\phi:D\to \RR$ be any continuous function, we calculate directly,
			\begin{align*}
			&\int_{T}d\mu_T(x)\int_{\mathcalf^s(x,R)}\phi(x,y)d\mu_x(y)=	\int_{D}\phi d\mu\\
			&= \lim_{n\to +\infty}	\int_{D}\phi d\mu^n=\lim_{n\to+\infty}\frac{1}{n}\sum_{i=0}^{n-1}\int_{D}\phi d\nu^n\\
				&=\lim_{n\to+\infty}\frac{1}{n}\sum_{i=0}^{n-1}\int_{T}d\nu^n_T(x)\int_{\mathcalf^s(x,R)}\phi(x,y)d\nu^n_x(y)\\
				&=\int_{T}d\mu_T(x) \Big( \int_{\mathcalf^s(x,R)} \rho_f(x, y) d m^s_{x}(y)\Big)^{-1}  \int_{\mathcalf^s(x,R)}\phi(x,y) \rho_f(x,y)dm_x(y)
			\end{align*}
		Hence for $\mu_T$-a.e. $x\in T$, the conditional measure $\mu_x$ is absolutely continuous with respect to $m^s$ on $\mathcalf^s(x,R)$ with the density $ \Big( \int_{\mathcalf^s(x,R)} \rho_f(x, y) d m^s_{x}(y)\Big)^{-1}\cdot \rho_f(x,y)$
		\item Recall that $\mathcalf^s$ is minimal  on $\TT^d$ since $f$ is irreducible and leaf conjugates to its linearization. Hence $\mu$ is supported on the whole $\TT^d$.
			\end{enumerate}

		\end{proof}

		By a standard method (see \cite[Proposition I.3.1]{QXZ2009}),  there is a unique $\sigma_f$-invariant measure $\tilde{\mu}\in \mathcal{M}(\TT^d_f)$ such that  $\mu=(\pi_0)_{*}(\tilde{\mu})$. We mention that $$\tilde{\mu}\big([A_0,A_1,\cdots,A_n]\big)=\mu\big(\bigcap_{i=0}^{n}f^{-i}(A_i) \big),$$ for any cylinder $[A_0,A_1,\cdots,A_n]\subset \TT^d_f$.
		
		\begin{lemma}\label{3 lemma full measure preimage dense}
			There is a set $\Lambda\subset\TT^d_f$ with $\tilde{\mu}(\Lambda)=1$ such that for any $\tilde{z}\in\Lambda$, $\tilde{z}$ is $\sigma_f^{-1}$-transitive point, i.e., the negative orbit $\{\sigma_f^{-n}(\tilde{z})\}_{n\in\NN}$ is dense in $\TT^d_f$. 
		\end{lemma}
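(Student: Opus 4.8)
The plan is to show that the $\sigma_f$--invariant measure $\tilde{\mu}$ is ergodic and has full support, and then to extract the generic point of the lemma from Birkhoff's theorem applied to the homeomorphism $\sigma_f^{-1}$.

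First I would record that $\mathrm{supp}\,\tilde{\mu}=\TT^d_f$. Since $f$ is irreducible, $\mathcal{L}^s$ is minimal and hence so is $\mathcal{F}^s_f$ (via the leaf conjugacy, Proposition \ref{2.2 prop leaf conjugate}); by item (3) in the proof of Lemma \ref{3 lemma AC measure} this forces $\mathrm{supp}\,\mu=\TT^d$. Combining this with the cylinder formula $\tilde{\mu}([A_0,\dots,A_n])=\mu\big(\bigcap_{i=0}^{n}f^{-i}(A_i)\big)$ and the $\sigma_f$--invariance of $\tilde{\mu}$: any nonempty basic open subset of $\TT^d_f$ can, after applying a suitable power of $\sigma_f$, be written as a cylinder $[B(w_0,\varepsilon),\dots,B(w_n,\varepsilon)]$ along an $f$--orbit segment $w_i=f^i(w_0)$, and by continuity of $f$ the set $\bigcap_{i=0}^{n}f^{-i}\big(B(w_i,\varepsilon)\big)$ contains a ball $B(w_0,\varepsilon')$, which has positive $\mu$--measure. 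Hence every nonempty open subset of $\TT^d_f$ has positive $\tilde{\mu}$--measure.

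The heart of the proof is the ergodicity of $\tilde{\mu}$ with respect to $\sigma_f$ (equivalently $\sigma_f^{-1}$, since the two maps have the same invariant sets). By construction the conditional measures of $\tilde{\mu}$ along the leaves of $\mathcal{F}^s_f$ --- which are precisely the unstable leaves of $\sigma_f^{-1}$ --- are absolutely continuous with respect to Lebesgue, with density proportional to $\rho_f$; in other words $\tilde{\mu}$ is an SRB measure for the Anosov homeomorphism $\sigma_f^{-1}$ on the inverse limit space. I would then run the Hopf argument: for a continuous $\varphi:\TT^d_f\to\RR$ the Birkhoff averages $\varphi^{\pm}=\lim\frac1n\sum_{k=0}^{n-1}\varphi\circ\sigma_f^{\mp k}$ (which agree $\tilde{\mu}$--a.e.) are constant along the unstable leaves of $\sigma_f^{-1}$, i.e.\ along $\mathcal{F}^s_f$ (from forward iteration), and along the stable leaves of $\sigma_f^{-1}$, i.e.\ along the $\mathcal{F}^u_f(\cdot,\tilde{x})$ (from backward iteration); since both foliations are absolutely continuous and admit a local product structure (Proposition \ref{2.1 prop C1 foliation} and Proposition \ref{2.1 prop: local product sturcture}), the standard chain argument shows $\varphi^{\pm}$ is $\tilde{\mu}$--a.e.\ constant, whence $\tilde{\mu}$ is ergodic. (Alternatively, one may invoke directly the ergodicity of the SRB measure of a transitive Anosov endomorphism from the smooth ergodic theory of \cite{QXZ2009}.) I expect this step --- carrying out the Hopf argument on the non-smooth phase space $\TT^d_f$ and checking that absolute continuity of the foliations together with the local product structure is exactly what is needed --- to be the main technical obstacle; the rest is soft.

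Finally, given ergodicity and full support, let $\Lambda\subset\TT^d_f$ be the set of $\tilde{z}$ for which $\frac1n\sum_{k=0}^{n-1}\delta_{\sigma_f^{-k}\tilde{z}}\to\tilde{\mu}$ weakly; by Birkhoff's ergodic theorem applied to $\sigma_f^{-1}$ and $\tilde{\mu}$ (which is $\sigma_f^{-1}$--ergodic), $\tilde{\mu}(\Lambda)=1$. If $\tilde{z}\in\Lambda$ then for every nonempty open $U\subset\TT^d_f$ the portmanteau inequality gives $\tilde{\mu}(U)\le\liminf_n\frac1n\#\{0\le k<n:\sigma_f^{-k}\tilde{z}\in U\}$, and since $\tilde{\mu}(U)>0$ by full support, the backward orbit $\{\sigma_f^{-n}\tilde{z}\}_{n\in\NN}$ must meet $U$; as $U$ was arbitrary this orbit is dense, i.e.\ $\tilde{z}$ is $\sigma_f^{-1}$--transitive. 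This proves the lemma, and together with Corollary \ref{3 cor h0 almost differentiable} (the differentiable points of $\bar{h}_0$ along $\mathcal{F}^s_f$ form a set of full conditional measure, hence full $\tilde{\mu}$--measure) it yields a point that is simultaneously $\sigma_f^{-1}$--transitive and a differentiability point of $\bar{h}_0$, which is what the next lemma needs.
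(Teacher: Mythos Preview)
Your approach is correct, but it is organized differently from the paper's, and the difference is worth noting.

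You aim to prove outright that $\tilde{\mu}$ is ergodic for $\sigma_f$ by running the full Hopf argument on the inverse limit space $\TT^d_f$, and then deduce the lemma from ergodicity plus full support. The paper instead never states ergodicity: for each basic open cylinder $B$ it takes a continuous $\psi$ supported near $B$, shows by the standard $\int_A\psi\,d\tilde\mu=0$ trick that $\psi^+>0$ $\tilde\mu$--a.e.\ on $B$, and then observes that the forward Birkhoff average $\psi^+$ is \emph{independent of the negative orbit} (two points of $\TT^d_f$ with the same $\pi_0$ have forward $\sigma_f$--orbits converging together). Thus $\psi^+$ descends to a $\mathcal{F}^s_f$--saturated function on the manifold $\TT^d$, and the Hopf-type propagation (using the absolutely continuous conditionals of $\mu$ along $\mathcal{F}^s_f$ and minimality of $\mathcal{F}^s_f$) is carried out entirely on $\TT^d$, not on the non-smooth space $\TT^d_f$. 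Finally $\psi^-=\psi^+>0$ $\tilde\mu$--a.e.\ gives the visits to $B$ under $\sigma_f^{-1}$.

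What this buys: the paper's route sidesteps exactly the obstacle you flagged, namely making the Hopf argument rigorous on the solenoidal space $\TT^d_f$ and checking absolute continuity of the \emph{unstable} lamination there; it only ever uses the stable foliation on $\TT^d$, where Proposition~\ref{2.1 prop C1 foliation} applies directly. Your route is cleaner conceptually (ergodicity is the natural statement) and, as you note, can be completed by invoking the SRB/Hopf theory for endomorphisms in \cite{QXZ2009}, but it imports more machinery than the lemma requires.
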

		\begin{proof}[Proof of Lemma \ref{3 lemma full measure preimage dense}]
			 It suffices to show that for any open ball $B\subset \TT^d_f$ with radius $\frac{1}{m}\ (m\in \NN)$ which is actually a cylinder $[B_{-k},\cdots, B_0,\cdots,  B_k]$, where $B_i$ is an open ball in $\TT^d$,  one has that $\tilde{\mu}$-a.e. points in $\TT^d_f$ go through $B$ infinity times by $\sigma_f^{-1}$. Then covering $\TT^d_f$ by $\frac{1}{m}$-ball and let $m$ go to infinity, one can get the full-$\tilde{\mu}$ transitivity.
			
			Taking an open ball $B=[B_{-k},\cdots, B_0,\cdots,  B_k] \subset \TT^d_f$.  For convenience, we see  the set $B_0=\pi_0(B)$, an open ball on $\TT^d$, as a foliation box of $\mathcalf^s$. Since $\mathcalf^s$ is minimal, there exists $R>0$ such that $\TT^d\subseteq \bigcup_{x\in B_0}\mathcalf^s(x,R)$.  Let $\psi:\TT^d_f\to \RR$ be  a continuous function such that $\psi=1$ on $B$ and supported on a small neighborhood of $B$. By Birkhoof Ergodic Theorem,
			$$\psi^+=\lim_{n\to+\infty} \frac{1}{n}\sum_{i=0}^{n-1} \psi\circ \sigma_f^{i}\ \ \ \in {\rm L}^1(\TT^d_f),$$
			and $\int\psi^+ d\tilde{\mu}= \int \psi d\tilde{\mu}$. Let $A=\{\tilde{x}\in\TT^d_f \ |\  \psi^+(\tilde{x})=0\}$. Note that $A$ is $\sigma_f$-invariant.
			By Dominated Convergence Theorem and applying Birkhoof Ergodic Theorem to $\psi\cdot \mathbb{1}_A$ where $\mathbb{1}_A$ is the characteristic function of $A$, one has that 
			\begin{align*}
				\int_A \psi d\tilde{\mu}&=\int \psi\cdot \mathbb{1}_Ad\tilde{\mu}=\int \lim_{n\to+\infty} \frac{1}{n}\sum_{i=0}^{n-1} (\psi\cdot \mathbb{1}_A)\circ \sigma_f^{i} d\tilde{\mu}\\
				&=\lim_{n\to+\infty} \frac{1}{n}\sum_{i=0}^{n-1} \int  \big(\psi \circ \sigma_f^{i}\big) \cdot \big(\mathbb{1}_A\circ \sigma_f^{i}\big) d\tilde{\mu}\\
				&=\lim_{n\to+\infty} \frac{1}{n}\sum_{i=0}^{n-1} \int_A  \psi \circ \sigma_f^{i}d\tilde{\mu}
				= \int_A\psi^+d\tilde{\mu}=0.
			\end{align*}
			It follows that $\tilde{\mu}(A\cap B)=0$ and hence $\psi^+(\tilde{x})>0$ for $\tilde{\mu}-$a.e. $\tilde{x}\in B$. It means that for $\mu$-a.e. $x_0\in B_0$, there exists $\bar{x}\in B$ with $\pi_0(\bar{x})=x_0$ such that  $\psi^+(\bar{x})>0$, by the construction of $B$ and the fact that $\mu$ is supported on $\TT^d$.
			Note that for any  $\tilde{x}\in\TT^d_f$ with $\pi_0(\tilde{x})=x_0$, one has $\psi^+(\tilde{x})=\psi^+(\bar{x})>0$. That implies the function $\psi^+$ is in fact independent with the negative orbit. It follows that for any $y_0\in \mathcalf^s(x_0,R)$ and any $\tilde{y}\in\TT^d_f$ with $\pi_0(\tilde{y})=y_0$, one has $\psi^+(\tilde{y})=\psi^+(\tilde{x})>0$. Applying the Hopf argument, for $\mu$-a.e. $z\in\TT^d$ and any $\tilde{z}\in\TT^d_f$ with $\pi_0(\tilde{z})=z$,  we have $\psi^+(\tilde{z})>0$ since the conditional measure of $\mu$ on each local stable leaf is absolutely continuous to the Lebesgue measure $m^s$. 
			Denote the set of such $z\in\TT^d$ by $C_0$, again $\mu(C_0)=1$. Let $C=\{\tilde{z}\in\TT^d_f\ |\ \pi_0(\tilde{z})\in C_0 \}$. Then one has $\tilde{\mu}(C)=\lim_{n\to +\infty}\mu\big(\pi_{-n}(C)\big)= \lim_{n\to +\infty}\mu\big(f^{-n}(C_0)\big)=1$. Again for any $\tilde{z}\in C$, $\psi^+(\tilde{z})>0$.
		
			By Birkhoof Ergodic Theorem again, there is a set $\Gamma\subset\TT^d_f$ with $\tilde{\mu}(\Gamma)=1$ such that 
			$$\psi^-(\tilde{x}):=\lim_{n\to+\infty} \frac{1}{n}\sum_{i=0}^{n-1} \psi\circ \sigma_f^{-i}(\tilde{x}) = \psi^+(\tilde{x}), \quad \forall \tilde{x}\in \Gamma.$$
			Then  $\tilde{\mu}(\Gamma\cap C)=1$ and for and $\tilde{z}\in \Gamma\cap C$, one has $\psi^-(\tilde{z})=\psi^+(\tilde{z})>0$.  It means that $\tilde{\mu}$-a.e. points go through $B$ infinity times by $\sigma_f^{-1}$. 
		\end{proof}

	  \begin{lemma}\label{3 lem differentiable transitive point}
	  	There is a point $\tilde{z}\in \TT^d_f$ which is a transitive point of $\sigma_f^{-1}$ and $D\bar{h}_0|_{E^s_f(\tilde{z})}$ exists.  
	  \end{lemma}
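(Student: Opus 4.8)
The plan is to show that the $\sigma_f$-invariant measure $\tilde\mu$ constructed above charges with full measure both the set where $D\bar h_0|_{E^s_f}$ is defined and the set $\Lambda$ of $\sigma_f^{-1}$-transitive points produced in Lemma \ref{3 lemma full measure preimage dense}; any point in the intersection is then the desired $\tilde z$.

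First I would isolate the differentiability set. Recall that each local stable leaf $L$ of $\sigma_f$ (of a fixed uniform size $R$) is mapped homeomorphically onto a local stable leaf of $f$ by $\pi_0$: indeed $f$ is a uniform contraction along stable leaves, hence injective on each of them, which is exactly what makes $f_0^{-1}$ well defined. Let $\mathcal D\subset\TT^d_f$ be the Borel set of those $\tilde z$ such that, writing $L$ for the local stable leaf of $\sigma_f$ through $\tilde z$, the map $y\mapsto\bar h_0\big((\pi_0|_L)^{-1}(y)\big)$ is differentiable at $y=\pi_0(\tilde z)$. By Corollary \ref{3 cor h0 almost differentiable}, for every such $L$ the projection $\pi_0(\mathcal D\cap L)$ has full $m^s$-measure in $\pi_0(L)$; equivalently $\pi_0\big((\TT^d_f\setminus\mathcal D)\cap L\big)$ is $m^s$-null on every $L$.

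Next I would prove $\tilde\mu(\mathcal D)=1$. Since $x_0$ is a fixed point of $f$ it has the constant orbit $\bar x_0\in\TT^d_f$, and $\pi_0$ restricts to a homeomorphism of the local stable leaf $\mathcalf^s(\bar x_0,\delta_0)$ onto $V_0$. Lifting $\nu^0$ through this homeomorphism and putting $\tilde\nu^i=(\sigma_f^{-i})_*\tilde\nu^0$, each $\tilde\nu^i$ is carried by the local stable leaf $\sigma_f^{-i}\big(\mathcalf^s(\bar x_0,\delta_0)\big)$ with $(\pi_0)_*\tilde\nu^i\ll m^s$; along the subsequence fixing $\mu$, the Cesàro averages $\frac1n\sum_{i=0}^{n-1}\tilde\nu^i$ converge to a $\sigma_f$-invariant measure with pushforward $\mu$, hence to $\tilde\mu$ by uniqueness of the lift. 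Running the disintegration computation of Lemma \ref{3 lemma AC measure} verbatim on $\TT^d_f$ — now with local stable leaves of $\sigma_f$ against a transversal, transverse measure the Cesàro limit of the transverse parts of the $\tilde\nu^i$, and conditional densities the normalized $\rho_f(\cdot,\cdot)$ read off through $\pi_0$ — shows that for $\tilde\mu$-a.e. local stable leaf $L$ the conditional $\tilde\mu_L$ has $(\pi_0|_L)_*\tilde\mu_L$ absolutely continuous with respect to $m^s$ with an explicit density, and in particular it annihilates every $m^s$-null subset of the leaf. Disintegrating $\tilde\mu=\int\tilde\mu_L\,d\hat\mu(L)$ we get
\[
\tilde\mu(\TT^d_f\setminus\mathcal D)=\int\tilde\mu_L\big((\TT^d_f\setminus\mathcal D)\cap L\big)\,d\hat\mu(L)=\int(\pi_0|_L)_*\tilde\mu_L\Big(\pi_0\big((\TT^d_f\setminus\mathcal D)\cap L\big)\Big)\,d\hat\mu(L)=0,
\]
using the first step and the absolute continuity.

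Finally, with $\Lambda$ from Lemma \ref{3 lemma full measure preimage dense} satisfying $\tilde\mu(\Lambda)=1$, we obtain $\tilde\mu(\mathcal D\cap\Lambda)=1$, so $\mathcal D\cap\Lambda\neq\emptyset$, and any $\tilde z\in\mathcal D\cap\Lambda$ is a $\sigma_f^{-1}$-transitive point at which $D\bar h_0|_{E^s_f(\tilde z)}$ exists. I expect the main obstacle to be the middle step: one must set up the Rokhlin disintegration of $\tilde\mu$ along the stable lamination of the non-smooth space $\TT^d_f$ and re-run the absolute-continuity argument there, carefully tracking the identifications $\pi_0|_L$ between local stable leaves of $\sigma_f$ and of $f$ (this is why it is important to use the explicit construction $\tilde\mu=\lim\frac1n\sum\tilde\nu^i$ rather than only the abstract existence of the lift, so that the conditionals come with honest densities). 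Granting the exact analogue of Lemma \ref{3 lemma AC measure} on $\TT^d_f$, the remainder is routine bookkeeping, but that analogue is where the real content sits.
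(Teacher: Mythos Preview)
Your approach is sound and reaches the conclusion, but it differs substantively from the paper's argument, and the difference is worth noting.

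You lift everything to $\TT^d_f$: you construct $\tilde{\mu}$ as a Cesàro limit of lifted measures $\tilde{\nu}^i$, re-establish the absolute continuity of the stable conditionals there, and then intersect leafwise with $\mathcal D$. This is conceptually clean, but as you correctly flag, the real work sits in setting up the Rokhlin disintegration of $\tilde{\mu}$ along the stable lamination of the non-manifold space $\TT^d_f$, where the transversal is no longer a finite-dimensional disk but carries the full backward-orbit information.

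The paper sidesteps this entirely. It never re-runs Lemma~\ref{3 lemma AC measure} on $\TT^d_f$; instead it stays on $\TT^d$, using the absolute continuity of $\mu$ already proved there together with the universal-cover set $\mathcal{R}=\pi\big(\mathrm{Orb}_F(\RR^d)\big)$. The key observation is that $\tilde{\mu}(\mathcal{R})=1$, and that for a point $x\in\mathcalf^s(w,R)$ the $f$-orbits through $x$ lying in $\mathcal{R}$ are exactly those coming from the lifts $x_0+k$, $k\in\ZZ^d$, of a single $x_0\in\tildeF^s(w_0,R)$. Since for each fixed $k$ the non-differentiable set of $H|_{\tildeF^s(w_0+k,R)}$ is $m^s$-null, a \emph{countable} intersection over $k\in\ZZ^d$ produces a full-$m^s$ set $W_*\subset\mathcalf^s(w,R)$ on which $D\bar h_0|_{E^s_f(\tilde x)}$ exists for \emph{every} $\tilde x\in\mathcal{R}$ above $x$. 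This is then intersected with the full-$m^s$ set $W^*$ obtained from $\pi_0(\mathcal{R}\cap\Lambda)$ via the (already-proved, manifold-level) equivalence of $\mu$-conditionals with $m^s$.

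In short: your route requires extending the SRB-type construction to the solenoid, buying conceptual directness at the cost of new measure-theoretic bookkeeping on $\TT^d_f$; the paper's route trades that for a universal-cover trick that reduces everything to Lemma~\ref{3 lemma AC measure} plus a countable intersection, avoiding any new disintegration machinery.
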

		
		\begin{proof}[Proof of Lemma \ref{3 lem differentiable transitive point}]
		On the one hand let us consider the set $\mathcal{R}=\pi \big( {\rm Orb}_F(\RR^d) \big)\subset \TT^d_f$, the projection of all orbits of $F$ on the universal cover $\RR^d$. Recall that the closure of $\mathcal{R}$ equals to the whole $\TT^d_f$. Moreover, the cylinder of $\mathcal{R}$  from $(-n)$-th to $0$-th position is $\mathcal{R}_n=\big[\TT^d,\cdots,\TT^d \big]_n$ since $\pi_{-i}(\mathcal{R})=\TT^d$ for all $i\in\NN$. Then we have $\tilde{\mu}(\mathcal{R})=\lim_{n\to +\infty}\tilde{\mu}(\mathcal{R}_n)=\lim_{n\to +\infty}\mu (\TT^d)=1$.  It follows that $\tilde{\mu}(\mathcal{R}\cap \Lambda)=1$, where $\Lambda$ is given by Lemma \ref{3 lemma full measure preimage dense}. Hence $\mu\big( \pi_0( \mathcal{R}\cap \Lambda )\big)=1$.
		Let $A_0$ be a small stable foliation box on $\TT^d$. Then by the absolute continuity of the measure $\mu$ along $\mathcalf^s$, one has that there exists a point $w\in A_0$ and a set $W^*\subset \mathcalf^s(w,R)$ with $m^s(W^*)=1$ such that for all $x\in W^*$ there exists $\tilde{x}\in \mathcal{R}\cap \Lambda$ with $\pi_0(\tilde{x})=x$, where $\mathcalf^s(w,R)$  is the component  of local stable leaf in $A_0$ containing $w$.
		
		On the other hand, let us fix the above leaf $\mathcalf^s(w,R)$ and fix the lifting $w_0\in\RR^d$ of $w$. By Proposition \ref{3 prop: bi-Lipschitz} or Corollary \ref{3 cor h0 almost differentiable}, for any $k\in\ZZ^d$ the differentiable points of $H$ restricted on $\tildeF^s(w_0+k,R)$ consists a full-$m^s$ set denoted by $W_k$. Then we have that $W=\bigcap_{k\in\ZZ^d}\{W_k-k\}\subset \tildeF^s(w_0,R)$  and $m^s(W)=1$ since $\tildeF^s(x+k)=\tildeF^s(x)+k$. It means that there is a full-$m_s$ set $W_*=\pi(W)\subset \mathcalf^s(w,R)$ such that for any $x\in W_*$ and any orbit $\tilde{x}\in \mathcal{R}$ of $x$, the derivative $D\bar{h}_0|_{E^s_f(\tilde{x})}$ exists.  Then we have $m^s(W^*\cap W_*)=1$. Hence there exists $z\in W^*\cap W_*$. This means that there exists $\tilde{z}\in \mathcal{R}\cap \Lambda$ and $D\bar{h}_0|_{E^s_f(\tilde{z})}$ exists.  Note that by Lemma \ref{3 lemma full measure preimage dense}, $\tilde{z}$ is a $\sigma_f^{-1}$-transitive point.
	\end{proof}
	
		\vspace{2mm}
		Let $\tilde{z}\in \TT^d_f$ be in Lemma \ref{3 lem differentiable transitive point}. Take any $\tilde{y}\in\TT^d_f\setminus\{\tilde{z}\}$, we prove that  $\bar{h}_0$ is differentiable on $E^s_f(\tilde{y})$ and $$\|D\bar{h}_0|_{E^s_f(\tilde{y})}\|=\frac{\tilde{P}(\tilde{z})}{\tilde{P}(\tilde{y})}\cdot\|D\bar{h}_0|_{E^s_f(\tilde{z})}\|,\quad \forall \tilde{y}\in\TT^d_f.$$
		The proof of this part is similar to one in \cite{GG2008} except that  we deal with it on the inverse limit space. For completeness, we prove it as following.
		
		Let $\tilde{y}=(y_i)\in\TT^d_f$ and $\tilde{y}'=(y_i')\in \mathcalf^s(\tilde{y},R)\setminus\{\tilde{y}\}$ for some $R>0$. Assume the orientation $y_0\prec y'_0$ on $\mathcalf^s$. Take $D\subset \TT^d_f$, a neighborhood of $\tilde{y}$. Let $D'\subset\TT^d_f$ be a neighborhood of $\tilde{y}'$  such that for any $\tilde{x}'=(x_i')\in D'$, there exists $\tilde{x}=(x_i)\in D$ with $x_0\prec x'_0$ satisfying $d^s_f(x_0,x'_0)=d^s_f(y_0,y'_0)$, where $d^s_f(\cdot,\cdot)$ is the affine structure on $\mathcalf^s$ given by Proposition \ref{3 prop: affine metric}.  
		
		For any $\e>0$, by the uniform continuity of $\bar{h}$ and $\tilde{P}$, we can take $D$ small enough such that 
		\begin{itemize}
			\item For any points $\tilde{x}=(x_i)\in D, \tilde{x}'=(x_i')\in D'$ with $x_0\prec x'_0$ and $d^s_f(x_0,x_0')=d^s_f(y_0,y_0')$, one has that $\Big|d^s_g\big(\bar{h}_0(\tilde{x}), \bar{h}_0(\tilde{x}')\big) \ - \  d^s_g\big(\bar{h}_0(\tilde{y}), \bar{h}_0(\tilde{y}')\big)\Big|<\e$.
			\item For any $\tilde{x}=(x_i)\in D$, $|\tilde{P}(\tilde{x})-\tilde{P}(\tilde{y})|<\e$.
		\end{itemize}
		
		Since $\tilde{z}=(z_i)$ is a $\sigma_f^{-1}$-transitive point, for any $N>0$ there exists $n>N$ such that $\sigma_f^{-n}(\tilde{z})\in D$. Take $\tilde{z}'=(z_i')\in \mathcalf^s(\tilde{z},R)$ such that $z_{-n}\prec z_{-n}'$ and $d^s_f(z_{-n},z_{-n}')=d^s_f(y_0,y_0')$. Note that $\sigma_f^{-n}(\tilde{z}')\in D'$.
		For given $\e>0$, let $N$ big enough such that for $n>N$, the point $\tilde{z}'$ satisfies
		$$\Big|\frac{d^s_g\big(\bar{h}_0(\tilde{z}), \bar{h}_0(\tilde{z}')\big)}{d^s_f(z_0,z_0')} \ -\  \|D\bar{h}_0|_{E^s_f(\tilde{z})}\|\Big|<\e.$$
		Now we fix this $n$. By the affine structure that is the second item of Proposition \ref{3 prop: affine metric} and Livschitz Theorem (see Lemma \ref{3 lem: s rigidity induce to RRd}), one has that 
		\begin{align*}
			\frac{d^s_g\big(\bar{h}_0(\tilde{z}), \bar{h}_0(\tilde{z}')\big)}{d^s_f(z_0,z_0')} &= \prod_{i=1}^{n}\frac{\|Dg|_{E^s_g(\bar{h}_{-i}(\tilde{z}))}\|}{\|Df|_{E^s_f(z_{-i})}\|} \cdot \frac{d^s_g\big(\bar{h}_{-n}(\tilde{z}), \bar{h}_{-n}(\tilde{z}')\big)}{d^s_f(z_{-n},z_{-n}')}, \\
			&=\frac{\tilde{P}\big(\sigma_f^{-n}(\tilde{z})\big)}{\tilde{P}(\tilde{z})} \cdot \frac{d^s_g\big(\bar{h}_{-n}(\tilde{z}), \bar{h}_{-n}(\tilde{z}')\big)}{d^s_f(z_{-n},z_{-n}')},
		\end{align*}
		where $\bar{h}_{-n}=\pi_{-n}\circ \bar{h}$ and $\pi_{-n}:(\TT^d)^{\ZZ}\to \TT^d$ is the projection to the $(-n)$-th position. 
		Note that $$\bar{h}_{-n}=\pi_0\circ \sigma_g^{-n}\circ \bar{h}=\pi_0\circ \bar{h}\circ \sigma_f^{-n}=\bar{h}_0\circ \sigma_f^{-n}.$$
		Since $\sigma_f^{-n}(\tilde{z})\in D, \sigma_f^{-n}(\tilde{z}')\in D'$ and
		$d_f^s(z_{-n},z_{-n}')=d^s_f(y_0,y_0')$, we have that 
		$$\Big|d^s_g\big(\bar{h}_{-n}(\tilde{z}), \bar{h}_{-n}(\tilde{z}')\big) \ - \  d^s_g\big(\bar{h}_0(\tilde{y}), \bar{h}_0(\tilde{y}')\big)\Big|<\e\quad {\rm and}\quad \Big|\tilde{P}(\sigma_f^{-n}(\tilde{z}))-\tilde{P}(\tilde{y})\Big|<\e.$$
	 Recall that there exists $C>1$ and $C^{-1}<\tilde{P}(\tilde{x})<C$ for any $\tilde{x}\in\TT^d_f$. It follows that 
		\begin{align*}
			&\Big|    \frac{\tilde{P}(\tilde{y})}{\tilde{P}(\tilde{z})} \cdot \frac{d^s_g\big(\bar{h}_{0}(\tilde{y}), \bar{h}_{0}(\tilde{y}')\big)}{d^s_f(y_{0},y_{0}')}\ -\  \frac{\tilde{P}\big(\sigma_f^{-n}(\tilde{z})\big)}{\tilde{P}(\tilde{z})} \cdot \frac{d^s_g\big(\bar{h}_{-n}(\tilde{z}), \bar{h}_{-n}(\tilde{z}')\big)}{d^s_f(z_{-n},z_{-n}')}      \Big|\\
			&=\Big|  \frac{ \tilde{P}(\tilde{y})\cdot d^s_g\big(\bar{h}_{0}(\tilde{y}), \bar{h}_{0}(\tilde{y}')\big)-  \tilde{P}\big(\sigma_f^{-n}(\tilde{z})\big)\cdot d^s_g\big(\bar{h}_{-n}(\tilde{z}), \bar{h}_{-n}(\tilde{z}')\big)}{\tilde{P}(\tilde{z})\cdot d^s_f(y_{0},y_{0}')}\      \Big|\\
			&<\frac{d^s_g\big(\bar{h}_{0}(\tilde{y}), \bar{h}_{0}(\tilde{y}')\big) + C}{d^s_f(y_0,y_0')}\cdot C\cdot \e.
		\end{align*}
	Since $\tilde{y}'\in\TT^d_f$ is given in advance, one has that 
	$$\Big|    \frac{\tilde{P}(\tilde{y})}{\tilde{P}(\tilde{z})} \cdot \frac{d^s_g\big(\bar{h}_{0}(\tilde{y}), \bar{h}_{0}(\tilde{y}')\big)}{d^s_f(y_{0},y_{0}')}\ -\   \|D\bar{h}_0|_{E^s_f(\tilde{z})}\|\Big| \to 0, $$
	as $\e\to 0$. Let $\tilde{y}'\to \tilde{y}$, we have that $$\|D\bar{h}_0|_{E^s_f(\tilde{y})}\|=\frac{\tilde{P}(\tilde{z})}{\tilde{P}(\tilde{y})}\cdot\|D\bar{h}_0|_{E^s_f(\tilde{z})}\|,\quad \forall \tilde{y}\in\TT^d_f.$$
	\end{proof}

	\bibliographystyle{plain}

\bibliography{ref}

   	\flushleft{\bf Ruihao Gu} \\
   School of Mathematical Sciences, Peking University, Beijing, 100871,  China\\
   \textit{E-mail:} \texttt{rhgu@pku.edu.cn}\\
   
   \flushleft{\bf Yi Shi} \\
   School of Mathematical Sciences, Peking University, Beijing, 100871,  China\\
   \textit{E-mail:} \texttt{shiyi@math.pku.edu.cn}\\

\end{document}